\documentclass{amsart}

\usepackage{pst-plot}
\usepackage{fullpage}
\usepackage{graphicx} %Formatting for submission
\usepackage{hyperref} %Clickable links
\usepackage{amsmath, amssymb, amsthm}
\usepackage{xcolor} % using for comments
\usepackage{enumitem}
\usepackage{thmtools}
\usepackage{multirow}
\usepackage{booktabs}
\usepackage{subcaption}
\usepackage[style=alphabetic, maxbibnames=9]{biblatex}
\usepackage{tikz}
\usepackage{amsmath,amssymb}
\usepackage{xcolor}

\definecolor{Bittersweet}{rgb}{1.0, 0.44, 0.37}
\definecolor{KateColour}{rgb}{0.3, 0.6, 0.3}%56, 156, 64

\hypersetup{colorlinks=true, urlcolor=purple, citecolor=blue, linkcolor=red, pdfstartview={XYZ null null 0.90}}
\numberwithin{equation}{subsection}
\definecolor{colblue}{HTML}{9fc2f8}

\DeclareMathOperator{\SL}{SL}
\DeclareMathOperator{\GL}{GL}
\DeclareMathOperator{\PSL}{PSL}
\DeclareMathOperator{\PGL}{PGL}
\DeclareMathOperator{\Mat}{Mat}
\DeclareMathOperator{\EisE}{E}
\DeclareMathOperator{\Eis}{Eis}

\DeclareMathOperator{\Inv}{Inv}
\DeclareMathOperator{\Nm}{Nm}
\DeclareMathOperator{\pres}{pres}
\DeclareMathOperator{\Tr}{Tr}

\newcommand{\FF}{\mathbb{F}}
\newcommand{\QQ}{\mathbb{Q}}
\newcommand{\RR}{\mathbb{R}}
\newcommand{\ZZ}{\mathbb{Z}}
\newcommand{\CC}{\mathbb{C}}
\newcommand{\PP}{\mathbb{P}}
\newcommand{\uhp}{\ensuremath{\mathbb{H}}} %Upper half plane
\newcommand{\om}{\omega}
\newcommand{\Egp}{\mathcal{E}}
\newcommand{\ol}[1]{\overline{#1}}
\newcommand{\GammaE}{\Gamma_{\EisE}}
\newcommand{\kron}[2]{\left(\frac{#1}{#2}\right)}%Kronecker symbol
\newcommand{\sm}[4]{\ensuremath{\left(\begin{smallmatrix} #1 & #2\\#3 & #4\end{smallmatrix}\right)}}%Small 2x2 matrix
\newcommand{\lm}[4]{\ensuremath{\left(\begin{matrix} #1 & #2\\#3 & #4\end{matrix}\right)}}%Large 2x2 matrix
\newcommand{\genmtx}{\sm{a}{b}{c}{d}}%Generic matrix [a,b;c,d] as a small matrix

\newcommand{\cir}{\ensuremath{\mathcal{C}}}
\newcommand{\Dcir}{\ensuremath{\mathcal{D}}}
\newcommand{\spor}{\mathcal{S}}
\newcommand{\Epres}{\Egp_{\pres}}

\DeclareRobustCommand{\pint}{
\includegraphics[height=0.9em]{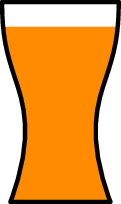}
}
\DeclareRobustCommand{\widepint}{
    \widehat{\pint}
}

\newcommand{\tablestretch}[1]{
\begin{center}
    \renewcommand{\baselinestretch}{1.3}
		{\upshape
		#1
		}
    \renewcommand{\baselinestretch}{1}
\end{center}}%Make the table centred, not affected by being in a theorem, and stretched a bit.

\newtheorem{theorem}{Theorem}[section]
\newtheorem{conjecture}[theorem]{Conjecture}
\newtheorem{corollary}[theorem]{Corollary}
\newtheorem{lemma}[theorem]{Lemma}
\newtheorem{proposition}[theorem]{Proposition}

\theoremstyle{definition}

\newtheorem{definition}[theorem]{Definition}

\newtheorem{remark}[theorem]{Remark}

\begin{document}

\title{Eisenstein circle packings and the Eisenpint Schmidt arrangement}
%\title{Wheretofore be thine cubique obstructions oh lovely flower of the Eisensteins?}
\keywords{Apollonian, circle packing, Kleinian group, Eisenstein integers, strong approximation, local-to-global, reciprocity obstruction}
\subjclass[2020]{Primary 52C26, 11E20, 11F06, 30F40; Secondary 11D09, 11E12}

\author{James Rickards}
\address{Saint Mary's University, Halifax, Nova Scotia, Canada}
\email{james.rickards@smu.ca}
\urladdr{https://jamesrickards-canada.github.io/}

\author{Katherine E. Stange}
\address{University of Colorado Boulder, Boulder, Colorado, USA}
\email{kstange@math.colorado.edu}
\urladdr{https://math.katestange.net/}

\date{\today}

\begin{abstract}
The Schmidt arrangement of an imaginary quadratic number field $K$ is the orbit of the extended real line under $\text{PSL}(2, \mathcal{O}_K)$ as M\"obius transformations on the extended complex plane. If $K\neq\mathbb{Q}(\sqrt{-3})$, then the resulting set of circles can only intersect tangentially, leading to various classes of integral circle packings, including Apollonian circle packings.  When $K=\mathbb{Q}(\sqrt{-3})$, circles can intersect at angles of $\frac{\pi}{3}$ and $\frac{2\pi}{3}$, making it unclear how to extract circle packings from the arrangement. The goal of this paper is to study a modification of the $\mathbb{Q}(\sqrt{-3})-$Schmidt arrangement called the \emph{Eisenpint Schmidt arrangement} and associated integral \emph{Eisenstein circle packings}.  In analogy to the study of Apollonian circle packings, we study the number theory of such packings, including associated families of quadratic forms, show the Eisenpint Schmidt arrangement is formed of exactly all primitive Eisenstein circle packings, show strong approximation and classify congruence obstructions, prove a density-one local-global statement, and find quadratic -- but alas no cubic -- reciprocity obstructions.  Unexpected aspects of the Eisenstein case include the role of congruence subgroups, the bipartite nature of the packings and reciprocity obstructions, the coefficients of quadratic obstructions, an abundance of extra symmetry, and the need to use ``first-odd'' quadratic forms. 
\end{abstract}

\maketitle

\section{Introduction}
The $K-$\emph{Schmidt arrangement} of an imaginary quadratic number field $K$ is the orbit of the extended real line $\widehat{\RR}=\RR\cup\{\infty\}$ under the group $\PSL(2, \mathcal{O}_K)$, which creates an arrangement of circles in $\widehat{\CC}$ (Figure~\ref{fig:gaussianschmdit}). Call an individual circle in this orbit a $K-$\emph{Bianchi circle}.

\begin{figure}[ht]
  \centering
  \begin{subfigure}[t]{0.95\textwidth}
    \centering
    \includegraphics[width=\textwidth]{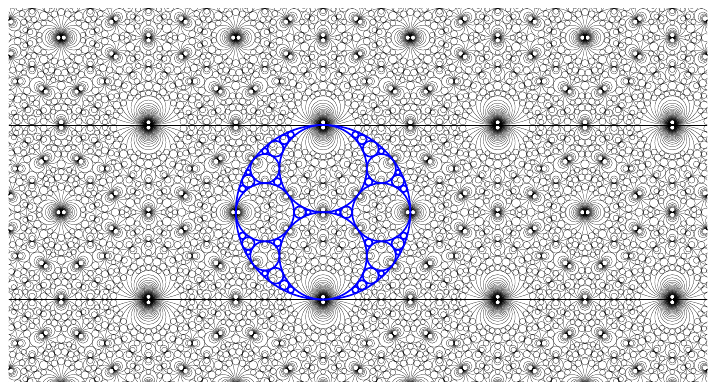}
    \caption{Part of the Gaussian Schmidt arrangement (curvatures at most $70$, i.e. reduced curvatures at most $35$). An immediate tangency packing, i.e. an Apollonian circle packing, is highlighted in blue (in the circle tangent to the real line at $0$).}\label{fig:gaussianschmdit}
  \end{subfigure}

   \vspace{0.5em}

  \begin{subfigure}[t]{0.95\textwidth}
    \centering
    \includegraphics[width=\textwidth]{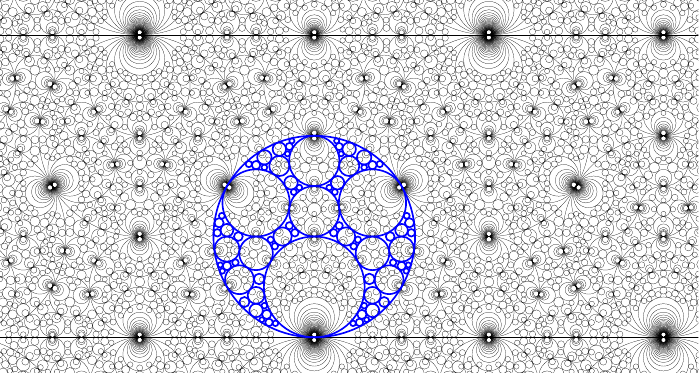}
    \caption{Part of the Eisenpint Schmidt arrangement (curvatures at most $40\sqrt{3}$, i.e. reduced curvatures at most $40$). An immediate tangency packing, i.e. an Eisenstein circle packing, is highlighted in blue (in the circle tangent to the real line at $0$).}\label{fig:pintschmidt}
  \end{subfigure}
      \caption{Gaussian and Eisenpint Schmidt arrangements and packings.}
\end{figure}

Remarkably, as long as $K\neq\QQ(\sqrt{-3})$, any two $K-$Bianchi circles can only intersect tangentially, giving this orbit a packing-like structure (\cite[Proposition 4.4]{StangeVisualizingArith}). Many other intriguing arithmetic properties hold:  the intersection points of any two circles lie in $K$ (\cite[Proposition 4.1]{StangeVisualizingArith}), and the orbit is connected if and only if $\mathcal{O}_K$ is Euclidean (\cite[Theorem 7.1]{StangeVisualizingArith}). More general arrangements were also studied in \cite{MartinGeneralSchmdit}, where an entire family of arrangements was associated to each $K$ (many of which include circles with non-tangent intersections).

From $K-$Schmidt arrangements one can define \emph{$K$-Apollonian packings} \cite{StangeApollonianStructureBianchi}, generalizing integral Apollonian circle packings (the case of $K = \QQ(i)$).  Apollonian circle packings (not necessarily integral) have been around long enough that their origin is murky, but their arithmetic properties have been of interest in recent years since an investigation of Graham, Lagarias, Mallows, Wilks and Yan \cite{GLMWY02}.  In particular, an integral Apollonian circle packing is one in which all the curvatures are integral.  It has been shown that the integers satisfy congruence conditions \cite{GLMWY02, FuchsStrongApproximation, HKRS23}, and conjectured (the \emph{local-to-global conjecture}) that all but finitely many integers obeying these conditions will appear \cite{GLMWY02, FS11}.  A density-one subset of such are known to appear \cite{BK14}, and recently a thin but infinite set of exceptions were discovered in many (but not all) packings, under the name \emph{reciprocity obstructions} \cite{HKRS23}.  
All integral Apollonian circle packings can be recovered from the $\QQ(i)-$Schmidt arrangement by growing a collection of circles by tangent neighbours.  

However, the original work of \cite{StangeApollonianStructureBianchi} avoids the case $K=\QQ(\sqrt{-3})$.  In this field, we immediately run into problems: circles may intersect at angles of $\frac{\pi}{3}$ and $\frac{2\pi}{3}$!  Extracting a sub-arrangement containing only tangency intersections turns out to be challenging.

A primary goal of this paper is to study a suitable adjustment of the theory for $\QQ(\sqrt{-3})$.  The basic circle packing definition via geometric configurations and circle swaps, an \emph{Eisenstein circle packing}, was originally discovered by Andrew Jensen, Cherry Ng, Evan Oliver and Tyler Schrock as part of an undergraduate/graduate research project held at the University of Colorado, Boulder in the summer of 2015, under the direction of the second author, but essentially independently rediscovered by the first author, which lends weight to the belief that this is the ``natural'' packing hidden in $\QQ(\sqrt{-3})$.  Oliver went on to write his undergraduate honors thesis describing them \cite{OliverEvanThesisEisenstein}.  In particular, the team described the same swap structure we use, but with reference to a cluster of six circles, instead of the four we use here.

The {Eisenstein circle packing} will be the suitable replacement of Apollonian circle packings in the Eisenstein setting. We will examine the geometry, reduction, and basic number theoretic properties of Eisenstein circle packings, including a formula for the number of primitive integral packings with a given outer curvature, and a somewhat twisted analogue to the crucial relationship between Apollonian circle packings and binary quadratic forms. 
Eisenstein circle packings are examples of superintegral crystallographic circle packings \cite[Definitions 2 and 10]{KontorovichNakamuraCrystallographic} (see Remark~\ref{rmk:crystl} and Figure~\ref{fig:stripbase}), although the Eisenstein strip packing does not appear on the list of examples yet known \cite{ChaitRothCuiStierTaxonomyCrystallographic, CrystallographicDatabase}.

\begin{figure}[htb]
    \includegraphics[scale=0.8]{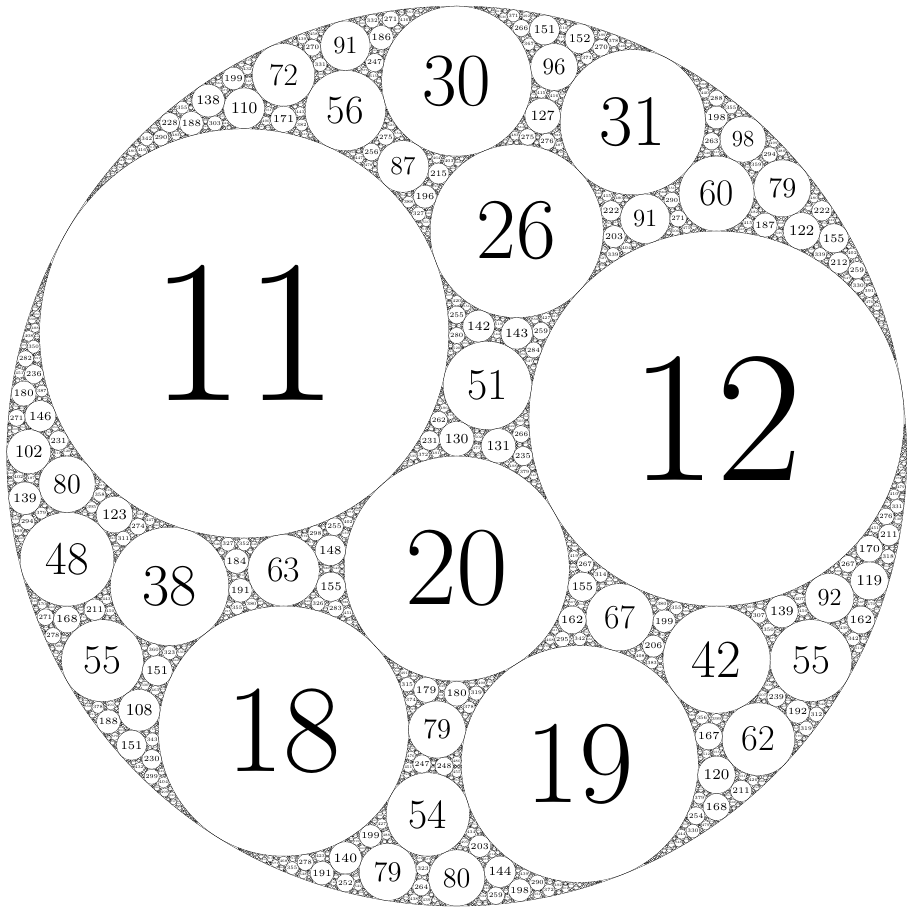}
    \caption{Circles in the Eisenstein circle packing with root quadruple $(-5, 11, 20, 12)$. The 5287 circles with curvature at most $4000$ are shown.}\label{fig:m5_11_20_12_to4000}
\end{figure}

Next, we connect this definition to the $\QQ(\sqrt{-3})-$Schmidt arrangement.  We modify the $\QQ(\sqrt{-3})-$Schmidt arrangement to produce the \emph{Eisenpint Schmidt arrangement}, which assembles all primitive Eisenstein packings into one picture with only tangential intersections.  The Eisenpint Schmidt arrangement was also discovered experimentally by the REU group of 2015, as an orbit of a congruence subgroup.  Interestingly, several interacting scaled, translated and rotated copies of the Eisenpint arrangement lie in the $\QQ(\sqrt{-3})-$Schmidt arrangement.  We study these arrangements, generalizing work of \cite{StangeApollonianStructureBianchi} describing how they capture all integral Eisenstein circle packings.

We end by considering the finer number theoretic properties of Eisenstein circle packings. This includes (again, in analogy to the story for Apollonian circle packings), determining the bad modulus coming from strong approximation, as well as considering the local-global conjecture for the curvatures. Similarly to the Apollonian, octahedral, cubic, square, and triangular circle packings \cite{HKRS23, WhiteheadEtAlReciprocity}, we find quadratic reciprocity obstructions.  Alas, cubic obstructions elude us. 

In the remainder of the introduction we provide the main definitions and statements, while proofs and further details follow in later sections.

\subsection{Eisenstein circle packings}

An \emph{oriented circle} in $\widehat{\CC} := \CC \cup \{ \infty \}$ is a circle or straight line with an orientation, where the region to the left is designated as the interior. Oriented circles can be represented as column vectors $(u, v, p, q)\in\RR^4$, where $u$ is the curvature, $v$ is the co-curvature, and $p+qi$ is the curvature-centre (see Section~\ref{sec:pedoedefinition}). The notion of \emph{inversive distance} between two oriented circles extends to a symmetric bilinear form $\langle,\rangle$ on $\RR^4$ (Definition~\ref{def:pedoedistance}), where a vector $\cir\in\RR^4$ is an oriented circle if and only if $\langle \cir, \cir\rangle = 1$. Crucially, if $\cir_1$ and $\cir_2$ are oriented circles whose exteriors contain $\infty$, then $\langle \cir_1, \cir_2\rangle \leq-1$ if and only if their interiors are disjoint. Equality with $-1$ is equivalent to them being externally tangent.

\begin{definition}
    A \emph{4-wheel} is 4-tuple $ W = (\cir_1, \cir_2, \cir_3, \cir_4)$ of oriented circles such that
    \[\Inv(\cir_1, \cir_2, \cir_3, \cir_4) := \left(\begin{matrix} \langle \cir_1, \cir_1\rangle & \langle \cir_1, \cir_2\rangle & \langle \cir_1, \cir_3\rangle& \langle \cir_1, \cir_4\rangle\\ \langle \cir_2, \cir_1\rangle & \langle \cir_2, \cir_2\rangle & \langle \cir_2, \cir_3\rangle & \langle \cir_2, \cir_4\rangle\\ \langle \cir_3, \cir_1\rangle & \langle \cir_3, \cir_2\rangle & \langle \cir_3, \cir_3\rangle & \langle \cir_3, \cir_4\rangle\\ \langle \cir_4, \cir_1\rangle & \langle \cir_4, \cir_2\rangle & \langle \cir_4, \cir_3\rangle & \langle \cir_4, \cir_4\rangle\end{matrix}\right) = \left(\begin{matrix}1 & -1 & -2 & -1\\-1 & 1 & -1 & -2\\-2 & -1 & 1 & -1 \\ -1 & -2 & -1 & 1\end{matrix}\right)=:R_{\Eis},\]
    and the sum of the four curvatures is positive.

    The matrix $\Inv(\cir_1, \cir_2, \cir_3, \cir_4)$ is called the \emph{inversive matrix} of our tuple of circles. We sometimes consider the tuple $W$ itself as a $4\times 4$ matrix, where the columns represent the circles as column vectors in $\RR^4$.
\end{definition}

These circles have disjoint interiors and form a tangency cycle in the order $\cir_1\rightarrow \cir_2\rightarrow \cir_3\rightarrow \cir_4\rightarrow \cir_1$. Call $\cir_1$ and $\cir_3$ \emph{opposite circles}, as well as $\cir_2$ and $\cir_4$.

Up to M\"obius symmetry and complex conjugation, there is only one 4-wheel.

\begin{proposition}\label{prop:4wheelmobius}
    Given two 4-wheels $W$ and $W'$, there exists a M\"obius map taking $W$ to either $W'$ or the image of $W'$ upon complex conjugation.
\end{proposition}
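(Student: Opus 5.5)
We argue geometrically: a Möbius map can be used to send a convenient tangency point of $W$ to $\infty$, after which the wheel takes a rigid normal form. The key input is that the inversive product $\langle\cdot,\cdot\rangle$ is preserved by Möbius transformations. Hence any Möbius image of a 4-wheel has the same inversive matrix $R_{\Eis}$. The same holds for complex conjugation, perhaps after adjusting orientations; we will not need this beyond the conclusion of the statement.

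\emph{Step 1: normal form.} Let $W=(\cir_1,\cir_2,\cir_3,\cir_4)$. Since $\langle\cir_1,\cir_2\rangle=-1$, the circles $\cir_1$ and $\cir_2$ are externally tangent at a single point $z$. Choose a Möbius map $M$ sending $z\mapsto\infty$, and normalise it further so that $M\cir_1$ is the line $\{\operatorname{Im} w = 0\}$ oriented with interior below, and $M\cir_2$ is the line $\{\operatorname{Im} w = 1\}$ oriented with interior above. The remaining freedom is the group of real translations. It also includes the reflection $w\mapsto -\bar w$, which is orientation-reversing and is where complex conjugation enters.

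\emph{Step 2: the other two circles are forced.} Now $\cir_3$ and $\cir_4$ lie in the strip $0<\operatorname{Im} w<1$ and have finite curvature. We determine them from the remaining entries of $R_{\Eis}$.
\begin{itemize}
\item $\langle \cir_2,\cir_3\rangle=-1$ forces $\cir_3$ to be tangent to the upper line.
\item $\langle \cir_1,\cir_3\rangle=-2$ fixes the inversive distance from a line. This pins the radius $r$ and height $h$ of the centre of $\cir_3$ via $h/r=2$ and $1-h=r$, giving $r=1/3$ and centre height $2/3$.
\item Symmetrically, $\cir_4$ has radius $1/3$, centre height $1/3$, and is tangent to the lower line.
\item $\langle\cir_3,\cir_4\rangle=-1$ says $\cir_3$ and $\cir_4$ are externally tangent. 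Their centres are then at distance $2/3$ with vertical offset $1/3$, so the horizontal offset is $\pm 1/\sqrt3$.
\end{itemize}
A translation puts $\cir_3$'s centre at $i\cdot 2/3$. Then $\cir_4$'s centre is at $\pm 1/\sqrt3 + i/3$.

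\emph{Step 3: the two sign choices.} Both signs occur, and they are exchanged by the reflection $w\mapsto -\bar w$. This reflection fixes the two lines setwise and preserves the orientations of the normal form. So every 4-wheel is carried by a Möbius map to one of two normal forms $N_+$ and $N_-$, where $N_-$ is the reflection of $N_+$.

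Given $W$ and $W'$, take $M$ and $M'$ to be their normalising maps. If they land on the same normal form, then $M'^{-1}M$ sends $W$ to $W'$. Otherwise $W$ is sent to the reflection of $W'$. That reflection equals the complex conjugate of $W'$ up to a Möbius map, because $w\mapsto-\bar w$ composed with conjugation is the Möbius map $w\mapsto -w$. This proves the statement.

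\emph{Main obstacle.} The delicate point is orientation bookkeeping. We must check that "interior" designations are consistent: that $\langle\cdot,\cdot\rangle\le -1$ for the given pairs forces the interiors to be the disjoint discs as claimed, and that the positivity of the curvature sum excludes the globally reversed-orientation configuration. Negating all four vectors preserves the Gram matrix, so $R_{\Eis}$ alone cannot rule it out. After applying $M$, however, the sum-positivity is not Möbius invariant. I would handle this by observing that the orientation-reversed normal form gives total negated orientation, and that Möbius maps preserve the relative configuration. Concretely, the condition says the four interiors are the four disjoint "inside" regions rather than their complements. This is a Möbius-invariant property, and it is exactly what the normal form in Step 1 encodes.
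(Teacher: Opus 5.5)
Your proof is correct, and it takes a different route from the paper's.

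\textbf{The paper's route.} The paper does not normalize the whole wheel. It matches $(\cir_1,\cir_2)$ with $(\cir_1',\cir_2')$ by a M\"obius map. It then applies an inversion in a circle orthogonal to both at $P_1'$, so that the tangency point $P_2$ also goes to $P_2'$. Finally it writes the image of $\cir_3$ in the basis $\cir_1',\dots,\cir_4'$ of $\RR^4$. The linear Gram conditions plus $\langle\Dcir,\Dcir\rangle=1$ force this image to be $\cir_3'$. The same computation leaves exactly two candidates for the image of $\cir_4$, namely $\cir_4'$ and its swap, which differ by the swap inversion. Conjugation enters through the parity of the inversions used.

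\textbf{Your route.} You send the tangency point to $\infty$ and solve for an explicit normal form. This normal form is essentially the paper's base wheel $\Dcir_1,\dots,\Dcir_4$ from Equation~\eqref{eqn:4wheelbase}, scaled by $1/\sqrt{3}$. The two-fold ambiguity then appears as the chirality $\pm1/\sqrt{3}$ of the horizontal offset between the centres of $\cir_3$ and $\cir_4$.

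\textbf{What each buys.} Your version gives an explicit model and a transparent criterion for when conjugation is needed. It also gives uniqueness of the M\"obius map $W\to W'$ when it exists, since the only M\"obius maps preserving the two normalized oriented lines are the real translations, and fixing $\cir_3$ kills those. The paper's version is coordinate-free, reuses the linear algebra of Lemma~\ref{lem:C4swap}, and identifies the ambiguity with the circle swap itself.

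\textbf{Step 2 presupposes orientation.} There you assume $\cir_3,\cir_4$ are positively oriented discs in the strip, but this is not needed. In $(u,v,p,q)$-coordinates, $M\cir_1=(0,0,0,-1)$ and $M\cir_2=(0,2,0,1)$. So $\langle M\cir_1,X\rangle=-q$ and $\langle M\cir_2,X\rangle=q-u$. This forces $(u,q)=(3,2)$ for $M\cir_3$ and $(u,q)=(3,1)$ for $M\cir_4$. Then $\langle M\cir_3,M\cir_4\rangle=-1$ gives $(p_3-p_4)^2=3$.

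\textbf{The orientation step is only asserted.} Your argument rests on the claim that a $4$-wheel has pairwise disjoint interiors, so that $MW$ is $N_\pm$ rather than $-N_\pm$. The paper uses the same fact implicitly when it treats $\cir_1,\cir_2$ as externally tangent, but you can prove it directly.

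Contrary to your remark, for tuples with inversive matrix $R_{\Eis}$ the sign of the curvature sum \emph{is} M\"obius invariant:
\begin{itemize}
\item The curvature of $\cir$ is $-2\langle\cir,P_\infty\rangle$ with $P_\infty=(0,1,0,0)$. So the curvature sum is $-2\langle S,P_\infty\rangle$, where $S=\cir_1+\cir_2+\cir_3+\cir_4$.
\item $\langle S,S\rangle$ is the sum of the entries of $R_{\Eis}$, namely $-12$, so $S$ is timelike.
\item Hence $\langle S,P\rangle$ never vanishes on the connected half-cone of point-vectors, and so has constant sign there.
\item M\"obius maps act through $\operatorname{SO}^+_{3,1}$, taking point-vectors to positive multiples of point-vectors.
\item Therefore $\langle MS,P_\infty\rangle=\langle S,M^{-1}P_\infty\rangle$ has the same sign as $\langle S,P_\infty\rangle$.
\end{itemize}
Since $-N_\pm$ has curvature sum $-6<0$, it is never the M\"obius image of a $4$-wheel.
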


The bilinear form $\langle , \rangle$ gives rise to circle swaps, which build out a circle packing, analogously to Apollonian circle packings.

\begin{definition}\label{def:circleswap}
    Given a 4-wheel $W = (\cir_1, \cir_2, \cir_3, \cir_4)$, there are four circle swaps. To construct the $i$\textsuperscript{th} swap, consider the circle $\cir'$ that is orthogonal to the other three circles in the wheel (besides $\cir_i$). The image of $\cir_i$ upon inversion through $\cir'$ gives $\cir_i'$, the swap of $\cir_i$ with respect to $W$.  See Figure~\ref{fig:swapC3C4} for depictions of the swaps.
\end{definition}

Since M\"obius maps and inversions preserve the inversive distance, and the above inversion fixes the circles besides $\cir_i$, this implies that replacing $\cir_i$ with $\cir_i'$ in the 4-wheel $W$ produces a new 4-wheel. 

\begin{figure}[htb]
	\centering
	\begin{subfigure}{.5\textwidth}
		\centering
		\includegraphics[width=.9\linewidth]{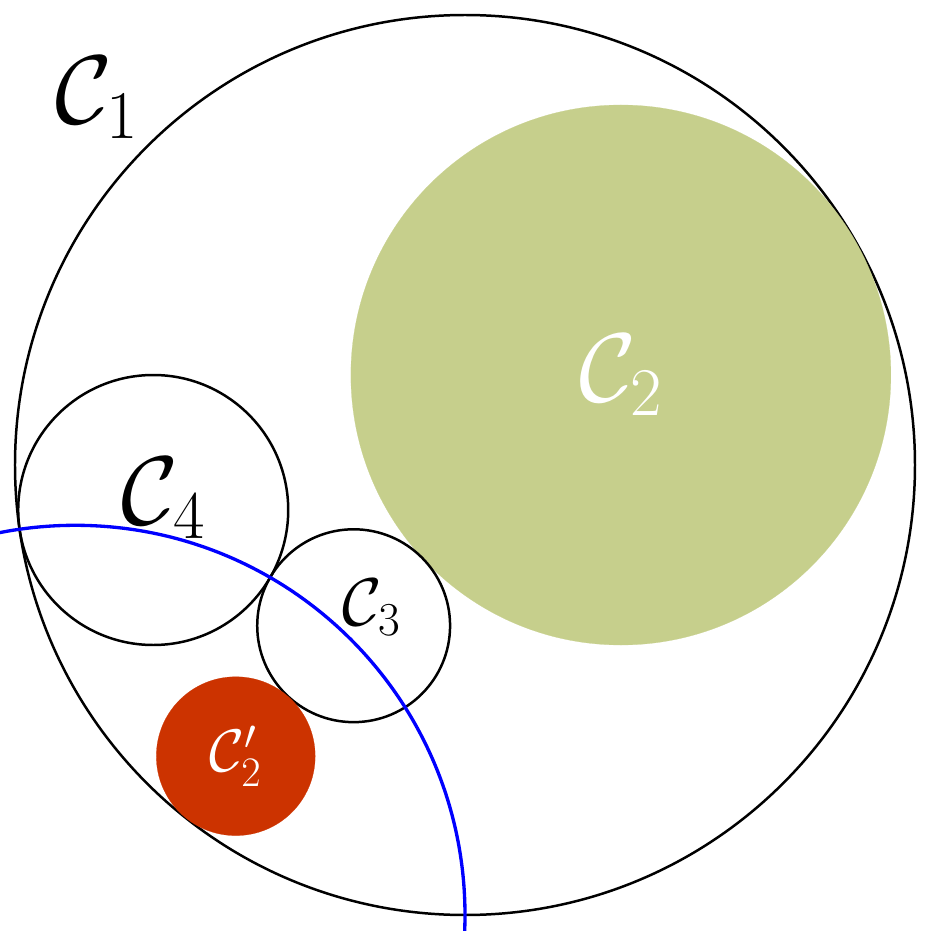}
		\caption{Swapping $\cir_2$ for $\cir_2'$ via inversion in the blue circle $\cir'$, which is orthogonal to $\cir_1, \cir_3, \cir_4$.}\label{fig:swapc2}
	\end{subfigure}%
	\begin{subfigure}{.5\textwidth}
		\centering
		\includegraphics[width=.9\linewidth]{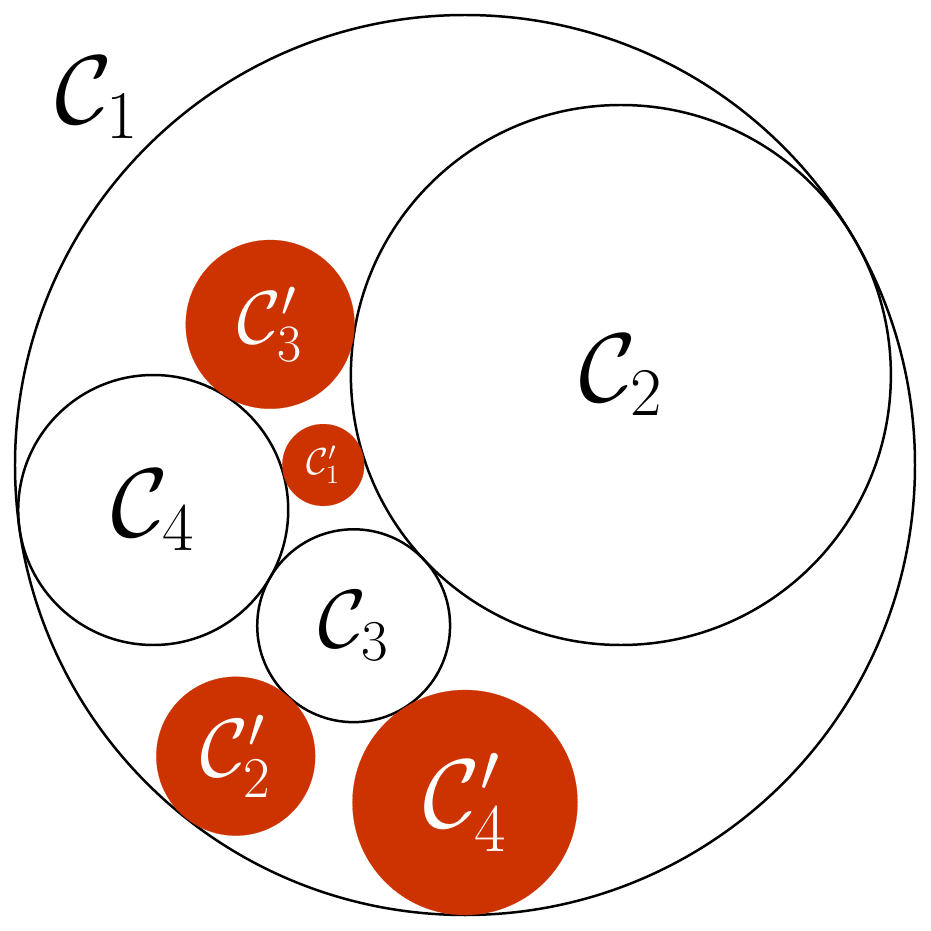}
		\caption{All four swaps. \\$\,$}\label{fig:allfourswaps}
	\end{subfigure}
	\caption{The swaps of the 4-wheel $(\cir_1, \cir_2, \cir_3, \cir_4)$.}\label{fig:swapC3C4}
\end{figure}
%The mysterious \\$\,$ line in the second caption is to make the pictures aligned horizontally.

In Figure~\ref{fig:allfourswaps}, ignoring $\cir_1'$ and $\cir_3'$, we see a picture that resembles a wheel, with $\cir_1$ and $\cir_3$ the \emph{hubs}, and $\cir_2, \cir_4, \cir_2', \cir_4'$ the \emph{spokes}. Similarly, if we ignore $\cir_2',\cir_4'$, we are treating $\cir_2, \cir_4$ as the hubs and $\cir_1, \cir_3, \cir_1', \cir_3'$ as the spokes. This explains the nomenclature of \emph{4-wheel}.

The swapping can also be captured by a linear map on the 4-wheel $W$.

\begin{proposition}\label{prop:circleswapmatrices}
    Swapping circle $\cir_i$ is equivalent to replacing $W$ with $WS_i$, where
    \[S_1 = \left(\begin{matrix} -1 & 0 & 0 & 0\\2 & 1 & 0 & 0\\0 & 0 & 1 & 0\\2 & 0 & 0 & 1\end{matrix}\right),\quad S_2 = \left(\begin{matrix} 1 & 2 & 0 & 0\\0 & -1 & 0 & 0\\0 & 2 & 1 & 0\\0 & 0 & 0 & 1\end{matrix}\right),\quad S_3 = \left(\begin{matrix} 1 & 0 & 0 & 0\\0 & 1 & 2 & 0\\0 & 0 & -1 & 0\\0 & 0 & 2 & 1\end{matrix}\right),\quad S_4 = \left(\begin{matrix} 1 & 0 & 0 & 2\\0 & 1 & 0 & 0\\0 & 0 & 1 & 2\\0 & 0 & 0 & -1\end{matrix}\right).\]
\end{proposition}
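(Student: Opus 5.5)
We will use the linear-algebra description of inversion. Let $G$ be the Gram matrix of the bilinear form $\langle,\rangle$ on $\RR^4$, so that $\langle \cir, \Dcir\rangle = \cir^T G \Dcir$. For a circle $\cir'$ with $\langle\cir',\cir'\rangle = 1$, inversion through $\cir'$ acts on circle vectors by the reflection
\[\Dcir \mapsto \Dcir - 2\langle \Dcir, \cir'\rangle \cir'.\]
This is the standard fact that inversions correspond to reflections in the Lorentzian model of circles, and we assume it from the setup of Section~\ref{sec:pedoedefinition}. The map fixes every circle orthogonal to $\cir'$, since those are exactly the circles with $\langle\Dcir,\cir'\rangle = 0$, and it preserves $\langle,\rangle$.

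Fix $i$. We determine $\cir_i'$ directly rather than computing $\cir'$.

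\textbf{Step 1.} The vector $\cir_i' = \cir_i - 2\langle\cir_i,\cir'\rangle\cir'$ is a reflection of $\cir_i$ in a hyperplane containing $\cir_j$ for all $j\ne i$.

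\textbf{Step 2.} We need the four circles of the wheel to be linearly independent. This holds because $\det R_{\Eis}\neq 0$: the Gram matrix of the columns of $W$ is $W^TGW = R_{\Eis}$. Hence they span $\RR^4$, and $\cir'$ is, up to scaling, the unique vector orthogonal to the three circles $\cir_j$ with $j\neq i$.

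\textbf{Step 3.} Write $\cir_i' = \sum_k a_k \cir_k$. The reflection fixes each $\cir_j$ with $j\ne i$ and preserves the form, so $\langle \cir_i', \cir_j\rangle = \langle \cir_i,\cir_j\rangle$ for all $j\neq i$. The reflection also sends $\cir_i\mapsto\cir_i'$ with $\cir_i-\cir_i'$ parallel to $\cir'$, and $\cir'$ is orthogonal to the $\cir_j$ with $j\neq i$. Combining these, $\cir_i'$ is the unique vector other than $\cir_i$ in the affine line $\cir_i + \RR\cir'$ with $\langle\cir_i',\cir_i'\rangle = 1$.

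\textbf{Step 4.} We solve for the coefficients using $R_{\Eis}$. The proposed column $i$ of $S_i$ gives, for example for $i=1$, $\cir_1' = -\cir_1 + 2\cir_2 + 2\cir_4$. We verify two things from $R_{\Eis}$:
\begin{enumerate}
\item $\langle \cir_1' - \cir_1, \cir_j\rangle = 0$ for $j = 2,3,4$. Indeed $\cir_1'-\cir_1 = -2\cir_1+2\cir_2+2\cir_4$. Pairing with $\cir_2$ gives $2+2-4=0$. Pairing with $\cir_3$ gives $4-2-2=0$. Pairing with $\cir_4$ gives $2-4+2=0$.
\item $\cir_1'\neq\cir_1$.
\end{enumerate}
By Step 2, the first condition forces $\cir_1'-\cir_1$ to be parallel to $\cir'$. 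Together with $\cir_1'\neq\cir_1$, Step 3 then identifies $\cir_1'$ as the swap. The remaining columns of $S_1$ are identity columns, since the other circles are unchanged.

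\textbf{Step 5.} The cases $i = 2,3,4$ follow by the cyclic symmetry $\cir_k\mapsto\cir_{k+1}$ of $R_{\Eis}$. This gives $\cir_i' = -\cir_i + 2\cir_{i-1} + 2\cir_{i+1}$, indices mod 4, which matches the displayed matrices.

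The main obstacle is justifying the reflection formula for inversion and the orientation convention, namely that inversion sends an oriented circle to the oriented circle given by the reflected vector. If needed, we would check this on a model case, inversion in the unit circle acting on $(u,v,p,q)$, and extend by Möbius equivariance together with Proposition~\ref{prop:4wheelmobius}. The sign of the result is then consistent with the requirement that the new tuple is a 4-wheel with positive curvature sum.
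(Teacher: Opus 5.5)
Your route is essentially the paper's: the paper writes $\cir_4'=a\cir_1+b\cir_2+c\cir_3+d\cir_4$ and imposes $\langle\cir_j,\cir_4'\rangle=(R_{\Eis})_{j4}$ for $j\neq 4$, which gives the line $(a,0,a,1-a)$. It then imposes $\langle\cir_4',\cir_4'\rangle=1$, which gives $a\in\{0,2\}$. You verify the stated answer instead of deriving it, which is fine, but Step 4 leaves out exactly that last condition. Your Step 3 characterizes the swap as the point of $\cir_1+\RR\cir'$ other than $\cir_1$ that has norm $1$. In Step 4, however, you only check that $\cir_1'-\cir_1$ is orthogonal to $\cir_2,\cir_3,\cir_4$ and that $\cir_1'\neq\cir_1$. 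Every vector $\cir_1+t(-2\cir_1+2\cir_2+2\cir_4)$ with $t\neq 0$ satisfies both conditions, so they do not single out $t=1$. The missing check is one line from $R_{\Eis}$: $\langle -\cir_1+2\cir_2+2\cir_4,\,-\cir_1+2\cir_2+2\cir_4\rangle = 9+4+4-16 = 1$. You should also say why the true swap is not $\cir_1$ itself. If $\langle\cir_1,\cir'\rangle=0$, then $\cir'$ would be orthogonal to a basis of $\RR^4$, which contradicts nondegeneracy. With these additions the argument is complete.

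The obstacle you flag, justifying the Lorentzian reflection formula and the orientation convention, is not actually needed. You never use the explicit coefficient $-2\langle\cir_i,\cir'\rangle$. The fact that $\cir_i'-\cir_i\in\RR\cir'$ already follows from $\langle\cir_i',\cir_j\rangle=\langle\cir_i,\cir_j\rangle$ for $j\neq i$, together with your Step 2. That equality holds because the inversion fixes those circles and preserves $\langle,\rangle$, as the paper notes right after Definition~\ref{def:circleswap}. The orientation of $\cir_i'$ is then pinned down by requiring the new tuple to have inversive matrix $R_{\Eis}$. This is why the paper's proof can bypass the reflection model entirely and work only with the inner products and the norm condition.
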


By iterating the circle swaps, we produce a tangency-connected collection of circles, where every pair of interiors is disjoint, and the measure of the complement is 0.

\begin{definition}\label{def:eisensteinpacking}
    An \emph{Eisenstein circle packing} is a collection $E$ of (oriented) circles in $\widehat{\CC}$ obtained by the following process:
    \begin{itemize}
        \item Start with a 4-wheel $W$;
        \item Apply all four circle swaps to $W$, creating new 4-wheels, and add these new circles to the collection;
        \item Repeat the process with any new $4$-wheels, ad infinitum.
    \end{itemize}

    Call the packing \emph{integral} if all curvatures are integral, and \emph{primitive} if they additionally share no common factor.
\end{definition}

See Figure~\ref{fig:m5_11_20_12_to4000} for an example of a primitive Eisenstein circle packing, where circles are labeled by curvature.

To describe the geometry of an Eisenstein circle packing, the data beyond the curvatures is extraneous.

\begin{definition}\label{def:eisensteinequation}
    Any quadruple $(a, b, c, d)\in\RR^4$ that satisfies the \emph{Eisenstein equation},
    \begin{equation}
    \label{eqn:Eisensteinform}
    a^2+b^2+c^2+d^2=2(a+c)(b+d),\quad a+b+c+d>0,
    \end{equation}
    is called an \emph{Eisenstein quadruple}. If $a,b,c,d\in\ZZ$ have no common divisor, we call it a \emph{primitive Eisenstein quadruple}.
\end{definition}

Eisenstein quadruples biject with 4-wheels up to symmetry.

\begin{proposition}\label{prop:eisquadruplefrom4wheel}
    Consider a 4-wheel $(\cir_1, \cir_2, \cir_3, \cir_4)$ where the circles have respective curvatures $a, b, c, d$. Then $(a, b, c, d)$ is an Eisenstein quadruple.

    Furthermore, given any Eisenstein quadruple, there exists a 4-wheel with those respective curvatures. This configuration is unique up to translation, rotation, and reflection.
\end{proposition}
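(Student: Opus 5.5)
We use the linear-algebraic description of circles: an oriented circle is a vector $\cir=(u,v,p,q)\in\RR^4$ with $\langle\cir,\cir\rangle=1$, where $\langle,\rangle$ is the inversive-distance form of Definition~\ref{def:pedoedistance}. This form has signature $(3,1)$, or $(1,3)$ after a sign change. The strategy is to find a quadratic form $Q$ on the dual side with the property that $Q$ vanishes on the curvature vector whenever the Gram matrix equals $R_{\Eis}$.

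\textbf{The curvature relation.} Let $W$ be the $4\times4$ matrix whose columns are the circles, and let $G$ be the Gram matrix of $\langle,\rangle$. Then $W^TGW=R_{\Eis}$. Since $R_{\Eis}$ has nonzero determinant, $W$ is invertible, and inverting gives
\[
W R_{\Eis}^{-1} W^T=G^{-1}.
\]
The curvature is a linear functional $\ell(\cir)=u$, so with $e=(1,0,0,0)^T$ the curvature row vector is $e^TW=(a,b,c,d)$. This yields
\[
(a,b,c,d)\,R_{\Eis}^{-1}\,(a,b,c,d)^T=e^TG^{-1}e .
\]
For the standard inversive coordinates the curvature direction is isotropic for $G^{-1}$; this is the familiar fact that curvatures correspond to an isotropic vector, as in the Descartes-theorem derivation. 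Hence the right side is $0$. It remains to compute $R_{\Eis}^{-1}$, or equivalently to note that $R_{\Eis}^{-1}$ is proportional to the matrix of the form
\[
a^2+b^2+c^2+d^2-2(a+c)(b+d).
\]
Both checks are routine. One can double-check against the circulant structure: $R_{\Eis}$ is circulant, with eigenvalues $1-1-2-1=-3$, $1+1-2+1=1$, and $3,3$ on the $\pm i$ eigenvectors. The positivity condition $a+b+c+d>0$ is part of the definition of a 4-wheel. This gives the first claim.

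\textbf{Existence.} Given an Eisenstein quadruple, we construct the 4-wheel explicitly. It is cleanest to split into cases by whether some curvature is zero or negative.

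The generic approach is to place $\cir_1$ and $\cir_3$, whose inversive product is $-2$, and then fit $\cir_2$ and $\cir_4$ tangent to both. Concretely, we first choose centres for $\cir_1,\cir_3$ of radii $1/a,1/c$ at distance $d_{13}$ determined by inversive distance $-2$. The circle $\cir_2$ of curvature $b$ must be tangent to both, which fixes its centre up to reflection in the line of centres, and $\cir_4$ is fixed similarly. The Eisenstein equation should then be exactly the condition that $\langle\cir_2,\cir_4\rangle=-2$ with $\cir_2,\cir_4$ on opposite sides.

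A cleaner alternative argues abstractly. Given $(a,b,c,d)$ with $Q=0$, we want $W$ with $W^TGW=R_{\Eis}$ and first row $(a,b,c,d)$. By Proposition~\ref{prop:4wheelmobius}, a 4-wheel $W_0$ exists, for instance an explicit one built from the Eisenpint picture. All solutions have the form $W=MW_0$ with $M$ in the Lorentz group $O(G)$. The first row of $MW_0$ is $(e^TM)W_0$, and $e^TM$ ranges over the orbit of $e^T$ under $O(G^{-1})$-type action, which is the set of isotropic covectors $f$. So we need $f=(a,b,c,d)W_0^{-1}$ to be isotropic, which is equivalent to $Q(a,b,c,d)=0$ by the first computation. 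The sign condition selects the correct nappe of the cone, since the positive sum picks out orientations with $\infty$ outside. I would take this abstract route.

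\textbf{Uniqueness.} The stabilizer of the isotropic covector $e$ in the orientation-appropriate Lorentz group corresponds to the M\"obius maps fixing $\infty$ that preserve curvature. These are Euclidean isometries: translations, rotations, and reflections, with reflections arising from the full orthogonal group. Thus two 4-wheels with the same curvatures differ by such a map.

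\textbf{Main obstacle.} The main obstacle is the bookkeeping of signs and nappes. We must ensure that the Lorentz element $M$ can be chosen in the component acting as genuine M\"obius or anti-M\"obius maps, not one that reverses all orientations. We must also check that degenerate cases (zero curvatures, meaning lines) are handled; the isotropic-vector argument covers these uniformly. I would resolve the sign issue using the condition $a+b+c+d>0$: the time-reversing component negates all curvatures, so it would send the sum to its negative.
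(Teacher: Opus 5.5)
Your derivation of the Eisenstein equation is exactly the paper's: invert $W^TGW=R_{\Eis}$ to $WR_{\Eis}^{-1}W^T=G^{-1}$ and read off the $(1,1)$ entry, which is $0$. For existence and uniqueness you take a genuinely different, more group-theoretic route, and it works.

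\textbf{Existence.} The paper uses no reference configuration. It starts from $c_1=(a,b,c,d)^T$, which is null for $R_{\Eis}^{-1}$. It then builds $c_2,c_3,c_4$ by a Gram--Schmidt procedure so that $C^TR_{\Eis}^{-1}C=G^{-1}$, with Sylvester's law of inertia forcing the last two diagonal entries to be $+1$, and sets $W=C^T$. In effect this is a hands-on proof of the Witt-type transitivity of $O(G)$ on nonzero null covectors that you invoke.

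\textbf{Uniqueness.} The paper argues in the Euclidean plane. It moves $\cir_1,\cir_3$ into place; $\cir_2$ then has at most two positions, related by reflection in the line of centres, and $\cir_4$ is forced. Your stabilizer argument replaces this elementary geometry with the identification of orthochronous Lorentz transformations with M\"obius and anti-M\"obius maps. In return it handles lines and negatively curved outer circles uniformly, with no case analysis.

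\textbf{Repairs.} A few points need fixing.
\begin{enumerate}
\item Proposition~\ref{prop:4wheelmobius} does not supply a base 4-wheel $W_0$; just exhibit one. For example, take the lines $y=0$ and $y=3$ (interiors $y<0$ and $y>3$) with the unit circles centred at $(\sqrt3,2)$ and $(0,1)$, giving curvatures $(0,0,1,1)$.
\item In the existence step you may use all of $O(G)$, so no nappe selection is needed. The columns of $MW_0$ are automatically oriented circles with Gram matrix $R_{\Eis}$. The positive curvature sum is simply the hypothesis, and it also guarantees $f\neq 0$, which transitivity requires.
\item In the uniqueness step your ``main obstacle'' dissolves. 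From $e^TM=e^T$ you get $MG^{-1}e=G^{-1}e$. Here $G^{-1}e=(0,-2,0,0)^T$ is a nonzero null vector, a multiple of the point $\infty$, so $M$ cannot interchange the two halves of the null cone and is automatically orthochronous.
\item Your stated reason, that the time-reversing components ``negate all curvatures,'' is not accurate. Composing $-1$ with the map induced by $z\mapsto -1/z$ sends a circle of curvature $u$ and co-curvature $v$ to one of curvature $-v$.
\item A positive curvature sum does not mean $\infty$ lies outside every circle: an outer circle has negative curvature.
\end{enumerate}
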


We will restrict our study to Eisenstein quadruples. Considering them as row vectors, the swaps act in the same fashion, via right multiplication by $S_i$.

\begin{definition}
\label{def:Egp}
     The \emph{Eisenstein swap group} is $\Egp:=\langle S_1, S_2, S_3, S_4\rangle\leq \GL(4, \ZZ)$.
\end{definition}

The four generators have order 2, but do not generate a free group, since the pairs $(S_1, S_3)$ commute, as well as $(S_2, S_4)$. In fact, these are the only relations.

\begin{proposition}\label{prop:eisensteingrouppresentation}
    The Eisenstein swap group $\Egp$ is isomorphic to
    \[\langle a, b, c, d \; | \; aca^{-1}c^{-1}, bdb^{-1}d^{-1},a^2,b^2,c^2,d^2\rangle.\]
\end{proposition}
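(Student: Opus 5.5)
The plan is to check that the relations hold, which gives a surjection from the abstract group $G=\langle a,b,c,d\mid \dots\rangle$ onto $\Egp$ via $a\mapsto S_1$, $b\mapsto S_2$, $c\mapsto S_3$, $d\mapsto S_4$. Then we show this surjection is injective using a ping-pong / reduced-word argument on Eisenstein quadruples, in the style of the standard proof that the Apollonian group is a free product.

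\textbf{The relations.} $S_i^2=I$ is a direct computation; for instance, in $S_1$ the first column is $(-1,2,0,2)^T$, and squaring returns the identity. Likewise $S_1S_3=S_3S_1$ and $S_2S_4=S_4S_2$ are direct computations, since these pairs act on disjoint coordinates: $S_1$ changes only the first coordinate of a row vector and $S_3$ only the third. So we get a surjection $\phi\colon G\to\Egp$.

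\textbf{Normal form in $G$.} $G$ is the free product of two copies of the Klein four-group, $V_{ac}=\langle a,c\rangle$ and $V_{bd}=\langle b,d\rangle$. Hence every nontrivial element is uniquely an alternating product of nontrivial elements from $V_{ac}$ and $V_{bd}$. Each syllable is one of $a$, $c$, $ac$ (respectively $b$, $d$, $bd$).

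\textbf{Injectivity via curvature sum.} Acting on row vectors, $S_i$ replaces the $i$-th entry $x_i$ by $x_i'=2(\text{sum of its two neighbours})-x_i$; for example $a'=2(b+d)-a$. Take a concrete Eisenstein quadruple with all entries positive and strictly "reduced", meaning every single swap strictly increases the total sum. This requires $a,c<b+d$ and $b,d<a+c$, with a suitable nondegeneracy; a quadruple such as $(-1,2,3,2)$ or one found from a 4-wheel would work, to be fixed by checking sums.

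We then prove by induction on the syllable length that applying a reduced word $w=s_1s_2\cdots s_k$ (syllables alternating between the two Klein groups) to the quadruple $v$ produces $v\phi(w)$ with strictly larger sum. The inductive claim is also that the last-changed coordinates are the "largest" in an appropriate sense: if the last syllable came from $V_{ac}$, then the new entries $a',c'$ satisfy $a'>b+d$ (or $c'>b+d$) for each changed entry. A swap in $V_{bd}$ changes $b\mapsto 2(a+c)-b$, which increases $b$ exactly when $b<a+c$. After a $V_{ac}$-syllable has made $a$ or $c$ large, we have $a+c$ large relative to $b,d$, so the next $V_{bd}$ syllable again increases the sum. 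The key point is that within a syllable $ac$ the two swaps commute and act on independent coordinates, so the syllable's effect is the sum of the individual effects.

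\textbf{Main obstacle.} The hardest step is formulating the exact inductive inequality that keeps both coordinates of the "other" pair small relative to the current pair's sum. One must handle the case where a syllable touches only one coordinate (say only $a$), leaving $c$ possibly small. I would try the invariant: after a $V_{ac}$-syllable, $b<a+c$ and $d<a+c$ both hold strictly, and $a+c>b+d$. This follows since the changed coordinate satisfies $a'=2(b+d)-a>b+d$ whenever $a<b+d$. Given the invariant, any subsequent $V_{bd}$ syllable strictly increases the total and flips the invariant. Hence $v\phi(w)\neq v$ for every nontrivial $w$, so $\phi(w)\neq I$, and $\phi$ is an isomorphism.
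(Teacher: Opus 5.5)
Your strategy is the paper's in outline. You view the abstract group as the free product of $\langle a,c\rangle\cong V_4$ and $\langle b,d\rangle\cong V_4$, and show that every nontrivial reduced word strictly increases the curvature sum of a suitable Eisenstein quadruple. The paper does this with $(-3,5,14,10)$, via Lemma~\ref{lem:tworeductive}, Lemma~\ref{lem:reducedwordnontrivial} and Corollary~\ref{cor:stationary}. However, the inductive step you single out as the crux is not established.

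First, the clause $a+c>b+d$ of your invariant is false. We have $(-3,5,14,10)S_4S_3=(-3,5,20,12)$ with $a+c=b+d=17$, and $(-3,5,14,10)S_3=(-3,5,16,10)$ with $a+c=13<15=b+d$.

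More importantly, your justification is purely linear, and linear inequalities cannot close the induction. The inequality $a'=2(b+d)-a>b+d$ only gives $a'+c>b+c+d$. To conclude $b,d<a'+c$ you need $c+d\ge 0$ and $b+c\ge 0$. Without that input the flip fails. For example, $(1,0,1,-10)$ satisfies all three of your inequalities, yet applying $S_2$ gives $(1,4,1,-10)$, on which $S_1$ is decreasing since $b+d=-6<a$. Similarly, the paper notes that the non-Eisenstein vector $(1,1,-1,-1)$ is fixed by the nontrivial reduced word $(S_1S_3S_2S_4)^2$.

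The missing idea is to use the Eisenstein relation, which every $S_i$ preserves. It yields Lemma~\ref{lem:pairsnonneg}: all adjacent sums $a+b,b+c,c+d,d+a$ are $\ge 0$. Equivalently, it yields the paper's Lemma~\ref{lem:tworeductive}: two non-increasing swaps must belong to the same commuting pair, since otherwise $(a-b)^2+c^2+d^2\le 0$.

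With this ingredient the argument goes through. Drop the third clause and use the invariant that both swaps of the pair not used in the last syllable are strictly increasing. This follows from $a'+c>b+c+d\ge\max(b,d)$ together with Lemma~\ref{lem:pairsnonneg}. Treat a two-letter syllable one swap at a time: whether $S_3$ increases ($c<b+d$) does not depend on $a$, so it stays increasing after $S_1$.

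Your base point also needs fixing. A strictly reduced Eisenstein quadruple cannot have all entries positive, by Proposition~\ref{prop:uniquereduction}(iv). Also, $(-1,2,3,2)$ is not an Eisenstein quadruple, since $18\ne 16$. The quadruple $(-3,5,14,10)$ works: its four swaps give $33,17,16,12$, all strictly larger.

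Starting from a strictly reduced point, no stationary swap ever occurs. So your corrected argument is a slightly more direct ping-pong than the paper's. It avoids the minimal-counterexample and stationary-swap analysis, which the paper needs only because it develops reduction theory for arbitrary quadruples.
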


Going back to Eisenstein quadruples, the four circle swaps have the following effect on curvatures:
\begin{align*}
        S_1: (a, b, c, d) & \rightarrow (2b+2d-a, b, c, d),\\
        S_2: (a, b, c, d) & \rightarrow (a, 2a+2c-b, c, d),\\
        S_3: (a, b, c, d) & \rightarrow (a, b, 2b+2d-c, d),\\
        S_4: (a, b, c, d) & \rightarrow (a, b, c, 2a+2c-d).
\end{align*}

The multiset of quadruples formed by iterating the swaps from an initial quadruple $q$ (as row vector) is given by the orbit $q\Egp$, and it consists of all 4-wheels in the entire Eisenstein circle packing described in Definition~\ref{def:eisensteinpacking}.

\begin{definition}
\label{def:equivalentquadruple}
    Call any quadruple reachable from $q$ via a series of circle swaps \emph{equivalent} to $q$.
\end{definition}

In Section~\ref{sec:reductiontheory}, we develop the reduction theory for Eisenstein quadruples. The main definition and results are as follows.

\begin{definition}
    An Eisenstein quadruple $(a, b, c, d)$ is called \emph{reduced} if none of the four swaps decrease the sum $a+b+c+d$.
\end{definition}

\begin{proposition}\label{prop:atmostonereducedintegralone}
    Let $q$ be an Eisenstein quadruple. Then, there is at most one reduced quadruple equivalent to $q$. If $q$ is integral, there is exactly one.
\end{proposition}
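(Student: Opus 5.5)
The plan is to handle existence by a descent argument and uniqueness by a ``unimodality along words'' argument in the swap group, exactly as for Apollonian quadruples.

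\textbf{Effect of a swap on the sum.} For $q=(a,b,c,d)$ put $s=a+c$ and $t=b+d$. The swap formulas give
\[\Sigma(qS_1)-\Sigma(q)=2(t-a),\qquad \Sigma(qS_3)-\Sigma(q)=2(t-c),\]
\[\Sigma(qS_2)-\Sigma(q)=2(s-b),\qquad \Sigma(qS_4)-\Sigma(q)=2(s-d),\]
where $\Sigma$ denotes the coordinate sum. Hence $S_1$ strictly decreases $\Sigma$ iff $a>t$, and similarly for the others. If a swap leaves $\Sigma$ unchanged, then it fixes $q$ (for instance $a=t$ gives $2t-a=a$), so ties never produce distinct quadruples.

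\textbf{Existence for integral $q$.} If $q$ is integral, then $\Sigma$ stays a positive integer on the whole orbit, since $\Sigma>0$ is part of the Eisenstein condition and swaps preserve the Eisenstein equation. Therefore repeatedly applying any sum-decreasing swap must terminate, and it terminates at a reduced quadruple.

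\textbf{Uniqueness.} Group the generators into the commuting pairs $A=\{S_1,S_3\}$ and $B=\{S_2,S_4\}$. By Proposition~\ref{prop:eisensteingrouppresentation}, $\Egp\cong(\ZZ/2)^2*(\ZZ/2)^2$, so every element has a normal form alternating between nontrivial elements of the two Klein four-groups $\langle S_1,S_3\rangle$ and $\langle S_2,S_4\rangle$. Within a block the two swaps act on disjoint coordinates, and each one's effect on $\Sigma$ depends only on $t$, which the block leaves unchanged. So a block's total change in $\Sigma$ is the sum of the individual changes. The key lemma is the following.

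\emph{Key lemma.} Suppose a block from $A$ is applied to $q$ and none of its swaps decreases $\Sigma$. Then any subsequent block from $B$ strictly increases $\Sigma$ (symmetrically with $A$ and $B$ exchanged).

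For example, suppose $a\le t$ and $S_1$ produces $a'=2t-a\ge t$. We then need $b<a'+c$ and $d<a'+c$, which amounts to $b,d< 2t-a+c$. This holds if $b,d\le t$ and $a<t+c$, which reduces to sign information on the curvatures. To supply it, I would use the identity $(s-t)^2=2(ac+bd)$ coming from the Eisenstein equation, together with the geometric fact that in a 4-wheel at most one curvature is negative (the interiors are disjoint). From these I would derive inequalities such as $b,d\le t$ and $c\ge 0$ in the relevant cases. Strict inequality needs a separate check for the ``non-strict'' case: the equality cases are exactly the fixed-point cases, and those collapse the word.

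Given the key lemma, along any normal-form word starting from a reduced quadruple, $\Sigma$ is non-decreasing after the first block and strictly increasing after the second. Hence a reduced quadruple cannot be reached from another reduced quadruple except trivially, which gives uniqueness. Equivalently, for any $q$ the sum along a reduced path is unimodal, with its minimum at the unique reduced quadruple.

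\textbf{Main obstacle.} I expect the main difficulty to be the sign and inequality bookkeeping in the key lemma. This is where I handle a possibly negative curvature (the outer circle) and degenerate cases with zero curvatures (lines, strip packings). There I would fall back on the explicit parametrisation of 4-wheels from Proposition~\ref{prop:eisquadruplefrom4wheel} if the algebraic inequalities alone do not suffice.
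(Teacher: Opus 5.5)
Your overall architecture matches the paper's. Existence is by descent on the positive integer $a+b+c+d$. Uniqueness comes from a lemma saying that after an increasing swap from one commuting pair, both swaps of the other pair are increasing, propagated along a normal-form word with stationary swaps deleted (Lemma~\ref{lem:tworeductive} and Proposition~\ref{prop:uniquereduction}(ii)). Once every swap on a nontrivial path out of a reduced $q$ is increasing, the last swap is decreasing at the endpoint, so no other reduced quadruple can be reached.

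The gap is that your key lemma, which carries all of the content of uniqueness, is not proved, and the route you sketch for it fails. Since $t=b+d$, the inequality $b\le t$ means $d\ge 0$, and $d\le t$ means $b\ge 0$. So your sufficient condition needs $b,d\ge 0$, which fails whenever the outer circle is in position $2$ or $4$. For example, $q=(3,-1,2,6)$ is an Eisenstein quadruple with $a=3<t=5$. Then $qS_1=(7,-1,2,6)$, on which $S_2$ and $S_4$ are indeed increasing (new entries $19$ and $12$), yet $d=6>t$. Similarly, $(4,2,-1,3)$ with $S_1$ violates your other condition $a<t+c$, although the lemma's conclusion still holds there. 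Knowing only that at most one curvature is negative cannot repair this, because the needed input is quantitative.

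What you need is $s=a+c>0$, $t=b+d>0$ and $a+b,\ b+c,\ c+d,\ d+a\ge 0$, which is Lemma~\ref{lem:pairsnonneg}. The first part follows from $2st=a^2+b^2+c^2+d^2>0$ together with $s+t>0$; the second needs a short case analysis (geometrically, the outer circle is at least as large as any circle tangent to it). With this your lemma takes two lines: if $a<t$ and $a'=2t-a$, then
\[a'+c-b=(t-a)+(c+d)>0,\qquad a'+c-d=(t-a)+(b+c)>0,\]
and the same works for $S_3$ and for the other pair. The paper argues differently. If $S_1$ and $S_2$ are both non-increasing, then $a\ge b+d>0$ and $b\ge a+c>0$, so $a^2+b^2+c^2+d^2=2(a+c)(b+d)\le 2ab$. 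This forces $(a-b)^2+c^2+d^2\le 0$, i.e.\ the stationary configuration $(a,a,0,0)$.

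Two smaller points:
\begin{itemize}
\item You should not cite Proposition~\ref{prop:eisensteingrouppresentation}, since the paper deduces it from this very reduction theory. You also do not need it: the alternating block form uses only the relations $S_i^2=1$, $S_1S_3=S_3S_1$, $S_2S_4=S_4S_2$, which hold in $\Egp$. After deleting stationary swaps you must re-normalize, since adjacent blocks may merge; this terminates because the word length drops.
\item Preservation of $\Sigma>0$ along the orbit needs a line of justification. For example, $st>0$ for every nonzero solution of the Eisenstein equation, and each swap fixes one of $s,t$.
\end{itemize}
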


Three of the smallest curvatures are described by a reduced quadruple.

\begin{proposition}\label{prop:threesmallestcurvatures}
    Let $(a, b, c, d)$ be a reduced Eisenstein quadruple, and assume that $a=\min\{a, b, c, d\}$. Then, $a\leq 0$, and the three smallest curvatures in the packing are $a, b, d$.
\end{proposition}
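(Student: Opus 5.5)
The plan is to work directly from the four inequalities that define reducedness, and then run an Apollonian-style "monotone tree" argument over the orbit $q\Egp$.

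\emph{Step 1: unpack reducedness.} Write $s=a+c$ and $t=b+d$. By the formulas for the swaps, $(a,b,c,d)$ is reduced exactly when
\[a\le t,\quad c\le t,\quad b\le s,\quad d\le s.\]

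\emph{Step 2: show $a\le 0$.} Suppose instead $a>0$. Since $a$ is the minimum, all four entries are then positive. From $b\le s$ and $b>0$ we get $b^2\le sb$, and likewise $d^2\le sd$, $a^2\le ta$, $c^2\le tc$. Summing these gives
\[a^2+b^2+c^2+d^2\le s(b+d)+t(a+c)=2st.\]
By the Eisenstein equation \eqref{eqn:Eisensteinform} this is an equality, so each of the four inequalities is an equality. Because all entries are positive, this forces $a=c=t$ and $b=d=s$. Then $a=t=2s=2(a+c)>a$, which is a contradiction. Hence $a\le 0$.

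\emph{Step 3: control the signs.} I will also need that $b,c,d\ge 0$. At most one circle of a 4-wheel can have negative curvature, since two circles with disjoint interiors cannot both contain $\infty$ in their interiors. This can also be read off the Eisenstein equation together with $a+b+c+d>0$. So $a$ is the only possibly negative entry.

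\emph{Step 4: monotonicity along the swap tree.} By Proposition~\ref{prop:eisensteingrouppresentation}, every quadruple in $q\Egp$ is reached from the reduced root by a reduced word in the $S_i$. I will prove by induction on word length the following hypothesis: at every non-root node, reached by a last swap $S_i$, every swap $S_j$ with $j\ne i$ does not decrease the corresponding entry. The root itself satisfies this for all $j$ by reducedness. Suppose a node was reached by swapping entry $i$ from $x$ to $x'\ge x$.
\begin{itemize}
\item If $S_j$ is opposite to $S_i$, that is $\{i,j\}=\{1,3\}$ or $\{2,4\}$, the new value produced by $S_j$ does not involve entry $i$. So its change is the same as at the parent, and is nondecreasing by the inductive hypothesis.
\item If $S_j$ is adjacent to $S_i$, the change it produces equals the change at the parent plus $2(x'-x)\ge 0$, so it is again nondecreasing.
\end{itemize}
Consequently, along any reduced word each swap replaces some entry $x$ by $x'\ge x$.

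\emph{Step 5: the new curvatures are large.} Take a swap replacing $x$ by $x'$, where $y,z$ are the two neighbours of $x$ in the wheel. Then $x+x'=2(y+z)$ and $x\le x'$, so $x'\ge y+z$. When $y,z\ge 0$ this gives $x'\ge\max(y,z)$. Inducting outward from the root, every circle created is at least as large as the circles of the quadruple it came from. Since the root already contains $b$ and $d$, it follows that every curvature in the packing other than those at the root is at least $\max(b,d)$. Also $c\ge a$ by minimality, so $a,b,d$ are the three smallest curvatures.

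\emph{Expected obstacle.} The delicate point is Step 5 when one of the neighbours is the negative circle $a$. There the bound $x'\ge y+z$ alone does not give $x'\ge\max(y,z)$, so I must track the negative circle explicitly. The negative circle is never swapped away from along a reduced word starting with a non-$S_1$ swap. Handling those branches will need the sharper inequality $x'\ge x$ combined with Step 1, in the form $b+d\ge c$ and so on. I would also double-check the induction in Step 4 at depth one, where the "parent" is the root and its hypothesis comes from reducedness rather than from the inductive statement.
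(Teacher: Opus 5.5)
Your Steps 1--3 are sound, and Step 2 is a tidy self-contained substitute for Proposition~\ref{prop:uniquereduction}(iv). The gap is in Step 4: the monotonicity claim, on which everything else rests, is not proved by the induction you describe.

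\emph{The hypothesis is false as stated.} From the reduced quadruple $(-3,5,14,10)$, the word $S_1S_3$ reaches $(33,5,16,10)$ with last swap $S_3$, and there $S_1$ sends $33$ back to $-3$. Your ``opposite'' case quietly applies the parent's hypothesis to $j$ equal to the parent's own last swap, which the hypothesis excludes.

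\emph{The obvious repair is not enough.} Restricting to swaps that keep the word reduced (no $S_3\to S_1$, no $S_4\to S_2$) fixes that case. But the ``adjacent'' case then fails on patterns $S_kS_iS_k$ with $k$ adjacent to $i$, which are reduced. There the $S_k$-change at the parent just undoes the previous swap, so it is $\le 0$, and ``parent change $+\,2(x'-x)$'' is not visibly nonnegative. Along $S_2S_1S_2$ from $(-3,5,14,10)$, the $S_2$-change at $(-3,17,14,10)$ is $-12$. Only the size of $2(x'-x)=120$ rescues it, and your hypothesis does not control that.

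\emph{What is missing.} The paper supplies the extra ingredient through Proposition~\ref{prop:uniquereduction}(ii), whose engine is Lemma~\ref{lem:tworeductive}. By the Eisenstein equation, two adjacent swaps are never both non-increasing. So after an increasing $S_i$ (which is non-increasing at the new node), both swaps adjacent to $i$ are increasing. The opposite swap is legal only in the orders $S_1S_3$ and $S_2S_4$, and is handled one level up.

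A purely linear repair also exists. Write $\delta_j(q)$ for the change produced by $S_j$. One checks $\delta_j(qS_i)=\delta_j(q)+2\delta_i(q)$ for adjacent $i,j$, $\delta_i(qS_i)=-\delta_i(q)$, and $\delta_j(qS_i)=\delta_j(q)$ for opposite $j$. This is the Tits geometric representation of the Coxeter group $\Epres$, with form $-1$ on non-commuting pairs and simple roots $\alpha_j$. Hence the change of $S_j$ at $qw$ is $\delta_{w\alpha_j}(q)$, which is $\ge 0$ because $w\alpha_j$ is a positive root when $\ell(wS_j)>\ell(w)$. That is a genuine theorem, though, not the two-case induction you wrote.

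Step 5 also needs repair. The sentence ``every circle created is at least as large as the circles of the quadruple it came from'' is false: $S_4$ applied to $(-3,5,14,10)$ gives $12<14$. The branch with the negative neighbour, which you flag, is left open. Once monotonicity is available, the paper's bookkeeping closes it with no sign analysis. Set $M=\max(b,d)$.
\begin{itemize}
\item Each root swap gives a value $\ge M$ by Step 1, e.g.\ $2(a+c)-b\ge a+c\ge M$ and $2(b+d)-a\ge b+d\ge M$.
\item If position $i$ has been swapped before, the new value exceeds the previous one, which is $\ge M$ by induction.
\item If not, the two neighbours of position $i$ have only grown since the root, so the new value is at least the root's $S_i$-value.
\end{itemize}
Finally, your closing sentence uses only $c\ge a$, which does not rule out $c<\max(b,d)$. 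You need $c\ge a+c\ge\max(b,d)$, which follows from $a\le 0$ and Step 1.
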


As evidenced by Figure~\ref{fig:m5_11_20_12_to4000}, curvature $c$ need not be one of the four smallest curvatures.

\subsection{Primitive Eisenstein circle packings}

\begin{definition}
    A primitive Eisenstein quadruple $(a, b, c, d)$ is in \emph{standard position} if $a\equiv b\pmod{2}$ and $c\equiv d\pmod{2}$.
\end{definition}

By possibly swapping $(b, d)$, every primitive quadruple can be put into standard position. This gives us a unique way of identifying primitive circle packings.

\begin{proposition}\label{prop:eisensteinreduced}
    Every primitive Eisenstein packing corresponds to a unique reduced quadruple $(a, b, c, d)$ in standard position with $a\leq 0$. This is called the \emph{root quadruple} of the packing.
\end{proposition}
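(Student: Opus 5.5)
Here a primitive Eisenstein packing is identified with the orbit $q\Egp$ of any primitive quadruple $q$ it contains, taken up to the relabelling symmetries of a 4-wheel. The plan is to combine Proposition~\ref{prop:atmostonereducedintegralone} (existence and uniqueness of a reduced quadruple in each integral orbit) with a careful accounting of these symmetries.

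\textbf{Existence.} Take any 4-wheel in the packing; its curvature quadruple $q$ is a primitive integral Eisenstein quadruple. By Proposition~\ref{prop:atmostonereducedintegralone}, $q\Egp$ contains a reduced quadruple $(a,b,c,d)$. The form $a^2+b^2+c^2+d^2-2(a+c)(b+d)$, the swap rules, and reducedness are all invariant under the dihedral symmetries of the 4-cycle $1\to2\to3\to4\to1$. Concretely:
\begin{itemize}[label={}]
\item a cyclic shift conjugates the set $\{S_i\}$ to itself;
\item the reflection exchanging $b$ and $d$ does the same.
\end{itemize}
Hence we may cyclically rotate so that the minimum sits in position $a$, and Proposition~\ref{prop:threesmallestcurvatures} then gives $a\le 0$.

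For standard position, reduce the equation mod $2$: $a+b+c+d\equiv 0 \pmod 2$, so an even number of the entries are odd. By primitivity they are not all even, so exactly two are odd or all four are odd. If all four are odd, standard position is automatic. If exactly two are odd and the pair is $\{a,c\}$ or $\{b,d\}$, the congruence is impossible even after swapping $b,d$. So we must check that opposite circles cannot both be the odd pair. Write the equation as $(a+c-b-d)^2 = 4ac+4bd$, wait—more precisely $(a-b+c-d)^2 = 4ac+4bd$ up to rearrangement. If $a,c$ are odd and $b,d$ are even, then $a+c-b-d$ is even and the left side is divisible by $4$. This only gives $ac+bd$ a square; we need mod $8$. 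With $a,c$ odd and $b,d$ even, one checks that $ac+bd \equiv ((a+c-b-d)/2)^2$ leads to a contradiction mod $4$ (note $ac\equiv 1$ or $3$). This is the routine verification I would carry out. Granting it, the odd pair is adjacent, and possibly swapping $b\leftrightarrow d$ (which preserves $a\le 0$ and reducedness) yields standard position.

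\textbf{Uniqueness.} Suppose two reduced quadruples in standard position with $a\le 0$ represent the same packing. A 4-wheel of the packing is determined only up to the dihedral relabelling, so the two quadruples are dihedral images of reduced quadruples in the same $\Egp$-orbit. By Proposition~\ref{prop:atmostonereducedintegralone} those reduced quadruples coincide. It remains to show that the normalizations pick out a single dihedral image. The condition $a=\min\le 0$ fixes the rotation, except when the minimum occurs in several positions. Standard position then fixes the $b\leftrightarrow d$ reflection whenever $b\not\equiv d\pmod 2$.

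The main obstacle is this residual ambiguity:
\begin{itemize}[label={}]
\item when all four entries are odd, standard position does not distinguish $(a,b,c,d)$ from $(a,d,c,b)$;
\item when the minimum is attained more than once, the rotation is not fixed.
\end{itemize}
I would handle this in one of two ways. The first is to show that in these cases the reflected quadruple lies in the same packing, i.e.\ a symmetry of the packing, so the packing is still unique. The second is to interpret "unique" as unique up to these evident symmetries, checking via Proposition~\ref{prop:threesmallestcurvatures} that the reduced wheel is the unique wheel containing the three smallest curvatures $a,b,d$. The first is the route I would try first.
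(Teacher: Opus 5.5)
You follow the same plan as the paper: take the unique reduced quadruple from Proposition~\ref{prop:uniquereduction}(iii), move the nonpositive entry to position $a$ by a dihedral symmetry, and let parity fix the remaining $b\leftrightarrow d$ reflection. The parity fact this plan depends on is where your proposal goes wrong, and as a result uniqueness is left unproved.

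\textbf{The parity step.} First, you treat ``all four entries odd'' as a live case, but it is impossible. In that case $a+c$ and $b+d$ are even, so $2(a+c)(b+d)\equiv 0\pmod 8$, while $a^2+b^2+c^2+d^2\equiv 4\pmod 8$. Second, your identity for the opposite-pair case is off by a factor of two: the Eisenstein equation gives $(a+c-b-d)^2=2ac+2bd$.
\begin{itemize}
\item With the correct identity, $a,c$ odd and $b,d$ even makes the left side $\equiv 0\pmod 4$ and the right side $\equiv 2\pmod 4$, an immediate contradiction.
\item With your version $((a+c-b-d)/2)^2=ac+bd$, the case $ac\equiv 1\pmod 4$ gives no contradiction, so the sketch does not deliver the step you ``grant''.
\end{itemize}
Together these two facts are Lemma~\ref{lem:twoeventwoodd}: exactly one of $a,c$ and exactly one of $b,d$ is even. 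With that lemma in place, your existence argument is fine.

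\textbf{The uniqueness step.} By Lemma~\ref{lem:twoeventwoodd}, $b\not\equiv d\pmod 2$. Hence $b\neq d$, and exactly one of $(a,b,c,d)$ and $(a,d,c,b)$ is in standard position. So your first residual ambiguity never occurs.

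Your preferred fix for it could not have worked. The same four circles read in reverse cyclic order always form a 4-wheel of the same packing, so showing that $(a,d,c,b)$ lies in the packing proves nothing. If $(a,b,c,d)$ and $(a,d,c,b)$ with $b\neq d$ could both be in standard position, the proposition would simply be false. Your second option changes the statement.

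The repeated-minimum case is also quick. By Lemma~\ref{lem:pairsnonneg}, a second nonpositive entry cannot be $c$, since $a+c>0$. Then $a+b\ge 0$ and $a+d\ge 0$ force $a=0$ together with $b=0$ or $d=0$. The Eisenstein equation and primitivity then force the quadruple to be $(0,0,1,1)$ or $(0,1,1,0)$, and only the first is in standard position.

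With these two facts inserted, your uniqueness argument closes exactly as in the paper: any normalized quadruple is a dihedral image of the unique reduced quadruple in the orbit.
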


Analogously to Apollonian circle packings, there exists a bijection between primitive Eisenstein circle packings and certain classes of quadratic forms (Section~\ref{sec:quadraticforms}). This enables efficient computation of all primitive Eisenstein packings containing a circle of curvature $n$, as well as a formula for the total count.

\begin{theorem}\label{thm:countcirclepackingsoutercurvn}
    Let $\omega(n)$ denote the number of distinct prime factors of $n$, and let $\left(\frac{-3}{p}\right)\in\{-1, 0, 1\}$ denote the Kronecker symbol, which satisfies $\left(\frac{-3}{p}\right)\equiv p\pmod{3}$. Then the number of primitive Eisenstein circle packings with outer curvature $-n$ is
    \[\frac{n}{2}\prod_{p\mid n}\left(1-\left(\frac{-3}{p}\right)\frac{1}{p}\right) + 2^{\omega(3n)-2}\]
    if $n$ is odd, and
    \[\frac{n}{3}\prod_{p\mid n}\left(1-\left(\frac{-3}{p}\right)\frac{1}{p}\right) + 2^{\omega{(3n/2)}-1}\]
    if $n$ is even.
\end{theorem}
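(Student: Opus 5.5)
The plan is to reduce the count to a class-number computation via the bijection between primitive Eisenstein packings and classes of binary quadratic forms (Section~\ref{sec:quadraticforms}). By Proposition~\ref{prop:eisensteinreduced}, primitive packings with outer curvature $-n$ correspond exactly to root quadruples $(-n,b,c,d)$: reduced, in standard position, and primitive. Since the outer circle has curvature $-n$, each circle tangent to it internally has curvature of the form $n+f(x,y)$ for a binary quadratic form $f$.

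\textbf{Step 1: parametrize the quadruples.} Fix $a=-n$ and rewrite \eqref{eqn:Eisensteinform} as $(a+c-b-d)^2=2ac+2bd$. From this I will extract a form $f=[A,B,C]$ with $A=b+n$ and $C=d+n$ (the two neighbours of the outer circle in the wheel), with $B$ determined linearly by $c$. I expect its discriminant to be $-3n^2$ or $-12n^2$, depending on the parity of $n$. The parity constraint from standard position ($a\equiv b$, $c\equiv d \pmod 2$) should translate into the ``first-odd'' normalization of forms. I will then check three things:
\begin{itemize}[label=$\circ$]
\item the swaps fixing $\cir_1$, namely $S_2,S_4$ and the combination moving $c$, act on $f$ as generators of $\SL(2,\ZZ)$, or of a congruence subgroup of it, possibly together with $x\leftrightarrow y$;
\item a reduced quadruple corresponds to a reduced form;
\item primitivity of the quadruple corresponds to primitivity of $f$.
\end{itemize}
This gives a bijection between packings and classes of primitive positive definite forms of the relevant discriminant, taken up to \emph{improper} equivalence. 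Improper equivalence is forced because swapping $(b,d)$ realizes $x\leftrightarrow y$, and the packing is defined only up to reflection.

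\textbf{Step 2: count classes.} The number of classes up to improper equivalence is $\frac12(h+h_{\mathrm{amb}})$, where $h$ is the proper class number and $h_{\mathrm{amb}}$ is the number of ambiguous classes (those of order dividing $2$), except where the congruence-subgroup structure changes the folding. For $h$ I use the conductor formula
\[h(-3f^2)=\frac{f}{3}\prod_{p\mid f}\Bigl(1-\Bigl(\tfrac{-3}{p}\Bigr)\tfrac1p\Bigr)\qquad (f>1).\]
For $n$ odd, take $f=2n$ (discriminant $-12n^2$). Since $\left(\frac{-3}{2}\right)=-1$, the prime $2$ contributes a factor $\frac32$, so $h=n\prod_{p\mid n}(\cdots)$, and halving gives the main term $\frac n2\prod$. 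For $n$ even the target main term is $\frac n3\prod$, which equals $h(-3n^2)$ itself rather than half of it. This suggests that in the even case either the first-odd condition selects a subgroup of index $2$ in the form class group, or orientation is not folded. I will decide which by checking small cases such as $n=2,4$ against the root quadruple enumeration.

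\textbf{Step 3: ambiguous classes.} By genus theory the $2$-rank of the class group equals the number of genera, namely $2^{\mu-1}$, where $\mu$ counts the prime divisors of the discriminant with the standard adjustments at $2$. This count should produce $2^{\omega(3n)-2}$ for $n$ odd (after halving) and $2^{\omega(3n/2)-1}$ for $n$ even. I also need a small correction for forms with extra automorphisms, which occur when $n$ is a power of $3$ or tiny.

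The main obstacle is Step 1: pinning down the exact form attached to a quadruple, the correct discriminant for each parity, and precisely which equivalence (proper or improper, full modular group or congruence subgroup, with the first-odd condition) matches equivalence of packings. The parity asymmetry between the two formulas must come out of this step. I would first derive the correspondence for a few explicit root quadruples, for example $(-5,11,20,12)$, and read off the transformation rules, then prove the general statement.
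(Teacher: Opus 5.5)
There is a genuine gap, and it is the step you flag yourself. Step~1 is never carried out, and the guesses you make there send the even case in the wrong direction.

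First, the form cannot have outer coefficients $b+n$ and $d+n$. Since tangent curvatures are $n+f(x,y)$, you would want $b-n=a+b$ and $d-n=a+d$. Even then, no linear middle coefficient makes the discriminant depend on $n$ alone on the Eisenstein cone: one outer coefficient has to be doubled or halved relative to the other. The paper uses $\phi(q)=\bigl[a+d,\ b+d-c,\ \tfrac{a+b}{2}\bigr]$, which has discriminant $-3a^2$ for \emph{both} parities. The halving encodes standard position, and primitivity at $3$ needs the fact that $(a,-a,a,-a)\bmod 3$ does not lift mod $9$.

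Second, improper equivalence does not come from swapping $(b,d)$; standard position rules that swap out. The swaps $S_2,S_3,S_4$ fixing the outer circle act on $\phi(q)$ by $\sm{1}{1}{0}{-1}$, $\sm{1}{0}{0}{-1}$ and $\sm{1}{0}{-2}{-1}$, all of determinant $-1$. These generate only the index-$3$ subgroup $\Gamma_0^{\PGL}(2)$ of $\PGL(2,\ZZ)$. That index-$3$ subgroup is the missing idea, and it is what produces the parity asymmetry. Your tentative even-$n$ model (classes of discriminant $-3n^2$ under $\PGL(2,\ZZ)$, folded or not, or cut down by first-oddness) already fails at $n=2$. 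There are two packings, with roots $(-2,4,9,5)$ and $(-2,8,11,3)$, but $h(-12)=1$.

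To finish, the paper lifts each $\PGL(2,\ZZ)$-reduced form $[A,B,C]$ through the coset representatives $I,U,V$ to $[A,B,C]$, $[C,B,A]$ and $[A-B+C,2A-B,A]$, and keeps the first-odd lifts. When $-3n^2\equiv 5\pmod 8$ and $n>1$, all three qualify, giving $\frac{3h+h_2}{2}$. When $-3n^2\equiv 0\pmod 4$, exactly two do, giving $h+2^{\omega(3n/2)-1}$. The last term counts the forms $[A,0,C]$ with $A<C$ and $AC=3n^2/4$, whose two lifts stay inequivalent.

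Alternatively, your discriminant-$(-12n^2)$ idea can be made to work for both parities. The map $f\mapsto f(x,2y)$ sends $\phi(q)$ to $[a+d,\,2(b+d-c),\,2(a+b)]$. It is a bijection from $\Gamma_0^{\PGL}(2)$-classes of first-odd forms of discriminant $-3n^2$ to $\PGL(2,\ZZ)$-classes of primitive forms of discriminant $-12n^2$. The reason is that every primitive form of discriminant $-12n^2$ takes values divisible by $4$ on exactly one of the three classes of primitive vectors mod $2$. The count is then $\frac12\bigl(h(-12n^2)+h_2(-12n^2)\bigr)$ for every $n\ge 1$. Here $h(-12n^2)=\frac{2n}{3}\prod_{p\mid 2n}\bigl(1-\kron{-3}{p}\frac1p\bigr)$, and by Cox's Proposition~3.11, $h_2(-12n^2)$ is $2^{\omega(3n)-1}$ for $n$ odd and $2^{\omega(3n/2)}$ for $n$ even. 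So the parity asymmetry is just the Euler factor and genus count at $2$, not a change of discriminant or of folding.

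Two minor points: $2^{\mu-1}$ counts the elements of order at most $2$, so the $2$-rank is $\mu-1$. Extra automorphs occur only for $n=1$, not for powers of $3$.
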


\subsection{The Eisenpint Schmidt arrangement}
Before investigating the local-global conjecture, we go back to the $\QQ(\sqrt{-3})-$Schmidt arrangement, adjust it suitably, and verify that this gives rise to (scaled) primitive Eisenstein circle packings.

Recall that general $K-$Apollonian packings are constructed in a ``greedy'' manner: start with one circle in the $K-$Schmidt arrangement, adjoin the largest externally tangent circle at each tangency point (i.e. the \emph{immediately tangent} circle), and repeat (see \cite[Definition 4.7]{StangeApollonianStructureBianchi}).

When $K=\QQ(\sqrt{-3})$, we can still attempt this process, as it is not disrupted by the presence of non-tangent circles. However, this too fails: you can follow paths of immediate tangencies that loop back on themselves to intersect non-tangentially! This is demonstrated in Figure~\ref{fig:schmidt-full}.

\begin{figure}[htb]
    \includegraphics[scale=2.9]{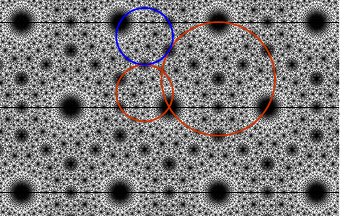}
    \caption{Part of the full Schmidt arrangement for $K=\QQ(\sqrt{-3})$ (curvatures at most 20). Both red circles are immediately tangent to the blue circle, yet the red circles intersect non-tangentially.}\label{fig:schmidt-full}
\end{figure}

The root cause is the presence of extra units. Let $\om=\frac{1+\sqrt{-3}}{2}$ denote the upper half plane root of the equation $x^2-x+1=0$, a primitive sixth root of unity. Then, the \emph{Eisenstein integers} are $\ZZ[\om]$, which is the maximal order of $\QQ(\sqrt{-3})$. The Schmidt arrangement naturally has a translation by $\ZZ[\om]$ symmetry, but these units also give rise to rotations by $\frac{2\pi}{3}$ and $\frac{4\pi}{3}$, which in turn produce non-tangential intersections. Thus, in order to extract a reasonable analogue of other $K-$Schmidt arrangements, one should attempt to remove these extra symmetries. To this end, take $\GammaE$ to be the matrices of $\PSL(2, \ZZ[\om])$ which, modulo 2, have unit diagonal and are lower triangular, i.e.
\[\GammaE:=\{M\in\PSL(2, \ZZ[\om]):M\equiv\sm{1}{0}{\ast}{1}\pmod{2}\}.\]
This group has index 15 in $\PSL(2, \ZZ[\om])$, and we call it the \emph{Eisenpint\footnote{A traditional beer stein holds 1 litre, or about 1.76 imperial pints.} group}.

\begin{definition}
    The \emph{Eisenpint Schmidt arrangement} for $\QQ(\sqrt{-3})$ is the orbit of $\widehat{\RR}$ under $\GammaE$, where the circles are taken to be unoriented. A circle in this orbit is called an \emph{Eisenpint circle}, and the collection of Eisenpint circles is denoted $\pint$. Considering each circle with both orientations gives the set $\widepint$.
\end{definition}

The Eisenpint Schmidt arrangement (Figure~\ref{fig:pintschmidt}) recovers the most familiar properties of Schmidt arrangements for other fields, whilst constituting a large part of the $\QQ(\sqrt{-3})-$Schmidt arrangement.

\begin{theorem}\label{thm:pintschmditiscanonical}
    Circles in the Eisenpint Schmidt arrangement only intersect tangentially. Additionally, given any pair of tangent circles in the $\QQ(\sqrt{-3})-$Schmidt arrangement, there exists a combination of a rotation about the origin (by $0$, $\frac{2\pi}{3}$ or $\frac{4\pi}{3}$), a translation (by $0$, $1$, $\om$, or $\om+1$), and a dilation (by $1$ or $\frac{1}{2}$) of the Eisenpint Schmidt arrangement such that the image is again a subset of the $\QQ(\sqrt{-3})-$Schmidt arrangement, and contains the intersecting circles in question.
\end{theorem}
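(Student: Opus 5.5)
The plan is to work with the standard parametrization of Bianchi circles: the image of $\widehat{\RR}$ under $M=\sm{\alpha}{\beta}{\gamma}{\delta}\in\PSL(2,\ZZ[\om])$ is the circle through $M(0)=\beta/\delta$, $M(1)=(\alpha+\beta)/(\gamma+\delta)$ and $M(\infty)=\alpha/\gamma$. Its curvature is $|\Im(\bar\gamma\delta)|\cdot 2/\sqrt{3}$ up to normalization, i.e. an integer multiple of $\sqrt3$. For the first claim it suffices, by $\GammaE$-invariance, to show that no $M\in\GammaE$ maps $\widehat{\RR}$ to a circle meeting $\widehat{\RR}$ at angle $\pi/3$ or $2\pi/3$. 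Two circles $\widehat{\RR}$ and $M(\widehat{\RR})$ intersect transversally exactly when $M(\widehat{\RR})$ has points strictly above and below the real line. The angle can be read off from the inversive product $\langle \widehat{\RR}, M\widehat{\RR}\rangle$. For the image circle this product is a simple expression in the entries of $M$; up to sign, it is the real part of $\alpha\bar\delta+\beta\bar\gamma$ or $\Re(\alpha\bar\delta)+\Re(\beta\bar\gamma)$ type combination, normalized by the determinant condition. An angle $\pm\pi/3$ corresponds to $\langle\cdot,\cdot\rangle=\pm\tfrac12$, whereas tangency or disjointness corresponds to $|\langle\cdot,\cdot\rangle|\ge1$.

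So I would first compute this inversive product as an explicit formula in $\alpha,\beta,\gamma,\delta$. I expect something like $\langle\widehat{\RR},M\widehat{\RR}\rangle=\tfrac{2}{\sqrt3}\Im(\ldots)$, or equivalently $\Re(\alpha\bar\delta-\beta\bar\gamma)$-type data lying in $\tfrac12\ZZ$. I would then show that the half-integer values occur only when certain entries are odd. Working modulo $2$, $\ZZ[\om]/2\cong\FF_4$, and the condition $M\equiv\sm{1}{0}{\ast}{1}$ forces $\alpha\bar\delta\equiv1$ and $\beta\bar\gamma\equiv0 \pmod 2$. The key parity lemma would be that the relevant quantity, $2\langle\cdot,\cdot\rangle$, is congruent mod $2$ to a trace-type expression $\Tr(\alpha\bar\delta)+\Tr(\beta\bar\gamma)$ in $\FF_4\to\FF_2$. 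Since $\Tr_{\FF_4/\FF_2}(1)=0$, this expression vanishes for $M\in\GammaE$, so the product is an integer, and hence the intersection angle cannot be $\pi/3$ or $2\pi/3$. Getting this formula and parity claim exactly right is the main computational obstacle. If the trace formulation fails, I would fall back to directly enumerating $M$ modulo $2$ over the $15$ cosets, checking which cosets produce half-integral products.

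For the second claim, take two tangent Bianchi circles. Applying an element of $\PSL(2,\ZZ[\om])$, I may assume one is $\widehat{\RR}$ and the other is tangent to it. The statement concerns explicit Euclidean similarities, though, so I would instead argue with cosets. The similarities listed (rotation by a cube root of unity, translation by $0,1,\om,1+\om$, dilation by $1$ or $\tfrac12$) are realized by elements $g\in\PSL(2,\ZZ[\om])$ or by $\sm{1}{0}{0}{2}$-type matrices normalizing things suitably. I would check that $g\GammaE g^{-1}$, together with the images $g\pint$, is contained in the Schmidt arrangement; for the dilation by $\tfrac12$ this needs $\sm{1}{0}{0}{2}\GammaE\sm{1}{0}{0}{2}^{-1}\subseteq\PSL(2,\ZZ[\om])$, which holds because the upper-right entry is even.

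Next I would show that the $3\cdot4=12$ undilated images together with the $3$ dilated ones account for all $15$ cosets of $\GammaE$. Concretely, a tangent pair $(\widehat{\RR}, M\widehat{\RR})$ is determined by $M$ up to the stabilizer, and each coset representative of $\PSL(2,\ZZ[\om])/\GammaE$ should be matched with one similarity $g$ such that $g^{-1}$ maps the pair into $\pint$. The tangency hypothesis is needed here: for non-tangent pairs no such $g$ exists, by the first claim. I would organize this as reduction modulo $2$, using $\PSL(2,\FF_4)\cong A_5$ of order $60$, in which the image of $\GammaE$ has order $4$, matching index $15$. I would then verify, via the orbit of the pair of points $(0,\infty)$ and the tangency point structure mod $2$, that the stabilizer of a tangent pair acts so that the listed similarities hit every coset needed.
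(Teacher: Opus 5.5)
\textbf{First claim.} You take a different route from the paper, and it nearly works. For $M=\sm{\alpha}{\beta}{\gamma}{\delta}$ one has $\langle\widehat{\RR},M\widehat{\RR}\rangle=\Re(\alpha\bar\delta-\beta\bar\gamma)=\tfrac12\Tr(\alpha\bar\delta-\beta\bar\gamma)$. For $M\in\GammaE$ we have $\alpha\bar\delta-\beta\bar\gamma\equiv 1\pmod 2$, so the product is an integer.

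Integrality only rules out $\pm\tfrac12$, though. A crossing pair with integral product would be orthogonal, and your reduction to the angles $\pi/3,2\pi/3$ assumes a classification of crossing angles that you never prove. The fix is one line. Put $x=\alpha\bar\delta-\beta\bar\gamma$. By Propositions~\ref{prop:mobiuscurv} and~\ref{prop:eisensteincircleformulae}, $|x|$ equals the modulus of the curvature-centre of $M\widehat{\RR}$, so $|x|^2=1+uv=1+3st\equiv 1\pmod 3$. On the other hand, $\Re x=0$ would force $x\in\sqrt{-3}\,\ZZ$, giving $3\mid |x|^2$, a contradiction.

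With this patch your argument is more self-contained than the paper's. The paper reduces circles modulo $2$ to $3$-subsets of $\widehat{\FF_2[\om]}$. It then uses rationality of intersection points to show that crossing circles reduce to $3$-sets sharing exactly two points (Lemma~\ref{lem:twointersect}), and notes that cosets $0$ and $4$ share only one.

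\textbf{Second claim.} Here there is a genuine gap. Matching the twelve translate/rotate images and the three dilated images with the fifteen cosets of $\GammaE$ only describes the images. For the dilation you would also need $D\GammaE D^{-1}=U\GammaE U^{-1}=\{M\equiv\sm{1}{\ast}{0}{1}\pmod 2\}$, with $D=\sm{1}{0}{0}{2}$ and $U=\sm{0}{-1}{1}{0}$, so that $D$ gives the same image as $U$.

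None of this shows that a given tangent pair lies in a common image, which is the actual content of the claim. Each Schmidt circle lies in exactly three of the fifteen images, namely the three Petersen edges at its mod-$2$ vertex. You must prove that for tangent circles these two triples meet, i.e.\ that tangent circles reduce modulo $2$ to equal or tangent $\FF_2[\om]$-circles.

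Your appeal to the first claim gives only the converse (non-tangent pairs lie in no image). Your closing sentence about the orbit of $(0,\infty)$ does not supply the missing step either. Tangent pairs do not form a single $\PSL(2,\ZZ[\om])$-orbit: $(\widehat{\RR},\widehat{\RR}+\om)$ reduces to the adjacent cosets $0,1$, while $(\widehat{\RR},\widehat{\RR}+2\om)$ reduces to coset $0$ twice.

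The paper obtains this step in Lemma~\ref{lem:twointersect}. After moving one circle to $\widehat{\RR}$, it normalizes by $\PSL(2,\ZZ)$ so that the tangency point is $\infty$ and $M=\sm{1}{b}{0}{1}$, then reduces $M(0),M(1),M(\infty)$ modulo $2$.

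Your own invariant would also do it. The parity of $\Tr(\alpha\bar\delta-\beta\bar\gamma)$ depends only on $M \bmod 2$. It is invariant under $\SL(2,\FF_2)$ on both sides, because left and right multiplication by $\PSL(2,\ZZ)$ preserves $\widehat{\RR}$ and the inversive product. Hence it is constant on the three double cosets, i.e.\ on Petersen distance $0,1,2$ from coset $0$. It is even at $I$ and $g_4$ and odd at $g_2$. So a tangent pair, having product $\pm1$, sits at distance at most $1$ and therefore lies in the image attached to a corresponding edge.
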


Roughly speaking, Theorem~\ref{thm:pintschmditiscanonical} shows that we can recover the full $\QQ(\sqrt{-3})-$Schmidt arrangement through basic rigid motions of the Eisenpint Schmidt arrangement.

At last, we can demonstrate that the Eisenpint Schmidt arrangement describes all primitive Eisenstein circle packings. In particular, this prescribes a unique (up to symmetry) placement and orientation in $\widehat{\CC}$ for every primitive Eisenstein circle packing.

\begin{definition}\label{def:eisensteinschmidtpacking}
    A collection of circles $\mathcal{P}$ \emph{straddles} a circle $\cir$ if it intersects both the interior and exterior of $\cir$. We call $\mathcal{P}$ \emph{tangency-connected} if the graph of tangencies is connected. An \emph{Eisenpint tangency packing} is a maximal tangency-connected subset $\mathcal{P}$ of oriented circles from $\widepint$ such that $\mathcal{P}$ does not straddle any circle in $\widepint$, and the interiors of $\mathcal{P}$ are disjoint.
\end{definition}

It is clear from the definition that Eisenpint tangency packings are obtained by picking any circle in $\widepint$, greedily adding the immediate tangency circles, and repeating the process ad infinitum. In particular, every circle in $\pint$ is a part of exactly two packings: one per orientation.

Symmetries of the Eisenpint Schmidt arrangement imply some repetition of packings.

\begin{definition}
    Declare two Eisenpint tangency packings as \emph{equivalent} if they are related by a sequence of translations by $2\ZZ[\om]$, rotations by $\pi$ about the origin, and reflections across the imaginary axis.
\end{definition}

These equivalences give rise to the bijection with primitive Eisenstein circle packings, thereby providing a canonical (up to the above equivalence) way of placing a primitive Eisenstein circle packing in the plane.

\begin{theorem}\label{thm:Qrt3packingsareEisenstein}
    Every Eisenpint tangency packing is a primitive Eisenstein circle packing scaled by $\frac{1}{\sqrt{3}}$. This association bijects equivalence classes of Eisenpint tangency packings with primitive Eisenstein circle packings.
\end{theorem}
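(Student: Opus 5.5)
We begin with one base packing and transport everything else with $\GammaE$. Start from $\widehat{\RR}$ oriented so that its interior is the lower half plane. Its immediately tangent Eisenpint circles should be circles of diameter $\frac{1}{\sqrt{3}}\cdot$(something) tangent at the points of $2\ZZ$ or $\ZZ$. Rather than guess, we write down an explicit 4-wheel of circles in $\widepint$, each the image of $\widehat{\RR}$ under an explicit element of $\GammaE$:
\begin{itemize}
\item $\widehat{\RR}$ itself;
\item a horizontal line $\RR+\sqrt{3}i/\text{(const)}$, i.e. a translate of $\widehat{\RR}$ by a suitable element of $\omega$-multiples permitted mod 2;
\item two circles tangent to both lines.
\end{itemize}
We then compute their inversive matrix and check that it equals $R_{\Eis}$. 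This yields the strip packing with curvatures, after scaling by $\sqrt{3}$, equal to a small root quadruple such as $(0,0,1,1)$.

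Next we show that the swaps $S_i$ of Proposition~\ref{prop:circleswapmatrices} are realized inside the arrangement. The inversion through the circle orthogonal to three members of the wheel is an anti-Möbius map. We identify it as $z\mapsto M\bar z$ for some $M$ normalizing $\GammaE$ (for instance reflection across a line $\mathrm{Re}\, z=k$ composed with an element of $\GammaE$). Such a map preserves $\widepint$, so by induction the whole Eisenstein packing generated by the base wheel lies in $\widepint$. Tangency-connectedness and disjoint interiors come from Definition~\ref{def:eisensteinpacking}.

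Maximality and the non-straddling condition follow because the packing's complement has measure zero. Any further circle of $\widepint$ tangent to a packing circle at a packing tangency point, or straddling, would have to meet some packing circle non-tangentially. This contradicts the first part of Theorem~\ref{thm:pintschmditiscanonical}. Hence the base Eisenpint tangency packing equals this Eisenstein packing. Since $\GammaE$ acts transitively on $\pint$ by definition, and each oriented circle lies in exactly one tangency packing, every Eisenpint tangency packing is a Möbius image $\gamma(\mathcal{P}_0)$ or the image of the reverse-oriented base. The second kind also contains a 4-wheel, by the analogous computation for the upper half plane, so it is again an Eisenstein packing.

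For integrality and primitivity after scaling by $\sqrt{3}$, we use the curvature formula. A circle $\gamma(\widehat{\RR})$ with $\gamma=\sm{a}{b}{c}{d}$ has curvature $\pm\frac{2}{\sqrt{3}}\,\mathrm{Im}(\bar c d)$, which lies in $\frac{1}{\sqrt{3}}\ZZ$. Hence $\sqrt{3}$ times each curvature is an integer. Primitivity follows because the packing contains a circle of scaled curvature attained via the base wheel, say the line of curvature $0$ together with coprime neighbours, and the gcd is a Möbius-invariant of the packing. More carefully, the gcd is the gcd of a root quadruple, and the reduced quadruple of $\gamma(\mathcal{P}_0)$ can be compared through the quadratic-form correspondence of Section~\ref{sec:quadraticforms}.

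For the bijection, surjectivity means that every primitive root quadruple $(a,b,c,d)$ (Proposition~\ref{prop:eisensteinreduced}) appears. For a circle of curvature $-n$ we produce $\gamma\in\GammaE$ with appropriate bottom row $(c,d)$. We then match the counts of the packings' tangent circles to the quadratic-form classes from Theorem~\ref{thm:countcirclepackingsoutercurvn}. Concretely, the circles of $\pint$ of scaled curvature $n$ lying in one packing correspond to form classes, which lets us hit every root quadruple.

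Injectivity means that two tangency packings with the same curvature multiset are equivalent. By Proposition~\ref{prop:4wheelmobius} and Proposition~\ref{prop:eisquadruplefrom4wheel} their root wheels differ by a Euclidean motion. This motion must preserve $\widepint$ locally, and we then classify the Euclidean symmetries of $\pint$. Those symmetries are $\GammaE$-translations by $2\ZZ[\om]$, rotation by $\pi$, and reflection across $i\RR$, possibly composed with elements of $\GammaE$ that map one packing to another. The main obstacle is this last step: pinning down exactly that stabilizer/normalizer. We also need to show that the parity condition mod 2 in $\GammaE$ rules out the $\frac{2\pi}{3}$ rotations and the $\ZZ[\om]/2\ZZ[\om]$ translations. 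For that we would first try reducing the candidate motions mod $2$ and checking which conjugate $\GammaE$ to itself.
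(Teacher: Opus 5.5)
\textbf{Where the proposal falls short.} The first sentence of the theorem is nearly handled, but the bijection is not established. Your base wheel, the anti-M\"obius realization of its four swaps, and transport by $\GammaE$ form a workable alternative to Lemma~\ref{lem:4wheelandallswapsinsideEnEisenstein}. (Each $\mathcal{T}_i$ is complex conjugation composed with rotation by $\pi$ and/or an element of $\GammaE$, and all of these preserve $\widepint$.)

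For surjectivity, ``matching counts'' against Theorem~\ref{thm:countcirclepackingsoutercurvn} would need injectivity plus an independent count of Eisenpint packing classes, and you supply neither. You also never construct, for a given root quadruple, a matrix in $\GammaE$ realizing it. For injectivity, a Euclidean motion carrying one tangency packing onto another is not a priori a symmetry of $\pint$, and you leave the classification of those symmetries open.

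\textbf{The missing idea.} It is arithmetic, and it settles both directions at once. To $M=\sm{\alpha}{\gamma}{\beta}{\delta}\in\GammaE$ attach $f_M(x,y)=\Nm(\delta x-\beta y)$. This form is primitive (as $\beta,\delta$ are coprime), first-odd (as $\delta\equiv 1\pmod 2$), of discriminant $-3s^2$, and its $\Gamma_0^{\PGL}(2)$-class depends only on the equivalence class of $M(\widehat{\RR})$ (Proposition~\ref{prop:schmidtcircletoquadratic}).

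Conversely, Proposition~\ref{prop:qfblift} writes every primitive form of discriminant $-3s^2$ as $N\circ(x^2+xy+y^2)$, with $N$ unique up to the automorphs of $x^2+xy+y^2$. First-oddness singles out exactly one of $N,UN,U^2N$ with first column (odd, even). This yields $\delta\equiv 1\pmod 2$, hence $M\in\GammaE$, unique up to $2\ZZ[\om]$-translation.

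Finally, compute the curvature quadruple $q$ of $M(\Dcir_1,\Dcir_2,\Dcir_3,\Dcir_4)$ explicitly and check that $\phi(q)=f_M$. Together with Propositions~\ref{prop:eisensteinpackingbijectionqf} and~\ref{prop:quadraticformtangentcircles}, this gives the bijection with no symmetry classification or counting.

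\textbf{Primitivity and normalization.} The same formulas give primitivity: a prime dividing $q$ would divide every coefficient of $f_M$. Your claim that ``the gcd is a M\"obius invariant'' is false, since dilations rescale all curvatures, and even restricted to $\GammaE$ it needs proof. Also, for bottom row $(e+f\om,\,g+h\om)$ the curvature of $M(\widehat{\RR})$ is $(eh-fg)\sqrt{3}\in\sqrt{3}\ZZ$. So you divide curvatures by $\sqrt3$, not multiply.

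\textbf{Non-straddling.} The measure-zero complement does give maximality, since an added circle's interior would have positive area disjoint from a full-measure set. It does not give non-straddling, because a circle is itself a null set. By tangency-connectedness, and since circles of $\pint$ meet only tangentially, straddling reduces to this: a circle of $\widepint$ passes through the tangency point $P$ of two packing circles $D_1,D_2$, tangent to both and separating them. You assert such a circle must cross some packing circle. That is true, but it needs an argument.

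One argument: complete $D_1,D_2$ to a 4-wheel and move $P$ to $\infty$, so that $D_1,D_2$ become the lines $y=0$ and $y=H$. The inversive products $-2$ force the other two wheel circles to have radius $H/3$, occupying heights $[0,2H/3]$ and $[H/3,H]$. Hence every intermediate line $y=h$ crosses one of them non-tangentially. The paper reaches the same conclusion via Lemma~\ref{lem:4wheelseparatingcircle} and the arithmetic Lemma~\ref{lem:notangenttriangles}. With this supplied, your route to the first sentence works; the essential missing ingredient is the quadratic-form correspondence above.
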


Figure~\ref{fig:pintschmidt} demonstrates an Eisenpint tangency packing inside the whole Eisenpint Schmidt arrangement. This packing corresponds to the Eisenstein circle packing with root quadruple $(-1, 3, 4, 2)$.

\subsection{Local-global aspects of primitive Eisenstein circle packings}

The tangency graph of an Eisenstein circle packing is bipartite (see Figure~\ref{fig:m4_12_19_7_chi2}).

\begin{definition}\label{def:moiety}
    Call each of the bipartite halves a \emph{moiety\footnote{``Moiety'' means \emph{one of two halves of a whole}, from the french \emph{moiti\'e.}}} of the packing.
\end{definition}

Considering the packing as generated by a 4-wheel (or Eisenstein quadruple) and circle swaps, one moiety corresponds to the circles which appear in either the first or third position, and the other is the circles appearing in the second or fourth positions.

We will study the sets of curvatures of the circles in each moiety of the packing separately. This also makes sense from a thin group point of view: the multiset of curvatures in the whole packing is a union of four linear functionals on $(a, b, c, d)\cdot\Egp$: one for each index of the vector. By studying a moiety, we are only unioning two of these at a time, which reveals certain interesting behaviours that get washed out upon combining them. It would be reasonable to go one step further and study each index independently, however, this is unnecessary. Indeed, in a primitive Eisenstein circle packing in standard position, $a-c\equiv b-d\equiv 1\pmod{2}$, so the $a-$orbit and $c-$orbit (and $b-$ orbit and $d-$orbit) are disjoint.

\begin{definition}
    Let $S$ be a subset of the circles in a primitive Eisenstein circle packing. Denote by $c(S)$ the multiset of curvatures found in $S$.
\end{definition}

As with Apollonian circle packings, the curvatures have modular restrictions. This time, they occur modulo 4.

\begin{proposition}\label{prop:curvaturesmod4}
    Let $E$ be either all or a moiety of the circles in a primitive Eisenstein circle packing. Then, the set $c(E)\pmod{4}$ is either $\{0, 1, 2\}$ or $\{0, 2, 3\}$. In particular, this set is identified by whether the odd curvatures are all $1\pmod{4}$ or all $3\pmod{4}$.
\end{proposition}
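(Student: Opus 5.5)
The plan is to reduce everything modulo $4$ and track how the swaps $S_1,\dots,S_4$ act on residues. By Proposition~\ref{prop:eisensteinreduced} we may assume the packing is generated by a primitive quadruple $(a,b,c,d)$ in standard position. Then $a\equiv b$, $c\equiv d\pmod 2$, and by primitivity $a\not\equiv c\pmod 2$. The equation \eqref{eqn:Eisensteinform} is invariant under $(a,b,c,d)\mapsto(c,d,a,b)$, which also preserves standard position. So we may assume $a,b$ are odd and $c,d$ are even.

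\textbf{Step 1: parities are invariant and odd residues mod $4$ are frozen.} Each swap replaces one entry $x$ by $2y-x$, where $y$ is the sum of the two neighbouring entries: $y=b+d$ for positions $1,3$, and $y=a+c$ for positions $2,4$. Parities are therefore preserved, so every quadruple in the orbit $(a,b,c,d)\Egp$ is still in standard position with positions $1,2$ odd and $3,4$ even. In particular $y$ is always odd, since it is the sum of one odd and one even entry. Hence $2y\equiv 2\pmod 4$, and every swap acts on the swapped entry by $x\mapsto 2-x\pmod 4$.

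This map fixes $1$ and $3$ and interchanges $0$ and $2$. Consequently the residue of position $1$ mod $4$ is constant along the whole orbit, and likewise for position $2$. Each even position alternates between $0$ and $2$, and applying $S_3$ (resp.\ $S_4$) shows that both even residues actually occur.

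So the moiety $\{1,3\}$ has curvature residues exactly $\{0,2\}\cup\{a\bmod 4\}$, and the moiety $\{2,4\}$ has residues exactly $\{0,2\}\cup\{b\bmod 4\}$. It then remains only to show $a\equiv b\pmod 4$; the statement for the whole packing follows immediately from the statement for the two moieties.

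\textbf{Step 2 (the main point): $a\equiv b\pmod 4$.} I rewrite the Eisenstein equation as
\[(a+c-b-d)^2 = 2(ac+bd).\]
Write $c=2c'$, $d=2d'$, and set $s=a-b+c-d$. Then $s=2t$ with $t=\frac{a-b}{2}+c'-d'$, and the equation becomes $t^2=ac'+bd'$.

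Now reduce mod $2$, using that $a,b$ are odd and $t^2\equiv t$. The left side gives $t\equiv \frac{a-b}{2}+c'+d'$, and the right side gives $c'+d'$. Comparing these forces $\frac{a-b}{2}$ to be even, i.e.\ $a\equiv b\pmod 4$.

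The only real content is this last congruence; the rest is bookkeeping of the swap action mod $4$. Finally, Step 1 shows that the odd residue is the same for every odd curvature in the packing, which gives the closing sentence of Proposition~\ref{prop:curvaturesmod4}.
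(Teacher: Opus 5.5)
Your proof is correct and takes the same route as the paper: the same mod-$4$ bookkeeping of the swaps $x\mapsto 2y-x$ with $y$ odd, then a rewriting of the Eisenstein equation to force $a\equiv b\pmod 4$. The only difference is in Step 2: the paper uses $(a-b)^2=2ad+2bc-(c-d)^2$ after first applying $S_3$ and/or $S_4$ to arrange $c\equiv d\equiv 0\pmod 4$, whereas your identity $(a+c-b-d)^2=2(ac+bd)$ combined with $t^2\equiv t\pmod 2$ works directly for any quadruple with $a,b$ odd and $c,d$ even, so you need no such normalization.
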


This modular behaviour is captured by the \emph{type} of the packing or moiety.

\begin{definition}
    The type of $E$ is defined by
    \begin{center}
		{\upshape
        \renewcommand{\arraystretch}{1.2}
		\begin{tabular}{|c|c|} 
			\hline
			Type      & $c(E)\pmod{4}$ \\ \hline
			$(3, 1)$  & $0, 1, 2$\\ \hline
			$(3, 3)$  & $0, 2, 3$\\ \hline
		\end{tabular}
		}
	\end{center}
    Any positive integer $n$ equivalent modulo $4$ to an element of $c(E)\pmod{4}$ is deemed \emph{admissible}.
\end{definition}

Analogously to the Apollonian setting (from \cite{HKRS23}), type $(a, b)$ denotes that there are $a$ residues modulo $4$, with $b$ being the smallest one coprime to $4$. The type of one moiety of a packing is equal to the type of the other: the type is well-defined on the whole packing.

We may ask if the analogue of the local-global conjecture is true for primitive Eisenstein circle packings: must every sufficiently large admissible integer occur in the packing?

To begin, we apply the results of \cite{FSZ19} to conclude that a density one local-to-global statement holds for Eisenstein packings. This requires an analysis of the strong approximation of a subgroup of $\SL(2,\ZZ[\omega])$ governing the packing. The group we define here differs slightly from the groups already used to describe the packing, because the machinery of \cite{FSZ19} has certain restrictions. In particular, it is a Kleinian group whose orbit on a collection of circles tangent to the real line gives the Eisenstein strip packing. This is in contrast to the abstract swap group, which acts on quadruples of circles or curvatures. 
To be precise, we define
\[
T: z \mapsto z + 2, \quad
S: z \mapsto z/(z+1), \quad
V: z \mapsto -z + 1 + \sqrt{-3},
\]
and
\[
\Egp' := \langle S, T, V \rangle, \quad
\Egp'' := \Egp' \cap \SL(2,\ZZ[w]]).
\]
The following is an introductory version of Proposition~\ref{prop:orbitsE}.
\begin{proposition}
\label{prop:orbitsEintro}
    The circles of the Eisenstein strip packing are made up of two orbits of $\Egp'$ or four orbits of $\Egp''$.
\end{proposition}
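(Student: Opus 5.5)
We make the Eisenstein strip packing explicit and then track the action of the generators $S,T,V$ on it. The natural strip packing is the one whose root quadruple has the two lines $\Im z=0$ and $\Im z=\sqrt{3}$ as hubs. In the normalization of the Eisenpint arrangement, the degenerate quadruple $(0,0,\ast,\ast)$ scaled by $1/\sqrt{3}$ should give these two lines, together with a chain of circles tangent to both. Taking $\Im z=0$ and $\Im z=\sqrt3$ as the hubs $\cir_1,\cir_3$, the spokes tangent to both have radius $\sqrt3/2$. Consecutive spokes are tangent, so the spoke centres are $k\sqrt3+\tfrac{\sqrt3}{2}i$? That does not match $T:z\mapsto z+2$. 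So I will first work out the actual normalization from $R_{\Eis}$ and the swap matrices, so that the horizontal period is $2$, consistent with $T$, and so that $V$, the point reflection about $\tfrac{1+\sqrt{-3}}{2}=\om$, swaps the two boundary lines. I then list the circles tangent to $\widehat{\RR}$, which form one $\langle S,T\rangle$-family, and the circles tangent to the top line.

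\textbf{Step 1: the group preserves the packing.} Check that $S$, $T$ and $V$ each map the strip packing to itself. For $T$ and $V$ this is immediate from the explicit picture. For $S$, which fixes $0$ and maps $\widehat{\RR}$ to itself, I check that it sends one explicit 4-wheel of the packing (containing $\widehat{\RR}$) to another 4-wheel of the packing. Since a packing is generated by swaps from any one of its wheels (Definition~\ref{def:eisensteinpacking} and the Möbius equivariance of swaps), this suffices.

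\textbf{Step 2: orbits of $\Egp'$.} Show that every circle of the packing is in the $\Egp'$-orbit of $\widehat{\RR}$ or of one fixed spoke circle $\cir_2$ tangent to $\widehat{\RR}$ at $0$. The plan is to relate the swaps to the group elements: each circle swap on a wheel containing $\widehat{\RR}$ and $\cir_2$ is realized, up to relabelling, by conjugating $S$, $T$ or $V$ by elements of $\Egp'$. Then induct on the swap-word length from the root wheel, using the statement after Definition~\ref{def:Egp} that the orbit $q\Egp$ exhausts the wheels. The two orbits are distinct because they are the two moieties, which is preserved by the Möbius maps: hubs and spokes have different parity classes of curvature, as in Proposition~\ref{prop:curvaturesmod4}. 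Alternatively, one can use an invariant such as tangency points lying in $\QQ(\sqrt{-3})$ with a given denominator class modulo $2$.

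\textbf{Step 3: passage to $\Egp''$.} Since $\det V=-1$, $V$ is not in $\SL$ and must be normalized by $\sqrt{-1}$ or treated projectively. I expect $[\Egp':\Egp'']=2$, with $V$ a coset representative. Each $\Egp'$-orbit then splits into at most two $\Egp''$-orbits, namely the orbit of $\cir$ and the orbit of $V\cir$. They do split, because $\Egp''$ preserves each boundary line's family (curvatures $\equiv$ something different), while $V$ exchanges the bottom and top lines.

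\textbf{Main obstacle.} The hardest part is Step 2's exhaustion argument: matching the abstract swap group $\Egp$ to the Kleinian group $\Egp'$ precisely enough to show that no circle is missed. I would first try a reduction argument. Given any circle, move it by $T$-translations into a fundamental strip. Then use $S$, a parabolic fixing $0$, to decrease its curvature. This mimics the reduction of Proposition~\ref{prop:atmostonereducedintegralone} until the circle reaches $\widehat{\RR}$, the top line, or one of finitely many small spokes.
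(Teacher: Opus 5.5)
Your Step~2, which you rightly call the crux, has a genuine gap, and the mechanism you propose for it cannot work. A swap is inversion in one of the dual circles $\mathcal{T}_1,\dots,\mathcal{T}_4$, so it is orientation-reversing. Every element of $\Egp'=\langle S,T,V\rangle$ is holomorphic; $V$ is the half-turn about $\om$. So no swap is realized by an element of $\Egp'$, and an induction on swap-word length with hypothesis ``the current wheel is an $\Egp'$-image of $W_0=(\Dcir_1,\Dcir_2,\Dcir_3,\Dcir_4)$ up to relabelling'' breaks at the first step.

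Concretely, swapping $\Dcir_3$ gives $W_1=(\Dcir_1,\Dcir_2,T^{-1}\Dcir_3,\Dcir_4)$. By Lemma~\ref{lem:Egpevenvsodd}, elements of $\Egp'$ send $\Dcir_1$ and $\Dcir_2=V\Dcir_1$ to circles of even reduced curvature, and $\Dcir_4$ and $\Dcir_3=V\Dcir_4$ to odd ones. So any $g\in\Egp'$ with $g(W_0)=W_1$ as sets must preserve the pair of lines. It must therefore be $z\mapsto z+r$ or $z\mapsto -z+r+\sqrt{-3}$ with $r\in\RR$. The first must fix $\Dcir_4$, forcing $r=0$ and then $g(\Dcir_3)=\Dcir_3\neq T^{-1}\Dcir_3$. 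The second must send $\Dcir_4$ to $T^{-1}\Dcir_3$ and $\Dcir_3$ to $\Dcir_4$, forcing both $r=-1$ and $r=1$. Your fallback (reduce with $T$ and $S$) is not yet an argument: you do not say why some element of $\langle S,T\rangle$ or $V$ lowers the curvature of an arbitrary packing circle, or what the terminal configurations are.

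You need a different exhaustion argument. The paper's runs as follows.
\begin{itemize}
\item The set $\Gamma_1^T(2)\Dcir_4\cup\Gamma_1^T(2)V\Dcir_1$ is exactly the set of circles immediately tangent to $\widehat{\RR}$. Hence for each $g\in\Egp'$, the circles immediately tangent to $g(\Dcir_1)$ lie in $\Egp'\Dcir_1\cup\Egp'\Dcir_4$.
\item By Lemma~\ref{lem:Egpevenvsodd}, the circles of even reduced curvature in this union are exactly the $g(\Dcir_1)$.
\item Every packing circle is joined to $\Dcir_1$ by a tangency path whose circles after the first have even curvature. This path is built by following a 4-wheel through its reduction, using that each circle of a primitive wheel has one even and one odd neighbour (Lemma~\ref{lem:twoeventwoodd}).
\end{itemize}

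Alternatively, in the spirit of your plan, adjoin the reflection $\mathcal{T}_3\colon z\mapsto-\overline{z}$.
\begin{itemize}
\item It normalizes $\Egp'$: $\mathcal{T}_3T\mathcal{T}_3=T^{-1}$, $\mathcal{T}_3S\mathcal{T}_3=S^{-1}$, and $\mathcal{T}_3V\mathcal{T}_3=T^{-1}V$.
\item It fixes $\Dcir_1$ and $\Dcir_4$, so $\Egp'\cup\Egp'\mathcal{T}_3$ has the same orbits on them as $\Egp'$.
\item This larger group contains all four dual reflections: $\mathcal{T}_4=T\mathcal{T}_3$, $\mathcal{T}_2=S\mathcal{T}_3$, $\mathcal{T}_1=V\mathcal{T}_2V$.
\item The swap in position $i$ at the wheel $h(W_0)$ is inversion in $h\mathcal{T}_ih^{-1}$. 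So the wheel reached by swaps $i_1,\dots,i_k$ is $\mathcal{T}_{i_1}\cdots\mathcal{T}_{i_k}(W_0)$, and every packing circle lies in $\Egp'\Dcir_1\cup\Egp'\Dcir_4$.
\end{itemize}

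Your Step~1 and the splitting $\Egp'=\Egp''\sqcup\Egp''V$ in Step~3 are fine.

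Your distinctness arguments are wrong, though the statement only needs the union.
\begin{itemize}
\item $\Egp'\Dcir_1$ contains $V\Dcir_1=\Dcir_2$, which is tangent to $\Dcir_1$, so the $\Egp'$-orbits are not the moieties. Each moiety contains both parities (Proposition~\ref{prop:curvaturesmod4}).
\item $\Dcir_1$ and $\Dcir_2$ both have curvature $0$. What separates $\Egp''\Dcir_1$ from $\Egp''\Dcir_2$ is the moiety (Proposition~\ref{prop:moietyorbit}), not a congruence.
\end{itemize}
Your opening normalization is also impossible. Opposite circles satisfy $a+c>0$ (Lemma~\ref{lem:pairsnonneg}), so the two lines must be adjacent, as in the root quadruple $(0,0,1,1)$.
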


We prove that $\Egp''$ has strong approximation, and as a consequence derive a density one local-to-global statement.

\begin{theorem}
\label{thm:densityone}
    Any primitive Eisenstein circle packing or a moiety of such satisfies a finite congruence condition and has a density one set of curvatures amongst the curvatures allowed by its type.
\end{theorem}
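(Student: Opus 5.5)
The plan is to deduce Theorem~\ref{thm:densityone} from the general density-one local-to-global theorem of \cite{FSZ19}. That theorem applies to a finitely generated, geometrically finite, Zariski-dense subgroup $\Gamma\le\SL(2,\mathcal{O}_K)$ with critical exponent $\delta>1$ (here $\delta=$ the Hausdorff dimension of the limit set). It says that for a circle $\cir$ the curvatures of $\Gamma\cdot\cir$ contain almost every integer satisfying the local conditions imposed by $\Gamma$.

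First I would reduce to the Eisenstein strip packing, scaled by $\frac{1}{\sqrt3}$ so that curvatures become rational integers. By Theorem~\ref{thm:Qrt3packingsareEisenstein}, every primitive Eisenstein packing is, up to a M\"obius map in $\GammaE$ (or its normaliser), an image of the strip packing. Curvatures of circles under a fixed M\"obius map $g$ are computed by an integral binary hermitian form evaluated along an orbit, so I would phrase everything as: the curvature multiset of a packing is a finite union of sets $\{f_j(\gamma)\colon\gamma\in\Egp''\}$, with $f_j$ an integral quadratic expression in the entries of $\gamma$. By Proposition~\ref{prop:orbitsEintro} there are four orbits of $\Egp''$. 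I expect a moiety to be a union of two of them and the whole packing to be all four, and I would verify this by tracking which positions of the 4-wheel each orbit hits.

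Next I would verify the hypotheses on $\Egp''$. Finite generation and geometric finiteness hold because $\Egp'$ is generated by $S,T,V$, and a finite-index subgroup of a geometrically finite group is geometrically finite; explicit reflection/fundamental domains make this clear. Zariski density in $\SL_2$ over $\RR$ (as a real group $\mathrm{Res}_{K/\QQ}$) holds because the limit set is the full residual set of the packing, which is not contained in a circle; hence the group is not Fuchsian and not virtually abelian. For $\delta>1$: the packing has residual set of measure zero but is not contained in a circle, so Sullivan's results give $\delta=\dim_H(\Lambda)>1$. I would cite the standard fact that any non-Fuchsian geometrically finite Kleinian group with connected, non-circular limit set has $\delta>1$, or check numerically against curvature counts.

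The main obstacle is strong approximation: I need to show $\Egp''$ surjects onto $\SL(2,\ZZ[\om]/\mathfrak{q})$ for all $\mathfrak{q}$ coprime to a fixed bad ideal (presumably a power of $2$), and then identify the image modulo that bad ideal. Away from the bad ideal, Zariski density together with the Matthews--Vaserstein--Weisfeiler theorem gives surjectivity for all but finitely many primes. For the remaining small primes ($2$, $\sqrt{-3}$, and anything else) I would compute the images of the generators directly and check generation of $\SL_2$ over the residue fields and their lifts. Goursat-type arguments then give surjectivity onto products over primes away from $2$. The finite congruence condition is the image modulo the $2$-part, which I would compute explicitly, by computer if necessary, and show matches the residues listed in Proposition~\ref{prop:curvaturesmod4}. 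This shows that the admissible set is exactly the set of residues allowed by the type.

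Finally, I would apply \cite{FSZ19} to each orbit separately. Each orbit gives density one among its own local admissible set, and the union over the two (or four) orbits covers all residues allowed by the type. This union then has density one among the admissible integers, which is the statement of the theorem.
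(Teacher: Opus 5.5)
Your architecture matches the paper's. Every primitive packing is a $\GammaE$-image of the strip packing, which is the union of the four orbits $\Egp''\cdot\Dcir_i$, two per moiety (Proposition~\ref{prop:moietyorbit}). One then checks the hypotheses of \cite{FSZ19}, determines the bad modulus, and applies the density-one theorem orbit by orbit.

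The gap is that you never use, or even list, the hypothesis on which \cite{FSZ19} rests: the group must contain a congruence subgroup of $\PSL(2,\ZZ)$. Here that subgroup is $\Gamma_1^T(2)\le\Egp''$. Infinite covolume is also required and is missing from your list. This is not cosmetic. The Bourgain--Kontorovich method generalised in \cite{FSZ19} uses this Fuchsian subgroup to produce, inside an orbit, families of circles tangent to a fixed circle whose curvatures are values of shifted binary quadratic forms. For an arbitrary finitely generated, geometrically finite, Zariski-dense group with $\delta>1$, the density-one statement you quote is not an available theorem. Relatedly, the paper applies \cite[Theorem 1.6]{FSZ19} only to the orbits of $\Dcir_2$ and $\Dcir_4$, which are tangent to $\widehat{\RR}$. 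It rewrites the other two orbits as $V\Egp''\cdot\Dcir_2$ and $V\Egp''\cdot\Dcir_4$, using that $V$ normalises $\Egp''$. Your ``apply \cite{FSZ19} to each orbit separately'' skips this step.

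The same omission breaks your strong approximation step. Matthews--Vaserstein--Weisfeiler (MVW) says only that the set of bad primes is finite, not what it is. So ``compute directly at $2$, $\sqrt{-3}$, and anything else'' cannot be carried out. Without a complete list you cannot conclude that the local conditions are exactly those of the type, rather than the type plus an obstruction at some unidentified prime.

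The paper gets effectivity from the congruence subgroup. \cite[Theorem 8.1]{FSZ19} restricts the candidate bad primes to $2$ and $3$, and bounds their exponents through $\iota_p$: the least $\iota$ with $p^{\iota}\mathfrak{sl}(2,\ZZ_p[\om])$ inside the $\ZZ_p$-span of $\Egp''$-conjugates of $\mathfrak{sl}(2,\ZZ_p)$.
\begin{itemize}
\item At $3$: conjugating $H$ and $R$ by $W=VSV$ and $SW$ gives $\iota_3=0$, and surjectivity modulo $3$ onto $\SL(2,\FF_3[x]/(x^2))$ shows $3$ is good.
\item At $2$: one gets $\iota_2\le 4$ (modulus at most $32$) and an obstruction already modulo $4$, and a computer check then gives exactly $16$.
\end{itemize}
If you want to avoid \cite[Theorem 8.1]{FSZ19}, you must replace MVW with an argument uniform in $p$. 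For instance, $\Gamma_1^T(2)$ surjects onto $\SL(2,\FF_p)$ for every odd $p$; combine this with Dickson/Goursat, an element such as $W$ not defined over $\ZZ$, and lifting to prime powers. That route again rests on the congruence subgroup you left out.
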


The finite congruence condition given by the analysis is modulo 16. However, it turns out that the set of admissible curvatures always descends down to modulo 4, giving the type as above (see Remark~\ref{rmk:SA}).

Despite Theorem~\ref{thm:densityone}, the local-global conjecture does not hold.  To this end, we define the $\chi_2$ function on any circle in a primitive Eisenstein circle packing.

\begin{definition}\label{def:chi2fromtangentcircle}
    Let $\cir$ be a circle of curvature $n$ in a primitive Eisenstein circle packing of type $(3, t)$, and let it be tangent to a circle of coprime curvature $b$. Define
    \[\chi_2(\cir):=\begin{cases}
        \kron{b}{n} & \text{if $n$ and $b$ are odd;}\\
        \kron{2b}{n} & \text{if $n$ is odd and $b$ is even;}\\
        \kron{b}{n/2} & \text{if $n$ is even and $t=1$;}\\
        -\kron{-b}{n/2} & \text{if $n$ is even and $t=3$;}
    \end{cases}\]
\end{definition}

This gives a well-defined invariant on each moiety! 

\begin{proposition}\label{prop:chi2invariance}
    Let $E$ be a primitive Eisenstein circle packing, which divides into moieties $E_1\cup E_2$. Then, the function $\chi_2$ is well-defined on all circles, and constant on each moiety $E_1$ and $E_2$, giving a well defined notion of $\chi_2(E_1)$ and $\chi_2(E_2)$. If $E$ has type $(3, 1)$, then $\chi_2(E_1)=\chi_2(E_2)$, hence $\chi_2$ is well-defined on the entire packing. If $E$ has type $(3, 3)$, then $\chi_2(E_1)=-\chi_2(E_2)$, hence $\chi_2$ alternates across tangencies.
\end{proposition}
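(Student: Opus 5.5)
The plan is to follow the strategy of \cite{HKRS23} for Apollonian packings, in two stages. First, show that $\chi_2(\cir)$ does not depend on the choice of tangent circle of coprime curvature. Second, compare $\chi_2$ across a single tangency using quadratic reciprocity. Since the tangency graph is connected and bipartite with the moieties as its halves, the second stage propagates the relation along paths. It gives constancy on each moiety, and equality or negation between the two moieties according to the type.

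\textbf{Well-definedness at a fixed circle.} Let $\cir$ have curvature $n$. I will use the correspondence of Section~\ref{sec:quadraticforms} between circles tangent to $\cir$ and a binary quadratic form attached to $\cir$. Swapping in a suitable order puts $\cir$ in the first position of a 4-wheel. The curvatures of circles tangent to $\cir$ are then $f(x,y)-n$, or a fixed affine shift of this, for a primitive (``first-odd'') form $f$ whose discriminant is a constant times $n^2$, as $(x,y)$ runs over coprime pairs. So every tangent curvature $b$ satisfies $b\equiv f(x,y)\pmod{n}$, up to a fixed unit factor.

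Take an odd prime $p\mid n$. The discriminant vanishes mod $p$, so $f\equiv \lambda\,\ell(x,y)^2\pmod p$ for a linear form $\ell$ and a fixed $\lambda$. Hence $\kron{b}{p}=\kron{\lambda}{p}$ is the same for every tangent $b$ coprime to $p$, and multiplying over $p$ gives constancy of the odd part of the symbol.

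The powers of $2$ are handled by the case distinctions in Definition~\ref{def:chi2fromtangentcircle}. When $n$ is odd and $b$ is even, the factor $2$ in $\kron{2b}{n}$ compensates for the parity change. When $n$ is even, we divide by $2$, and Proposition~\ref{prop:curvaturesmod4} tells us that $n\equiv 0,2\pmod 4$ while the odd $b$ have a fixed residue mod $4$. I expect to need the first-odd normalisation to control $f(x,y)\bmod 8$ here. A coprime tangent curvature always exists because a primitive form represents values coprime to $n$.

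\textbf{Comparison across a tangency.} Let $\cir_1,\cir_2$ be tangent with coprime curvatures $n,b$. Then $\chi_2(\cir_1)$ can be computed from $b$ and $\chi_2(\cir_2)$ from $n$, so we must compare the symbol of $b$ modulo $n$ with that of $n$ modulo $b$. There are three cases.
\begin{itemize}
\item If both are odd, quadratic reciprocity gives $\kron{b}{n}\kron{n}{b}=(-1)^{\frac{n-1}{2}\frac{b-1}{2}}$. In type $(3,1)$ all odd curvatures are $\equiv1\pmod4$, giving $+1$. In type $(3,3)$ they are all $\equiv 3\pmod 4$, giving $-1$.
\item If $n$ is even and $b$ is odd, write $n=2m$. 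We compare $\kron{2n}{b}=\kron{4m}{b}=\kron{m}{b}$ with $\kron{b}{m}$ in type $(3,1)$, or with $-\kron{-b}{m}$ in type $(3,3)$. Reciprocity, the supplementary law for $-1$, and the fixed residue of $b\bmod 4$ should give $+1$ or $-1$ matching the type. The sign and the $-b$ in the definition are presumably chosen exactly so that this works.
\item The case $n$ odd, $b$ even is the same computation with the roles exchanged.
\end{itemize}

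Constancy on each moiety then follows because any two circles in the same moiety are joined by a path of even length in the tangency graph, for instance $\cir_1,\cir_2,\cir_3$ within one 4-wheel. The type $(3,1)$ and $(3,3)$ statements follow from paths of odd length.

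The main obstacle I expect is the $2$-adic bookkeeping. This means checking from the explicit form $f$ (its discriminant and first-odd normalisation) that the even-curvature cases really give a constant symbol at a fixed circle, and that the reciprocity signs come out uniformly rather than depending on $m\bmod 4$. I would first verify this on the example packings, such as root quadruples $(-1,3,4,2)$ and $(-5,11,20,12)$, and then carry out the computation mod $8$ in general.
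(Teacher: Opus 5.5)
There is a genuine gap in your argument. Your second stage compares $\chi_2$ only across a tangency whose two curvatures are \emph{coprime}. That is the only case in which $\chi_2(\cir_1)$ can be read off from $b$ and $\chi_2(\cir_2)$ from $n$. You then propagate along arbitrary paths in the tangency graph, but many edges of that graph are not coprime. By Lemma~\ref{lem:twoeventwoodd}, each opposite pair of a primitive 4-wheel contains exactly one even curvature, so the two even circles of every 4-wheel are tangent to each other. Odd--odd and odd--even tangencies can also share odd primes. So your sample path $\cir_1,\cir_2,\cir_3$ inside one 4-wheel fails in general.

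What is missing is the fact that the subgraph of coprime tangencies is connected (Proposition~\ref{prop:coprimetangencypath}). The paper's proof is exactly your two stages plus this proposition, and most of the work lies in the proposition. Your reciprocity computations in stage two are fine.

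This connectivity is not routine here. In the Apollonian case one uses the quadratic family of circles tangent to both members of a tangent pair. Eisenstein packings have no three mutually tangent circles (Lemma~\ref{lem:notangenttriangles}), so you must route through two intermediate circles.

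The paper completes the tangent pair $\cir,\cir'$ (curvatures $a,d$) to a 4-wheel $(\cir,\cir_1,\cir_2,\cir')$ and applies $(S_3S_2)^k$. This fixes $a$ and $d$ and makes the middle curvatures $b_k,c_k$ quadratic polynomials in $k$ with leading coefficient $2(a+d)$. Primitivity shows that $b_k,c_k$ never share an odd prime with $\gcd(a,d)$, and that any common odd prime of $b_k,c_k$ divides $a-d$. One then chooses $k$ by Lemma~\ref{lem:quadraticoddprime} and the Chinese remainder theorem over the finite sets of primes dividing $a$ or $d$ (but not $\gcd(a,d)$), or dividing $a-d$. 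This makes all three consecutive pairs odd-coprime (Lemma~\ref{lem:onetangencyreplacement}).

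Powers of $2$ are then removed using the parity pattern and one further application of the same lemma to the remaining even--even edge. Tangencies involving curvature $0$, or two equal curvatures, are treated separately, since there the prime sets would be infinite (Lemma~\ref{lem:coprimetangencypath0curv}).

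Separately, the $2$-adic bookkeeping you leave open in stage one is settled by Proposition~\ref{prop:rhowelldefined}. Move an admissible value $\rho$ to the leading coefficient and complete the square with $y$ even. This shows all admissible values lie in one square class modulo $n$, and are $\equiv\rho\pmod 8$ when $n\equiv 4\pmod 8$.
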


A packing of type $(3, 3)$ with alternating $\chi_2$ values is demonstrated in Figure~\ref{fig:m4_12_19_7_chi2}.

\begin{figure}[htb]
    \includegraphics[scale=0.8]{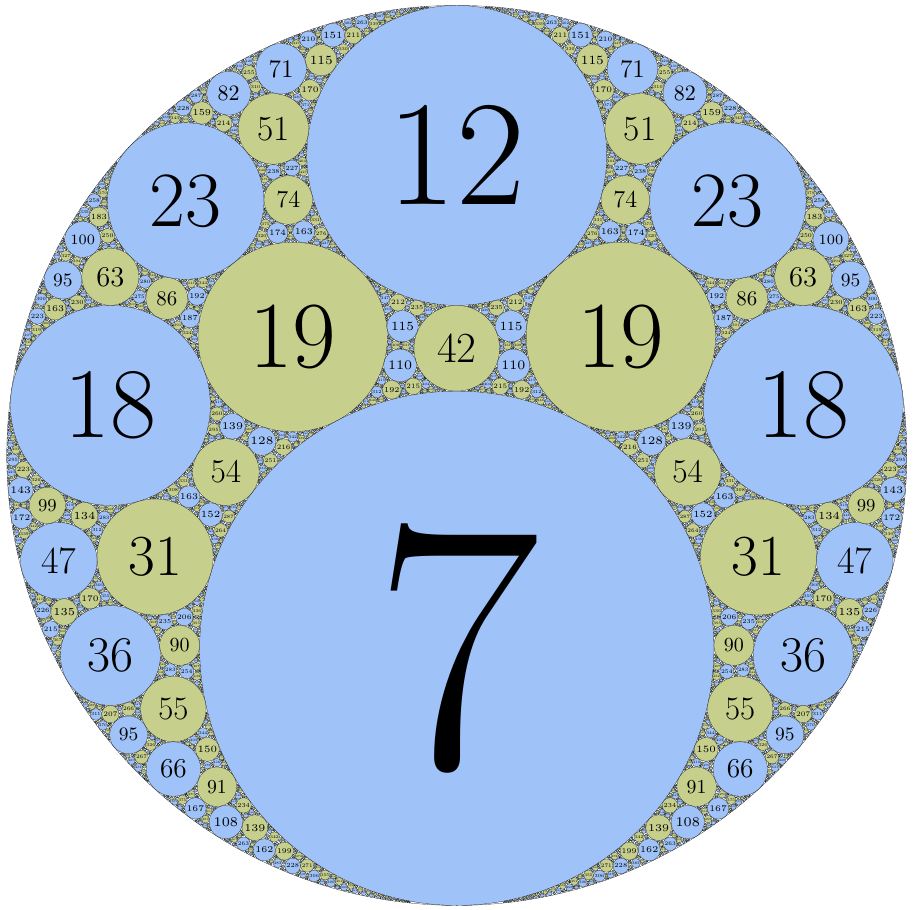}
    \caption{Circles in the Eisenstein circle packing with root quadruple $(-4, 12, 19, 7)$. The 4718 circles with curvature at most $3000$ are shown. Circles in green have $\chi_2=1$, and circles in blue have $\chi_2=-1$. These are the two moieties of this packing.}\label{fig:m4_12_19_7_chi2}
\end{figure}

By incorporating the value of $\chi_2$ into the type, we get an extended type of a moiety.

\begin{definition}
    Let $E$ be a moiety of a packing of type $(3, t)$. The \emph{extended type} of $E$ is the triple $(3, t, \chi_2(E))$. If $t=1$, then this extends to an extended type for the whole packing.
\end{definition}

We can now fully state the reciprocity obstructions for half packings.

\begin{theorem}\label{thm:reciprocity}
    Let $E$ be a moiety of a primitive Eisenstein circle packing. Then, as long as $E$ does not have extended type $(3, 1, 1)$, there exists (infinite) quadratic families of admissible curvatures which do not occur in $c(E)$. A precise description of the families is found in Table~\ref{table:reciprocity}.
\end{theorem}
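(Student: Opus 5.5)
The plan is to follow the strategy of \cite{HKRS23} for Apollonian packings: turn the invariance of $\chi_2$ (Proposition~\ref{prop:chi2invariance}) into obstructions on curvatures of the shape $n=uk^2$ or $n=2uk^2$. Two ingredients are needed. The first is a parametrisation of the curvatures tangent to a fixed circle by a binary quadratic form, using the correspondence of Section~\ref{sec:quadraticforms}. The second is a computation of $\chi_2$ on a circle whose curvature lies in such a family, done purely from $n$ via quadratic reciprocity.

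Fix a moiety $E$ of type $(3,t)$ with $\chi_2(E)=\epsilon$. Take a circle $\cir\in E$ of curvature $n$, and suppose it is tangent to a circle of coprime curvature $b$. We will use the following fact, which should follow from Definition~\ref{def:chi2fromtangentcircle} together with Proposition~\ref{prop:chi2invariance}: $\chi_2(\cir)$ is a Kronecker symbol of the form $\kron{\pm b}{n}$, $\kron{2b}{n}$ or $\pm\kron{\pm b}{n/2}$, and it is independent of the chosen tangent circle.

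Now specialise to $n=uk^2$ with $u$ a fixed small odd integer (or $n=2uk^2$ in the even case), where $\gcd(k,6b)=1$. Then $\kron{b}{uk^2}=\kron{b}{u}$. We then want $\kron{b}{u}$ to depend only on $u$ and on congruence data of the moiety. To do this, let $b$ run over the curvatures of circles tangent to $\cir$. These are values of a shifted quadratic form of discriminant a multiple of $-3n^2$ (again via Section~\ref{sec:quadraticforms}). Reducing modulo $u$, such values are constrained, and we can flip $\kron{b}{u}$ to $\kron{u}{b}$ by reciprocity, with the sign corrections fixed by $b \bmod 4$. Those residues are known from the type (Proposition~\ref{prop:curvaturesmod4}).

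The cleanest choice should be $u\in\{1,2,3,6\}$ up to sign, with $u=3$ the natural candidate because the discriminant is $-3$. For these $u$, $\chi_2(\cir)$ becomes a function $f(t,u)$ of the type alone. So whenever $f(t,u)\neq\epsilon$, no curvature $uk^2$ (respectively $2uk^2$) with $\gcd(k,6)=1$ can occur in $E$. We then have to check that these families are admissible, i.e.\ that $uk^2 \bmod 4$ lies in the residue set allowed by type $(3,t)$. This is what produces the rows of Table~\ref{table:reciprocity}.

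The case analysis splits by extended type: $(3,1,-1)$, $(3,3,1)$ and $(3,3,-1)$. In each case we exhibit at least one pair $(u,\text{parity})$ that is admissible and has $f\neq\epsilon$. For $(3,1,1)$ every admissible family gives $f=1$, which matches the exclusion in the statement. The sign $-\kron{-b}{n/2}$ in the $t=3$ even case is exactly what makes the two $t=3$ moieties, which have opposite $\chi_2$ by Proposition~\ref{prop:chi2invariance}, each obstructed.

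The main obstacle is to establish that $\chi_2(\cir)$ for $n=uk^2$ really depends only on $u$ and $t$. This requires controlling $\kron{b}{u}$ uniformly over all valid tangent curvatures $b$, including the $2$-adic and $3$-adic twists. We also need to show these quadratic families really are admissible but absent, rather than merely unlikely. The key to the first point is to pick the tangent circle cleverly, e.g.\ one in the other moiety whose residue modulo $4$ and modulo $3$ is forced by the Eisenstein equation; the second then follows by direct checking.
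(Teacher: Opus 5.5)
Your framework matches the paper's. By Proposition~\ref{prop:rhofromtangentcircle}, $\chi_2(\cir)$ is a Kronecker symbol in a coprime tangent curvature $b$, and for curvatures $k^2$, $2k^2$, $3k^2$, $6k^2$ it collapses to a symbol with denominator $1$ or $3$. For the families $(2n+1)^2$ and $2n^2$ this finishes the argument at once, since $\kron{\pm b}{k^2}=1$.

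\textbf{The gap.} It lies in the families $6n^2$ and $3(2n+1)^2$: the whole $(3,3,-1)$ row and one entry in each of the other two rows. There everything hinges on $\kron{b}{3}$, i.e.\ on $b \bmod 3$ when $3\mid c$, and none of your mechanisms determines it.
\begin{itemize}
\item Flipping to $\kron{3}{b}$ using $b \bmod 4$ only trades one unknown symbol for another.
\item Your quadratic-form remark gets you halfway. Modulo $3$, a form of discriminant $-3c^2$ with $3\mid c$ is $A$ times a square, so its values prime to $3$ lie in the single class $A \bmod 3$. Nothing in your plan determines that class.
\item $b \bmod 3$ is not forced by the Eisenstein equation modulo $3$, nor by the type. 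By Proposition~\ref{prop:mod3}, if $3\mid c$ and both neighbours of $\cir$ in a 4-wheel are prime to $3$, then one is $\equiv 1$ and the other $\equiv 2 \pmod 3$.
\end{itemize}
What decides which neighbour is which is parity. The neighbour of parity opposite to $c$ is $\equiv 1 \pmod 3$, and the one of the same parity is $\equiv 2 \pmod 3$. No congruence argument can see this coupling of $2$-adic and $3$-adic data. For instance, $(0,1,1,0)$ modulo $2^k$ and $(-3,5,14,10)$ modulo $3^k$ combine by the Chinese remainder theorem into solutions modulo $6^k$, primitive at $2$ and $3$, with $a+b\equiv 5 \pmod 6$. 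This is the point of the paper's Modulo~6 subsection. As written, your argument cannot decide which of the families $3k^2$, $6k^2$ are obstructed, so it does not produce Table~\ref{table:reciprocity}.

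\textbf{The missing input is global.} The paper uses Proposition~\ref{prop:tangentcurvaturemod6sum}: tangent curvatures satisfy $a+b\not\equiv 4,5 \pmod 6$. This is proved from the identity $(a+c-d)^2+3a^2=(a+b)(3a-b+2c)$ together with Lemma~\ref{lem:xsq3ysqprimedivisors}.

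A route closer to your idea goes through Proposition~\ref{prop:qfblift}. There $\phi(q)=N\circ Q$ with $Q=x^2+xy+y^2$, so every value of $\phi(q)$ prime to $3$ is a norm from $\ZZ[\om]$, hence $\equiv 1 \pmod 3$. Proposition~\ref{prop:curvaturesoftangentcircles} then gives $b=\phi(q)(x,y)-c\equiv 1$ for opposite parity and $b=2\phi(q)(x,y)-c\equiv 2 \pmod 3$ for the same parity.

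With this in hand:
\begin{itemize}
\item For $c=6n^2$, an odd tangent $b$ has opposite parity, so $b\equiv 1 \pmod 3$. Then $\chi_2=\kron{b}{3}=1$ for $t=1$, and $\chi_2=-\kron{-b}{3}=1$ for $t=3$.
\item For $c=3(2n+1)^2$, admissible only when $t=3$, an odd $b$ has the same parity, so $b\equiv 2 \pmod 3$ and $\chi_2=\kron{b}{3}=-1$.
\end{itemize}

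Finally, drop the condition $\gcd(k,6)=1$. The condition $\gcd(b,k)=1$ is automatic from $\gcd(b,n)=1$. Keeping $\gcd(k,6)=1$ would only prove the obstruction on proper subfamilies of those in Table~\ref{table:reciprocity}.
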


\begin{table}[hbt]
\centering
    {
    \renewcommand{\arraystretch}{1.2}
    \begin{tabular}{|l|c|}
        \hline
        Extended Type & Quadratic Obstructions\\ \hline
        $(3, 1, 1)$ & \\ \hline
        $(3, 1, -1)$ & $(2n+1)^2, 2n^2, 6n^2$\\ \hline
        $(3, 3, 1)$ & $2n^2, 3(2n+1)^2$\\ \hline
        $(3, 3, -1)$ & $6n^2$\\ \hline
    \end{tabular}
    }
    \caption{Reciprocity obstructions in a moiety of a primitive Eisenstein circle packing.}\label{table:reciprocity}
\end{table}

Note that the obstructions in type $(3, 3, \pm 1)$ are complementary, and thus ``wash each other out'' when you pass to the entire primitive Eisenstein circle packing. On the other hand, the obstructions in type $(3, 1, -1)$ remain in the full packing.

This behaviour is analogous to the cases studied in \cite{WhiteheadEtAlReciprocity}. Only with certain congruence conditions do they find full reciprocity obstructions for the entire set of circles in octahedral, cubic, triangular, and square circle packings. However, for each of the four packing types, they still recover partial obstructions for a moiety of the circles: the patterns of $\chi_2$ behave in a predictable way. In future works studying the local-global question for circle packings, one should be aware of the likely need to partition the set of circles into two (or more) moieties to obtain finer reciprocity obstructions.

Another point of note is we have reciprocity obstructions of the form $6n^2$ and $3(2n+1)^2$, despite there being no congruence obstructions at $3$. This is in contrast to the previously known examples of reciprocity obstructions. In the Apollonian case \cite{HKRS23}, the reciprocity obstructions are of the form $cn^k$ where $c$ is supported on $2,3$, both of which contribute congruence obstructions as well. Similarly, in the octahedral, cubic, triangular, and square circle packings studied in \cite{WhiteheadEtAlReciprocity}, the leading constant in the quadratic obstructions is only supported at primes where there are congruence obstructions. See Remark~\ref{rem:mod6andstrongapprox} for more.

\subsection{Computations}
An accompanying PARI/GP \cite{PARI} package to explore Eisenstein circle packings and the Eisenstein Schmidt arrangement is found in \cite{GHEisenstein}. This includes a comprehensive suite of methods to create circle packings, find all of the circles or curvatures up to a fixed bound, make LaTeX images of the packings, and much more. Consult the user's guide in the project for a full listing of the capabilities.

Using this package, we can further investigate the local-global conjecture. First, we estimate the growth rate.

\begin{conjecture}\label{conj:growthratehdim}
    Let $E$ be a moiety of a bounded primitive Eisenstein circle packing. Then, the number of circles of curvature at most $N$ is asymptotic to $c_E N^{\delta}$, where $c_E$ is a constant depending on $E$, and $\delta\approx 1.4124$ is the Hausdorff dimension of the limit set of the packing.
\end{conjecture}

This predicted growth rate is analogous to Apollonian circle packings (where $\delta \approx 1.3056$), as well as more general thin groups. To estimate $\delta$, we took the packing with root quadruple $(-3, 5, 14, 10)$, computed all circles of curvature up to $10^7$, and separated this data into $10^4$ bins of length $10^3$. There were $661,725,121$ curvatures processed, and running an ordinary least squares regression on the data, the estimate $\delta\approx 1.4124$ was found, with an $R^2$ coefficient of $0.999999998$.

We take into account the quadratic families found in Theorem~\ref{thm:reciprocity} to make an updated local-global conjecture.

\begin{definition}
    Let $E$ be a moiety of a primitive Eisenstein circle packing. Call a positive integer \emph{sporadic} if it is admissible in $E$, does not appear in $E$, and does not lie in one of the quadratic families described by Theorem~\ref{thm:reciprocity}. Take the set of all sporadic curvatures to be $\spor_E$, and the set of sporadic curvatures up to $N$ is $\spor_E(N)$.
\end{definition}

We took all 48 primitive Eisenstein circle packings with root quadruple having all entries bounded by $60$, and computed $\spor_E(N)$ for a varying bound $N$. In each case, we chose an $N$ large enough so that the final sporadic curvature found was below $N/10$. An excerpt of this data is displayed in Table~\ref{table:conjecturedata}. In particular, this suggests that the sporadic sets are finite.

\begin{conjecture}[Modified local-global conjecture]\label{conj:modifiedlocalglobal}
    The set $\spor_E$ is finite.
\end{conjecture}

\subsection{Map of the paper}
Sections~\ref{sec:eisensteincirclepackings}-\ref{sec:quadraticforms} describe the basics of Eisenstein circle packings, going through their geometry, algebra, and number theory, and finally connecting them to quadratic forms. In Section~\ref{sec:schmidt} we begin an interlude where we study the full Eisenstein Schmidt arrangement, from which we extract the smaller Eisenpint Schmidt arrangement. Section~\ref{sec:schmidtsymmetries} connects these two parts, by demonstrating that the Eisenpint tangency packings are in fact primitive Eisenstein circle packings, i.e. Theorem~\ref{thm:Qrt3packingsareEisenstein}. Section~\ref{sec:strongapproximation} settles strong approximation and density one local-global, and the reciprocity obstructions are explained in Section~\ref{sec:reciprocity}. Section~\ref{sec:computation} concludes the paper with explicit computational evidence towards Conjecture~\ref{conj:modifiedlocalglobal}.

\subsection*{Acknowledgements}  Thank you to Iv\'an Rasskin for helpful discussions.  Rickards was supported by NSERC Discovery Grant RGPIN-2025-04068. Katherine E. Stange was supported by NSF DMS-2401580 and a Joan and Joseph Birman Fellowship from the American Mathematical Society.

\section{Eisenstein circle packings: Geometry}\label{sec:eisensteincirclepackings}

We begin by recalling a treatment of inversive geometry through linear algebra, which was pioneered by Pedoe \cite{PedoeCircles57, PedoeGeometryCourse}.

\subsection{Inversive geometry and oriented circles}\label{sec:pedoedefinition}

To any oriented circle $\cir$, we can associate $u, v \in \RR$ and $w \in \CC$ so that:
\[\cir = \{X/Y\in\widehat{\CC}:uX\overline{X}-wY\overline{X}-\overline{w}X\overline{Y}+vY\overline{Y}=0\},\]
where
\begin{itemize}
    \item $u\in\RR$ is the curvature of $\cir$;
    \item $v\in\RR$ is the co-curvature of $\cir$;
    \item $w\in\CC$ is the curvature-centre of $\cir$.
\end{itemize}
Observe that swapping the orientation multiplies $u,v,w$ all by $-1$ simultaneously.

Furthermore, having chosen such $u,v,w$, 
\begin{itemize}
    \item they satisfy the relation $uv=w\overline{w}-1$;
    \item $u\neq 0$ if and only if $\cir$ is a circle, which has radius $\frac{1}{|u|}$;
    \item $u<0$ if and only if $\infty$ is in the interior of $\cir$;
    \item $u>0$ if and only if $\infty$ is in the exterior of $\cir$;
    \item if $u\neq 0$, then $w$ is the product of $u$ and the centre of $\cir$;
    \item if $u=0$, then $w$ is a unit vector pointing from the exterior to the interior, orthogonal to $\cir$;
    \item $v$ is the curvature of $\cir$ after the M\"obius inversion $z\rightarrow\frac{1}{z}$.
\end{itemize}

\begin{definition}
    The \emph{quadruple associated to an oriented circle} is the column vector $(u, v, p, q)\in\RR^4$, where $u$ is the curvature, $v$ is the co-curvature, and $w=p+qi$ is the curvature-centre.
\end{definition}

An oriented circle necessarily satisfies the quadratic form $-uv+p^2+q^2=1$. Conversely, any $(u, v, p, q)\in\RR^4$ satisfying $-uv+p^2+q^2=1$ gives an oriented circle.

When considering collections of oriented circles, we typically do not want to consider a circle and its opposite orientation together. With this in mind, we make the following definition.

\begin{definition}
    Consider two oriented circles as \emph{distinguishable} if the corresponding unoriented circles are distinct.
\end{definition}

We recall the inversive distance between two oriented circles (e.g. from \cite[Section 40.2]{PedoeGeometryCourse}), which is M\"obius invariant. We use a slightly different convention than the classical inversive distance, which is often defined as the negative or cosine of our choice (for example, see \cite{CoxeterInversiveDistance}).

\begin{definition}\label{def:pedoedistance}
    Consider two oriented circles $\cir_1,\cir_2$, with associated quadruples $(u_i, v_i, p_i,q_i)$. The inversive distance between them is given by the symmetric bilinear form
    \[\langle \cir_1, \cir_2\rangle := -\frac{1}{2}u_1v_2-\frac{1}{2}u_2v_1+p_1p_2+q_1q_2=(u_1, v_1, p_1, q_1)\left(\begin{matrix}0 & -1/2 & 0 & 0\\-1/2 & 0 & 0 & 0\\0 & 0 & 1 & 0\\0 & 0 & 0 & 1\end{matrix}\right)\left(\begin{matrix}u_2\\v_2\\p_2\\q_2\end{matrix}\right).\]
    Call this matrix $M_{\Inv}$.
\end{definition}

Note that $\langle v, v\rangle = 1$ if and only if $v\in\RR^4$ corresponds to an oriented circle. Furthermore, if $\cir_1$ and $\cir_2$ are distinguishable circles, then
\begin{itemize}
    \item $\langle \cir_1, \cir_2\rangle = -1$ if and only if the circles are tangent, with the orientations meeting in opposite directions at the tangency point;
    \item $\langle \cir_1, \cir_2\rangle = 0$ if and only if the circles are mutually orthogonal;
    \item $\langle \cir_1, \cir_2\rangle = 1$ if and only if the circles are tangent, with the orientations meeting in the same direction at the tangency point.
\end{itemize}
If $\cir_1,\cir_2$ are tangent Euclidean circles with exteriors containing $\infty$, then $\langle \cir_1, \cir_2\rangle = -1$ if and only if they are externally tangent, i.e. have disjoint interiors.

If the curvatures are non-zero, we also have the formula
\[\langle \cir_1, \cir_2\rangle = \frac{1}{2}(u_1/u_2+u_2/u_1-d^2u_1u_2),\]
where $d$ is the distance between the centres.

\subsection{Tangency points of circles}

The inversive distance is an inner product on $\RR^4$, where oriented circles correspond to vectors $\cir$ with $\langle \cir, \cir\rangle =1$. We can express points in $\widehat{\CC}$ in this framework as well.

\begin{definition}
    Let $P\in\widehat{\CC}$ be a point. The quadruple associated to $P$ is
    \[\begin{cases}(0, 1, 0, 0) & \text{if $P=\infty$}\\(1, p^2+q^2, p, q) & \text{if $P=p+qi$}\end{cases}.\]
\end{definition}

If we also label this quadruple $P$, we note that $P\neq 0$ and $\langle P, P\rangle = 0$. In fact, every non-zero vector $v\in\RR^4$ for which $\langle v, v\rangle = 0$ can be rescaled by $\RR^{\times}$ in a unique way to produce a point.

With this parametrization, a simple computation demonstrates the following.

\begin{proposition}
    A point $P$ is on a circle $\cir$ if and only if $\langle \cir, P\rangle = 0$.
\end{proposition}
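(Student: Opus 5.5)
The claim is that $P$ lies on $\cir$ if and only if $\langle \cir, P\rangle = 0$. The plan is a direct computation from Definition~\ref{def:pedoedistance}, comparing the resulting expression with the defining equation of $\cir$ from Section~\ref{sec:pedoedefinition}. Write $\cir=(u,v,p_1,q_1)$ with curvature-centre $w=p_1+q_1 i$. Recall that $\cir$ is the set of $X/Y$ satisfying
\[uX\overline{X}-wY\overline{X}-\overline{w}X\overline{Y}+vY\overline{Y}=0.\]
I split into the two cases $P$ finite and $P=\infty$.

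\emph{Finite point.} Let $P=x+yi$, with quadruple $(1,x^2+y^2,x,y)$. By the definition of the bilinear form,
\[\langle \cir,P\rangle=-\tfrac12 u(x^2+y^2)-\tfrac12 v+p_1x+q_1y .\]
Take $X=P$ and $Y=1$ in the defining equation. This gives
\[u|P|^2-w\overline{P}-\overline{w}P+v=0.\]
The only point to check is the cross terms:
\[w\overline{P}+\overline{w}P=2\operatorname{Re}(w\overline{P})=2(p_1x+q_1y).\]
So the defining equation equals $-2\langle \cir,P\rangle$, and it vanishes exactly when $\langle \cir,P\rangle$ does.

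\emph{The point at infinity.} Here $P=(0,1,0,0)$, so $\langle \cir,P\rangle=-\tfrac12 u$. Take $X=1$ and $Y=0$ in the defining equation; it reduces to $u=0$. This matches the fact that $\infty$ lies on $\cir$ exactly when $\cir$ is a line, i.e.\ when $u=0$.

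Both cases are routine, and there is no real obstacle. The only care needed is the sign and conjugation convention in the cross terms, and using homogeneous coordinates $X/Y$ to handle $\infty$. One further check: scaling $(X,Y)$ by a common factor $\lambda$ multiplies the defining form by $|\lambda|^2$. Likewise, rescaling the quadruple of $P$ by a real scalar multiplies $\langle\cir,P\rangle$ by that scalar. Neither changes whether the expression vanishes, so the criterion does not depend on the choice of representative.
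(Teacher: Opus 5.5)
Your proof is correct and is exactly the ``simple computation'' the paper invokes without writing out. Evaluating the defining equation at $(X,Y)=(P,1)$ gives $-2\langle \cir, P\rangle$, and evaluating it at $(X,Y)=(1,0)$ gives $u=-2\langle \cir,\infty\rangle$; these are precisely the details the paper leaves to the reader.
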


From this, we can describe the tangency point of two circles.

\begin{proposition}\label{prop:tangencypoint}
    Let $\cir_1, \cir_2$ be distinguishable circles with $\langle \cir_1, \cir_2\rangle = -1$. Then, the tangency point is represented by $\cir_1+\cir_2$ (that is, the tangency point is represented by the sum of the vectors in $\RR^4$ representing the circles).
\end{proposition}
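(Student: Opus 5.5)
We must show that $P:=\cir_1+\cir_2$ is a nonzero isotropic vector satisfying $\langle \cir_1, P\rangle = \langle \cir_2, P\rangle = 0$. By the preceding proposition, the second condition says the point represented by $P$ lies on both circles. It will therefore be a common point, and since two distinct tangent circles meet in exactly one point, it will be the tangency point.

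First, isotropy follows from bilinearity and symmetry of the form:
\[\langle P, P\rangle = \langle \cir_1,\cir_1\rangle + 2\langle \cir_1,\cir_2\rangle + \langle \cir_2,\cir_2\rangle = 1 - 2 + 1 = 0.\]
Second, the incidence conditions follow the same way:
\[\langle \cir_1, P\rangle = \langle \cir_1,\cir_1\rangle + \langle \cir_1,\cir_2\rangle = 1 - 1 = 0,\]
and symmetrically $\langle \cir_2, P\rangle = 0$.

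Third, we need $P\neq 0$. If $P=0$ then $\cir_2=-\cir_1$, which is the same circle with opposite orientation. This contradicts distinguishability. Hence $P$ is a nonzero isotropic vector, and it rescales uniquely by $\RR^{\times}$ to the quadruple of a point of $\widehat{\CC}$. Rescaling preserves the vanishing of $\langle \cir_i,P\rangle$, so that point lies on both $\cir_1$ and $\cir_2$.

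It remains to see that this common point is the tangency point. Using the stated characterization, $\langle \cir_1,\cir_2\rangle=-1$ for distinguishable circles means they are tangent. Two distinct tangent circles (or lines, or a circle and a line) in $\widehat{\CC}$ meet in exactly one point, counting $\infty$ for two parallel lines. So the point represented by $P$ is the tangency point.

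The only step needing care is this last uniqueness of the intersection point, including the degenerate cases with lines through $\infty$. It can alternatively be argued intrinsically: a second common point $Q$ would span, together with $P$, a two-dimensional isotropic-orthogonal configuration. That is incompatible with the Gram matrix of $\cir_1,\cir_2$ being degenerate, so $\cir_1,\cir_2$ span a space whose orthogonal complement meets the null cone only along $P$.
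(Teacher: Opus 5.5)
Your proof is correct and is essentially the paper's argument: the same bilinearity computations show $\cir_1+\cir_2$ is isotropic and orthogonal to both circles, and the uniqueness of the intersection point of two tangent circles finishes it. Your use of distinguishability to rule out $\cir_1+\cir_2=0$ makes explicit a step the paper only asserts. The optional intrinsic remark in your last paragraph is loosely worded but not needed. The clean version: a second common point would make $U^\perp$ totally isotropic, where $U$ is the span of $\cir_1,\cir_2$; this forces $U^\perp=U$, contradicting $\langle\cir_1,\cir_1\rangle=1$.
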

\begin{proof}
    Note that
    \[\langle \cir_1+\cir_2, \cir_1+\cir_2\rangle = \langle \cir_1, \cir_1\rangle + 2\langle \cir_1, \cir_2\rangle + \langle \cir_2, \cir_2\rangle = 1 - 2 + 1 = 0,\]
    so as $\cir_1+\cir_2\neq 0$, this represents a point. Finally, we have
    \[\langle \cir_1+\cir_2, \cir_1\rangle = \langle \cir_1, \cir_1\rangle + \langle \cir_1, \cir_2\rangle = 1 - 1 = 0,\]
    so $\cir_1+\cir_2$ is on $\cir_1$. Similarly, it is on $\cir_2$, hence it is the unique intersection point.
\end{proof}

\subsection{Proof of Proposition~\ref{prop:4wheelmobius}}
Let $W=(\cir_1, \cir_2, \cir_3, \cir_4)$ and $W'=(\cir_1', \cir_2', \cir_3', \cir_4')$ be two 4-wheels. It suffices to show that one can map $W$ to $W'$ via a sequence of translations, dilations, reflections, and circle inversions.

To begin, let $P_i$ denote the tangency point of $\cir_i$ and $\cir_{i+1}$, and define $P_i'$ analogously. There exists a M\"obius map which takes $(\cir_1, \cir_2)$ to $(\cir_1', \cir_2')$, whence $P_1$ to $P_1'$ (to see this, one can map any pair of externally tangent circles to the real axis pointed left and a unit circle centred at $i$ oriented counterclockwise). In fact, we can also assume that $P_2$ is sent to $P_2'$. If it were not, say it is sent to $K$. Consider a circle $\cir$ through $P_1'$ that is orthogonal to $\cir_1'$, and thus is orthogonal to $\cir_2'$ as well. An inversion through $\cir$ will fix both $\cir_1'$ and $\cir_2'$. We can choose $\cir$ to map $K$ to $P_2'$, which preserves the other assumptions made. See Figure~\ref{fig:4wheelmobius} for a depiction of this setup.

\begin{figure}[htb]
    \includegraphics[scale=0.7]{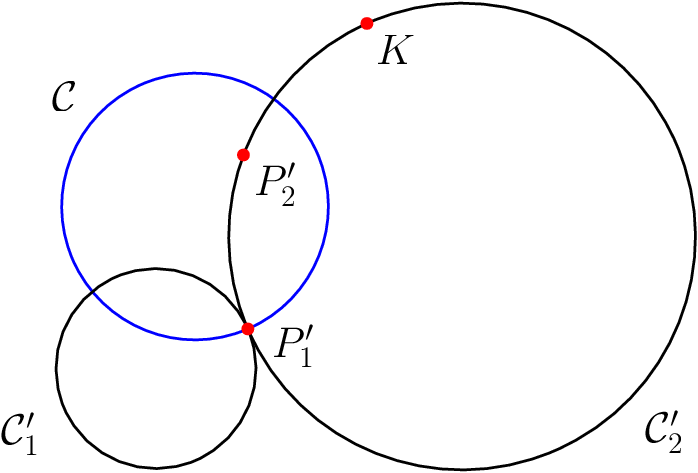}
    \caption{A sample configuration where circle $\cir$ is orthogonal to $\cir_1'$ and $\cir_2'$ at $P_1'$, and swaps $P_2'$ and $K$.}\label{fig:4wheelmobius}
\end{figure}

Taking stock of the situation, we have mapped the 4-wheel $W$ to the 4-wheel $(\cir_1', \cir_2', \Dcir, \mathcal{F})$, where $\cir_2'$ and $\Dcir$ are tangent at $P_2'$. Circle $\Dcir$ must satisfy
\[\langle \cir_1', \Dcir\rangle = -2, \quad \langle \cir_2', \Dcir\rangle = -1,\quad \langle P_2', \Dcir\rangle = 0.\]
As $(\cir_1', \cir_2', \cir_3', \cir_4')$ are linearly independent (follows from Proposition~\ref{prop:4wheelinvertible}), write $\Dcir=a\cir_1'+b\cir_2'+c\cir_3'+d\cir_4'$. The above equations give the linear system
\begin{align*}
    -2 & = a - b -2c - d\\
    -1 & = -a + b - c - 2d\\
    0  & = -3a - 3d,
\end{align*}
which has general solution $(a, b, c, d) = (a, 0, a+1, -a)$. Finally, using $\langle \Dcir, \Dcir\rangle = 1$, this reduces to $3a^2+1 = 1$, hence $a=0$ is the only solution. This implies that $(a, b, c, d) = (0, 0, 1, 0)$, i.e. $D=\cir_3'$.

In a similar fashion, circle $\mathcal{F}$ is almost determined by the inversive distances with $\cir_1',\cir_2',\cir_3'$; this again gives a linear set of solutions (in one variable). The quadratic condition $\langle \mathcal{F}, \mathcal{F}\rangle = 1$ gives at most two possibilities for $\mathcal{F}$: necessarily $\cir_4'$, and the circle swap of $\cir_4'$. Since these 4-wheels are related by an inversion (Definition~\ref{def:circleswap}), the proposition follows.

\section{Eisenstein circle packings: Algebra}\label{sec:eispackingalgebra}

We defined a 4-wheel $W=(\cir_1, \cir_2, \cir_3, \cir_4)$ to be a tuple that satisfies $\Inv(\cir_1, \cir_2, \cir_3, \cir_4) = R_{\Eis}$, which also has the sum of curvatures being non-negative.  Recalling that we can write $W$ as a $4\times 4$ matrix whose columns are the vectors for the $\cir_i$, the equation is equivalent to

\begin{equation}\label{eqn:4wheel}
    W^TM_{\Inv}W = W^T \left(\begin{matrix}0 & -1/2 & 0 & 0\\-1/2 & 0 & 0 & 0\\0 & 0 & 1 & 0\\0 & 0 & 0 & 1\end{matrix}\right) W = \left(\begin{matrix}1 & -1 & -2 & -1\\-1 & 1 & -1 & -2\\-2 & -1 & 1 & -1 \\ -1 & -2 & -1 & 1\end{matrix}\right) = R_{\Eis}.
\end{equation}

\subsection{Circle swaps}

Since the matrices $M_{\Inv}$ and $R_{\Eis}$ are invertible, Equation~\eqref{eqn:4wheel} implies the following result.

\begin{proposition}\label{prop:4wheelinvertible}
    As a matrix, every 4-wheel is invertible. In particular, the four circles in a 4-wheel are linearly independent as vectors in $\RR^4$.
\end{proposition}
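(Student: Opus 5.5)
Taking determinants in Equation~\eqref{eqn:4wheel} gives
\[
\det(W)^2\det(M_{\Inv}) = \det(R_{\Eis}).
\]
So it suffices to show that both $\det(M_{\Inv})$ and $\det(R_{\Eis})$ are non-zero. Then $\det(W)\neq 0$, so $W$ is invertible. Since the columns of $W$ are the vectors representing $\cir_1,\dots,\cir_4$, invertibility is the same as linear independence of the four circles in $\RR^4$.

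For $M_{\Inv}$, the matrix is block diagonal. The upper $2\times 2$ block $\sm{0}{-1/2}{-1/2}{0}$ has determinant $-1/4$, and the lower block is the identity. Hence $\det(M_{\Inv})=-1/4\neq 0$.

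For $R_{\Eis}$, I will use that it is a symmetric circulant matrix with first row $(1,-1,-2,-1)$. Its eigenvalues are
\[
1 - \zeta^{j} - 2\zeta^{2j} - \zeta^{3j}, \qquad \zeta=i,\ j=0,1,2,3.
\]
These evaluate to:
\begin{itemize}
\item $j=0$: $1-1-2-1=-3$;
\item $j=1$: $1-i+2+i=3$;
\item $j=2$: $1+1-2+1=1$;
\item $j=3$: $1+i+2-i=3$.
\end{itemize}
So $\det(R_{\Eis}) = (-3)\cdot 3\cdot 1\cdot 3 = -27\neq 0$.

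This is consistent with the first display: $\det(W)^2\cdot(-1/4)=-27$ gives $\det(W)^2=108>0$, as it must be for a real matrix $W$. Every step is a routine computation; the only care needed is evaluating the circulant eigenvalues correctly, or alternatively row-reducing $R_{\Eis}$ directly.
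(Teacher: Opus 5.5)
Your proof is correct and is essentially the paper's argument: the paper notes that $W^TM_{\Inv}W=R_{\Eis}$ with $M_{\Inv}$ and $R_{\Eis}$ invertible forces $W$ to be invertible, and your explicit values $\det M_{\Inv}=-1/4$ and $\det R_{\Eis}=-27$ make that check concrete. (Only $\det R_{\Eis}\neq 0$ is actually needed, since $\det W=0$ would make the left-hand side singular.)
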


Our first order of business is to settle the algebra of circle swaps, i.e. Proposition~\ref{prop:circleswapmatrices}. 

\begin{lemma}\label{lem:C4swap}
    Swapping circle $\cir_4$ is equivalent to replacing $W$ with $WS_4$, where
    \[S_4 = \left(\begin{matrix} 1 & 0 & 0 & 2\\0 & 1 & 0 & 0\\0 & 0 & 1 & 2\\0 & 0 & 0 & -1\end{matrix}\right).\]
\end{lemma}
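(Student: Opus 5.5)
The plan is to find the new circle $\cir_4'$ as a linear combination of the columns of $W$ and then read off the coefficients, which form the last column of $S_4$. By Proposition~\ref{prop:4wheelinvertible}, the circles $\cir_1,\cir_2,\cir_3,\cir_4$ form a basis of $\RR^4$, so we may write $\cir_4'=x_1\cir_1+x_2\cir_2+x_3\cir_3+x_4\cir_4$. The swap fixes $\cir_1,\cir_2,\cir_3$, so the first three columns of $S_4$ are the standard basis vectors; only the last column needs to be computed.

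To pin down the $x_i$, I would use that the inversion defining the swap (in the circle $\cir'$ orthogonal to $\cir_1,\cir_2,\cir_3$) preserves inversive distance and fixes $\cir_1,\cir_2,\cir_3$. Hence
\[\langle \cir_4',\cir_j\rangle=\langle\cir_4,\cir_j\rangle \quad (j=1,2,3), \qquad \langle\cir_4',\cir_4'\rangle=1.\]
With $\langle \cir_4,\cir_j\rangle$ read off from $R_{\Eis}$, these become the linear system $R_{\Eis}x$ agreeing with the fourth column of $R_{\Eis}$ in the first three entries, that is,
\begin{align*}
x_1-x_2-2x_3-x_4&=-1,\\
-x_1+x_2-x_3-2x_4&=-2,\\
-2x_1-x_2+x_3-x_4&=-1.
\end{align*}
This system has a one-parameter family of solutions, of which $x=e_4$ is one. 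I expect the null direction to be $(1,0,1,-1)$ (one checks it against each row), so the solutions are $x=(t,0,t,1-t)$. Imposing $x^TR_{\Eis}x=1$ gives a quadratic in $t$ with roots $t=0$ and $t=2$. The root $t=0$ recovers $\cir_4$ itself, and $t=2$ gives $\cir_4'=2\cir_1+2\cir_3-\cir_4$, which is exactly the last column of $S_4$.

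The main obstacle is justifying that the swap is the \emph{other} root rather than $\cir_4$ itself, and that the orientation is the correct one. Since inversion in $\cir'$ is not the identity on $\cir_4$ ($\cir_4$ is not orthogonal to $\cir'$; otherwise $\cir_4$ would be fixed and the configuration degenerate), $\cir_4'\neq\cir_4$ as oriented circles. It also cannot be $-\cir_4$, because $\langle -\cir_4,\cir_1\rangle=1\neq-1$. The quadratic has only two roots, so $\cir_4'$ must be the $t=2$ solution. Alternatively, the same argument can be phrased as reflection in $\cir'$ acting on $\RR^4$ as the linear reflection $v\mapsto v-2\langle v,\cir'\rangle\cir'$ with $\cir'$ in the orthogonal complement of $\cir_1,\cir_2,\cir_3$, which shows directly that the map is linear and fixes the span of $\cir_1,\cir_2,\cir_3$.
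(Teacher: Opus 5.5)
Your proposal is correct and follows the paper's proof essentially verbatim. You expand $\cir_4'$ in the basis $\cir_1,\dots,\cir_4$, solve the same three linear equations to get the family $(t,0,t,1-t)$, and use $\langle\cir_4',\cir_4'\rangle=1$, i.e.\ $3t^2-6t+1=1$, to force $t\in\{0,2\}$. Your extra justification for discarding $t=0$ is a welcome addition to the paper's one-line dismissal; it is cleanest phrased as: if inversion in $\cir'$ fixed $\cir_4$, then $\cir'$ would be orthogonal to a basis of $\RR^4$ and hence zero.
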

\begin{proof}
    By Proposition~\ref{prop:4wheelinvertible}, we can write $\cir_4'=a\cir_1+b\cir_2+c\cir_3+d\cir_4$ for some $a,b,c,d\in\RR$. Since $(\cir_1, \cir_2, \cir_3, \cir_4')$ is a 4-wheel, it follows that
    \begin{align*}
        -1 = \langle \cir_1, \cir_4'\rangle = & \quad a - b - 2c - d\\
        -2 = \langle \cir_2, \cir_4'\rangle = & -a + b -c - 2d\\
        -1 = \langle \cir_3, \cir_4'\rangle = & -2a - b + c - d
    \end{align*}
    This system of equations has general solution $(a, b, c, d) = (a, 0, a, 1-a)$. Since $\cir_4'$ is an oriented circle, we have the final equation of $\langle \cir_4', \cir_4'\rangle = 1$, which gives
    \[1 = a^2 + a^2 + (1-a)^2 - 4a^2 - 2a(1-a) - 2a(1-a) = 3a^2-6a+1,\]
    hence $a=0, 2$. If $a=0$ we recover $\cir_4$, so $\cir_4'$ is given by $2\cir_1+2\cir_3-\cir_4$, giving the lemma.
\end{proof}

By symmetry, Proposition~\ref{prop:circleswapmatrices} follows from Lemma~\ref{lem:C4swap}.

\subsection{Eisenstein equation}\label{sec:eiseqn}

Inverting and rearranging $W^TM_{\Inv}W = R_{\Eis}$ gives \[WR_{\Eis}^{-1}W^T = W\left( \frac{1}{3}\left(\begin{matrix}1 & -1 & 0 & -1\\-1 & 1 & -1 & 0\\0 & -1 & 1 & -1\\-1 & 0 & -1 & 1\end{matrix}\right)\right) W^T = \left(\begin{matrix}0 & -2 & 0 & 0\\-2 & 0 & 0 & 0\\0 & 0 & 1 & 0\\0 & 0 & 0 & 1\end{matrix}\right)= M_{\Inv}^{-1}.\]

Recall that,
\[W^T = \left(\begin{matrix}u_1 & v_1 & p_1 & q_1\\u_2 & v_2 & p_2 & q_2\\u_3 & v_3 & p_3 & q_3\\u_4 & v_4 & p_4 & q_4\end{matrix}\right),\]
hence this gives quadratic relations in the tuples of curvatures, co-curvatures, curvature-centre real, and curvature-centre imaginary values. The top left corner equation is equivalent to
\[u_1^2+u_2^2+u_3^2+u_4^2=2(u_1+u_3)(u_2+u_4),\]
the Eisenstein equation for the curvatures. Combining this with the additional condition of $u_1+u_2+u_3+u_4>0$, it follows that $(u_1, u_2, u_3, u_4)$ is an Eisenstein quadruple, proving the first claim of Proposition~\ref{prop:eisquadruplefrom4wheel}.

\begin{remark}\label{rem:quadruplesymmetries}
    The Eisenstein equation has a $D_8-$symmetry: given a solution $(a, b, c, d)$, we can swap $a$ with $c$, swap $b$ with $d$, or swap $(a, c)$ and $(b, d)$. This is equivalent to the action of $D_8$, and gives $8$ quadruples. Similarly, if we only allow swapping $a$ and $c$ or $b$ and $d$, we generate the $V_4-$orbit of the quadruple.
\end{remark}

\begin{proposition}\label{prop:4wheelgivencurvatures}
    Given any Eisenstein quadruple, there exists a 4-wheel with those respective curvatures.
\end{proposition}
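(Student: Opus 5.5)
The plan is to reduce the statement to one fixed 4-wheel plus the transitivity of the inversive (Lorentz) group on null vectors. Suppose a single 4-wheel $W_0=(\cir^0_1,\cir^0_2,\cir^0_3,\cir^0_4)$ is known; I will fix it below. Every real matrix $g$ with $g^TM_{\Inv}g=M_{\Inv}$ preserves $\langle,\rangle$. Hence $gW_0$ again satisfies \eqref{eqn:4wheel}, and it is a 4-wheel as long as its curvature sum is positive. So it suffices to find such a $g$ for which the curvatures of $gW_0$ are the prescribed $(a,b,c,d)$.

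Let $P_\infty=(0,1,0,0)^T$ be the vector of the point $\infty$. From Definition~\ref{def:pedoedistance}, the curvature of a circle is $u=-2\langle \cir,P_\infty\rangle$. Therefore the curvatures of $gW_0$ are
\[-2\langle g\cir^0_i,P_\infty\rangle=-2\langle \cir^0_i,g^{-1}P_\infty\rangle .\]
Put $q=(a,b,c,d)^T$. By Proposition~\ref{prop:4wheelinvertible}, $W_0$ is invertible, so the linear system $-2\langle \cir^0_i,N\rangle=q_i$ for $i=1,\dots,4$ has the unique solution
\[N=-\tfrac12 M_{\Inv}^{-1}W_0^{-T}q .\]
Since $q\neq 0$ (its entries have positive sum), $N\neq 0$. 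Using $W_0^{-1}M_{\Inv}^{-1}W_0^{-T}=R_{\Eis}^{-1}$ from Section~\ref{sec:eiseqn}, I compute
\[\langle N,N\rangle=\tfrac14\, q^TR_{\Eis}^{-1}q=\tfrac1{12}\left(a^2+b^2+c^2+d^2-2(a+c)(b+d)\right).\]
This vanishes exactly by the Eisenstein equation \eqref{eqn:Eisensteinform}, so $N$ is a nonzero isotropic vector.

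It remains to produce $g\in O(M_{\Inv})$ with $g^{-1}P_\infty=N$, that is, $gN=P_\infty$. The form $M_{\Inv}$ is nondegenerate, and both $N$ and $P_\infty$ are nonzero isotropic vectors. By Witt's extension theorem, the isometry of lines $\RR N\to\RR P_\infty$ sending $N\mapsto P_\infty$ extends to an isometry of all of $\RR^4$; this includes the scaling, since any nonzero isotropic vector has a hyperbolic-pair partner. Concretely, I would first move the point represented by $N$ to $\infty$ by a M\"obius map (or use $-I$ if $N$ is a negative multiple of a point vector). I would then rescale along $\infty$ using the diagonal map $(u,v,p,q)\mapsto(\lambda u,\lambda^{-1}v,p,q)$, which preserves $M_{\Inv}$. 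With this $g$, the matrix $gW_0$ has curvatures $(a,b,c,d)$, and its curvature sum is $a+b+c+d>0$ by hypothesis. So $gW_0$ is a 4-wheel, as required.

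The main obstacle is the base case: exhibiting one explicit 4-wheel $W_0$, that is, four vectors with $\langle \cir,\cir\rangle=1$ and Gram matrix exactly $R_{\Eis}$. Signature considerations alone do not suffice; I would check that $R_{\Eis}$ has signature $(3,1)$ like $M_{\Inv}$. By Sylvester's law, some real $W_0$ then satisfies $W_0^TM_{\Inv}W_0=R_{\Eis}$. Any such $W_0$ works for the argument above, replacing it by $-W_0$ if necessary to make the curvature sum positive, because \eqref{eqn:4wheel} is invariant under $W\mapsto -W$. To check the signature, note that the eigenvalues of $R_{\Eis}$, a circulant matrix, are $1-1-2-1=-3$ and $1+2=3$ with multiplicity three, giving signature $(3,1)$. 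The signature claim, and then the construction of $N$, are the steps to double-check carefully.
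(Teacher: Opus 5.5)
Your proof is correct, but it takes a different route from the paper's. The paper works in the dual picture. It treats $q$ itself as a nonzero isotropic column vector for $R_{\Eis}^{-1}$ and extends it, by a hands-on Gram--Schmidt construction (a hyperbolic partner, then an orthogonal complement), to a matrix $C$ with $C^TR_{\Eis}^{-1}C=M_{\Inv}^{-1}$. The signature of $R_{\Eis}^{-1}$ forces $\delta_1=\delta_2=1$, and taking $W=C^T$ places $q$ literally in the curvature row.

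You instead start from an arbitrary solution $W_0$ of $W_0^TM_{\Inv}W_0=R_{\Eis}$, supplied by Sylvester's law, and move it by some $g\in O(M_{\Inv})$. The key identity is that the curvature row of $gW_0$ depends only on $g^{-1}P_\infty$. The Eisenstein equation then becomes exactly the isotropy of the required vector $N$, and Witt's theorem (transitivity on nonzero isotropic vectors) finishes the argument. All your computations check out, including $N=-\tfrac12 M_{\Inv}^{-1}W_0^{-T}q$ and $\langle N,N\rangle=\tfrac14 q^TR_{\Eis}^{-1}q$.

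The underlying linear algebra is the same: the paper's construction is in effect a proof of Sylvester plus Witt in this instance. The paper's version is self-contained and explicit. Yours is more conceptual and gives more for free. Any two solutions of $W^TM_{\Inv}W=R_{\Eis}$ differ by $W'W^{-1}\in O(M_{\Inv})$, so your identity shows that two 4-wheels with the same curvatures differ by an isometry fixing the vector $P_\infty$, i.e.\ a Euclidean motion. That is the uniqueness half of Proposition~\ref{prop:eisquadruplefrom4wheel}.

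One correction to your signature check. $R_{\Eis}$ is circulant with first row $(1,-1,-2,-1)$, and its eigenvalues are $-3$, $1$, $3$, $3$, with eigenvectors $(1,1,1,1)$, $(1,-1,1,-1)$, $(1,0,-1,0)$, $(0,1,0,-1)$. They are not $-3$ together with $3$ three times: that list has trace $6$, while the trace of $R_{\Eis}$ is $4$. The signature is still $(3,1)$, so nothing downstream is affected.

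Also, the base case is not a genuine obstacle. Sylvester's law of inertia, which you invoke, settles it completely, so your remark that signature considerations alone do not suffice is not right.
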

\begin{proof}
    Let $\langle,\rangle^{R_{\Eis}^{-1}}$ denote the bilinear form corresponding to $R_{\Eis}^{-1}$, and let $c_1\in\RR^4$ denote a non-zero column vector satisfying $\langle c_1, c_1\rangle^{R_{\Eis}^{-1}} = 0$. We claim that there exists $c_2,c_3,c_4\in\RR^4$ so that the matrix $C=\left(\begin{matrix}c_1 & c_2 & c_3 & c_4\end{matrix}\right)$ satisfies $C^TR_{\Eis}^{-1}C=M_{\Inv}^{-1}$. Indeed, if this were true, then take $c_1$ to be the Eisenstein quadruple. The assumption implies that it is non-zero and satisfies the bilinear product being $0$, and let $W=C^T$. Reversing the above computations shows that $W^TM_{\Inv}W = R_{\Eis}$, i.e. $W$ is a 4-wheel with the prescribed curvatures, as desired.

    For the claim, it is equivalent to construct $c_2, c_3, c_4$ such that $\langle c_i, c_j\rangle^{R_{\Eis}^{-1}} = 0$ for all $1\leq i, j\leq 4$, with the exceptions of $\langle c_1, c_2\rangle^{R_{\Eis}^{-1}} = -2$ and $\langle c_3, c_3\rangle^{R_{\Eis}^{-1}} = \langle c_4, c_4\rangle^{R_{\Eis}^{-1}} = 1$. This is accomplished in a similar way to the Gram–Schmidt process.

    First, since $\langle,\rangle^{R_{\Eis}^{-1}}$ is non-degenerate and $c_1\neq 0$, there exists $d\in\RR^4$ for which $\langle c_1,d\rangle^{R_{\Eis}^{-1}}\neq 0$. Take
    \[\lambda = -\frac{\langle d,d\rangle^{R_{\Eis}^{-1}}}{2\langle c_1,d\rangle^{R_{\Eis}^{-1}}},\]
    and let $c_2'=d+\lambda c_1$, so that $\langle c_2', c_2'\rangle^{R_{\Eis}^{-1}}=0$ and $\langle c_1,c_2'\rangle^{R_{\Eis}^{-1}}=\langle c_1,d\rangle^{R_{\Eis}^{-1}}\neq 0$. By appropriately scaling $c_2'$, we produce $c_2$ with $\langle c_1, c_2\rangle^{R_{\Eis}^{-1}}=-2$.

    Next, we can pick non-zero $c_3'$ orthogonal to $c_1, c_2$ (with respect to $\langle,\rangle^{R_{\Eis}^{-1}}$), and finally pick non-zero $c_4'$ orthogonal to $c_1, c_2, c_3'$. By scaling $c_3',c_4'$ appropriately to $c_3,c_4$, we can ensure that $\langle c_i,c_i\rangle^{R_{\Eis}^{-1}}=-1, 0,1$ for $i=3,4$. In particular, we have
    \[C^TR_{\Eis}^{-1}C=\left(\begin{matrix}0 & -2 & 0 & 0\\-2 & 0 & 0 & 0\\0 & 0 & \delta_1 & 0\\0 & 0 & 0 & \delta_2\end{matrix}\right),\]
    with $\delta_i=-1, 0, 1$. Recall that the signature of $R_{\Eis}^{-1}$ is $(3, 1)$, and this is preserved under similarity of quadratic forms. The eigenvalues of the above matrix are $-2, 2, \delta_1, \delta_2$, proving that $\delta_1=\delta_2=1$, as desired.  
\end{proof}

We can now demonstrate that up to symmetry, 4-wheels are completely described by their corresponding Eisenstein quadruples.

\begin{proof}[Proof of Proposition~\ref{prop:eisquadruplefrom4wheel}]
Given a 4-wheel, the work before Remark~\ref{rem:quadruplesymmetries} proves that it produces an Eisenstein quadruple. Proposition~\ref{prop:4wheelgivencurvatures} implies that Eisenstein quadruples all give rise to 4-wheels.

To finish, consider 4-wheels $W=(\cir_1, \cir_2, \cir_3, \cir_4)$ and $W'=(\cir_1', \cir_2', \cir_3', \cir_4')$ which correspond to the same Eisenstein quadruple, hence the corresponding curvatures are equal. As the inversive distance only depends on the relative positions and curvatures of two oriented circles, it follows that there exists a translation and rotation of $W$ that takes $\cir_1$ to $\cir_1'$ and $\cir_3$ to $\cir_3'$. Assume that $\cir_2$ is sent to $\Dcir$ and $\cir_4$ to $\mathcal{F}$. The curvature of $\Dcir$ is fixed, and $\Dcir$ is tangent to disjoint circles $\cir_1'$ and $\cir_3'$. In particular, there are at most two possible locations for $\Dcir$. These locations are related by a reflection across the unique Euclidean line orthogonal to $\cir_1'$ and $\cir_3'$.

Since $\cir_2'$ satisfies the above conditions, after possibly a reflection, we can assume that $\Dcir=\cir_2'$. Finally, we know that $\mathcal{F}$ is tangent to $\cir_1'$ and $\cir_3'$, and has $\langle \cir_2', \mathcal{F}\rangle = -2$. This has at most one solution, and we know that $\cir_4'$ is a solution, whence $\mathcal{F}=\cir_4'$, completing the proof.
\end{proof}

\subsection{Eisenstein swap group}
Recall that the Eisenstein swap group, $\Egp=\langle S_1, S_2, S_3, S_4\rangle$, acts on Eisenstein quadruples $q$ on the right. The equivalence class of $q$ is the orbit $q\Egp$. The four generators of $\Egp$ have order 2, but do not generate a free group, since the pairs $(S_1, S_3)$ commute, as well as $(S_2, S_4)$.

For the moment, take
\[\Epres=\langle S_1, S_2, S_3, S_4 \; | \; S_1S_3S_1^{-1}S_3^{-1},S_2S_4S_2^{-1}S_4^{-1},S_1^2,S_2^2,S_3^2,S_4^2\rangle\]
as an abstract group. There is an obvious homomorphism $\Epres\rightarrow\Egp$ taking $S_i$ to $S_i$, and the claim in Proposition~\ref{prop:eisensteingrouppresentation} is that the kernel is trivial, i.e. $\Egp$ satisfies no more relations. This will be proven at the end of Section~\ref{sec:stationary}, by developing a theory of reduced words for both $\Epres$ and its action on Eisenstein quadruples.

We visualize the Cayley graph of $\Epres$ in Figure~\ref{fig:cayley}. Edges are unoriented as our four generators all have order 2. From this, it is clear how to define a reduced word that links any two elements of $\Epres$.

\begin{figure}[htb]
    \includegraphics[scale=0.65]{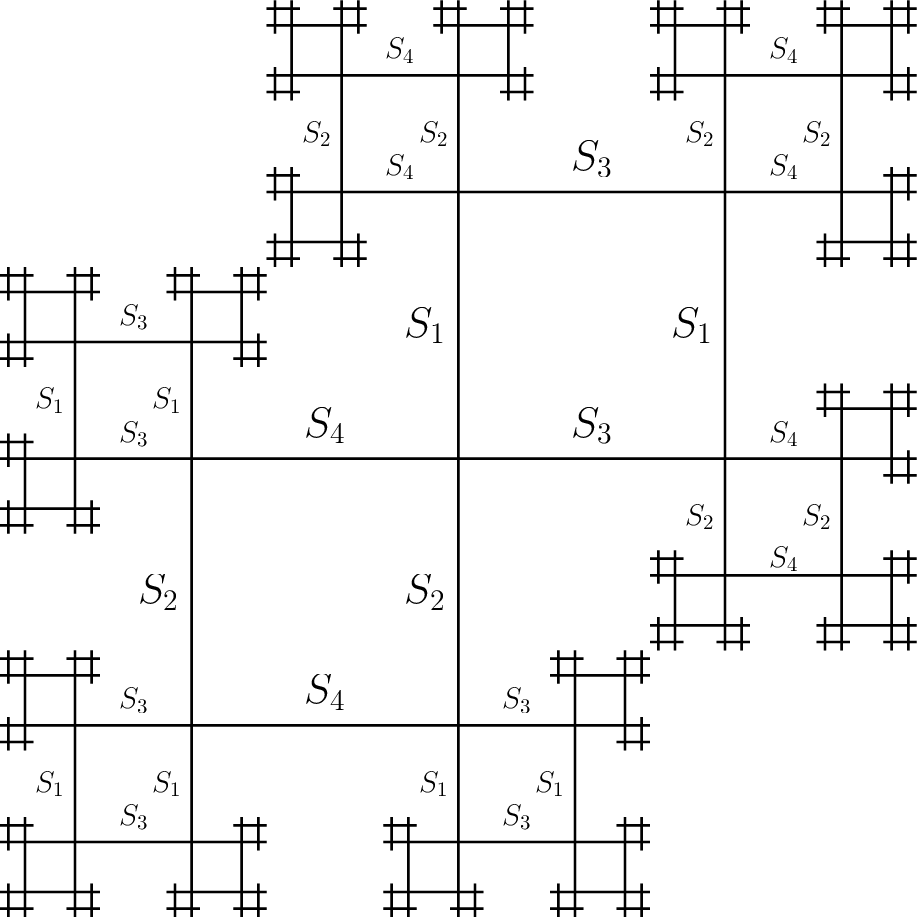}
    \caption{The start of the Cayley graph of $\Epres$.}\label{fig:cayley}
\end{figure}

\begin{definition}
    A \emph{reduced word} for $\Epres$ is a word $W=S_{i_1}S_{i_2}\cdots S_{i_k}$ such that for all $1\leq j\leq k-1$,
    \begin{enumerate}[label={\upshape(\roman*)}]
        \item $i_j\neq i_{j+1}$;
        \item if $i_j=3$, then $i_{j+1}\neq 1$;
        \item if $i_j=4$, then $i_{j+1}\neq 2$.
    \end{enumerate}
\end{definition}

The Cayley graph is comprised of squares, attached at their corners. As we travel along it, if we pass between adjacent corners of a square, there is a unique shortest way to do so. If we travel to an opposite corner, there are two possible choices, but items (ii) and (iii) indicate that we always follow $S_1$ then $S_3$, or $S_2$ then $S_4$. In particular, given two elements $M, N\in\Epres$, there exists a unique reduced word $W$ for which $MW=N$.

\subsection{Reduction theory}\label{sec:reductiontheory}
The action of $\Epres$ on Eisenstein quadruples is not always free. This corresponds to symmetries in the corresponding Eisenstein packing, and must be accounted for.

\begin{definition}
    Let $q=(a, b, c, d)$ be an Eisenstein quadruple. Call the action of a swap on $q$
    \begin{itemize}
        \item \emph{decreasing} if $a+b+c+d$ decreases;
        \item \emph{stationary} if $a+b+c+d$ is unchanged;
        \item \emph{increasing} if $a+b+c+d$ increases.
    \end{itemize}
\end{definition}

In particular, a reduced quadruple is equivalent to none of the four swaps being decreasing.

The existence of a stationary swap indicates a symmetry in the packing: the tuple of curvatures is unchanged, but one curvature-centre moves.

Consider a word $W=S_{i_1}S_{i_2}\cdots S_{i_k}\in\Epres$ acting on $q$. Denote $qS_{i_1}S_{i_2}\cdots S_{i_j}$ by $q_j$.

\begin{definition}\label{def:reducedword}
    The word $W$ is said to be a \emph{reduced word for $q$} if $W$ is reduced in $\Epres$, and for all $1\leq j\leq k-1$, $S_{i_{j+1}}$ is not stationary on $q_j$.
\end{definition}

Given any $q$ and equivalent $q'$ (Definition~\ref{def:equivalentquadruple}), there exists a reduced word $W$ for $q$ for which $qW=q'$. Indeed, start with a word $W'\in\Epres$ with $qW'=q'$, reduce $W'$ in $\Epres$, delete any of the stationary swaps, and repeat. This will eventually terminate, producing the desired reduced word $W$ for $q$.

The following proposition generalizes this observation, and sets out the rules and structure of quadruple reduction. Part (iii) is the statement of Proposition~\ref{prop:atmostonereducedintegralone}.

\begin{proposition}\label{prop:uniquereduction}
    Let $q$ be an Eisenstein quadruple. Then,
    \begin{enumerate}[label={\upshape(\roman*)}]
        \item If $q'$ is an Eisenstein quadruple equivalent to $q$, there is a unique reduced word $W$ for $q$ for which $qW=q'$;
        \item Furthermore, if $q$ is reduced, then $W$ contains only increasing swaps;
        \item There is at most one reduced quadruple equivalent to $q$. If $q$ is integral, there is exactly one.
        \item If $q$ is reduced, then at least one entry of $q$ is non-positive.
    \end{enumerate}
\end{proposition}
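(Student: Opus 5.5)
The plan is to run everything through a \emph{height} function, the sum $s(q)=a+b+c+d$, and one local lemma about which swaps can decrease it. From the formulas for the swaps, the change in height is
\[
s(qS_1)-s(q)=2(b+d-a),\quad s(qS_3)-s(q)=2(b+d-c),\quad s(qS_2)-s(q)=2(a+c-b),\quad s(qS_4)-s(q)=2(a+c-d).
\]
It will also be convenient to rewrite the Eisenstein equation as
\[
(a+c-b-d)^2=2ac+2bd,
\]
and to note that $a$ and $a'=2b+2d-a$ are the two roots of a quadratic in the first coordinate, and similarly for the other coordinates.

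\textbf{Key lemma.} Suppose $q'=qS_i$ and $S_i$ is increasing on $q$. Then on $q'$:
\begin{itemize}
\item $S_i$ is decreasing;
\item the swap commuting with $S_i$ ($S_3$ for $S_1$, etc.) has the same effect on the height as it had on $q$, because its height change does not involve the swapped coordinate;
\item the two non-commuting swaps are strictly increasing.
\end{itemize}
For instance, for $i=1$ the third claim amounts to $a'+c>b$ and $a'+c>d$ with $a'=2b+2d-a>a$. I expect this inequality to be the main obstacle. The first thing I would try is to write $a'+c-b=b+2d+c-a$ and combine $b+d>a$ with the rewritten Eisenstein equation. The rewritten equation controls the sign pattern: $2ac+2bd\ge 0$, and at most one entry is negative, since a single negative circle is the outer circle. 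If this fails to close, the fallback is a geometric argument: the swapped circle is nested inside the region bounded by $\cir_2,\cir_4$.

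\textbf{Parts (ii) and (i).} With the lemma, (ii) follows by induction along a reduced word for a reduced $q$. Each $q_j$ has as its only non-increasing swaps the previous letter and possibly its commuting partner. The partner was non-decreasing at the start, by reducedness, and it keeps that status as long as it is unused. Stationary swaps are excluded by Definition~\ref{def:reducedword}. Hence every letter is increasing.

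For (i), existence was already shown before the statement. For uniqueness, I would take two distinct reduced words $W,W'$ with $qW=qW'=q'$ and form the closed walk $W(W')^{-1}$ in the orbit. At a vertex of maximal height on this walk, both incident steps go down, so two decreasing swaps occur at a single quadruple. By the lemma, applied backwards from the vertex's predecessor, this forces the two swaps to be equal or to form a commuting pair $\{S_1,S_3\}$ or $\{S_2,S_4\}$. In the first case the word is not reduced. In the second case the normal-form rules (ii)--(iii) of reduced words fix the order of the pair, so the two words coincide there. Either case contradicts distinctness. Stationary vertices, which give plateaus, are handled by the rule that reduced words for $q$ contain no stationary letters.

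\textbf{Parts (iii) and (iv).} For (iii), let $q,q'$ be reduced and equivalent, and take the reduced word $W$ with $qW=q'$. By (ii), $W$ is entirely increasing, so $s(q')>s(q)$ unless $W$ is empty. The same argument from $q'$ gives $s(q)>s(q')$, so $W$ is empty and $q=q'$. For integral $q$, perform decreasing swaps while possible. The height is a positive integer that strictly decreases, so the process terminates at a reduced quadruple.

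For (iv), suppose all entries are positive and $q$ is reduced, so $a,c\le b+d$ and $b,d\le a+c$. By symmetry assume $a+c\ge b+d$ and $a\ge c$. Then $(a+c-b-d)^2=2ac+2bd$, combined with these bounds, should force a contradiction: for example, $a\le b+d$ gives $a+c-b-d\le c$, whence $c^2\ge 2ac+2bd>2c^2$ when $a\ge c$.
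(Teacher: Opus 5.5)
Your architecture mostly follows the paper. Your key lemma is Lemma~\ref{lem:tworeductive} plus the remark after it, your (ii) is the paper's propagation argument, and (iii) is identical. Your (iv) is a different and cleaner route: the paper rearranges to $c(c-a)+d(d-b)=(a+c-b)(b+d)+(a+c)(b+d-a)$ and needs Lemma~\ref{lem:pairsnonneg}, whereas your use of $(a+c-b-d)^2=2ac+2bd$ with $a+c\ge b+d$, $a\ge c$ is complete on its own.

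The first gap is that the third bullet of your key lemma, on which (i) and (ii) rest, is never proved, and the ingredients you list do not suffice. Your rewriting gives $a'+c-b=(b+d-a)+(c+d)$ and $a'+c-d=(b+d-a)+(b+c)$, so you actually need $c+d\ge 0$ and $b+c\ge 0$. The fact that at most one entry is negative does not give this: if $c<0<d$ you still need $d\ge -c$. You can either prove adjacent-pair nonnegativity as in Lemma~\ref{lem:pairsnonneg}, or argue as the paper does. On $q'=(a',b,c,d)$, suppose $S_1$ is decreasing and $S_2$ is non-increasing. Then $a'>b+d$ and $b\ge a'+c$, so $0<a'+c\le b$ and $0<b+d<a'$. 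The Eisenstein equation then gives $a'^2+b^2+c^2+d^2=2(a'+c)(b+d)<2a'b$, i.e. $(a'-b)^2+c^2+d^2<0$, a contradiction.

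The more serious gap is the uniqueness argument in (i), which fails in two places.

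\emph{Junctions.} At the junctions $q$ and $q'$ of $W(W')^{-1}$, equal incident letters are no contradiction, since $W$ and $W'$ may legitimately share a first or last letter. You must first reduce $W^{-1}W'$ to a nontrivial (cyclically) reduced word in $\Epres$, noting that commuting moves do not change which letters are stationary, or else induct on length.

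\emph{Commuting-pair peaks.} The normal-form rules do not exclude a peak at a commuting pair, because a peak can sit strictly inside a block $S_1S_3$ of a single reduced word. For example, $r=(-3,5,14,10)$ is reduced. The reduced word $S_2S_1S_3S_2$ applied to $rS_3S_2=(-3,21,16,10)$ passes through heights $44,28,64,62,146$, and at $(33,5,16,10)$ both $S_1$ and $S_3$ are decreasing. So your claim that the two words coincide there does not follow.

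The missing idea is the propagation principle your key lemma supplies: once an entire block is increasing, every later block is increasing, so a nontrivial reduced closed walk without stationary letters can never return to its start.
\begin{itemize}
\item The paper applies this from a minimum-height vertex (Lemma~\ref{lem:reducedwordnontrivial}). There the first letter is increasing, and if the first block is a commuting pair, the commuted walk is again a counterexample, so both letters are increasing.
\item Your maximum-based approach can be salvaged the same way. At such a peak $v$, the neighbour $u=vS_1$ has $S_3$ decreasing, hence both red swaps strictly increasing. So the reduced closed walk, traced backward from $u$, begins with a fully increasing block. From then on it increases strictly, yet it must eventually step down from $v$ to $u$, which is a contradiction.
\end{itemize}
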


In particular, this demonstrates that the stationary swaps account for the only repetition of Eisenstein quadruples in $q\Epres$. By finding an equivalence class with no stationary swaps (which we will do in the next section), Proposition~\ref{prop:eisensteingrouppresentation} (that $\Egp = \Epres$) will follow.

Key to Proposition~\ref{prop:uniquereduction} is the good behaviour of decreasing/increasing swaps on \emph{Eisenstein quadruples} with relation to each other. For example, consider $q=(1, 1, -1, -1)$, which is \emph{not} an Eisenstein quadruple. We find that $q(S_1S_3S_2S_4)^2=q$, despite $(S_1S_3S_2S_4)^2$ being a non-trivial reduced word for $q$, contradicting part (i).

We start the proof with a series of lemmas.

\begin{lemma}\label{lem:pairsnonneg}
    Assume $(a, b, c, d)$ is an Eisenstein quadruple. Then
    \[a+c,b+d>0,\qquad a+b,a+d,b+c,c+d\geq 0.\]
\end{lemma}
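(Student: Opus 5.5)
The plan is to separate the quadruple into its two "opposite" pairs, handle the strict inequalities $a+c>0$ and $b+d>0$ by an averaging argument, and then treat the adjacent sums by a discriminant argument. Throughout I will use the $D_8$-symmetry of Remark~\ref{rem:quadruplesymmetries} to reduce the number of cases.

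\emph{Opposite pairs.} Put $x=a+c$ and $y=b+d$, so the Eisenstein equation reads $a^2+b^2+c^2+d^2=2xy$. By the inequality between quadratic and arithmetic means, $a^2+c^2\geq x^2/2$ and $b^2+d^2\geq y^2/2$. Hence $2xy\geq (x^2+y^2)/2\geq 0$, so $xy\geq 0$. Together with $x+y>0$ this forces $x,y\geq 0$. If, say, $x=0$, the same inequality gives $y^2/2\leq 0$, so $y=0$, contradicting $x+y>0$. Therefore $x,y>0$.

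\emph{Adjacent pairs.} The four adjacent sums $a+b$, $b+c$, $c+d$, $d+a$ are permuted transitively by the $D_8$-symmetries, which preserve the equation and the positivity of the total sum. It therefore suffices to prove $a+b\geq 0$. Suppose instead that $a+b<0$. Then one of $a,b$ is negative. Since the symmetry $(a,c)\leftrightarrow(b,d)$ maps the pair $\{a,b\}$ to itself, we may assume $a<0$; write $a=-s$ with $s>0$, so that $b<s$.

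From $b+d>0$ we get $d>-b>-s$, but what we actually need is the sign of $b$. I plan to obtain it from the identity
\[(x-y)^2=2(ac+bd),\]
which follows by expanding $x^2$ and $y^2$ in the Eisenstein equation. Since $c=x-a>s>0$, we have $ac<0$, so $bd>0$. As $b+d>0$, this gives $b,d>0$, and hence $0<b<s$.

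Now regard the Eisenstein equation as a quadratic in $c$:
\[c^2-2(b+d)c+\bigl(s^2+b^2+d^2+2s(b+d)\bigr)=0.\]
A quarter of its discriminant is
\[(b+d)^2-s^2-b^2-d^2-2sb-2sd \;=\; 2d(b-s)-s^2-2sb.\]
Because $d>0$ and $b<s$, the first term is negative, and $-s^2-2sb<0$ as well, so the whole discriminant is negative. Then no real $c$ satisfies the equation, which is a contradiction. Hence $a+b\geq 0$, and by symmetry all four adjacent sums are non-negative.

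The step I expect to be the main obstacle is pinning down the signs in the adjacent-sum case before the discriminant computation. The key input is the identity $(x-y)^2=2(ac+bd)$, which forces $bd>0$ and hence $b,d>0$; without it, the sign of the term $2d(b-s)$ is not controlled. The remaining work is routine: checking that the $D_8$-reductions are legitimate and expanding the discriminant.
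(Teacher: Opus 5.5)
Your proof is correct, and it differs from the paper's mainly in how you handle the adjacent sums.

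\textbf{Opposite sums.} For $a+c,\,b+d>0$ both arguments use only that $2(a+c)(b+d)$ equals a sum of squares. The paper gets strictness from the fact that $a^2+b^2+c^2+d^2=0$ forces all four entries to vanish; your mean inequality does the same job.

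\textbf{Adjacent sums.} For $a+b\geq 0$ the paper splits into two cases:
\begin{itemize}
\item if $a,b<0$, it uses $0<a+c<c$ and $0<b+d<d$ to reach $a^2+b^2+(c-d)^2<0$;
\item if $a<0\leq b$, it rearranges the Eisenstein equation to $(a+c-d)^2+b^2=2c(a+b)+2ab\leq 0$.
\end{itemize}
You instead use the identity $(a+c-b-d)^2=2(ac+bd)$. It shows that a negative $a$ forces $b,d>0$, which disposes of the ``both negative'' case at once and fixes every sign. You then finish by completing the square in $c$ via the discriminant.

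\textbf{Comparison.} Your route is more uniform, and the identity is a useful structural fact in its own right: it gives $ac+bd\geq 0$ for every real solution of the Eisenstein equation. The paper's route avoids that extra identity at the cost of a case split. Once you know $a<0<b$ and $c>0$, the paper's rearrangement gives $(a+c-d)^2+b^2=2c(a+b)+2ab<0$ immediately. So your discriminant computation is one of several equally short ways to close the argument.
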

\begin{proof}
    Recall the Eisenstein equation,
    \[a^2+b^2+c^2+d^2=2(a+c)(b+d).\]
    It follows that $(a+c)(b+d)\geq 0$. If equality holds, then $a^2+b^2+c^2+d^2=0$, whence $a=b=c=d=0$, contradiction. Therefore $(a+c)(b+d)>0$, so the two factors have the same sign. As $a+b+c+d>0$, this sign is positive.

    To finish the claim, by symmetry, it suffices to show that $a+b<0$ is impossible. If both $a< 0$ and $b< 0$, then $0<a+c<c$ and $0<b+d< d$. Thus $a^2+b^2+c^2+d^2<2cd$, which rearranges to
    \[a^2+b^2+(c-d)^2< 0,\]
    contradiction. Otherwise, we may assume that $a<0$ and $b\geq 0$, and the Eisenstein equation rearranges to
    \[(a+c-d)^2+b^2=2ab+2ac+2bc=2c(a+b)+2ab\leq 0.\]
    This implies equality with zero, whence $b=c=0$ and $a=d$, so $a+b+c+d=2a>0$, contradiction.
\end{proof}

Lemma~\ref{lem:pairsnonneg} implies that there do not exist multiple negative curvatures in a packing, consistent with there being at most one ``outer circle.''

\begin{lemma}\label{lem:tworeductive}
    If there exists two swaps that are not increasing, they must be either $S_1$ and $S_3$, or $S_2$ and $S_4$, with one exception: the $V_4-$orbit of $(a, a, 0, 0)$ for $a> 0$. In this case, both are stationary.  
\end{lemma}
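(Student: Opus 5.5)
The plan is to translate ``not increasing'' into linear inequalities and then play them off against Lemma~\ref{lem:pairsnonneg} and the Eisenstein equation. Swap $S_1$ changes the sum by $2(b+d-a)$, so $S_1$ is not increasing exactly when $a\geq b+d$. Likewise $S_3$ is not increasing when $c\geq b+d$, $S_2$ when $b\geq a+c$, and $S_4$ when $d\geq a+c$. The dihedral symmetry of Remark~\ref{rem:quadruplesymmetries} permutes these four conditions. So it suffices to treat one ``mixed'' pair, namely one swap from $\{S_1,S_3\}$ together with one from $\{S_2,S_4\}$. I will take $S_1$ and $S_2$ both not increasing, i.e.
\[a\geq b+d,\qquad b\geq a+c.\]
The goal is to show that this forces the $V_4$-orbit exception.

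The first step is to add the two inequalities. This gives $a+b\geq a+b+c+d$, i.e. $c+d\leq 0$. Lemma~\ref{lem:pairsnonneg} gives $c+d\geq 0$, so $c+d=0$ and both inequalities must be equalities: $a=b+d$ and $b=a+c$. These two relations are equivalent to each other given $d=-c$, so we are left with $d=-c$ and $a=b-c$.

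Next, substitute into the Eisenstein equation. Here $a+c=b$ and $b+d=b-c$, so the equation reads
\[(b-c)^2+b^2+2c^2=2b(b-c).\]
This simplifies to $3c^2=0$, so $c=d=0$ and $a=b$. Positivity of the sum then gives $a>0$. Hence the quadruple is $(a,a,0,0)$, and since both inequalities were equalities, both swaps are stationary.

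Finally, the other mixed pairs ($S_1S_4$, $S_3S_2$, $S_3S_4$) are obtained by applying the $V_4$ symmetries, namely swapping $a\leftrightarrow c$ and/or $b\leftrightarrow d$. This is why the exception is exactly the $V_4$-orbit of $(a,a,0,0)$. The same bookkeeping confirms that in every case it is the two offending swaps themselves that are stationary.

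I do not expect a real obstacle here. The only care needed is in checking that the symmetry reduction correctly maps the swap conditions to one another. The key ingredient is the nonnegativity $c+d\geq 0$ from Lemma~\ref{lem:pairsnonneg}.
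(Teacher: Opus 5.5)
Your proof is correct and follows the paper's route. Both arguments reduce via the $V_4$ symmetry to $S_1,S_2$, rewrite these as $a\ge b+d$ and $b\ge a+c$, and combine this with Lemma~\ref{lem:pairsnonneg} and the Eisenstein equation. The only difference is cosmetic: the paper multiplies the two inequalities (using $a+c,b+d>0$) to get $(a-b)^2+c^2+d^2\le 0$ directly. You instead add them and use $c+d\ge 0$ to force equality first, which has the small bonus of exhibiting stationarity immediately.
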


Note that at least two swaps are increasing from any quadruple.

\begin{proof}
    Assume otherwise. By applying a $V_4-$permutation, we can assume that $S_1$ and $S_2$ are both decreasing or stationary. This is equivalent to $2b+2d-a\leq a$ and $2a+2c-b\leq b$, hence $a\geq b+d$ and $b\geq a+c$. By the Eisenstein equation,
    \[a^2+b^2+c^2+d^2=2(a+c)(b+d)\leq 2ab,\]
    so $(a-b)^2+c^2+d^2\leq 0$. This is equivalent to $a=b$ and $c=d=0$, the sole exception.
\end{proof}

This lemma is key to the reduction process, as it essentially demonstrates that there is a unique way to reduce (save for the commuting swaps). In particular, if $S=S_1$ or $S_3$ is increasing for $q$ (hence $S$ is decreasing for $qS$), then $S_2$ and $S_4$ are increasing for $qS$. Similarly, if $S=S_2$ or $S_4$ is increasing for $q$, then $S_1$ and $S_3$ are increasing for $qS$.

\begin{lemma}\label{lem:reducedwordnontrivial}
    There does not exist a non-trivial reduced word $W$ for $q$ for which $qW=q$.
\end{lemma}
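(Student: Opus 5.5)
Suppose for contradiction that $W=S_{i_1}\cdots S_{i_k}$ is a non-trivial reduced word for $q$ with $qW=q$, and write $q_0=q$, $q_j=qS_{i_1}\cdots S_{i_j}$, and $\sigma_j$ for the coordinate sum of $q_j$. The plan is a ``no return'' argument on the sequence $\sigma_0,\sigma_1,\dots,\sigma_k$. Since $W$ is reduced for $q$, no step after the first is stationary. So each step $j\ge 2$ strictly changes $\sigma$, and the first step also does unless $S_{i_1}$ is stationary on $q$. We will show that once the sum starts increasing, it keeps increasing. Combined with an argument that the sequence must start increasing (or start by going down and then can only turn upward once), this will contradict $\sigma_k=\sigma_0$.

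\textbf{Step 1 (monotonicity after an increase).} Suppose step $j$ is increasing, i.e.\ $S_{i_j}$ is increasing on $q_{j-1}$. Then $S_{i_j}$ is decreasing on $q_j$. By the remark following Lemma~\ref{lem:tworeductive}, if $i_j\in\{1,3\}$ then $S_2,S_4$ are increasing on $q_j$, and if $i_j\in\{2,4\}$ then $S_1,S_3$ are. So the only possible non-increasing next swaps are $S_{i_j}$ itself, excluded by rule (i) of reduced words, and the partner $S_{i_j\pm 2}$.

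The partner needs care. If $i_j=1$, the next letter cannot be $S_3$? Rule (ii) forbids only $3\to 1$, so $1\to 3$ is allowed. The key observation is that $S_1$ and $S_3$ commute and change different coordinates. The effect of $S_3$ on $c$ depends only on $b,d$, which $S_1$ leaves unchanged. Hence whether $S_3$ is increasing is the same on $q_{j-1}$ and $q_j$. Applying Lemma~\ref{lem:tworeductive} at $q_{j-1}$, where $S_1$ was increasing, does not directly settle $S_3$, so we run a joint case analysis:
\begin{itemize}
\item If $S_3$ was also increasing on $q_{j-1}$, it is increasing on $q_j$, which is what we want.
\item If $S_3$ was decreasing or stationary on $q_{j-1}$, we show below that this configuration cannot occur after a prior increase.
\end{itemize}
Cleaner is to prove the invariant by induction: after any increasing step, every allowed next letter is increasing. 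For step $j+1$ with letter $S_{i_j+2}$, its increasing status equals its status at $q_{j-1}$. If step $j$ was preceded by an increasing step $j-1$, the induction gives the result. This leaves the initial segment to handle.

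\textbf{Step 2 (initial segment).} Before the first increasing step, all steps are decreasing, with the possible exception of a stationary first step. By Lemma~\ref{lem:tworeductive}, at most two swaps are non-increasing, and they are a commuting pair, except in the $(a,a,0,0)$ case, which we treat directly. So the sequence looks like
\begin{itemize}
\item some decreasing steps,
\item then possibly one partner step, whose status is preserved by commutation,
\item then increasing steps only.
\end{itemize}
Hence $\sigma$ is strictly decreasing and then strictly increasing, i.e.\ unimodal. The sum $\sigma$ changes by $\pm$ the amount the swapped coordinate changes, and this change is nonzero except possibly at the first step.

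\textbf{Step 3 (contradiction).} For $\sigma_k=\sigma_0$ with a unimodal, strictly monotone-on-each-side sequence, there must be both a decrease and an increase. We then exploit that the turning point is a reduced quadruple, which is unique by the reduction structure. Let $m$ be the index of the minimum. Read the two halves as reduced words from $q_m$: the reverse of the first half and the second half. Both consist only of increasing swaps and both reach $q$. Lemma~\ref{lem:tworeductive} at each stage forces the swap choice up to a commuting pair, since only swaps that undo an increase are decreasing, and the words are reduced in $\Epres$. From this we deduce that the two words coincide, so $W$ is trivial in $\Epres$, a contradiction.

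I expect the main obstacle to be this last uniqueness step together with the stationary and $(a,a,0,0)$ edge cases. The reason is that uniqueness of increasing paths between two quadruples requires knowing that distinct reduced increasing words give distinct quadruples. I would try to get this from the sum strictly increasing along a tree-like branching: at each vertex at most one swap decreases, modulo the commuting pair.
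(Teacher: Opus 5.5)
Your Step~1 is the same block-propagation argument the paper uses. Step~3, where the contradiction is supposed to come from, is not proved, and as written it is circular.

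You appeal to two facts. The first is that the turning point $q_m$ is ``a reduced quadruple, which is unique by the reduction structure''. The second is that ``distinct reduced increasing words give distinct quadruples''. In the paper, uniqueness of the reduced quadruple and of the reduced word joining two quadruples (Proposition~\ref{prop:uniquereduction}(i),(iii)) are \emph{derived from} this lemma. Your second fact is the lemma itself restated: a non-trivial closed reduced walk is the same thing as two different reduced words with a common endpoint. Moreover, $q_m$ is only the minimum of the walk and need not be reduced. For example, one can descend via $S_1$ into $(-3,5,14,12)$ and leave via $S_2$, although $S_4$ is decreasing there.

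A non-circular version of your idea would peel the last block off both increasing paths from $q_m$ to $q$ and induct on the sum. That needs an argument you do not give. When two swaps $S_i,S_j$ of the same colour are both decreasing at $q$, you must rule out one path ending in $S_i$ alone while the other ends in $S_j$ alone or in both. This uses the remark after Lemma~\ref{lem:tworeductive}: an increasing swap of the other colour into $qS_i$ forces $S_j$ to be increasing at $qS_i$, whereas $S_j$ has the same status at $qS_i$ as at $q$. As it stands, the decisive step is asserted rather than proved.

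The two-halves comparison is also unnecessary. The paper takes a counterexample $(q,W)$ minimizing $a+b+c+d$ over all counterexamples. Every vertex of the closed walk is again a counterexample, so the first swap is increasing: it cannot be stationary, and a decreasing one would give a counterexample of smaller sum. If the first block is $S_iS_j$, then $S_j$ is increasing as well, because $qS_j$ is a vertex of the commuted walk. Your Step~1 then forces every later swap to be increasing, and a strictly increasing walk cannot return to $q$. In your language: simply start the closed walk at a vertex of minimal sum.

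Two smaller points.
\begin{enumerate}
\item You allow a stationary first swap, but then $q=(0,0,1,1)$ with $W=S_3$ satisfies $qW=q$. So the lemma is only true if stationary swaps are excluded at every position, including the first, which is how the paper's proof reads the definition.
\item Your sum sequence is not literally unimodal. An increasing $S_1$ may be followed by a decreasing $S_3$, since $S_3$ inherits its status from before the $S_1$ step, so that commuting pair must be reordered first.
\end{enumerate}
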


\begin{proof}
    Assume otherwise, and across all possible counterexamples $(q, W)$, pick one that minimizes $a+b+c+d$ (where $q=(a, b, c, d)$).  (Recall that all Eisenstein quadruples have $a+b+c+d > 0$, so a minimal counterexample exists.) Note that every quadruple visited in the sequence from $q$ to $qW$ will be a counterexample (as this is a closed path), so the quadruple sum cannot be smaller than that of $q$.

    Call a $S_1$ or $S_3$ swap a \emph{blue swap}, and a $S_2$ or $S_4$ swap a \emph{red swap}. Consider $W$ as broken up into blocks of blue and red swaps. Each block will have 1 or 2 swaps. The first swap $S$ must be increasing, as stationary swaps do not occur in reduced words, and a decreasing swap would produce a smaller quadruple sum counterexample. If the first block has length 2, say $S_iS_j$, then $S_j$ must also be increasing. Indeed, since $S_iS_j=S_jS_i$, the form $qS_j$ is also a counterexample to the lemma.

    Thus, the first block is only increasing swaps. By Lemma~\ref{lem:tworeductive} (and the subsequent comment), all swaps in the next block are increasing. This continues inductively, and we see that every swap is increasing, so it was impossible to end back at $q=qW$, contradiction.
\end{proof}

We can now proceed to the proof of Proposition~\ref{prop:uniquereduction}.

\begin{proof}[Proof of Proposition~\ref{prop:uniquereduction}]
    Let $q'$ be equivalent to $q$. We have already noted that there exists a reduced word $W$ for $q$ for which $qW=q'$, so assume there exists a second distinct one, $W'$. By placing $W$ and $W'$ in the Cayley graph, we see that there exists a non-trivial reduced word $W''$ in $\Epres$ for which $q'W''=q'$. As this was built from $W$ and $W'$, none of the swaps are stationary, so it is still reduced for $q'$, contradicting Lemma~\ref{lem:reducedwordnontrivial}. This completes (i).

    Part (ii) follows from the proof of Lemma~\ref{lem:reducedwordnontrivial}. The first swap(s) must be increasing (as $q$ is reduced), and all subsequent swaps are therefore forced to be increasing as well.

    For (iii), the existence of at most one reduced quadruple follows from (i) and (ii). If $q$ is integral, applying a reductive swap decreases $a+b+c+d$. Repeat the process; since this term is integral and non-negative, we eventually terminate at a reduced quadruple.

    For (iv), assume that $(a, b, c, d)$ is reduced with $a,b,c,d>0$. Without loss of generality, it can be assumed that $a\geq c$ and $b\geq d$. Since $S_1$ and $S_2$ are not reductive, we obtain $b+d\geq a$ and $a+c\geq b$. The Eisenstein equation rearranges as
    \[c(c-a) + d(d-b) = (a+c-b)(b+d) + (a+c)(b + d - a).\]
    Our assumptions imply the the left hand side is $\leq 0$, and the assumptions plus Lemma~\ref{lem:pairsnonneg} implies the right hand side is $\geq 0$. Equality thus holds, which further gives $a=c$, $b=d$, $a+c=b$, $b+d=a$, hence $a=b=c=d=0$, contradiction. Part (iv) follows.
\end{proof}

We can also settle Proposition~\ref{prop:threesmallestcurvatures}.

\begin{proof}[Proof of Proposition~\ref{prop:threesmallestcurvatures}]
    Assume that $q=(a, b, c, d)$ is reduced with $a=\min\{a, b, c, d\}$. By Proposition~\ref{prop:uniquereduction}(iv), it follows that $a\leq 0$, and from Lemma~\ref{lem:pairsnonneg}, $b,c,d\geq 0$. Since $S_2$ and $S_4$ are not decreasing on $q$, $c\geq a+c\geq b\geq a$, and similarly $c\geq a+c\geq d$, whence $c=\max\{a, b, c, d\}$, so $a, b, d$ are the smallest three curvatures of $q$.  Now, we show that no other curvature in the packing are smaller.
    
    Let $M=\max(b, d)$, and we claim that when any swap $S_i$ is applied to this initial quadruple, the new curvature will be $\geq M$.
    
    For $S_1$, since it is not decreasing, $2b+2d-a\geq a$, which rearranges to
    \[2b+2d-a\geq b+d\geq M.\]
    For $S_2$, we know that $a+c\geq b, d$ (as above), hence $2a+2c-b\geq b$, and (by adding them) $2a+2c-b\geq d$, whence $2a+2c-b\geq M$. For $S_3$ the claim is immediate as $c\geq M$, and $S_4$ follows analogously to $S_2$.

    In general, any equivalent quadruple $q'=(a',b',c',d')$ is obtained from $q$ by a sequence of increasing swaps. By induction, we claim that every swap produces a curvature $\geq M$. The base case was given above. Next, if we are executing $S_i$, and have previously used $S_i$ to get to $q'$, then the new curvature is $\geq M$ by induction and the fact that $S_i$ is increasing. Otherwise, it is our first time swapping $S_i$. If $i=1$, then we have
    \[2(b'+d')-a'=2(b'+d')-a\geq 2(b+d)-a,\]
    the first entry of $q\cdot S_1$, which is $\geq M$ by the base case. The other indices $i$ are analogous.
\end{proof}

\subsection{Stationary swaps}\label{sec:stationary}
Lemma~\ref{lem:tworeductive} severely limits the possible stationary swaps.

\begin{lemma}\label{lem:stationaryreduced}
    Assume that $S_1$ is a stationary swap for $q$. Then either $q$ or $qS_3$ is reduced. Analogous results hold for $S_2, S_3, S_4$ being stationary.
\end{lemma}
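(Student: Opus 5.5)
Assume $S_1$ is stationary for $q=(a,b,c,d)$, so that $2b+2d-a=a$, i.e. $a=b+d$. The plan is to use Lemma~\ref{lem:tworeductive} to constrain the other swaps of $q$. Since $S_1$ is not increasing, Lemma~\ref{lem:tworeductive} says that any other non-increasing swap is $S_3$, except in the $V_4$-orbit of $(e,e,0,0)$. We treat that exceptional family separately. Its members with $S_1$ stationary are $(e,e,0,0)$ and $(e,0,0,e)$, where $a=b+d$ holds; for $(0,e,e,0)$ and $(0,0,e,e)$, stationarity of $S_1$ would force $b+d=0$, contradicting Lemma~\ref{lem:pairsnonneg}. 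In each of these cases one checks directly that $q$ is reduced: the swaps give values $a$, $2a-b$, $2b+2d$, $2a-d$, none of which decreases the sum.

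Outside the exceptional family, $S_2$ and $S_4$ are increasing on $q$. So $q$ is reduced unless $S_3$ is decreasing on $q$. Suppose $S_3$ is decreasing, and set $q'=qS_3=(a,b,c',d)$ with $c'=2b+2d-c$. We must show $q'$ is reduced. Four facts do this.
\begin{itemize}
\item $S_3$ is increasing on $q'$, since it undoes a decreasing swap.
\item $S_1$ is still stationary on $q'$, because $S_1$ only involves $a,b,d$, and these are unchanged.
\item Suppose $S_2$ or $S_4$ were non-increasing on $q'$. Then $q'$ would have two non-increasing swaps, one from $\{S_1\}$ and one from $\{S_2,S_4\}$. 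By Lemma~\ref{lem:tworeductive}, $q'$ would then lie in the exceptional $V_4$-orbit, where the two non-increasing swaps are both stationary.
\item In that case, with $S_1$ stationary on $q'$, the previous paragraph forces $q'=(e,e,0,0)$ or $(e,0,0,e)$. So $c'=0$ and $b+d=e$, giving $c=2(b+d)-c'=2e$. But $S_3$ decreasing on $q$ means $c>b+d=e$, which is consistent, so no contradiction arises yet.
\end{itemize}
We then check directly whether $q=(e,e,2e,0)$ or $(e,0,2e,e)$ is an Eisenstein quadruple. For $(e,e,2e,0)$ we get $e^2+e^2+4e^2=6e^2$ and $2(3e)(e)=6e^2$, so it is one. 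In that case $q'=(e,e,0,0)$, which is reduced (all its swaps are non-decreasing). The same holds for the other form. Hence $q'$ is reduced in every case: either $S_2$ and $S_4$ are increasing on $q'$ and we are done by the first three facts, or $q'$ lies in the exceptional orbit and was shown reduced directly.

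The statements for $S_2$, $S_3$, $S_4$ stationary follow from the $D_8$-symmetry of Remark~\ref{rem:quadruplesymmetries}, which permutes the swaps correspondingly; for example $S_3$ stationary pairs with $qS_1$. The main obstacle I expect is the bookkeeping around the exceptional orbit $(e,e,0,0)$, where Lemma~\ref{lem:tworeductive} gives no information. The direct checks above must be carried out carefully, including which members of the orbit can have $S_1$ stationary.
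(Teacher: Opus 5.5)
Your proof is correct and follows essentially the paper's argument: Lemma~\ref{lem:tworeductive} prevents $S_2$ and $S_4$ from being decreasing. So either $q$ is reduced, or $S_3$ is decreasing and you pass to $qS_3$, where $S_1$ stays stationary and $S_3$ becomes increasing. Your separate treatment of the exceptional $V_4$-orbit, including re-checking that $q$ is an Eisenstein quadruple (which is given), is unnecessary: as you note yourself, Lemma~\ref{lem:tworeductive} already says both non-increasing swaps there are stationary, so $S_2$ and $S_4$ are non-decreasing in every case, and this is exactly the shortcut the paper takes.
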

\begin{proof}
    By Lemma~\ref{lem:tworeductive}, neither $S_2$ nor $S_4$ can be decreasing. If $S_3$ is not decreasing then $q$ is reduced, and otherwise, $qS_3$ is reduced (since, taking $qS_3$ in place of $q$, we still have that $S_1$ is stationary and neither $S_2$ nor $S_4$ can be decreasing). 
\end{proof}

Thus, it suffices to check the reduced quadruple for stationary swaps.

\begin{corollary}\label{cor:stationary}
    Assume $q=(a, b, c, d)$ is reduced, with $a,c\neq b+d$ and $b,d\neq a+c$. Then no swap applied to any Eisenstein quadruple equivalent to $q$ is stationary.
\end{corollary}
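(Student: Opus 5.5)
We argue by contradiction: suppose some quadruple $q'$ equivalent to $q$ admits a stationary swap. We will transport this stationary swap back to a reduced quadruple. By Lemma~\ref{lem:stationaryreduced}, if $S_i$ is stationary on $q'$, then either $q'$ or $q'S_j$ is reduced, where $S_j$ is the swap commuting with $S_i$ ($j=i\pm 2$).

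First we check that stationarity survives this passage. Since $S_i$ and $S_j$ commute and change different coordinates, the $i$-th entry of $q'S_j$ is the $i$-th entry of $q'$. The quantity controlling $S_i$ depends only on the two coordinates not in $\{i,j\}$, and $S_j$ does not alter those either. For instance, when $i=1$ the controlling quantity is $2(b'+d')-2a'$, which involves only entries $1,2,4$, and $S_3$ changes only entry $3$. So $S_i$ remains stationary on $q'S_j$. In either case we obtain a reduced quadruple $r$, equivalent to $q$, on which $S_i$ is stationary.

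Next, by Proposition~\ref{prop:uniquereduction}(iii) there is at most one reduced quadruple in the equivalence class, so $r=q$. We now translate stationarity into the excluded equalities:
\begin{itemize}
\item $S_1$ stationary on $q$ means $2b+2d-a=a$, i.e. $a=b+d$;
\item $S_3$ stationary means $c=b+d$;
\item $S_2$ stationary means $b=a+c$;
\item $S_4$ stationary means $d=a+c$.
\end{itemize}
All four are excluded by hypothesis, which is the contradiction.

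The only delicate point is the first step: confirming that Lemma~\ref{lem:stationaryreduced} really produces a reduced quadruple on which the same $S_i$ is stationary, rather than merely some reduced quadruple. This reduces to the coordinate bookkeeping above, which relies on the commuting pair acting on disjoint coordinates and $S_i$'s condition ignoring coordinate $j$.
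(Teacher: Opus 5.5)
Your proof is correct and follows the paper's argument. The paper likewise uses Lemma~\ref{lem:stationaryreduced} to transport any stationary swap back to the unique reduced quadruple from Proposition~\ref{prop:uniquereduction}(iii), where it would force one of the excluded equalities. Your explicit check that $S_i$ remains stationary on $q'S_j$ spells out a step the paper leaves implicit.
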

\begin{proof}
Stationary swaps correspond to $a,c=b+d$ or $b,d=a+c$. By Lemma~\ref{lem:stationaryreduced}, any such occurrences must also occur in the reduced quadruple (the unique one guaranteed by Proposition~\ref{prop:uniquereduction}(iii)).
\end{proof}

The quadruple $q=(-3, 5, 14, 10)$ is an Eisenstein quadruple falling under the purview of Corollary~\ref{cor:stationary}. Therefore the reduced words for $q$ coincide with the reduced words of $\Epres$. By Proposition~\ref{prop:uniquereduction}, the orbit of $q$ under reduced words in $\Epres$ produces distinct quadruples, giving a free group action. This action descends to $\Egp$ acting on $q$, whence $\Epres\cong\Egp$. In particular, Proposition~\ref{prop:eisensteingrouppresentation} follows.

\section{Eisenstein circle packings: Number Theory}

While the congruence obstructions for curvatures will be settled later (Section~\ref{sec:strongapproximation}), we will first describe the behaviour of Eisenstein quadruples modulo $3$, $4$, and $6$.

\subsection{Standard position}

\begin{lemma}\label{lem:twoeventwoodd}
    Assume $(a, b, c, d)$ is a primitive Eisenstein quadruple. Then exactly one of $(a, c)$ is even, and exactly one of $(b, d)$ is even.
\end{lemma}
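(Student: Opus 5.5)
We reduce the Eisenstein equation modulo $2$ and then modulo $4$. Write the equation as
\[a^2+b^2+c^2+d^2=2(a+c)(b+d).\]
Since $x^2\equiv x\pmod 2$, reducing modulo $2$ gives $a+b+c+d\equiv 0\pmod 2$. So the number of odd entries is $0$, $2$ or $4$. The case of no odd entries is excluded by primitivity.

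Next we exclude four odd entries. If $a,b,c,d$ are all odd, the left side is $4\pmod 8$, in particular $\equiv 0\pmod 4$. On the right, $a+c$ and $b+d$ are both even, so the right side is $\equiv 0\pmod 8$. Comparing modulo $8$ gives $4\equiv 0\pmod 8$, a contradiction. Hence exactly two entries are odd.

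It remains to show that the two odd entries are not both in $\{a,c\}$ or both in $\{b,d\}$. By the symmetry of Remark~\ref{rem:quadruplesymmetries}, it suffices to rule out $a,c$ odd with $b,d$ even.

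In that case the left side is $\equiv 1+1+0+0=2\pmod 4$. On the right, $a+c$ and $b+d$ are both even, so $2(a+c)(b+d)\equiv 0\pmod 8$, a contradiction.

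Therefore exactly one of $a,c$ is odd and exactly one of $b,d$ is odd, which is the claim. Every step is a direct congruence check; the only point needing care is taking the four-odd and two-odd-on-one-side cases modulo $8$ (or $4$), since working modulo $2$ alone does not separate them.
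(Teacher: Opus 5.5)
Your proof is correct and follows essentially the same argument as the paper. You count the odd entries modulo $2$, rule out four odd entries by comparing modulo $8$, and rule out a same-parity pair modulo $4$. The one small difference is that you invoke the symmetry of Remark~\ref{rem:quadruplesymmetries} to reduce to $a,c$ odd, whereas the paper handles $a\equiv c\pmod{2}$ directly, which covers both sub-cases at once.
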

\begin{proof}
From the Eisenstein equation $a^2+b^2+c^2+d^2=2(a+c)(b+d)$, it follows that zero, two or four of $a, b, c, d$ must be even. All four would violate primitivity, and if none were even, then $4\equiv a^2+b^2+c^2+d^2\equiv 2(a+c)(b+d)\equiv 0\pmod{8}$, contradiction. Therefore two are even and two are odd. If $a\equiv c\pmod{2}$, then $2\equiv a^2+b^2+c^2+d^2\equiv 2(a+c)(b+d)\equiv 0\pmod{4}$, contradiction. Thus, it must be that exactly one of $(a, c)$ is even, and exactly one of $(b, d)$ is even.
\end{proof}

A consequence of Lemma~\ref{lem:twoeventwoodd} is that every primitive Eisenstein quadruple is either in standard position, or can be put there by swapping $(b, d)$.

\begin{remark}
    Standard position is special to primitive Eisenstein packings, and should not be seen as ``canonical''. It relies on arithmetic data of our curvatures, and since all 4-wheels are M\"obius (and conjugation) equivalent (Proposition~\ref{prop:4wheelmobius}), it is not preserved.
    
    For non-integral packings, if one wanted to choose between $(a, b, c, d)$ and $(a, d, c, b)$, you can make the choice to take $b\leq d$. A disadvantage of this with primitive packings will be seen next section, where to ensure the quadratic forms we construct are primitive and integral, we need to know which of our curvatures are even and odd. Hence, we stick to standard position.
\end{remark}

Another easy result to settle now is Proposition~\ref{prop:eisensteinreduced}, which gives a unique representative for each primitive Eisenstein circle packing.

\begin{proof}[Proof of Proposition~\ref{prop:eisensteinreduced}]
    Take any Eisenstein quadruple $q$ corresponding to a 4-wheel in a fixed primitive packing, and Proposition~\ref{prop:uniquereduction}(iii) gives a unique reduced quadruple $(a, b, c, d)$ in the packing. By part (iv), at least one entry must be nonpositive. By (possibly) swapping the pair $(a, c)$ with $(b, d)$, and then (possibly) swapping $(a, c)$, we can ensure that $a\leq 0$.

    If $a$ is the only nonpositive curvature, the only ambiguity left is in the swapping of $(b, d)$. This is resolved by Lemma~\ref{lem:twoeventwoodd}, i.e. choosing standard position.
    
    Otherwise, if this quadruple has a second nonpositive curvature, Lemma~\ref{lem:pairsnonneg} implies that $a=0$, as well as $b=0$ or $d=0$. Similarly to Lemma~\ref{lem:tworeductive}, the quadruple is either $(0, 0, b, b)$ or $(0, b, b, 0)$, and so there is still no ambiguity other than swapping $(b, d)$, which was already resolved.
\end{proof}

\subsection{Modulo 4}

Proposition~\ref{prop:curvaturesmod4} claims that the odd curvatures in a primitive packing are all equivalent modulo 4. We now settle this claim.

\begin{proof}[Proof of Proposition~\ref{prop:curvaturesmod4}]
    Let $(a, b, c, d)$ be an Eisenstein quadruple. Using Lemma~\ref{lem:twoeventwoodd}, we can put it in standard position, and further assume that $a,b$ are odd and $c,d$ are even. If we swap $c$ with $S_3$, we find that
    \[2b+2d-c\equiv 2(b+d)-c\equiv 2-c\equiv 2+c\pmod{4},\]
    since $b+d$ is odd and $c$ is even. The analogous result holds for $d$, hence $c,d$ both represent the two classes $0,2\pmod{4}$. On the other hand, we swap $a$ with $S_1$ to get
    \[2b+2d-a\equiv 2(b+d)-a\equiv 2-a\equiv a\pmod{4},\]
    since $b+d$ and $a$ are odd. Thus swapping $a$ does not change it modulo 4, and the same holds for $b$.

    It remains to show that $a\equiv b\pmod{4}$, hence the modular behaviour on each moiety is identical. We can rewrite the Eisenstein equation as
    \[(a-b)^2=2ad+2bc-(c-d)^2.\]
    From above, we can swap $c$ and/or $d$ to a position where $c\equiv d\equiv 0\pmod{4}$, whence $2ad+2bc-(c-d)^2\equiv 0\pmod{8}$. Thus $8\mid (a-b)^2$, so $a\equiv b\pmod{4}$.
\end{proof}

\begin{remark}
\label{rmk:SA}
    In Theorem~\ref{thm:SA}, we confirm that the bad modulus of $\Egp''$ (defined before Proposition~\ref{prop:orbitsEintro}) is 16, hence all congruence obstructions in an orbit must occur modulo 16. In the current section we demonstrated the stronger result that they in fact occur modulo 4. Indeed,  strong approximation is a property of the entire group $\Egp''$, as opposed to an orbit. Orbits of \emph{Eisenstein quadruples} have restrictions modulo 2, and these happen to lead to conditions compactly described modulo 4.
    By contrast, consider $(0, 1, 1, 1)\cdot\Egp$, where $(0, 1, 1, 1)$ is \emph{not} an Eisenstein quadruple modulo 16. By computing the whole orbit, we find that the third entry is restricted to being $1,3,9\pmod{16}$, which cannot be described modulo a lower power of $2$.
\end{remark}

\subsection{Modulo 3}
As the bad modulus for strong approximation is 16, we may expect that nothing interesting happens modulo 3. Intriguingly, there are modulo 3 solutions that do not lift to modulo 9, and therefore do not lift to the integers.

\begin{proposition}\label{prop:mod3}
    Let $(a, b, c, d)$ primitively satisfy the Eisenstein equation modulo $3$. Then, up to $D_8 $ permutations, we have
    \[(a, b, c, d)\in\{(0, 0, 1, 1), (0, 0, 2, 2), (0, 1, 1, 2), (0, 1, 2, 2), (1, 2, 1, 2)\}.\]
\end{proposition}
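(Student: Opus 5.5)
The Eisenstein equation $a^2+b^2+c^2+d^2=2(a+c)(b+d)$ can be simplified modulo $3$ before any case analysis. Since $2\equiv -1\pmod 3$, it becomes
\[a^2+b^2+c^2+d^2+(a+c)(b+d)\equiv 0\pmod 3.\]
The $D_8$ symmetry of Remark~\ref{rem:quadruplesymmetries} still acts, and so does the global sign change $(a,b,c,d)\mapsto(-a,-b,-c,-d)$. The sign change preserves the congruence, and it is allowed because the listed representatives are only taken up to $D_8$; indeed, the list is closed under negation up to $D_8$. For example, $-(0,0,1,1)=(0,0,2,2)$ appears separately, and $-(0,1,1,2)=(0,2,2,1)$ is $D_8$-equivalent to $(0,1,2,2)$. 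I therefore plan to enumerate carefully rather than lean too hard on symmetry.

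I would organise the enumeration by the number of zero entries, using the fact that squares mod $3$ are $0$ or $1$.

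\emph{Case 1: no entry is $0$.} Then $a^2+b^2+c^2+d^2\equiv 4\equiv 1$, so we need $(a+c)(b+d)\equiv -1\equiv 2$. This forces $a+c$ and $b+d$ to be nonzero and of opposite residues, so $\{a+c,b+d\}=\{1,2\}$. With all entries in $\{1,2\}$, $a+c\equiv 1$ forces $a=c=2$ and $a+c\equiv 2$ forces $a=c=1$. Up to swapping the pair $(a,c)$ with $(b,d)$, this gives $(1,2,1,2)$.

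\emph{Case 2: at least one entry is $0$.} Using $D_8$, I would put a zero at $a$, so the condition becomes $b^2+c^2+d^2+c(b+d)\equiv 0$. I would then subdivide by how many of $b,c,d$ vanish.
\begin{itemize}
\item All three vanishing is excluded by primitivity.
\item If exactly two vanish, one term remains, namely $b^2$, $c^2$ or $d^2$, which is nonzero, so there are no solutions.
\item If exactly one vanishes, it is either $c$ or one of $b,d$ (which are equivalent under $D_8$). If $c=0$, the condition is $b^2+d^2\equiv 2\not\equiv 0$, so there are no solutions. If $b=0$, the condition is $c^2+d^2+cd\equiv 2+cd\equiv 0$, so $cd\equiv 1$ and $c=d$. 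This gives $(0,0,1,1)$ and $(0,0,2,2)$.
\item If none of $b,c,d$ vanish, the condition is $3+c(b+d)\equiv c(b+d)\equiv 0$, so $b+d\equiv 0$ and $\{b,d\}=\{1,2\}$, with $c$ arbitrary nonzero. This gives $(0,1,1,2)$ and $(0,1,2,2)$, with the $(0,2,c,1)$ variants equivalent under the $b\leftrightarrow d$ swap.
\end{itemize}

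Finally, I would confirm that each of the five listed quadruples satisfies the congruence; this is immediate. The main obstacle is bookkeeping, not ideas: I need to check that the $D_8$ normalisations I use (moving a zero to position $a$, and then swapping $b\leftrightarrow d$) are legitimate and do not merge genuinely distinct classes. If pinning a zero at $a$ causes trouble, for instance when deciding whether $(0,0,1,1)$ and $(0,1,1,0)$ are the same class, the fallback is a brute-force check of all $80$ nonzero vectors modulo $3$ sorted into $D_8$-orbits, which is short enough to carry out.
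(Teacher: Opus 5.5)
Your proof is correct and follows essentially the same route as the paper: a case split on how many entries vanish modulo $3$, using that squares are $0$ or $1$ modulo $3$ to pin down $(a+c)(b+d)$ in each case (the paper reads off $(a+c)(b+d)\equiv 1,0,2$ for two, one, or no zero entries, which is exactly what your subcases compute). Your normalisation of a zero into position $a$ is legitimate, since $D_8$ acts transitively on the four positions and the stabiliser of position $a$ still contains the swap $b\leftrightarrow d$, so the worry about $(0,1,1,0)$ versus $(0,0,1,1)$ does not arise and the brute-force fallback is unnecessary.
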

\begin{proof}
    Recall that $x^2\equiv 0, 1\pmod{3}$. From the Eisenstein equation $a^2+b^2+c^2+d^2=2(a+c)(b+d)$, we analyze how many of $a,b,c,d$ are $0\pmod{3}$. It cannot be all four, since that would violate primitivity. It also cannot be three, as the left hand side would be $1\pmod{3}$ and the right hand side $0\pmod{3}$.

    If it is $2$, then we have $(a+c)(b+d)\equiv 1\pmod{3}$. This leads to the $D_8$ symmetries of the first two solutions.

    If it is $1$, then $(a+c)(b+d)\equiv 0\pmod{3}$, leading to the next two solutions.

    Finally, if it is none, then $(a+c)(b+d)\equiv 2\pmod{3}$, leading to the final solution.
\end{proof}

By applying circle swaps, we see that (up to symmetry), the first four quadruples in Proposition~\ref{prop:mod3} all lead to each other. On the other hand, the fifth quadruple is preserved under swaps. Tragically, this quadruple cannot actually lift to the integers, or even modulo $9$.

\begin{proposition}\label{prop:mod9}
    Let $(a, b, c, d)$ primitively satisfy the Eisenstein equation modulo $9$. Then at least one of $a,b,c,d$ is $0\pmod{3}$. In particular, the solution of $(1, 2, 1, 2)\pmod{3}$ does not lift to modulo $9$.
\end{proposition}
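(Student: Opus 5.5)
We argue by contradiction. Suppose $(a,b,c,d)$ satisfies the Eisenstein equation modulo $9$, is primitive (so not all entries are divisible by $3$), and has no entry divisible by $3$. Reducing modulo $3$ and applying Proposition~\ref{prop:mod3}, the quadruple is, up to $D_8$ symmetry, congruent to $(1,2,1,2)$ or $(2,1,2,1)\pmod{3}$. In fact the case analysis in that proof shows directly that whenever no entry is $0\pmod 3$ we must have $a\equiv c$ and $b\equiv d\equiv -a\pmod 3$. The equation is invariant under negating all four entries, so we may assume $a\equiv c\equiv 1$ and $b\equiv d\equiv 2\pmod 3$.

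The natural next step is to rewrite the Eisenstein equation so that its two sides have different $3$-adic valuations. Put $s=a+c$ and $t=b+d$. Since $a^2+c^2=s^2-2ac$ and $b^2+d^2=t^2-2bd$, the equation becomes
\[s^2+t^2-2st = 2ac+2bd,\qquad\text{that is,}\qquad (s-t)^2 = 2(ac+bd).\]
Now $s\equiv 2$ and $t\equiv 1\pmod 3$, so $s-t\equiv 1\pmod 3$ and $(s-t)^2\equiv 1\pmod 3$. On the other side, $ac\equiv 1$ and $bd\equiv 4\equiv 1\pmod 3$, so $2(ac+bd)\equiv 4\equiv 1\pmod 3$. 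Modulo $3$ there is therefore no contradiction, which is consistent with Proposition~\ref{prop:mod3}. The contradiction has to come from working modulo $9$.

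For that, write $a=1+3\alpha$, $c=1+3\gamma$, $b=-1+3\beta$ and $d=-1+3\delta$, and expand both sides modulo $9$. The key step is to compute $(s-t)^2$ and $2(ac+bd)$ modulo $9$ in terms of $\alpha,\beta,\gamma,\delta$ and check that they can never agree. Expanding, $s-t=4+3(\alpha+\gamma-\beta-\delta)$, so $(s-t)^2\equiv 16+24(\alpha+\gamma-\beta-\delta)\equiv 7+6(\alpha+\gamma-\beta-\delta)\pmod 9$. Also $ac\equiv 1+3(\alpha+\gamma)$ and $bd\equiv 1-3(\beta+\delta)\pmod 9$, so $2(ac+bd)\equiv 4+6(\alpha+\gamma-\beta-\delta)\pmod 9$. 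The two sides therefore differ by exactly $3\pmod 9$ for every choice of lifts, which is the desired contradiction.

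Hence every primitive solution modulo $9$ has an entry divisible by $3$. In particular $(1,2,1,2)\pmod 3$ does not lift to a solution modulo $9$, and so it does not lift to an integer solution either. The only real obstacle is locating this cancellation: the symmetric rewriting $(s-t)^2=2(ac+bd)$ is what makes the $3$-adic linear terms cancel identically, leaving the constant discrepancy $7$ versus $4$. If the expansion behaved badly, the fallback would be a direct finite computation over the $3^4$ lift classes.
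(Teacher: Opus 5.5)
Your proof is correct and is essentially the paper's argument. You substitute lifts $a,c\equiv 1$ and $b,d\equiv 2\pmod 3$ into the Eisenstein equation modulo $9$, note that the linear terms in the lift parameters cancel, and are left with a constant discrepancy, as the paper does. The rewriting $(s-t)^2=2(ac+bd)$ is only cosmetic: the linear terms cancel in the direct expansion too, because every partial derivative of the form vanishes modulo $3$ at $(1,2,1,2)$.
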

\begin{proof}
    If there were a solution modulo $9$ with all curvatures coprime to 3, then from Proposition~\ref{prop:mod3} we could write
    \[a=1+3t, \quad b=2+3x, \quad c=1+3y, \quad d=2+3z.\]
    Plugging this into the Eisenstein equation modulo $9$ yields
    \begin{align*}
        (1+3t)^2 + (2+3x)^2 + (1+3y)^2 + (2+3z)^2 & \equiv 2(1+3t+1+3y)(2+3x+2+3z)\pmod{9}\\
        1+6t+3x+6y+3z & \equiv 7+6t+3x+6y+3z\pmod{9}\\
        0 & \equiv 6\pmod{9},
    \end{align*}
    a contradiction.
\end{proof}

In particular, there are no obstructions to $c(E)\pmod{3}$.

\begin{corollary}
    For all primitive Eisenstein circle packings $E$, we have $c(E)\pmod{3}=\{0, 1, 2\}$.
\end{corollary}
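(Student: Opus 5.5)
To prove the corollary, I will show that each residue class $0,1,2 \pmod 3$ already appears among the curvatures of $E$. The tool is the circle-swap dynamics modulo $3$, driven by Propositions~\ref{prop:mod3} and~\ref{prop:mod9}. Take any Eisenstein quadruple $q=(a,b,c,d)$ coming from a 4-wheel in $E$. Since the packing is primitive, $q$ is a primitive integral solution, so its reduction modulo $3$ is not identically zero. Proposition~\ref{prop:mod9} then rules out the pattern $(1,2,1,2)$ up to $D_8$. By Proposition~\ref{prop:mod3}, $q \bmod 3$ is therefore, up to $D_8$ symmetry, one of $(0,0,1,1)$, $(0,0,2,2)$, $(0,1,1,2)$ or $(0,1,2,2)$.

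Next I will check, by explicit swap computations modulo $3$, that each of these patterns can be carried by swaps to a quadruple containing both a $1$ and a $2 \pmod 3$. The swaps act on residues by $a\mapsto 2b+2d-a$, and similarly for the other entries, so this is a finite check.
\begin{itemize}
\item For $(0,1,1,2)$ the three residues $0,1,2$ are already present.
\item For $(0,0,1,1)$, applying $S_1$ gives $2(0+1)-0=2$. The swapped quadruple is $(2,0,1,1)$, which contains $0$, $1$ and $2$.
\item For $(0,0,2,2)$, applying $S_1$ gives $2(0+2)-0=4\equiv 1$, producing $(1,0,2,2)$.
\item For $(0,1,2,2)$ the three residues are again already present.
\end{itemize}
The $D_8$ symmetries permute positions compatibly with the swaps (they conjugate the set $\{S_1,\dots,S_4\}$ into itself), so checking one representative per pattern suffices. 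I need care here: the same swap index must be matched to the correctly permuted position. To avoid that bookkeeping, I may instead just observe that each representative pattern list is closed under the relevant swaps up to symmetry.

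Every circle produced by swaps belongs to $E$ by Definition~\ref{def:eisensteinpacking}. So after at most one swap, $c(E)$ contains representatives of all of $0,1,2 \pmod 3$, and the inclusion $c(E)\pmod 3\subseteq\{0,1,2\}$ is trivial. The only real obstacle is ruling out the swap-invariant pattern $(1,2,1,2)$, which would avoid $0$ entirely. That case is exactly what Proposition~\ref{prop:mod9} handles, since the pattern does not even lift to a solution modulo $9$.
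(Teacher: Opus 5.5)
Your proposal is correct and is essentially the paper's argument: Proposition~\ref{prop:mod9} rules out the swap-invariant class $(1,2,1,2)$, and each of the remaining four classes from Proposition~\ref{prop:mod3} exhibits all of $0,1,2 \pmod 3$ within at most one swap, which is what the paper's brief remark that these classes ``are all in the same graph'' asserts. Your explicit swap checks make this precise. The one step you leave implicit, that every quadruple in a primitive packing is itself primitive, holds because the swaps lie in $\GL(4,\ZZ)$ and therefore preserve the gcd.
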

\begin{proof}
    Based on Proposition~\ref{prop:mod9}, the curvatures of an Eisenstein quadruple must lie in one of the first four classes (up to symmetry) in Proposition~\ref{prop:mod3}. After applying circle swaps, these solutions are all in the same graph, and we always see $0,1,2\pmod{3}$.
\end{proof}

\subsection{Modulo 6}
In the Apollonian circle packing setting, the modular behaviour of a quadruple could be studied prime power by prime power, with the results combined via a Chinese remainder theorem as in \cite[Theorem 1.4]{FuchsStrongApproximation}. Intriguingly, this is \emph{not} the case for Eisenstein! In particular, the modulo 2 and 3 reductions of integral solutions to the Eisenstein equation are not independent. The culprit is analogous to \cite[Proposition 3.1]{HKRS23}, which gave restrictions on the sums of tangent Apollonian curvatures modulo 8. As before, these restrictions come from quadratic reciprocity in the integers, and not via a modular argument.

For a prime $p$ and non-zero integer $x$, let $v_p(x)$ denote the exponent of the largest power of $p$ that divides $x$. We recall a standard lemma.

\begin{lemma}\label{lem:xsq3ysqprimedivisors}
    Let $p\equiv 2\pmod{3}$ be prime. Then for any $x, y\in\ZZ$ not both $0$, $v_p(x^2+3y^2)$ is even. Furthermore, if $p$ is odd and $v_p(x^2+3y^2)>0$, then $p\mid x, y$.
\end{lemma}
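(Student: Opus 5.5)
The plan is a descent on the multiplicity of $p$, followed by a reduction of the second statement to the first.

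First suppose $p$ is odd with $p\equiv 2\pmod 3$, and that $p\mid x^2+3y^2$. If $p\nmid y$, then $y$ is invertible modulo $p$, and $(xy^{-1})^2\equiv -3\pmod p$. This would make $-3$ a quadratic residue modulo $p$. By quadratic reciprocity, $\left(\frac{-3}{p}\right)=\left(\frac{p}{3}\right)$, and this equals $-1$ since $p\equiv 2\pmod 3$, so this is impossible. Hence $p\mid y$, and then $p\mid x^2$ gives $p\mid x$. This proves the second claim.

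Now the valuation statement for odd $p$. Whenever $p\mid x^2+3y^2$, we have $p\mid x,y$. Writing $x=px'$ and $y=py'$ gives $x^2+3y^2=p^2(x'^2+3y'^2)$, with $x',y'$ not both zero. Repeat this. Since $x^2+3y^2>0$, the valuation is finite and the process terminates. It ends at a pair with $p\nmid x'^2+3y'^2$, so $v_p(x^2+3y^2)=2k$ is even.

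For $p=2$ the descent step changes, since $2\mid x^2+3y^2$ does not force $2\mid x,y$; for example $1+3=4$. Strip common factors of $2$: if $x,y$ are both even, divide each by $2$, which lowers $v_2$ by exactly $2$. So we may assume $x,y$ are not both even.

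If exactly one of $x,y$ is odd, then $x^2+3y^2$ is odd and $v_2=0$. If both are odd, then $x^2\equiv y^2\equiv 1\pmod 8$, so $x^2+3y^2\equiv 4\pmod 8$ and $v_2=2$. In every case the valuation is even.

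The only point needing care is this $p=2$ case, where the divisibility statement fails and the mod-$8$ check replaces the descent step. One could alternatively phrase the whole argument via norms from $\ZZ[\om]$: $x^2+3y^2=\Nm(x+y\sqrt{-3})$, and primes $p\equiv 2\pmod 3$, including $2$, are inert in $\ZZ[\om]$, so the norm has even $p$-adic valuation. For odd $p$, since $\ZZ[\sqrt{-3}]$ has conductor $2$, divisibility of $x+y\sqrt{-3}$ by $p$ in $\ZZ[\om]$ is the same as in $\ZZ[\sqrt{-3}]$, which gives $p\mid x,y$. The elementary version above avoids the conductor subtlety.
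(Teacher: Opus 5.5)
Your proposal is correct and follows essentially the same route as the paper. For odd $p$ you use $\kron{-3}{p}=\kron{p}{3}=-1$ to force $p\mid y$ and then $p\mid x$, and divide out $p^2$ repeatedly; for $p=2$ you reduce to $x,y$ not both even and check parities and the residue $4 \pmod 8$. The paper's $p=2$ case writes $x=2^ax_o$, $y=2^by_o$ and compares $a$ with $b$, which is the same computation as your stripping of common factors of $2$; your version also handles $x=0$ or $y=0$ without separate comment.
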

\begin{proof}
    If $p=2$, write $x=2^ax_o$ and $y=2^by_o$ for $x_o,y_o$ odd. If $a\neq b$, then $v_2(x^2+3y^2)=2\min(a, b)$ is even. If $a=b$, then $x^2+3y^2=2^{2a}(x_0^2+3y_0^2)$, and $x_0^2+3y_0^2\equiv 1+3\equiv 4\pmod{8}$. Thus, $v_2(x^2+3y^2)=2a+2$ is again even.

    Now, if $p$ is odd, it suffices to prove that if $p\mid x^2+3y^2$, then $p\mid y$, and thus $p\mid x$, whence we can divide through by $p^2$ and repeat. Assume otherwise, hence $-3\equiv (x/y)^2\pmod{p}$, so by quadratic reciprocity,
    \[1=\kron{-3}{p}=\kron{p}{3}=-1,\]
    contradiction.
\end{proof}

As in \cite{HKRS23}, we re-write the Eisenstein equation as a divisibility by a sum of tangent curvatures:
\begin{equation}\label{eqn:eisensteinmod6}
    (a+c-d)^2+3a^2=(a+b)(3a-b+2c).
\end{equation}
To proceed, we require a result about the 2-adic behaviour of tangent circles.

\begin{proposition}\label{prop:tangentcurvature2adicvaluation}
    Let $a,b$ be tangent curvatures in a primitive Eisenstein circle packing. Then, $v_2(a+b)$ is either $0$ or odd.
\end{proposition}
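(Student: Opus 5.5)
Tangent circles occupy adjacent positions in some 4-wheel, so after a $D_8$ relabelling (Remark~\ref{rem:quadruplesymmetries}) we may take the tangent pair to be $a,b$ in a primitive Eisenstein quadruple $(a,b,c,d)$. The natural tool is the identity \eqref{eqn:eisensteinmod6},
\[(a+c-d)^2+3a^2=(a+b)(3a-b+2c),\]
combined with Lemma~\ref{lem:xsq3ysqprimedivisors} at $p=2$. If $a+b$ is odd there is nothing to prove. So I assume $a+b$ is even; by Lemma~\ref{lem:twoeventwoodd} this forces $a,b$ to have the same parity. I would also dispose of the degenerate case $a+b=0$ at the start: it forces $a+c=d$ and $a=0$, hence $b=0$, giving quadruples like $(0,0,c,c)$, which are not primitive unless $c=1$.

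\textbf{Case $a,b$ both odd.} By Lemma~\ref{lem:twoeventwoodd}, $c,d$ are both even, so $x=a+c-d$ is odd and $y=a$ is odd. The proof of Lemma~\ref{lem:xsq3ysqprimedivisors} then gives $v_2(x^2+3y^2)=2$ exactly. Next I compute $3a-b+2c$ modulo 4. Since $a\equiv b\pmod 4$ by Proposition~\ref{prop:curvaturesmod4} (both odd curvatures have the same residue), we get $3a-b\equiv 2a\equiv 2\pmod 4$. With $c$ even this gives $3a-b+2c\equiv 2\pmod 4$, so $v_2(3a-b+2c)=1$. Hence $v_2(a+b)=2-1=1$, which is odd.

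\textbf{Case $a,b$ both even.} Now $c,d$ are odd, so $x=a+c-d$ is even and $y=a$ is even. I will write $v=v_2(x^2+3y^2)$, which is even by Lemma~\ref{lem:xsq3ysqprimedivisors}. The goal is to show $v_2(3a-b+2c)$ is odd, because then $v_2(a+b)=v-v_2(3a-b+2c)$ is odd. For this I would use the same identity with the roles shifted: apply \eqref{eqn:eisensteinmod6} after a $D_8$ symmetry, or use another factorization of the Eisenstein equation in which $3a-b+2c$ appears as a sum of tangent curvatures in a related quadruple. Note that $3a-b+2c=a+(2a+2c-b)=a+b'$, where $b'$ is the $S_2$ swap of $b$, and $b'$ is also tangent to $a$. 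So $v_2(a+b)+v_2(a+b')$ is even, and both valuations have the same parity.

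This suggests an \textbf{induction/descent over the packing}. Suppose some even tangent pair $(a,b)$ has $v_2(a+b)$ even and positive. Then the pair $(a,b')$, with $b'$ the swap of $b$ relative to the wheel, has the same property. I would follow reductions: choose $c,d$ or swap so that the quantity $a+b$ decreases, and check via Proposition~\ref{prop:uniquereduction} that we descend to a reduced quadruple. There I would verify the claim directly using that the reduced quadruple has a nonpositive entry and a controlled shape.

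The \textbf{main obstacle} is this even case. The identity alone only gives parity agreement between $v_2(a+b)$ and $v_2(a+b')$, not oddness, so something must anchor the parity. My first attempt would be a direct 2-adic argument. Write $a=2a_1$, $b=2b_1$, divide the identity by 4, and track $x/2$ and $a_1$. When $a_1$ is odd, $(x/2)^2+3a_1^2$ has valuation 0 or exactly 2. Meanwhile $(a+b)/2\cdot(3a-b+2c)/2$, where $(3a-b+2c)/2$ contains the odd term $c$, is odd or has valuation pinned by congruences modulo 8 coming from Proposition~\ref{prop:curvaturesmod4}, namely $a\not\equiv b\pmod 4$ since even curvatures in a wheel alternate $0,2$. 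In fact, since tangent even curvatures $a,b$ presumably lie in different moieties and one is $0$ and the other $2$ modulo 4, we get $a+b\equiv 2\pmod 4$, so $v_2(a+b)=1$. I would try to prove that $a\not\equiv b\pmod 4$ for even tangent curvatures directly from the Eisenstein equation modulo 8, and expect this to finish the proof with $v_2\in\{0,1\}$ in all cases.
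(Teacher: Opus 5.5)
Your odd case is correct, and it can be shortened: Proposition~\ref{prop:curvaturesmod4} gives $a\equiv b\pmod 4$, so $a+b\equiv 2a\equiv 2\pmod 4$ without using the identity. The gap is in the even case. The claim you expect to finish the proof, that even tangent curvatures satisfy $a\not\equiv b\pmod 4$, is false. So no mod $8$ analysis can prove it, and the conclusion $v_2(a+b)\in\{0,1\}$ is wrong. For example, in the root quadruple $(-3,5,14,10)$ the circles in positions $3$ and $4$ are tangent, with curvatures $14\equiv 10\equiv 2\pmod 4$, and $v_2(14+10)=v_2(24)=3$.

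The moiety heuristic fails too. Tangent circles do lie in opposite moieties, but that says nothing about residues mod $4$. The swap $S_2$ replaces an even $b$ by $2a+2c-b\equiv b+2\pmod 4$ (since $a+c$ is odd) while keeping it in the same moiety, so each moiety contains even curvatures of both classes. The descent sketch is never carried out, so as written the even case is unproved.

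The anchor you were looking for is already in your observation that $3a-b+2c=a+b'$. Since $a,b$ are even, $c$ is odd. If $a\not\equiv b\pmod 4$, then $v_2(a+b)=1$ immediately. If $a\equiv b\pmod 4$, then $3a-b+2c\equiv 2a+2\equiv 2\pmod 4$, so $v_2(3a-b+2c)=1$. Since $v_2\bigl((a+c-d)^2+3a^2\bigr)$ is even by Lemma~\ref{lem:xsq3ysqprimedivisors}, the identity \eqref{eqn:eisensteinmod6} forces $v_2(a+b)$ to be odd. This two-case split is exactly the paper's proof, and it shows that the valuation can genuinely exceed $1$.
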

\begin{proof}
    Complete $(a, b)$ into a primitive Eisenstein quadruple $(a, b, c, d)$ (not necessarily in standard position). If $a\not\equiv b\pmod{2}$, we are done, hence assume that $a,b$ have the same parity. If they are both odd, then by Proposition~\ref{prop:curvaturesmod4}, $a+b\equiv 2a\equiv 2\pmod{4}$, so $v_2(a+b)=1$ is odd, as desired.

    Thus, assume that $a,b$ are both even. If one is $0\pmod{4}$ and the other is $2\pmod{4}$, the result is immediate. So, assume that $a\equiv b\pmod{4}$. Then, $c$ is odd, hence 
    \[3a-b+2c\equiv 3a-a+2\equiv 2a+2\equiv 2\pmod{4},\]
    so $v_2(3a-b+2c)=1$. By Equation~\eqref{eqn:eisensteinmod6} and Lemma~\ref{lem:xsq3ysqprimedivisors}, this implies that $v_2(a+b)$ is odd, as desired.
\end{proof}

We can now prove the key result.

\begin{proposition}\label{prop:tangentcurvaturemod6sum}
    Let $a,b$ be tangent curvatures in a primitive Eisenstein circle packing. Then, $a+b\not\equiv 4, 5\pmod{6}$.
\end{proposition}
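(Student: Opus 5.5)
The plan is to run the same reciprocity-style argument as in Proposition~\ref{prop:tangentcurvature2adicvaluation}, but now tracking the prime $3$ and the primes $p\equiv 2\pmod 3$ simultaneously. First, complete the tangent pair to a primitive Eisenstein quadruple $(a,b,c,d)$ and use Equation~\eqref{eqn:eisensteinmod6}:
\[N:=(a+c-d)^2+3a^2=(a+b)(3a-b+2c).\]
Dispose of the degenerate case $N=0$ directly. Then $a=0$ and $d=c$, and the Eisenstein equation forces $b=0$ or $b=2c$. Primitivity gives $c=\pm1$, and the sign condition gives $c=1$, so $a+b\in\{0,2\}$ and we are done.

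For $N\neq 0$, Lemma~\ref{lem:xsq3ysqprimedivisors} says every prime $p\equiv2\pmod3$ (including $p=2$) divides $N$ to an even power. Write $\psi(n)=\kron{n'}{3}$, where $n'$ is the part of $n>0$ prime to $3$. Then $\psi$ is completely multiplicative and $\psi(N)=1$. Note also that $a+b\ge 0$ by Lemma~\ref{lem:pairsnonneg}; the case $a+b=0$ is harmless, so assume $a+b>0$. Then $F:=3a-b+2c>0$ as well, since $N>0$. Hence
\[\psi(a+b)=\psi(F).\]
The two forbidden classes correspond to the two values of $2$-adic parity:
\begin{itemize}
\item $a+b\equiv5\pmod 6$ means $a+b$ is odd and $\psi(a+b)=-1$;
\item $a+b\equiv 4\pmod6$ means $a+b$ is even, $3\nmid a+b$, and $a+b\equiv 1\pmod 3$.
\end{itemize}
In the even case, Proposition~\ref{prop:tangentcurvature2adicvaluation} gives that $v_2(a+b)$ is odd. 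Since $\psi(2)=-1$, the odd part of $a+b$ then has $\psi$-value $-1$. So in both cases the odd, prime-to-$3$ part of $a+b$ has $\psi=-1$. This is the quantity I will aim to contradict.

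Next, compute $\psi(F)$ and the $2$-adic valuation of $F$ using the mod $3$ classification. By Proposition~\ref{prop:mod9}, $(a,b,c,d)\bmod 3$ is, up to $D_8$ symmetry, one of the first four solutions in Proposition~\ref{prop:mod3}. Impose $a+b\not\equiv 0\pmod 3$ and the prescribed class of $a+b\bmod 3$ ($2$ in the odd case, $1$ in the even case). This should pin down the residues of $b$ and $c$ modulo $3$ enough to determine $F\equiv 2c-b\pmod 3$, including whether $3\mid F$.

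When $3\mid F$, I will handle it by replacing $F$ with the companion expression obtained by exchanging the roles of $(a,c)$ and $(b,d)$. That is,
\[(b+d-c)^2+3b^2=(a+b)(3b-a+2d),\]
which is Equation~\eqref{eqn:eisensteinmod6} applied to the quadruple $(b,c,d,a)$, itself an Eisenstein quadruple by the $D_8$ symmetry. I will use whichever cofactor is prime to $3$. The goal is to show that this cofactor $F$ has $\psi(F)=+1$ in each case, after also accounting for the factor $\psi(2)^{v_2(F)}$. For the $2$-adic part, use the parity information from Lemma~\ref{lem:twoeventwoodd} and Proposition~\ref{prop:curvaturesmod4} to compute $v_2(F)$; since $v_2(N)$ is even, $v_2(F)$ has the same parity as $v_2(a+b)$. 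Then $\psi(a+b)=\psi(F)$ contradicts the computed value, ruling out both classes.

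The main obstacle is the middle step. One must show the mod $3$ residues coming from Proposition~\ref{prop:mod3} force $\psi(F)$, or the companion cofactor's value, to have the sign opposite to what the forbidden class requires. This has to hold across all $D_8$ placements of the tangent pair $(a,b)$ within the quadruple. There is a real danger that the residue of $F$ modulo $3$ alone is not enough, because $\psi$ sees all of $F$ and not only $F\bmod 3$. If so, the fallback is to work with $\psi(F)=\kron{F'}{3}$ directly via $F'\equiv F/3^{v_3(F)}\pmod 3$. That needs the $3$-adic valuation of $F$, which I expect to control by choosing between the two cofactors.
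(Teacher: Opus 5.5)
\textbf{There is a genuine gap: the middle step cannot be completed, and no finer residue computation will rescue it.}

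Your character $\psi$ depends only on the unit part of its argument modulo $3$, so it is a $3$-adic quantity. The identity $\psi(a+b)=\psi(F)$ is already a purely $3$-adic consequence of $N=X^2+3Y^2$ with $X=a+c-d$, $Y=a$. Indeed, $v_3(X^2)$ is even and $v_3(3Y^2)$ is odd, so $N/3^{v_3(N)}$ is congruent modulo $3$ to a unit square, hence to $1$.

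Consequently, any evaluation of $\psi(F)$ from residues of $(a,b,c,d)$ modulo powers of $3$, corrected by $\psi(2)^{v_2(F)}$ computed $2$-adically, must return the same sign as $\psi(a+b)$, never the opposite one. Switching to the companion cofactor or tracking $v_3(F)$ changes nothing.

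In fact the statement admits no congruence proof at all. Glue $(0,1,1,0)$ modulo $2^k$ to $(1,1,0,0)$ modulo $3^l$ by the Chinese remainder theorem. Both are genuine Eisenstein quadruples (rotations of the strip root), so the glued object satisfies every parity, mod $4$, mod $3$, mod $9$ and $2$-adic valuation property you invoke. Yet it has $a+b\equiv 5\pmod 6$, $\psi(F)=-1$ and $v_2(F)=0$, so the odd prime-to-$3$ part of $F$ has $\psi=-1$, not the $+1$ you need. Exchanging the two quadruples likewise realizes $a+b\equiv 4\pmod 6$ with $v_2(a+b)=1$. A minor slip as well: your companion identity is \eqref{eqn:eisensteinmod6} applied to $(b,a,d,c)$, not $(b,c,d,a)$.

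What is missing is primitivity at a prime $p\equiv 2\pmod 3$ other than $2$; the completely multiplicative bookkeeping throws this away. You have already reached the right intermediate statement: $\psi=-1$ on the odd, prime-to-$3$ part of $a+b$ means there is an odd prime $p\equiv 2\pmod 3$ with $v_p(a+b)$ odd. Stop there and use the second clause of Lemma~\ref{lem:xsq3ysqprimedivisors}:
\begin{itemize}
\item Since $v_p(N)$ is even and $v_p(a+b)$ is odd, $v_p(3a-b+2c)$ is odd, so $p\mid 3a-b+2c$.
\item Since $p\mid N$, the lemma gives $p\mid a+c-d$ and $p\mid a$.
\item Then $p\mid a+b$ gives $p\mid b$, $p\mid 3a-b+2c$ gives $p\mid c$, and $p\mid a+c-d$ gives $p\mid d$.
\end{itemize}
This contradicts primitivity, and it is the paper's argument. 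The obstruction combines a global fact (a positive integer $\equiv 2\pmod 3$ has some prime factor $\equiv 2\pmod 3$ to an odd power) with a local condition at that unknown prime. That is exactly why data at $2$ and $3$ alone cannot detect it.
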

\begin{proof}
    Complete $(a, b)$ into a primitive Eisenstein quadruple $(a, b, c, d)$ (not necessarily in standard position). If $a+b\equiv 5\pmod{6}$, then there exists an odd prime $p\equiv 2\pmod{3}$ such that $v_p(a+b)$ is odd. By Lemma~\ref{lem:xsq3ysqprimedivisors}, $v_p((a+c-d)^2+3a^2)$ is even, so by Equation~\ref{eqn:eisensteinmod6}, $v_p(3a-b+2c)$ is odd. In particular, $p\mid a+b, 3a-b+2c, a+c-d, a$, whence it immediately follows that $p\mid a, b, c, d$. Therefore the quadruple was not primitive, contradiction.

     If $a+b\equiv 4\pmod{6}$, then $a+b\equiv 1\pmod{3}$ and is even. By Proposition~\ref{prop:tangentcurvature2adicvaluation}, it must have odd $2-$adic valuation, hence $a+b=2^rx$, where $r,x$ are odd positive integers. Thus, $x\equiv 2^{-r}\equiv 2\pmod{3}$ and is odd, so there exists an odd prime $p\equiv 2\pmod{3}$ such that $v_p(a+b)$ is odd. As in the first case, this gives a contradiction.
\end{proof}

Note that if $a+b\equiv 2\pmod{3}$ is \emph{even}, then it need not be divisible by an odd prime that is $2\pmod{3}$: it could be divisible by 2 to an odd power and a product of $1\pmod{3}$ primes. Furthermore, the final result in Lemma~\ref{lem:xsq3ysqprimedivisors} is false for $p=2$. This explains why we need to know information about the parity of $a+b$ for Proposition~\ref{prop:tangentcurvaturemod6sum}, and why the modulo 2 and 3 restrictions are not independent.

\begin{remark}\label{rem:mod6andstrongapprox}
    This may appear to contradict strong approximation (Section~\ref{sec:strongapproximation}), which is supported only on the prime $2$ for the group $\Egp''$.  However, the orbits of that group consist of \emph{non-tangent circles}.  It is only in taking the union of four orbits that we obtain tangent circles (Lemma~\ref{lem:Egpevenvsodd}), hence can discuss $a+b$.
\end{remark}

\section{Connection to quadratic forms}\label{sec:quadraticforms}

As with Apollonian circle packings, there is a direct connection between Eisenstein circle packings and binary quadratic forms. On the other hand, we require a slightly restricted set of quadratic forms, with a modified equivalence relation. We start by independently introducing this theory, before applying it to Eisenstein packings.

\subsection{Quadratic form theory}

\begin{definition}
    A \emph{first-odd} binary quadratic form is a function $f(x)=Ax^2+Bxy+Cy^2$ where $A, B, C\in\ZZ$, $\gcd(A, B, C)=1$, and $A$ is odd. Its discriminant is $D=B^2-4AC$. The form is \emph{positive definite} (respectively \emph{positive semidefinite}) if $D<0$ (respectively $D\leq 0$) and $A\geq 0$. We sometimes write $f=[A, B, C]$.
        
    The \emph{principal root} of a positive (semi)definite $f$ is
    \[r_f : = \begin{cases}\frac{-B+\sqrt{D}}{2A} & \text{if $A\neq 0$;}\\\infty & \text{if $A=0$.}\end{cases}\]
    In the case of positive definite forms, this is the unique upper half-plane root of $f(x, 1)$. 
\end{definition}

\begin{definition}
    Define
    \[\Gamma_0^{\PGL}(2):=\left\{\lm{a}{b}{c}{d}\in\PGL(2, \ZZ):2\mid c\right\}.\]
    This acts on first-odd binary quadratic forms via
    \begin{equation}\label{eqn:matrixactqf}
        \lm{a}{b}{c}{d}\circ f(x, y) := f(ax+by, cx+dy).
    \end{equation}
    The action preserves the discriminant.
\end{definition}

Denote the upper half-plane by $\uhp$. The action of $\gamma=\genmtx\in\PGL(2, \ZZ)$ on $z\in\uhp$ is 
\[\gamma(z):=\begin{cases}
\frac{az+b}{cz+d} & \text{if $\det(\gamma)=1$;}\\
\frac{a\overline{z}+b}{c\overline{z}+d} & \text{if $\det(\gamma)=-1$.}
\end{cases}\]

\begin{remark}
    Alternatively, if we allow for negative definite forms, we can now let $\PGL(2, \ZZ)$ act via M\"obius maps on $\widehat{\CC}$, considered as a union of the lower and upper half planes. The quadratic form action would also negate for matrices of determinant $-1$. Our choice is akin to ``folding'' the lower half plane onto the upper, and identifying the corresponding points. Considering the unfolded action may be preferable in certain applications.
\end{remark}

The action of $\PGL(2, \ZZ)$ on forms $f$ and their principal roots is anti-equivariant, i.e.
\[r_{\gamma\circ f} = \gamma^{-1}(r_f).\]

The action of $\Gamma_0^{\PGL}(2)$ on first-odd binary quadratic forms of discriminant $D$ separates them into finitely many equivalence classes, producing a modified class number (alas, the group structure is corrupted). In order to find the equivalence classes, we track the principal root, and reduce it to a fundamental domain for $\Gamma_0^{\PGL}(2)$.

\begin{proposition}\label{prop:gamma0pgl2generators}
    We have
    \[\Gamma_0^{\PGL}(2) = \left\langle\lm{1}{0}{0}{-1},\lm{1}{1}{0}{1}, \lm{1}{0}{2}{1}\right\rangle.\]
    It has index $3$ in $\PGL(2, \ZZ)$, and a fundamental domain for its action on $\uhp$ is the region given by:
    \[-\frac{1}{2}\leq x\leq 0,\quad \left(x+\frac{1}{2}\right)^2+y^2\geq \frac{1}{4},\]
    which is displayed in Figure~\ref{fig:fdomdomain}.
\end{proposition}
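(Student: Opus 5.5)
We have to prove Proposition~\ref{prop:gamma0pgl2generators}: generators, index 3, and the fundamental domain. Write $G$ for the group generated by $J=\sm{1}{0}{0}{-1}$, $T=\sm{1}{1}{0}{1}$ and $U=\sm{1}{0}{2}{1}$. The plan is to do the index computation first, then a reduction algorithm, and to get generation and the fundamental domain from the same argument.

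\emph{Index.} Reduction modulo $2$ gives a homomorphism $\PGL(2,\ZZ)\to\GL(2,\FF_2)\cong S_3$, since the sign ambiguity vanishes mod $2$. It is surjective because $\SL(2,\ZZ)\to\SL(2,\FF_2)$ is surjective. Under this map $\Gamma_0^{\PGL}(2)$ is the preimage of the lower-left-zero subgroup of $\GL(2,\FF_2)$, which has order $2$. Equivalently, $\Gamma_0^{\PGL}(2)$ is the stabilizer of the point $\infty$ in $\PP^1(\FF_2)$ under a transitive action on its three points. Either way the index is $6/2=3$. Clearly $J,T,U$ all lie in $\Gamma_0^{\PGL}(2)$, so $G\subseteq\Gamma_0^{\PGL}(2)$.

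\emph{Reduction.} Let $F$ be the region in the statement. The key step is to show that every $z\in\uhp$ can be moved into $F$ by $G$, using the following algorithm.
\begin{enumerate}
\item Apply a power of $T$ to put $\mathrm{Re}(z)$ in $[-\tfrac12,\tfrac12]$.
\item If $\mathrm{Re}(z)>0$, apply $J$, which acts by $z\mapsto-\overline{z}$ under the folded action, to get $\mathrm{Re}(z)\in[-\tfrac12,0]$.
\item If $z$ lies strictly inside the circle $|z+\tfrac12|=\tfrac12$, apply $U$ or $U^{-1}$ and repeat.
\end{enumerate}

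For step 3 I need the imaginary part to increase strictly. The relevant identity is $\mathrm{Im}(U^{\pm1}z)=y/|\pm2z+1|^2$, and $|2z+1|<1$ is exactly the condition $|z+\tfrac12|<\tfrac12$. So $U$ increases the height strictly. I would then use the standard discreteness argument: the set of heights $\mathrm{Im}(\gamma z)$ for $\gamma\in\PGL(2,\ZZ)$ has no accumulation except at $0$, since there are finitely many pairs $(c,d)$ with $|cz+d|\le1$. Hence the process terminates, and it can only terminate in $F$.

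\emph{Generation and fundamental domain.} Two points of the interior of $F$ should not be identified by $\Gamma_0^{\PGL}(2)$. Rather than checking this directly, I would compare with the classical domain $F_1=\{|x|\le\tfrac12,\ |z|\ge1\}$ for $\PGL(2,\ZZ)$; since $\PGL(2,\ZZ)$ acts with the folding, a fundamental domain is the left half of $F_1$. Then:
\begin{itemize}
\item Show that $F$ is the union of $3$ translates of that half-domain by coset representatives, namely the identity, $S=\sm{0}{-1}{1}{0}$ and $ST$. One checks that $S$ maps the half of $F_1$ into the region under the arc.
\item Because the index is $3$, the union of $3$ coset translates is a fundamental domain for $\Gamma_0^{\PGL}(2)$.
\end{itemize}

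Generation then follows from a standard Poincaré-type argument. Given $\gamma\in\Gamma_0^{\PGL}(2)$, take $z_0$ in the interior of $F$. Reduce $\gamma z_0$ to $F$ by some $g\in G$. Then $g\gamma$ fixes a generic interior point, so $g\gamma$ is trivial in $\PGL(2,\ZZ)$ because the action is free on generic points. Hence $\gamma\in G$.

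The main obstacle is the bookkeeping with the folded, anti-holomorphic action of determinant $-1$ elements: checking the coset-translate decomposition and making sure the boundary identifications are consistent. I would verify the three translates explicitly by mapping the corners $\rho=\tfrac{-1+\sqrt{-3}}{2}$, $i$ and $\infty$.
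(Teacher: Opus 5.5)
Your proof is correct in outline but follows a different route from the paper's. The paper uses the same reduction algorithm, then computes the hyperbolic area of $F$ to be $\pi/2$. Since $F$ contains a fundamental domain for $G$ and the $\PGL(2,\ZZ)$ domain has area $\pi/6$, this gives $[\PGL(2,\ZZ):G]\le 3$. Reduction mod $2$ gives $[\PGL(2,\ZZ):\Gamma_0^{\PGL}(2)]\ge 3$, so $G\le\Gamma_0^{\PGL}(2)$ forces equality, index $3$, and $F$ a fundamental domain all at once.

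You instead get the index exactly from $\GL(2,\FF_2)\cong S_3$. You show $F$ is a fundamental domain by tiling it with three translates of the half-domain $H_0=\{-\tfrac12\le x\le 0,\ |z|\ge 1\}$. You then derive generation by the standard Poincar\'e argument at an interior point. The paper's covolume comparison is shorter and needs no explicit tiling. Yours avoids the measure-theoretic input and makes the tiling explicit, at the cost of the bookkeeping you anticipated.

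That bookkeeping contains one slip. Under the paper's convention, $S=\sm{0}{-1}{1}{0}$ has determinant $1$ and acts by $z\mapsto -1/z$, which sends $H_0$ into the right half-plane. So $H_0\cup SH_0\cup STH_0$ is a fundamental domain, but it is not $F$. Replace $S$ by $JS=\sm{0}{-1}{-1}{0}$, which acts by $z\mapsto 1/\overline{z}$ and lies in the same right coset $\Gamma_0^{\PGL}(2)S$, since $J\in\Gamma_0^{\PGL}(2)$. Then $F$ splits, with disjoint interiors, as
\begin{itemize}
\item $H_0$ (the part with $|z|\ge 1$),
\item $JS(H_0)=\{x\le 0,\ |z|\le 1,\ |z+1|\ge 1\}$,
\item $ST(H_0)=\{x\ge -\tfrac12,\ |z+1|\le 1,\ |z+\tfrac12|\ge\tfrac12\}$.
\end{itemize}

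The representatives must be right-coset representatives $\Gamma_0^{\PGL}(2)g$, which $I, JS, ST$ are: their bottom rows are $(0,1),(1,0),(1,1)$ mod $2$. This matters because $S$ and $ST$ lie in the same left coset. Finally, only $U$ is needed in step 3, not $U^{-1}$, since $|{-2z}+1|\ge 1$ whenever $\mathrm{Re}(z)\in[-\tfrac12,0]$.
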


\begin{figure}[htb]
    \includegraphics[scale=1]{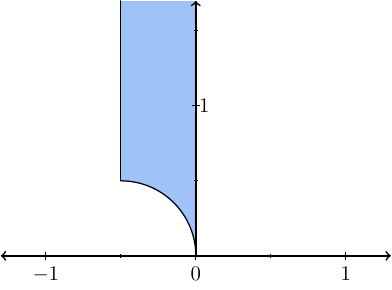}
    \caption{Fundamental domain for $\Gamma_0^{\PGL}(2)$.}\label{fig:fdomdomain}
\end{figure}

\begin{proof}
    Let $\Gamma=\left\langle\sm{1}{0}{0}{-1},\sm{1}{1}{0}{1}, \sm{1}{0}{2}{1}\right\rangle$, let $R$ denote the region claimed as a fundamental domain, and take $z\in\uhp$. First, we claim that we can $\Gamma-$reduce $z$ into $R$.

    Note that
    \[\lm{1}{0}{0}{-1}(z) = -\overline{z},\quad \lm{1}{1}{0}{1}(z) = z + 1,\quad \lm{1}{0}{2}{1}(z) = \frac{z}{2z+1}.\]
    Execute the following algorithm:
    \begin{enumerate}
        \item Apply $\sm{1}{1}{0}{1}^k$ to bring the real part of $z$ into the range $[-1/2, 1/2)$.
        \item If the real part is positive, apply $\sm{1}{0}{0}{-1}$ to bring the real part into the range $[-1/2, 0]$.
        \item If $\Im\left(\frac{z}{2z+1}\right)>\Im(z)$, apply $\sm{1}{0}{2}{1}$, and go back to step (1). Otherwise terminate.
    \end{enumerate}
    Let $z=x+yi$, and recall that
    \[\Im\left(\lm{a}{b}{c}{d}(z)\right) = \frac{y}{(cx+d)^2+(cy)^2}.\]
    Thus, for $\gamma\in\Gamma$, there are finitely many possible values of $\Im(\gamma(z))$ that are larger than any fixed positive real number. Steps (1) and (2) preserve imaginary part, while step (3) (if triggered), increases it. Thus, this can happen finitely many times, and the algorithm must terminate.

    At this point, we have reduced our point to $z$ where $-1/2\leq x\leq 0$ and $\Im\left(\frac{z}{2z+1}\right)\leq\Im(z)$. Since
    \[\Im\left(\frac{z}{2z+1}\right)=\frac{y}{(2x+1)^2+(2y)^2},\]
    this implies that $(x+1/2)^2+y^2\geq 1/4$, which gives the domain $R$. The hyperbolic area of $R$ is computed to be
    \[\int_{-1/2}^0\int_{\sqrt{1/4-(x+1/2)^2}}^{\infty}\frac{1}{y^2}dydx = \int_{-1/2}^0\frac{2}{\sqrt{1-(2x+1)^2}}dx=\int_0^1\frac{1}{\sqrt{1-t^2}}dt=\frac{\pi}{2}.\]
    Since the hyperbolic area of the fundamental domain of $\PGL(2, \ZZ)$ acting on $\uhp$ is $\frac{\pi}{6}=\frac{1}{3}\frac{\pi}{2}$, it follows that $[\PGL(2, \ZZ):\Gamma]\leq 3$. However, we know $\Gamma\leq \Gamma_0^{\PGL}(2)$, and by reducing modulo $2$, it follows that
    \[[\PGL(2, \ZZ):\Gamma]\geq [\PGL(2, \ZZ):\Gamma_0^{\PGL}(2)]\geq 3.\]
    Thus, equality holds, $\Gamma_0^{\PGL}(2)=\Gamma$ has index 3 in $\PGL(2, \ZZ)$, and the fundamental domain is as claimed.
\end{proof}

By tracking the principal root, we can reduce a positive definite first-odd binary quadratic form so that the principal root lies in the above fundamental domain. This corresponds to the equations
\[-\frac{1}{2}\leq\frac{-B}{2A}\leq 0,\quad \left(\frac{A-B}{2A}\right)^2 + \left(\frac{\sqrt{4AC-B^2}}{2A}\right)^2\geq\frac{1}{4}.\]
In turn, these are equivalent to $0\leq B\leq A, 2C$.

\begin{definition}
    Call a positive semidefinite first-odd binary quadratic form $[A, B, C]$ \emph{reduced} if $0\leq B\leq A, 2C$.
\end{definition}

As we have seen, every positive definite first-odd binary quadratic form is equivalent to a unique reduced form. A direct computation with $D=0$ shows that every such form can be reduced to $[1, 0, 0]$, the only reduced form of this discriminant, which extends the result to semidefinite forms.

\begin{definition}
    For $D\leq 0$ a discriminant, let $\mathcal{F}(D)$ denote the set of equivalence classes of positive semidefinite first-odd binary quadratic forms of discriminant $D$.
\end{definition}

\begin{proposition}
    The set $\mathcal{F}(D)$ is finite.
\end{proposition}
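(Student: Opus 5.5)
The plan is to show that every class in $\mathcal{F}(D)$ contains a reduced form, and that there are only finitely many reduced forms of discriminant $D$. The first part is already in place: the reduction algorithm in the proof of Proposition~\ref{prop:gamma0pgl2generators} moves the principal root of a positive definite form into the fundamental domain. Since the action on roots is anti-equivariant, this gives an equivalent form $[A,B,C]$ satisfying $0\leq B\leq A, 2C$.

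I would first check that this reduced form is still a first-odd form. The group $\Gamma_0^{\PGL}(2)$ preserves oddness of $A$: for $\gamma=\sm{a}{b}{c}{d}$ with $2\mid c$, the new leading coefficient is $f(a,c)=Aa^2+Bac+Cc^2\equiv Aa^2\pmod 2$. Here $a$ is odd, because $ad-bc=\pm1$ and $c$ is even. Primitivity and the discriminant are also preserved. For $D=0$ the discussion before the statement already reduces everything to the single form $[1,0,0]$, so I may assume $D<0$.

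For $D<0$ I bound the coefficients of a reduced form. From $0\leq B\leq A$ and $B\leq 2C$ we get $B^2\leq 2AC$. Then $|D|=4AC-B^2\geq 2AC$, so $AC\leq |D|/2$. Since $A,C\geq 1$ (positive definiteness with $A\geq0$ forces $A>0$, and then $C>0$), both $A$ and $C$ are at most $|D|/2$. Also $0\leq B\leq A$. Hence only finitely many triples $(A,B,C)$ are possible, and therefore $\mathcal{F}(D)$ is finite.

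None of these steps is a real obstacle. The only point needing care is the invariance of the first-odd condition under $\Gamma_0^{\PGL}(2)$, which is exactly the parity computation above. For finiteness we do not even need uniqueness of the reduced representative, only its existence.
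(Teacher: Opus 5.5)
Your proof is correct and follows the paper's route: reduce to counting reduced forms $0\le B\le A,2C$ and use $B^2\le 2AC$ to bound the coefficients. The paper bounds $B\le\sqrt{-D}$ first and then takes the finitely many factorizations $AC=(B^2-D)/4$, whereas you bound $AC\le |D|/2$ directly and then use $B\le A$; your explicit check that $\Gamma_0^{\PGL}(2)$ preserves the first-odd condition fills in a point the paper leaves implicit.
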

\begin{proof}
    It suffices to count the reduced forms. We have
    \[B^2=D+4AC\geq D+2B^2,\]
    hence $0\leq B\leq \sqrt{-D}$. This gives finitely many possible $B$ values. For each such $B$, we have $AC=\frac{B^2-D}{4}$. Since $A,C$ are integral, there are only finitely many solutions to this equation, giving finitely many reduced forms.
\end{proof}

In fact, we can do better by finding a formula for $|\mathcal{F}(D)|$ in terms of the class number corresponding to $D$, the genus, and the factorization of $D$. For Eisenstein circle packings, we care about the sets $\mathcal{F}(-3n^2)$, where we can make the formula explicit from the factorization of $n$.

\subsection{Relating $\mathcal{F}(D)$ to classical quadratic forms}

First, we recall some important aspects of classical quadratic form theory, which can be found in (for example) \cite[Chapters 2 and 3]{Coxprimesx2ny2}. Consider the set $S(D)$ of all primitive binary quadratic forms of discriminant $D<0$. Take this up to the action of $\PSL(2, \ZZ)$ (which acts identically to Equation~\eqref{eqn:matrixactqf}). This can be given the structure of an abelian group (called the class group). Important to us will be the following:
\begin{itemize}
    \item A form $[A, B, C]$ is called reduced whenever (a) $|B|\leq A\leq C$, and (b) if $|B|=A$ or $A=C$, then $B\geq 0$.
    \item Each quadratic form is equivalent to a unique reduced form.
    \item The number of equivalence classes of discriminant $D$ is called the class number, and denoted by $h(D)$. Denote the number of forms of order at most 2 by $h_2(D)$.
    \item A form has order at most 2 in the class group if and only if $B=0$, $A=B$, or $A=C$ \cite[Lemma 3.10]{Coxprimesx2ny2}.
    \item There is a formula for $h_2(D)$. Let $t=\omega(D)$ denote the number of distinct prime divisors of the discriminant, and if $D$ is odd, define $\mu=t$. If $D$ is even, let $D=-4n$ for an integer $n>0$, and define
    \begin{equation}\label{eqn:h2formula}
    \mu = \begin{cases} t & \text{if $n\equiv 2,3\pmod{4}$ or $n\equiv 4\pmod{8}$;}\\t+1 & \text{if $n\equiv 1\pmod{4}$ or $n\equiv 0\pmod{8}$.}\end{cases}
    \end{equation}
    Then, the number of elements of order at most 2 is exactly $2^{\mu-1}$ \cite[Proposition 3.11]{Coxprimesx2ny2}. 
    \item There is a formula relating $h(n^2D)$ to $h(D)$ \cite[Theorem 7.24]{Coxprimesx2ny2}. Specializing to the case of $D=-3$ yields for $n>1$,
    \begin{equation}\label{eqn:hformula}
        h(-3n^2)=\frac{n}{3}\prod_{p\mid n}\left(1-\left(\frac{-3}{p}\right)\frac{1}{p}\right)= \frac{n}{3}\prod_{p\mid n}\begin{cases} 1 & \text{if $p=3$;}\\ \frac{p-1}{p} & \text{if $p\equiv 1\pmod{3}$;}\\ \frac{p+1}{p} & \text{if $p\equiv 2\pmod{3}$.}\end{cases}
    \end{equation}
\end{itemize}

For the equivalence class $\mathcal{F}(D)$, we are considering a restricted set of forms (first coefficient must be odd), but also considering a different matrix group, $\Gamma_0^{\PGL}(2)$. To relate these distinct situations, we first go from the classical $\PSL$ action to $\PGL$.

\begin{lemma}
    The number of equivalence classes of $\PGL(2, \ZZ)\backslash S(D)$ is
    \[\frac{h(D)+h_2(D)}{2}.\]
\end{lemma}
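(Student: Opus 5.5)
The plan is to view the $\PGL(2,\ZZ)$-classes as orbits of the order-two group $\PGL(2,\ZZ)/\PSL(2,\ZZ)$ acting on the finite set of $\PSL(2,\ZZ)$-classes. Concretely, $\PSL(2,\ZZ)$ has index $2$ in $\PGL(2,\ZZ)$, with nontrivial coset represented by $\sigma=\sm{1}{0}{0}{-1}$. Since $\PSL(2,\ZZ)$ is normal, $\sigma$ permutes the $h(D)$ classes of $\PSL(2,\ZZ)\backslash S(D)$. Its action is $[A,B,C]\mapsto[A,-B,C]$, so on the class group it is inversion $g\mapsto g^{-1}$. The $\PGL(2,\ZZ)$-classes are exactly the orbits of this involution. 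By Burnside's lemma (or direct pairing), the number of orbits is $\frac{1}{2}(h(D)+\#\{\text{classes fixed by }\sigma\})$.

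It remains to show that the classes fixed by $\sigma$ are exactly the classes of order at most $2$, which number $h_2(D)$. A class $[f]$ is fixed iff $f$ and $\sigma\circ f=[A,-B,C]$ are $\PSL(2,\ZZ)$-equivalent. In the classical group law, $[A,-B,C]$ is the inverse of $[A,B,C]$ (\cite[Chapter 3]{Coxprimesx2ny2}). So $\sigma$ fixes $[f]$ iff $[f]=[f]^{-1}$, i.e.\ iff $[f]$ has order at most $2$.

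If I want to avoid invoking the group law, I would argue directly from reduced forms. Take $f=[A,B,C]$ reduced. The form $[A,-B,C]$ is also reduced, except when $|B|=A$ or $A=C$ with $B>0$. By uniqueness of reduced forms, if $[A,-B,C]$ is reduced and equivalent to $f$, then $B=0$. In the exceptional cases, $f$ is automatically fixed: if $B=A$, the form $[A,-A,C]$ reduces to $[A,A,C]$ via $z\mapsto z+1$, and if $A=C$, apply $\sm{0}{-1}{1}{0}$. This matches the criterion quoted from \cite[Lemma 3.10]{Coxprimesx2ny2}: a reduced form has order at most $2$ iff $B=0$, $A=B$ or $A=C$. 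Hence the fixed count is $h_2(D)$, and the formula follows.

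The main point needing care is that $\sigma$ acts on forms by the substitution, as in Equation~\eqref{eqn:matrixactqf}, rather than through any orientation twist. One should check that this is compatible with the convention for $\det=-1$ (i.e.\ no negation of the form), and that the forms remain positive definite, which holds since $A$ is unchanged. Everything else is bookkeeping.
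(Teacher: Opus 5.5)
Your proposal is correct and follows essentially the same route as the paper, which notes that $\PGL(2,\ZZ)$ is generated by $\PSL(2,\ZZ)$ and $\sm{1}{0}{0}{-1}$, the latter sending $[A,B,C]$ to $[A,-B,C]$. The paper then pairs reduced forms, with the unpaired classes being exactly those with $B=0$, $A=B$ or $A=C$, i.e.\ the $h_2(D)$ classes of order at most $2$; your Burnside framing and your explicit checks (normality, and that the exceptional reduced forms are fixed) just spell out that pairing argument in more detail.
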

\begin{proof}
    The group $\PGL(2, \ZZ)$ is generated by $\PSL(2, \ZZ)$ and $\sm{1}{0}{0}{-1}$. Note that
    \[\lm{1}{0}{0}{-1}\circ [A, B, C] = [A, -B, C].\]
    In particular, by considering only the reduced forms, this pairs up distinct equivalence classes, except when $B=0$, $A=B$, or $A=C$. In this case, these classes were already equivalent, and this corresponds to exactly the order at most 2 forms. In total, we get
    \[\frac{h(D)-h_2(D)}{2}+h_2(D)=\frac{h(D)+h_2(D)}{2}\]
    classes.
\end{proof}

While the $\PGL(2, \ZZ)\backslash S(D)$ forms no longer carry a natural group law, we can still refer to the ``order at most two forms'', and there remains $h_2(D)$ of them. Additionally, we can refer to a ``$\PGL(2, \ZZ)$-reduced form'' as satisfying $0\leq B\leq A\leq C$: each class has a unique representative of this form.

Since $\Gamma_0^{\PGL}(2)\leq\PGL(2, \ZZ)$, we can consider the map $\theta:\mathcal{F}(D)\rightarrow \PGL(2, \ZZ)\backslash S(D)$ defined by
\[\theta(\Gamma_0^{\PGL}(2)\circ f) = \PGL(2, \ZZ)\circ f.\]
It suffices to analyze the range and fibres of this map. First, by Proposition~\ref{prop:gamma0pgl2generators}, $\Gamma_0^{\PGL}(2)$ has index 3 in $\PGL(2, \ZZ)$. It can be checked that three left coset representatives are
\[I = \lm{1}{0}{0}{1},\quad U = \lm{0}{1}{1}{0},\quad V = \lm{1}{1}{-1}{0}.\]
Therefore, for every $f\in S(D)$, the class $\PGL(2, \ZZ)\circ f$ has three possible preimages under $\theta$: $I\Gamma_0^{\PGL}(2)\circ f$, $U\Gamma_0^{\PGL}(2)\circ f$, and $V\Gamma_0^{\PGL}(2)\circ f$. It suffices to compute which of these represent first-odd binary quadratic forms, and of those that do, how many are distinct. Assume that $f=[A, B, C]$ is given by its unique $\PGL(2, \ZZ)-$reduced form, hence $0\leq B\leq A\leq C$. We compute that
\[I\circ f = [A, B, C],\qquad U\circ f = [C, B, A], \qquad V\circ f = [A-B+C, 2A-B, A].\]
Call this new form $[A', B', C']$, and for each of them we see that $0\leq B'\leq A', 2C'$, whence it is $\Gamma_0^{\PGL}(2)-$ reduced. Furthermore,
\begin{itemize}
    \item $I$ and $U$ represent the same class if and only if $A=C$;
    \item $I$ and $V$ represent the same class if and only if $A=B=C$;
    \item $U$ and $V$ represent the same class if and only if $A=B$.
\end{itemize}
Since the form is primitive, $A=B=C$ is only possible if $A=B=C=1$, giving $D=-3$. In particular, for $D\neq -3$, these represent either 2 or 3 distinct classes, though we are currently ignoring the first-odd requirement. We add this back in over a series of lemmas.

\begin{lemma}\label{lem:D5mod8}
    If $D\equiv 5\pmod{8}$ and $D\neq -3$, then
    \[|\mathcal{F}(D)|=\frac{3h(D)+h_2(D)}{2}.\]
\end{lemma}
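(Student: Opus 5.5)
We use the map $\theta:\mathcal{F}(D)\to\PGL(2,\ZZ)\backslash S(D)$ and count preimages class by class. Fix a class, represented by its $\PGL(2,\ZZ)$-reduced form $f=[A,B,C]$ with $0\leq B\leq A\leq C$. By the discussion above, the only candidates for preimages are the three $\Gamma_0^{\PGL}(2)$-reduced forms
\[I\circ f=[A,B,C],\qquad U\circ f=[C,B,A],\qquad V\circ f=[A-B+C,2A-B,A].\]
A candidate is an element of $\mathcal{F}(D)$ exactly when its first coefficient is odd. The first step is the parity analysis. Since $D=B^2-4AC\equiv 5\pmod 8$, the coefficient $B$ is odd and $B^2\equiv 1\pmod 8$, so $4AC\equiv -4\equiv 4\pmod 8$. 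Hence $AC$ is odd, and so $A$ and $C$ are both odd. It follows that $A-B+C$ is also odd. Thus all three candidates are first-odd.

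The second step counts how many of the three are distinct, using the three bullet criteria above. Since $D\neq -3$, the case $A=B=C$ is excluded. Moreover, $B$ is odd, so $B\neq 0$. The coincidences are therefore:
\begin{enumerate}[label=(\alph*)]
\item If $A=C$, then $I$ and $U$ coincide, giving $2$ preimages.
\item If $A=B$, then $U$ and $V$ coincide, giving $2$ preimages.
\item Otherwise there are $3$ distinct preimages.
\end{enumerate}
Cases (a) and (b) cannot hold simultaneously, since that would force $A=B=C$. Because $B\neq 0$, the forms of order at most two (characterised by $B=0$, $A=B$ or $A=C$) are exactly those in cases (a) and (b). So each order-at-most-two class contributes $2$ and every other class contributes $3$.

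Finally, we sum over classes using the preceding lemma. There are $h_2(D)$ order-at-most-two classes and $\frac{h(D)+h_2(D)}{2}-h_2(D)=\frac{h(D)-h_2(D)}{2}$ other classes. Therefore
\[|\mathcal{F}(D)|=3\cdot\frac{h(D)-h_2(D)}{2}+2h_2(D)=\frac{3h(D)+h_2(D)}{2}.\]

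The main point requiring care is the implicit claim behind the bullets: that distinct reduced candidates really give distinct $\Gamma_0^{\PGL}(2)$-classes, and that every element of $\mathcal{F}(D)$ over $f$ appears among them. This holds because each $\Gamma_0^{\PGL}(2)$-class has a unique reduced representative (via the fundamental domain of Proposition~\ref{prop:gamma0pgl2generators}), and the three cosets $I$, $U$, $V$ exhaust the fibre.
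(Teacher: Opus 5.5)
Your proof is correct and follows the paper's argument essentially step for step. The paper likewise shows that every form of discriminant $D\equiv 5\pmod 8$ is already first-odd, so all three candidates $I\circ f$, $U\circ f$, $V\circ f$ lie in $\mathcal{F}(D)$. Since $B$ is odd, the order-at-most-two classes are exactly those with $A=B$ or $A=C$; each of these contributes two preimages and every other class contributes three. Summing over the $\frac{h(D)+h_2(D)}{2}$ classes of $\PGL(2,\ZZ)\backslash S(D)$ gives the formula. Your additional remarks only make explicit what the paper leaves implicit: that cases (a) and (b) cannot occur together, and that uniqueness of reduced representatives guarantees distinctness.
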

\begin{proof}
    Let $f=[A, B, C]$, and since $D$ is odd, $B$ must be odd. As $AC=\frac{B^2-D}{4}$, it follows that $B^2-D\equiv 4\pmod{8}$, so $AC$ is odd, hence $A$ and $C$ are odd. In particular, all forms of this discriminant are already first-odd!

    Therefore all three possible preimages represent classes in $\mathcal{F}(D)$, so it suffices to determine which are equivalent. Since $D\neq -3$, the above work implies that there are two distinct classes if $A=B$ or $A=C$, and three classes otherwise. However, the order at most two elements correspond to $A=B$, $A=C$, and $B=0$. Since $B$ is necessarily odd, there are no such elements with $B=0$, whence we get 2 preimages for the order at most two elements, and 3 otherwise. This implies that
    \[|\mathcal{F}(D)|=2h_2(D)+3\left(\frac{h(D)+h_2(D)}{2}-h_2(D)\right) = \frac{3h(D)+h_2(D)}{2},\]
    as desired.
\end{proof}

If $D$ is even, not all preimages will be first-odd.

\begin{lemma}\label{lem:D0mod4}
    If $D\equiv 0\pmod{4}$ and $D<-4$, then
    \[|\mathcal{F}(D)|=h(D)+2^{\omega(-D/4)-1}.\]
\end{lemma}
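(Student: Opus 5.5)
The plan is to reuse the fibre analysis of $\theta$ from before Lemma~\ref{lem:D5mod8}, now keeping track of which of the three candidate preimages are first-odd. Write $D=-4n$ with $n>1$. Every form of discriminant $D$ then has $B$ even, and primitivity forces $A$ and $C$ not to be both even. Take a $\PGL(2,\ZZ)$-class represented by its reduced form $f=[A,B,C]$ with $0\le B\le A\le C$. The candidate preimages are $I\circ f=[A,B,C]$, $U\circ f=[C,B,A]$ and $V\circ f=[A-B+C,2A-B,A]$, and each is already $\Gamma_0^{\PGL}(2)$-reduced. Which of them are first-odd depends only on the parities of $A$ and $C$, because $B$ is even. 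This gives three cases.

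\begin{itemize}
\item $A,C$ both odd: $I$ and $U$ are first-odd and $V$ is not. By the criteria listed before Lemma~\ref{lem:D5mod8}, the fibre has size $2$ unless $A=C$, when it has size $1$.
\item $A$ odd, $C$ even: $I$ and $V$ are first-odd and $U$ is not. Since $I$ and $V$ coincide only when $A=B=C$, which forces $D=-3$, the fibre has size $2$.
\item $A$ even, $C$ odd: $U$ and $V$ are first-odd and $I$ is not. The fibre has size $2$ unless $A=B$, when it has size $1$.
\end{itemize}

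So every $\PGL(2,\ZZ)$-class contributes exactly $2$, except the \emph{collapsing} classes, meaning those with $A=C$ odd or with $A=B$ even. Both of these conditions lie among the order-at-most-$2$ conditions. By the preceding lemma there are $\frac{h(D)+h_2(D)}{2}$ classes, so
\[|\mathcal{F}(D)| = h(D)+h_2(D) - \#\{\text{collapsing classes}\}.\]
It remains to show that the number of order-at-most-$2$ reduced forms that do \emph{not} collapse is exactly $2^{\omega(n)-1}$. I will list the order-at-most-$2$ reduced forms by the three conditions $B=0$, $A=B$ and $A=C$.

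\begin{itemize}
\item $A=C$: if $A$ were even, then $B$ even would contradict primitivity, so every such form collapses.
\item $A=B$: the even case collapses. If $A=B$ were odd, then $4AC-A^2=4n$ would force $A^2\equiv 0\pmod 4$, a contradiction, so this case does not occur.
\item $B=0$: the form is $[A,0,C]$ with $AC=n$ and $\gcd(A,C)=1$. Here $A=C$ would force $n=1$, which is excluded by $D<-4$. So $A<C$, and the form does not collapse in any of the three parity cases.
\end{itemize}

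The non-collapsing forms are therefore exactly the forms $[A,0,C]$ with $A<C$, $AC=n$ and $\gcd(A,C)=1$. These correspond to the unordered splittings of $n$ into two coprime factors, of which there are $2^{\omega(n)-1}$. Hence $\#\{\text{collapsing}\}=h_2(D)-2^{\omega(n)-1}$, and substituting gives $|\mathcal{F}(D)|=h(D)+2^{\omega(-D/4)-1}$.

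The step needing most care is the case bookkeeping. One must confirm that the coincidence criteria for $I$, $U$, $V$ from the preceding discussion are exact in the presence of the parity restriction. One must also check that the reduced forms $[A,0,C]$ and the possible edge cases $A=B=C$ or $B=0$ with $A=B$ are ruled out once $n>1$. I would double-check the claim that $I$ and $V$ never coincide here by verifying directly that the $\Gamma_0^{\PGL}(2)$-reduced representatives $[A,B,C]$ and $[A-B+C,2A-B,A]$ are distinct.
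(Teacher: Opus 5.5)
Your proposal is correct and follows essentially the same route as the paper: for each $\PGL(2,\ZZ)$-reduced form you find which two of the candidates $I\circ f$, $U\circ f$, $V\circ f$ are first-odd, show that only the order-at-most-$2$ forms $[A,0,C]$ with $A<C$ keep two distinct preimages, and count those as $2^{\omega(-D/4)-1}$. Your parity-by-parity case analysis simply spells out the coincidence checks that the paper states in one line; the ``$A=B$ odd'' subcase is vacuous anyway, since $B$ is even.
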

\begin{proof}
    Let $f=[A, B, C]$, and since $D$ is even, $B$ is even. Since the form is primitive, it follows that
    \[f\equiv\quad [0,0,1],\quad [1,0,0],\quad [1,0,1]\pmod{2}.\]
    In each case, we compute that exactly two of $I\circ f$, $U\circ f$, and $V\circ f$ represent first-odd forms, hence we generically have two preimages. If our form did not have order at most 2, then these must be distinct. If it had order at most 2, then they are equal, except those forms with $B=0$ and $A\neq C$, which still have 2 distinct preimages. The only primitive form with $B=0$ and $A=C$ is $[1, 0, 1]$, having discriminant $-4$, which is excluded. Therefore, we just need to count $\PGL(2, \ZZ)-$reduced forms $[A, 0, C]$ of discriminant $D$.

    This corresponds to solving $-4AC=D$, so $AC=-D/4$ in coprime integers $(A, C)$. There are $2^{\omega(-D/4)}$ such solutions, and exactly half of them will have $A\leq C$, giving $2^{\omega(-D/4)-1}$ solutions.

    Putting this together gives
    \[|\mathcal{F}(D)| = 2\left(\frac{h(D)+h_2(D)}{2}-h_2(D)+2^{\omega(-D/4)-1}\right) + 1(h_2(D)-2^{\omega(-D/4)-1}) = h(D) + 2^{\omega(-D/4)-1},\]
    as desired.
\end{proof}

For completeness, we include the $D\equiv 1\pmod{8}$ case, even through it won't be relevant for Eisenstein circle packings, as $-3n^2\not\equiv 1\pmod{8}$.

\begin{lemma}\label{lem:D1mod8}
    If $D\equiv 1\pmod{8}$, then
    \[|\mathcal{F}(D)|=\frac{h(D)+h_2(D)}{2}.\]
\end{lemma}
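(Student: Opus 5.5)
The plan is to reuse the framework set up before Lemma~\ref{lem:D5mod8}: study the map $\theta:\mathcal{F}(D)\rightarrow\PGL(2,\ZZ)\backslash S(D)$. For each class we look at its three candidate preimages $I\Gamma_0^{\PGL}(2)\circ f$, $U\Gamma_0^{\PGL}(2)\circ f$ and $V\Gamma_0^{\PGL}(2)\circ f$. The goal is to show that $\theta$ is a bijection, since the lemma on $\PGL(2,\ZZ)\backslash S(D)$ then gives $|\mathcal{F}(D)|=\frac{h(D)+h_2(D)}{2}$ at once.

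\emph{Step 1: a parity invariant.} For $\gamma=\sm{a}{b}{c}{d}\in\Gamma_0^{\PGL}(2)$, the first coefficient of $\gamma\circ f$ is $f(a,c)=Aa^2+Bac+Cc^2$. Since $c$ is even and $a$ is odd (because $ad-bc=\pm1$), this is $\equiv A\pmod 2$. Hence the parity of the first coefficient is constant on each $\Gamma_0^{\PGL}(2)$-orbit. Consequently, each of the three candidate preimages is either entirely first-odd or contains no first-odd form.

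\emph{Step 2: exactly one candidate is first-odd.} Since $D$ is odd, $B$ is odd. Since $D\equiv1\pmod 8$, we have $B^2-D\equiv0\pmod 8$, so $AC=\frac{B^2-D}{4}$ is even. Reducing modulo $2$, $f$ is one of $[0,1,0]$, $[1,1,0]$ or $[0,1,1]$. The three candidates have first coefficients $A$, $C$ and $A-B+C\equiv A+C+1$. In each of the three residue cases exactly one of these is odd, which is a direct check. Combined with Step 1, every fibre of $\theta$ has at most one element, so $\theta$ is injective. It is also surjective, because every class has a first-odd representative among the three candidates. Note that the degenerate case $D=-3$, where coincidences among $I,U,V$ had to be tracked, cannot occur because $-3\not\equiv1\pmod 8$.

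\emph{Step 3: count.} By the bijection, $|\mathcal{F}(D)|=|\PGL(2,\ZZ)\backslash S(D)|=\frac{h(D)+h_2(D)}{2}$. The only point needing care is Step 1: one must confirm that the candidate preimages, as defined via the coset representatives in the paper's convention, really are single $\Gamma_0^{\PGL}(2)$-orbits on which parity is constant. The rest is the mod $2$ case check above.
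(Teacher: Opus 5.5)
Your proposal is correct and follows the paper's proof: you reduce $f$ modulo $2$ to one of $[0,1,0]$, $[0,1,1]$, $[1,1,0]$, check that exactly one of $I\circ f$, $U\circ f$, $V\circ f$ is first-odd, and conclude that $\theta$ is a bijection. Your Step 1 makes explicit a point the paper leaves implicit, namely that the parity of the first coefficient, $f(a,c)\equiv A\pmod 2$, is constant on $\Gamma_0^{\PGL}(2)$-orbits; the convention check you flag is immediate, because $g\Gamma_0^{\PGL}(2)\circ f$ is exactly the $\Gamma_0^{\PGL}(2)$-orbit of $g\circ f$.
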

\begin{proof}
    Let $f=[A, B, C]$, and since $D$ is odd, $B$ must be odd. As $AC=\frac{B^2-D}{4}$ and $B^2-D\equiv 0\pmod{8}$, $AC$ is even. Thus,
    \[f\equiv\quad [0,1,0],\quad [0,1,1],\quad [1,1,0]\pmod{2}.\]
    In each case, we compute that exactly one of $I\circ f$, $U\circ f$, and $V\circ f$ represents first-odd forms, so the class number remains unchanged.
\end{proof}

Combining these results together gives the final explicit formula for $|\mathcal{F}(-3n^2)|$.

\begin{corollary}\label{cor:exactquadraticformcount}
    If $n$ is odd, then
    \[|\mathcal{F}(-3n^2)|=\frac{n}{2}\prod_{p\mid n}\left(1-\left(\frac{-3}{p}\right)\frac{1}{p}\right) + 2^{\omega(3n)-2}.\]
    If $n$ is even, then
    \[|\mathcal{F}(-3n^2)|=\frac{n}{3}\prod_{p\mid n}\left(1-\left(\frac{-3}{p}\right)\frac{1}{p}\right) + 2^{\omega{(3n/2)}-1}.\]
\end{corollary}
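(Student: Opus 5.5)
The proof will be a direct case split on the parity of $n$, feeding $D=-3n^2$ into Lemmas~\ref{lem:D5mod8} and~\ref{lem:D0mod4}. In each case I will substitute the class number formula \eqref{eqn:hformula}, together with the count $h_2(D)=2^{\mu-1}$ from \eqref{eqn:h2formula} where it is needed. The only point needing care is that \eqref{eqn:hformula} holds only for $n>1$ and Lemma~\ref{lem:D5mod8} excludes $D=-3$, so the case $n=1$ will be checked by hand.

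\emph{Case $n$ odd, $n>1$.} Here $n^2\equiv 1\pmod 8$, so $D=-3n^2\equiv -3\equiv 5\pmod 8$, and $D\neq -3$. Lemma~\ref{lem:D5mod8} gives
\[|\mathcal{F}(D)|=\frac{3h(D)+h_2(D)}{2}.\]
By \eqref{eqn:hformula},
\[\frac{3h(D)}{2}=\frac{n}{2}\prod_{p\mid n}\left(1-\kron{-3}{p}\frac1p\right).\]
Since $D$ is odd, \eqref{eqn:h2formula} gives $\mu=\omega(D)$. The prime divisors of $3n^2$ are those of $3n$, so $\mu=\omega(3n)$. Hence $h_2(D)=2^{\omega(3n)-1}$, and half of it is $2^{\omega(3n)-2}$, as claimed.

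\emph{Case $n=1$.} Enumerate the reduced first-odd forms of discriminant $-3$ directly. The condition $0\le B\le A,2C$ forces $B\le\sqrt3$, and $B$ must be odd, so $B=1$ and $AC=1$. This leaves only the form $[1,1,1]$, giving $|\mathcal{F}(-3)|=1$. The formula gives $\tfrac12+2^{\omega(3)-2}=\tfrac12+\tfrac12=1$, so it agrees.

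\emph{Case $n$ even.} Here $D=-3n^2\equiv 0\pmod 4$ and $D\le -12<-4$, so Lemma~\ref{lem:D0mod4} gives
\[|\mathcal{F}(D)|=h(D)+2^{\omega(-D/4)-1}.\]
Since $n\geq 2$, \eqref{eqn:hformula} supplies $h(D)$ in exactly the claimed form. For the second term, $-D/4=3(n/2)^2$, whose prime divisors are those of $3n/2$. This includes the prime $2$ exactly when $4\mid n$, which is why the exponent is $\omega(3n/2)$ rather than $\omega(3n)$. Thus the second term is $2^{\omega(3n/2)-1}$.

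There is no real obstacle here. The main care points are the boundary cases: $n=1$, where the hypotheses of Lemma~\ref{lem:D5mod8} and \eqref{eqn:hformula} fail, and correctly tracking whether $2$ divides $-D/4$ in the even case.
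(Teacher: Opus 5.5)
Your proposal is correct and follows the paper's own proof: split on the parity of $n$, then apply Lemma~\ref{lem:D5mod8} or Lemma~\ref{lem:D0mod4} together with \eqref{eqn:hformula}, using \eqref{eqn:h2formula} only in the odd case (where $\mu=\omega(3n)$), and check $n=1$ directly. Your added details are accurate, namely the explicit enumeration giving $|\mathcal{F}(-3)|=1$ and the observation that $h_2(D)$ cancels out of Lemma~\ref{lem:D0mod4}, so it plays no role in the even case.
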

\begin{proof}
    If $n$ is odd, then $-3n^2\equiv 5\pmod{8}$, and if $n$ is even, then $-3n^2\equiv 0\pmod{4}$. Combining Lemmas~\ref{lem:D5mod8} and~\ref{lem:D0mod4} with Equations~\eqref{eqn:h2formula} and \eqref{eqn:hformula} yields the result for $n\neq 1$. For $n=1$, the formula is verified directly.
\end{proof}

\subsection{Bijection with primitive Eisenstein packings}

\begin{proposition}\label{prop:eisensteinpackingbijectionqf}
    There is a bijection between primitive Eisenstein quadruples with first entry $a$ in standard position and primitive positive semidefinite first-odd binary quadratic forms of discriminant $-3a^2$. The bijection is realized by
    \[\phi(a, b, c, d) := \left[a + d, b + d - c, \frac{a+b}{2}\right],\]
    \[\theta[A, B, C] := (a, 2C-a, A-B+2C-2a, A-a).\]
\end{proposition}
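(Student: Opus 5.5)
The plan is to check directly that $\phi$ and $\theta$ are well-defined maps between the two sets, and then that they are mutually inverse. Both maps are linear once $a$ is fixed, so the only real content is verifying the arithmetic conditions (integrality, parity, primitivity, positivity) on each side.

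\emph{Step 1: $\phi$ is well defined.} Take $(a,b,c,d)$ primitive in standard position, and set $[A,B,C]=\phi(a,b,c,d)$.
\begin{itemize}
\item \emph{Discriminant.} Expand
\[B^2-4AC+3a^2=(b+d-c)^2-2(a+d)(a+b)+3a^2 .\]
This equals $a^2+b^2+c^2+d^2-2(a+c)(b+d)$, which vanishes by the Eisenstein equation. Hence the discriminant is $-3a^2$.
\item \emph{Integrality and first-odd.} Standard position gives $a\equiv b\pmod 2$, so $C\in\ZZ$. Lemma~\ref{lem:twoeventwoodd} says exactly one of $a,c$ is even. Combined with $c\equiv d\pmod 2$, this forces $a\not\equiv d$, so $A=a+d$ is odd.
\item \emph{Positivity.} Lemma~\ref{lem:pairsnonneg} gives $A=a+d\ge 0$, and $A$ is odd, so $A>0$. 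With $D=-3a^2\le 0$, the form is positive semidefinite.
\end{itemize}

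\emph{Step 2: $\theta$ is well defined.} Take $[A,B,C]$ first-odd, positive semidefinite, of discriminant $-3a^2$, and set $(a,b,c,d)=\theta[A,B,C]$.
\begin{itemize}
\item \emph{Eisenstein equation.} Running the identity of Step 1 backwards, $B^2-4AC=-3a^2$ yields $a^2+b^2+c^2+d^2=2(a+c)(b+d)$.
\item \emph{Positive sum.} Suppose the sum were negative. Then $-(a,b,c,d)$ would be an Eisenstein quadruple, and Lemma~\ref{lem:pairsnonneg} applied to it gives $-(a+d)=-A\ge 0$. This contradicts $A$ being odd and nonnegative. The sum cannot be zero either, since that forces the equation to give all entries zero, impossible as $A$ is odd.
\item \emph{Standard position.} We have $a+b=2C$, so $a\equiv b\pmod 2$. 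Also $c-d\equiv B+a\pmod 2$, and $B\equiv a\pmod 2$ because $B^2\equiv -3a^2$. Hence $c\equiv d\pmod 2$.
\end{itemize}

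\emph{Step 3: primitivity in both directions.} This is the step needing a little care, and I expect it to be the main (if mild) obstacle.
\begin{itemize}
\item \emph{Quadruple to form.} If a prime $p$ divides $A,B,C$, then $p$ is odd since $A$ is odd. Also $p^2\mid B^2-4AC=-3a^2$. If $p\neq 3$ this gives $p\mid a$; if $p=3$, then $9\mid 3a^2$ still gives $3\mid a$. Since $\theta$ expresses $b,c,d$ as integer combinations of $A,B,C,a$, we get $p\mid a,b,c,d$, contradicting primitivity of the quadruple.
\item \emph{Form to quadruple.} If $p$ divides $a,b,c,d$, then $p\mid A$ and $p\mid B$. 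Moreover $p\neq 2$ because $A$ is odd, so $p\mid 2C$ gives $p\mid C$, contradicting primitivity of the form.
\end{itemize}

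\emph{Step 4: the maps are inverse.} This is routine linear substitution.
\begin{itemize}
\item $\phi\circ\theta$: we get $a+d=A$, $(a+b)/2=C$, and $b+d-c=(2C-a)+(A-a)-(A-B+2C-2a)=B$.
\item $\theta\circ\phi$: using $2C=a+b$ and $A=a+d$, we recover $b$ and $d$ immediately. The formula for $c$ then collapses to $c$ after substituting $B=b+d-c$.
\end{itemize}

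Both composites are the identity, so $\phi$ and $\theta$ are mutually inverse bijections.
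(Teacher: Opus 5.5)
Your proof is correct and has the same overall structure as the paper's. Both check the discriminant, integrality and parity, positivity, and primitivity on each side, and observe that the two linear maps are mutually inverse.

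The one step where you genuinely diverge is the prime $3$ in the primitivity of $\phi(q)$. The paper only uses that a common prime divisor $p$ of $A,B,C$ divides $-3a^2$. So when $p=3\nmid a$, it has to reduce to $(a,b,c,d)\equiv(a,-a,a,-a)\pmod 3$ and appeal to Proposition~\ref{prop:mod9}, which says this residue pattern does not lift modulo $9$. You instead note that $p\mid A,B,C$ forces $p^2\mid B^2-4AC=-3a^2$, which gives $3\mid a$ at once. Your argument is therefore self-contained and avoids the mod-$9$ computation entirely.

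You also check explicitly that $\theta[A,B,C]$ has positive entry sum, by applying Lemma~\ref{lem:pairsnonneg} to the negated quadruple and ruling out sum zero. This is part of the definition of an Eisenstein quadruple, and the paper's proof leaves it unaddressed.
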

\begin{proof}
    Let $q=(a, b, c, d)$ and $Q=[A, B, C]$. It is easy to check that $\phi$ and $\theta$ are inverses to each other (as linear maps), and we compute the discriminant of $\phi(q)$ to be
    \[(b+d-c)^2-2(a+d)(a+b)=-3a^2+a^2+b^2+c^2+d^2-2(a+c)(b+d)=-3a^2.\]
    Similarly, the Eisenstein equation for $\theta(Q)$ is satisfied.

    As $q$ is in standard position, $a+d$ is odd and $\frac{a+b}{2}\in\ZZ$, so $\phi(q)$ is first-odd and integral. By Lemma~\ref{lem:pairsnonneg}, $a+d>0$, hence it is positive semidefinite. For primitivity, assume $p$ is a prime such that $p\mid \phi(q)$. Thus $p\mid -3a^2$, so $p=3$ or $p\mid a$. In the second case, it immediately follows that $p\mid (a+d)-a=d$, $p\mid (a+b)-a=b$, and $p\mid b+d-(b+d-c)=c$, so $q$ was not primitive, contradiction. Otherwise, $p=3$, and it follows that $(a, b, c, d)\equiv (a, -a, a, -a)\pmod{3}$. However, this contradicts Proposition~\ref{prop:mod9}, as this behaviour cannot lift to modulo $9$, and therefore the integers. Thus, $\phi$ does produce a primitive positive semidefinite first-odd binary quadratic form.

    For $\theta$, it is clear that $\theta(Q)$ is in standard position, and for primitivity, assume that $p\mid\theta(Q)$. It follows that $p\mid a, A, 2C, B$. As $A$ is odd, $p\neq 2$, so $p\mid A, B, C$, proving that $Q$ was not primitive.

    Putting this all together shows that we have well-defined bijections between the two sets, as claimed.
\end{proof}

Under these bijections, reduced quadratic forms correspond to reduced quadruples!

\begin{proposition}\label{prop:quadruplequadraticbijection}
    Let $q=(a, b, c, d)$ be a primitive Eisenstein quadruple. Then $S_2, S_3, S_4$ are all simultaneously non-decreasing for $q$ if and only if $\phi(q)$ is reduced. In particular, if $a\leq 0$, then the set of primitive Eisenstein circle packings of outer curvature $a$ bijects with reduced positive semidefinite first-odd binary quadratic forms of discriminant $-3a^2$.
\end{proposition}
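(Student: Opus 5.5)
The first claim is a direct translation of inequalities, and the second follows by combining it with the root-quadruple normalisation of Proposition~\ref{prop:eisensteinreduced} and the bijection $\phi$ of Proposition~\ref{prop:eisensteinpackingbijectionqf}.

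\textbf{Step 1: swap conditions as linear inequalities.} Write $\phi(q)=[A,B,C]=\left[a+d,\ b+d-c,\ \frac{a+b}{2}\right]$. From the explicit swap formulas:
\begin{itemize}
\item $S_2$ is non-decreasing iff $2a+2c-b\geq b$, i.e.\ $a+c\geq b$;
\item $S_3$ is non-decreasing iff $2b+2d-c\geq c$, i.e.\ $b+d\geq c$;
\item $S_4$ is non-decreasing iff $a+c\geq d$.
\end{itemize}
The reducedness conditions $0\leq B\leq A,2C$ unwind as follows:
\begin{itemize}
\item $B\geq 0$ is exactly $b+d\geq c$, the $S_3$ condition;
\item $B\leq A$ reads $b+d-c\leq a+d$, i.e.\ $b\leq a+c$, the $S_2$ condition;
\item $B\leq 2C$ reads $b+d-c\leq a+b$, i.e.\ $d\leq a+c$, the $S_4$ condition.
\end{itemize}
This proves the equivalence. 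Positive semidefiniteness is already supplied by Proposition~\ref{prop:eisensteinpackingbijectionqf}. Strictly, we need $q$ in standard position for $\phi(q)$ to be a first-odd integral form, so I would state the equivalence for such $q$.

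\textbf{Step 2: when $a\leq 0$, $S_1$ is automatic.} $S_1$ is non-decreasing iff $2(b+d)-a\geq a$. This holds since $b+d>0$ by Lemma~\ref{lem:pairsnonneg} and $a\leq 0$. Hence a standard-position quadruple with $a\leq 0$ is reduced exactly when $S_2,S_3,S_4$ are non-decreasing, exactly when $\phi(q)$ is reduced.

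\textbf{Step 3: the packing bijection.} By Proposition~\ref{prop:eisensteinreduced}, primitive packings correspond bijectively to reduced standard-position quadruples $(a,b,c,d)$ with $a\leq 0$. By Proposition~\ref{prop:threesmallestcurvatures}, $a$ is then the smallest curvature in the packing, i.e.\ the outer curvature. So packings with outer curvature $a$ are in bijection with reduced standard-position primitive quadruples with first entry $a$. Applying $\phi$ and Steps 1--2, these correspond to reduced primitive positive semidefinite first-odd forms of discriminant $-3a^2$.

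\textbf{Main obstacle.} The only delicate point is checking that ``outer curvature $a$'' is unambiguous. The packing must not have a different root with a smaller nonpositive entry, and the degenerate cases where $a=0$ with a second zero entry, such as $(0,0,b,b)$ or $(0,b,b,0)$, must be handled. Both are covered by the uniqueness argument in the proof of Proposition~\ref{prop:eisensteinreduced}. Primitivity and $\gcd$ conditions then transfer automatically through $\phi$ and its inverse $\theta$.
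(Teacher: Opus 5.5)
Your proof is correct and follows the paper's argument. The $S_2,S_3,S_4$ conditions unwind to exactly $B\le A$, $B\ge 0$, $B\le 2C$, and for $a\le 0$ the $S_1$ condition is automatic. The paper calls the second claim ``immediate''; your version also justifies the $S_1$ step via Lemma~\ref{lem:pairsnonneg}, flags the implicit standard-position hypothesis needed for $\phi(q)$ to be first-odd, and identifies the outer curvature using Propositions~\ref{prop:eisensteinreduced} and~\ref{prop:threesmallestcurvatures}.
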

\begin{proof}
    Let $\phi(q)=[A, B, C]$, where
    \[A=a+d,\quad B=b+d-c,\quad C=\frac{a+b}{2}.\]
    We see that $S_2$ is non-decreasing if and only if $2a+2c-b\geq b$, which is equivalent to $B=b+d-c\leq a+d=A$. Next, $S_3$ is non-decreasing if and only if $2b+2d-c\geq c$, which is equivalent to $B=b+d-c\geq 0$. Finally, $S_4$ is non-decreasing if and only if $2a+2c-d\geq d$, which is equivalent to $B=b+d-c\leq a+b=2C$. These are the three inequalities defining a reduced quadratic form, giving the first claim. The second claim follows immediately, since if $a\leq 0$, then $S_1$ is also non-decreasing.
\end{proof}

Combining Corollary~\ref{cor:exactquadraticformcount} with Proposition~\ref{prop:quadruplequadraticbijection}  proves Theorem~\ref{thm:countcirclepackingsoutercurvn}.  

As in the Apollonian case, the power of the Proposition~\ref{prop:quadruplequadraticbijection} bijection goes further: we can describe the multiset of curvatures of circles tangent to a fixed circle. Indeed, take an Eisenstein circle packing, fix a circle of curvature $a$, and consider the multiset of Eisenstein quadruples containing that circle. Up to reordering, this is equivalent to fixing an Eisenstein quadruple $q=(a, b, c, d)$ in standard position, and considering the orbit under the swaps $S_2, S_3, S_4$.

\begin{definition}
    The \emph{reduced Eisenstein swap group} is $\Egp_1:=\langle S_2, S_3, S_4\rangle\leq \GL(4, \ZZ)$.
\end{definition}

The orbit of $\Egp_1$ on $q$ is equivalent to taking a $\Gamma_0^{\PGL}(2)$ orbit of $\phi(q)$.

\begin{proposition}\label{prop:quadraticformtangentcircles}
    The multiset $\phi(q\Egp_1)$ is equal to the multiset $\Gamma_0^{\PGL}(2)(\phi(q))$.
\end{proposition}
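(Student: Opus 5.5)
The plan is to show that $\phi$ intertwines the three swap generators of $\Egp_1$ with three involutions generating $\Gamma_0^{\PGL}(2)$, and then to upgrade this generator-level compatibility to an equality of multisets.

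First I compute, for $q=(a,b,c,d)$ in standard position with $\phi(q)=[A,B,C]=[a+d,\,b+d-c,\,\tfrac{a+b}{2}]$, the effect of each swap. This is a direct linear computation.
\begin{itemize}
\item $S_3$ replaces $c$ by $2b+2d-c$. It fixes $A$ and $C$ and sends $B\mapsto -B$, so $\phi(qS_3)=g_3\circ\phi(q)$ with $g_3=\sm{1}{0}{0}{-1}$.
\item $S_2$ replaces $b$ by $2a+2c-b$. It gives $[A,\,2A-B,\,A-B+C]$, which equals $f(x+y,-y)$. So $\phi(qS_2)=g_2\circ\phi(q)$ with $g_2=\sm{1}{1}{0}{-1}$.
\item $S_4$ replaces $d$ by $2a+2c-d$. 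It gives $[A-2B+4C,\,4C-B,\,C]$, which equals $f(-x,2x+y)$. So $g_4=\sm{-1}{0}{2}{1}$.
\end{itemize}
Note that $qS_i$ stays in standard position, since $S_i$ preserves parities mod $2$, so $\phi$ remains applicable. All three $g_i$ lie in $\Gamma_0^{\PGL}(2)$ and have order $2$ in $\PGL(2,\ZZ)$. Moreover $g_2g_4=g_4g_2$ in $\PGL$, matching the relation $S_2S_4=S_4S_2$.

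Next I check that $\langle g_2,g_3,g_4\rangle=\Gamma_0^{\PGL}(2)$. Geometrically, $g_3$, $g_2$ and $g_4$ act on $\uhp$ as the reflections in the three sides of the fundamental domain of Proposition~\ref{prop:gamma0pgl2generators}. These sides are the line $x=0$, the line $x=-\tfrac12$, and the circle $(x+\tfrac12)^2+y^2=\tfrac14$, though possibly not in that order; I will verify the fixed loci. Alternatively, I can write the generators of Proposition~\ref{prop:gamma0pgl2generators} directly as words: $\sm{1}{1}{0}{1}=\pm g_3g_2$ up to sign conventions, and $\sm{1}{0}{2}{1}=\pm g_3g_4$ similarly.

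The domain is a hyperbolic triangle with angles $0$, $0$ and $\pi/2$, the right angle being at $-\tfrac12+\tfrac i2$. By Poincar\'e's polygon theorem, $\Gamma_0^{\PGL}(2)$ is therefore the reflection group with presentation $\langle g_2,g_3,g_4\mid g_i^2,\,(g_2g_4)^2\rangle$. This is exactly the presentation of $\Egp_1$, because Proposition~\ref{prop:eisensteingrouppresentation} restricted to the subgroup $\langle S_2,S_3,S_4\rangle$ gives $(\ZZ/2)^2 * \ZZ/2$; that the subgroup presentation is inherited follows from the free-product structure of $\Epres$.

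The actions are compatible in the right order. Since $q$ is acted on on the right while forms are acted on by substitution, $\phi(qS_{i_1}\cdots S_{i_k})=(g_{i_1}\cdots g_{i_k})\circ\phi(q)$, because substitution composes as $(g h)\circ f=g\circ(h\circ f)$ in the appropriate order. I will double-check this order; if it is reversed, I instead use the anti-isomorphism $S_i\mapsto g_i$, and since the generators are involutions, inversion fixes this issue. Either way, the map $S_{i_1}\cdots S_{i_k}\mapsto g_{i_1}\cdots g_{i_k}$ (or its reversal) is a bijection $\Egp_1\to\Gamma_0^{\PGL}(2)$ compatible with $\phi$. Indexing both multisets over the group elements then gives the equality of multisets.

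The main obstacle is the group-theoretic step: certifying that the $g_i$ generate $\Gamma_0^{\PGL}(2)$ with no extra relations, so that multiplicities match. I expect this to follow most cleanly from the fundamental-domain picture already established, via reflection in the sides, rather than from manipulating words.
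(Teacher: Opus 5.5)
Your proposal is correct and follows the paper's proof. The paper makes the same three computations (its matrix for $S_4$ is $\sm{1}{0}{-2}{-1}$, equal to your $g_4$ in $\PGL$) and then simply observes that these generate $\Gamma_0^{\PGL}(2)$, leaving implicit the multiplicity matching that you justify via Poincar\'e's theorem and the free-product structure of $\Epres$. Your word order $\phi(qS_{i_1}\cdots S_{i_k})=(g_{i_1}\cdots g_{i_k})\circ\phi(q)$ is right, although the composition law you wrote should read $g\circ(h\circ f)=(hg)\circ f$.

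The isomorphism can also be obtained without presentations. The identity $\phi(qS_i)=g_i\circ\phi(q)$ holds for every $q\in\RR^4$, $\phi$ is onto $\RR^3$ and injective on each hyperplane of fixed first entry (which $\Egp_1$ preserves), and only $\pm I$ fixes every real binary form. Hence a word in the $S_i$ is trivial in $\Egp_1$ exactly when the corresponding word in the $g_i$ is trivial in $\PGL(2,\ZZ)$.
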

\begin{proof}
    Let $q=(a, b, c, d)$, and recall that
    \[Q=\phi(q)=\left[a+d, b+d-c, \frac{a+b}{2}\right]=[A, B, C].\]
    The action of $S_2$ gives
    \[\phi(S_2(q)) = \left[a+d, 2a-b+c+d, \frac{3a-b+2c}{2}\right] = [A, 2A-B, A-B+C] = \lm{1}{1}{0}{-1}\circ Q.\]
    The action of $S_3$ gives
    \[\phi(S_3(q)) = \left[a+d, -b+c-d, \frac{a+b}{2}\right] = [A, -B, C] = \lm{1}{0}{0}{-1}\circ Q.\]
    The action of $S_4$ gives
    \[\phi(S_4(q)) = \left[3a+2c-d, 2a+b+c-d, \frac{a+b}{2}\right] = [A-2B+4C, -B+4C, C] = \lm{1}{0}{-2}{-1}\circ Q.\]
    Since
    \[\left\langle\lm{1}{0}{0}{-1}, \lm{1}{1}{0}{-1}, \lm{1}{0}{-2}{-1}\right\rangle = \left\langle\lm{1}{0}{0}{-1},\lm{1}{1}{0}{1}, \lm{1}{0}{2}{1}\right\rangle = \Gamma_0^{\PGL}(2),\]
    the result follows.
\end{proof}

Let $[A, B, C]=\sm{e}{f}{g}{h}\circ\phi(q)$, where $\sm{e}{f}{g}{h}\in\Gamma_0^{\PGL}(2)$. It follows that $\theta([A, B, C])=(a, 2C-a, A-B+2C-2a, A-a)$ is an Eisenstein quadruple in the same packing. In particular, $2C-a$ represents a curvature of a circle tangent to $a$ with the same parity, and $A-a$ represents a curvature of a circle tangent to $a$ with opposite parity.

\begin{proposition}\label{prop:curvaturesoftangentcircles}
    The multiset of curvatures of circles tangent to our fixed circle of curvature $a$ is a union of two multisets, split by parity. Those curvatures which have the same parity as $a$ biject with the multiset
    \[\{2\phi(q)(x, y)-a:\gcd(x, y)=1,\text{$y$ is odd}\}.\]
    Those curvatures which have the opposite parity as $a$ biject with the multiset
    \[\{\phi(q)(x, y)-a:\gcd(x, y)=1,\text{$y$ is even}\}.\]
\end{proposition}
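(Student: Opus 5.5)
The plan is to combine Proposition~\ref{prop:quadraticformtangentcircles} with the explicit inverse map $\theta$, and then to convert the orbit of forms into values of the single form $\phi(q)$ at primitive vectors.

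First, identify which circles are tangent to the fixed circle. Write the fixed circle as the first entry of a quadruple $q=(a,b,c,d)$ in standard position. Every 4-wheel containing this circle in the first position arises as $q\gamma$ for some $\gamma\in\Egp_1$; this uses Proposition~\ref{prop:uniquereduction}, since swaps $S_1$ would move the circle out of position. The circles tangent to the fixed circle are exactly the second and fourth entries of these 4-wheels. Standard position gives $b\equiv a$ and $d\not\equiv a \pmod 2$, and parity of each slot is preserved by $\Egp_1$. Hence the same-parity tangent circles are the $b$-entries of $q\Egp_1$, and the opposite-parity ones are the $d$-entries.

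Counting with correct multiplicity requires care, because each tangent circle appears in many 4-wheels. I would handle this via the orbit-stabilizer structure, by passing to cosets. The $b$-entry is fixed by $S_4$ and $S_3$? Not quite: $S_3$ changes $c$ only, and $S_4$ changes $d$ only, so the circle in slot 2 is fixed by $\langle S_3,S_4\rangle$. The distinct tangent circles of same parity are therefore indexed by $\Egp_1/\langle S_3,S_4\rangle$. Similarly, the opposite-parity ones are indexed by $\Egp_1/\langle S_2,S_3\rangle$. Geometrically, a circle tangent to the fixed one, together with it, determines the wheel up to these swaps.

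Next, transfer to forms. By Proposition~\ref{prop:quadraticformtangentcircles}, $\phi(q\gamma)=M\circ\phi(q)$ with $M\in\Gamma_0^{\PGL}(2)$, and $\theta$ shows the $b$-entry equals $2C'-a$ and the $d$-entry equals $A'-a$, where $[A',B',C']=M\circ\phi(q)$. For $M=\sm{e}{f}{g}{h}$ we have $A'=\phi(q)(e,g)$ and $C'=\phi(q)(f,h)$. Membership in $\Gamma_0^{\PGL}(2)$ forces $g$ even and, since $\det=\pm1$, $h$ odd, with both columns primitive. Conversely, every primitive $(x,y)$ with $y$ odd (respectively even) occurs as a second (respectively first) column.

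The main obstacle is showing that the correspondence between cosets and columns is bijective. Under the dictionary in the proof of Proposition~\ref{prop:quadraticformtangentcircles}, $S_3\mapsto\sm{1}{0}{0}{-1}$ and $S_4\mapsto\sm{1}{0}{-2}{-1}$. The subgroup they generate must be identified with the stabilizer of the second column up to sign in $\PGL$, namely $\{\sm{e}{0}{g}{\pm1}\}$ with $g$ even. Likewise, $\langle S_2,S_3\rangle$ must correspond to $\{\sm{\pm1}{f}{0}{h}\}$, the stabilizer of the first column. I would verify these stabilizer computations directly. This requires using that $\Egp_1\to\Gamma_0^{\PGL}(2)$ is an isomorphism, so that multiplicities match. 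That fact follows from comparing presentations: $\Gamma_0^{\PGL}(2)$ acts with the fundamental domain of Proposition~\ref{prop:gamma0pgl2generators}, whose side pairings give the relations $S_i^2$ and $(S_2S_4)^2$. With that in hand, the cosets biject with primitive columns $(x,y)$ modulo $\pm$ (equal in $\PGL$), which yields the two stated multisets.
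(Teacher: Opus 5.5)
Your proposal is correct and follows the paper's proof: both write $A=\phi(q)(e,g)$ and $C=\phi(q)(f,h)$ in terms of the columns of $M\in\Gamma_0^{\PGL}(2)$, and both identify the column stabilizers with $\langle S_2,S_3\rangle$ and $\langle S_3,S_4\rangle$, which fix the slot-$4$ and slot-$2$ circles respectively. Your extra multiplicity bookkeeping is sound; this includes the coset indexing and the isomorphism $\Egp_1\cong\Gamma_0^{\PGL}(2)$, which the paper uses implicitly. The Poincaré-presentation argument is more than you need, though, because injectivity of $\Egp_1\to\Gamma_0^{\PGL}(2)$ follows at once from $\phi(q\gamma)=M_\gamma\circ\phi(q)$ holding for all real $q$ together with $\phi$ being a bijection on each slice with fixed first entry.
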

\begin{proof}
As seen above, we need to determine all possible values of $A,C$ for $\Gamma_0^{\PGL}(2)$-translates of $\phi(q)$. Note that
\[A=[A, B, C](1, 0) = \left(\lm{e}{f}{g}{h}\circ\phi(q)(x, y)\right)(1, 0) = (\phi(q)(ex+fy, gx+hy))(1, 0)= \phi(q)(e, g).\]
Similarly, $C=\phi(q)(f, h)$. Thus, all possible $A$ values come from a choice of coprime $e,g\in\ZZ$. We can find a corresponding $f,h\in\ZZ$ with $\sm{e}{f}{g}{h}\in\Gamma_0^{\PSL}(2)$, if and only if $g$ is even. The set of all possible such pairs $(f, h)$ corresponds to the column replacement $(f, h)\rightarrow (f+ke, h+kg)$, which in turn corresponds to the actions of $S_2$ and $S_3$, i.e. the stabilizer of the circle of curvature $A-a$ inside $q\Egp_1$. This gives the second claim.

For the first claim, note that since $eh-fg=1$ and $2\mid g$, $h$ must be odd. Given any coprime $(f, h)$ with odd $h$, we can complete it to $\sm{e}{f}{g}{h}\in\Gamma_0^{\PGL}(2)$. The set of all possible pairs $(e, g)$ corresponds to the column replacement $(e, g)\rightarrow (e + 2kf, g + 2kh)$, which in turn corresponds to the actions of $S_3$ and $S_4$, i.e. the stabilizer of the circle of curvature $2C-a$ inside $q\Egp_1$. This gives the first claim.
\end{proof}

\section{Schmidt arrangement}\label{sec:schmidt}

We pause our study of Eisenstein circle packings for an interlude to demonstrate that they arise as immediate tangency packings in the Eisenpint Schmidt arrangement.

\subsection{Eisenstein circles and tangencies}

Recall that an oriented circle is parametrized by the triple $(u, v, w)$, where $u,v\in\RR$ and $w\in\CC$ satisfy $uv=|w|^2-1$. There is a nice formula for this triple when $\widehat{\RR}$ is acted on by a M\"obius map.

\begin{proposition}[\upshape{Proposition 3.7 of \cite{StangeVisualizingArith}}]\label{prop:mobiuscurv}
    Let\footnote{Note that $\beta$ and $\gamma$ are transposed from their usual locations, to remain consistent with the second author's somewhat unhinged convention in \cite{StangeVisualizingArith}.} $M=\sm{\alpha}{\gamma}{\beta}{\delta}\in\SL(2, \CC)$. Then the oriented circle $M(\widehat{\RR})$ satisfies
    \[u = i(\beta\ol{\delta}-\ol{\beta}\delta), \qquad v=i(\alpha\ol{\gamma}-\ol{\alpha}\gamma),\qquad w=i(\alpha\ol{\delta}-\gamma\ol{\beta}).\]
\end{proposition}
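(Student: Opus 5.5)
Since Proposition~\ref{prop:mobiuscurv} is a formula for the image of $\widehat{\RR}$, I will compute the image circle directly in the parametrization of Section~\ref{sec:pedoedefinition}: find a Hermitian equation for $M(\widehat{\RR})$, normalize it so that $uv=|w|^2-1$, and then fix the sign using the orientation.

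First write $\widehat{\RR}$ in homogeneous coordinates. A point $X/Y$ lies on $\widehat{\RR}$ exactly when $X\ol{Y}-\ol{X}Y=0$. Equivalently, the vector $(X,Y)$ annihilates the Hermitian form with matrix $H_0=\sm{0}{i}{-i}{0}$ (up to an overall sign, fixed below). In this language, the circle $\{uX\ol X-wY\ol X-\ol w X\ol Y+vY\ol Y=0\}$ corresponds to the Hermitian matrix
\[H=\sm{u}{-\ol w}{-w}{v},\]
because
\[(\ol X,\ol Y)\,H\,(X,Y)^T=uX\ol X-\ol w X\ol Y-wY\ol X+vY\ol Y.\]
Then $\det H=uv-|w|^2=-1$ is precisely the normalization $uv=|w|^2-1$. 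The form $H_0$ has determinant $-1$, so it is already normalized.

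Next, transport $H_0$ through $M$. A point $P$ lies on $M(\widehat{\RR})$ exactly when $M^{-1}P\in\widehat{\RR}$. Hence the Hermitian matrix of $M(\widehat{\RR})$ is
\[H=(M^{-1})^{*}H_0M^{-1}.\]
Since $\det M=1$, this $H$ still has determinant $-1$. With $M=\sm{\alpha}{\gamma}{\beta}{\delta}$ we have $M^{-1}=\sm{\delta}{-\gamma}{-\beta}{\alpha}$. Multiplying out:
\begin{itemize}
\item The $(1,1)$ entry is $\pm i(\beta\ol\delta-\ol\beta\delta)$, which gives $u$.
\item The $(2,2)$ entry is $\pm i(\alpha\ol\gamma-\ol\alpha\gamma)$, which gives $v$.
\item The off-diagonal entry is $\mp i(\alpha\ol\delta-\gamma\ol\beta)$ or its conjugate, which gives $w$.
\end{itemize}
I will do this multiplication carefully, because the transposed labelling of $\beta$ and $\gamma$ makes index slips easy.

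The main obstacle is the overall sign, meaning the orientation. The formulas are determined by the entries of $H$ only up to the simultaneous negation $(u,v,w)\mapsto-(u,v,w)$, which reverses orientation. I will fix the convention on $\widehat{\RR}$ itself. Take $M=I$, so $\alpha=\delta=1$ and $\beta=\gamma=0$. The claimed formulas give $u=v=0$ and $w=i$. By the list in Section~\ref{sec:pedoedefinition}, for a line $w$ is the unit normal pointing into the interior. So $\widehat{\RR}$ is oriented with the upper half-plane as its interior, which is consistent with the interior lying to the left when the line is traversed left to right.

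With that choice of sign on $H_0$, I then need the sign to propagate correctly to every $M$. Conjugating by $M$ preserves the form and hence the orientation, so a single consistent sign choice yields the stated formulas for all $M\in\SL(2,\CC)$. As a final check I will test $M=\sm{1}{0}{1}{1}$, the map $z\mapsto z/(z+1)$:
\begin{itemize}
\item The formulas give $u=0$. This is correct, because $z\mapsto z/(z+1)$ maps $\widehat{\RR}$ to itself.
\item The formulas give $w=i(1-0)=i$, so the orientation is preserved.
\end{itemize}
This also confirms the sign convention for the footnoted transposition. Alternatively, the whole result can be cited as Proposition~3.7 of \cite{StangeVisualizingArith}.
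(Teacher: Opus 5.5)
The paper gives no proof of this statement; it quotes it from \cite{StangeVisualizingArith}. Your computation therefore has to stand on its own. Your strategy is the right one: realize $\widehat{\RR}$ as the null set of a determinant $-1$ Hermitian form and transport it by $(M^{-1})^{*}H_0M^{-1}$. However, the dictionary step is wrong as written, and your checks cannot detect the error.

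With the pairing $(\ol X,\ol Y)H(X,Y)^T$ that you state, $H=\sm{u}{-\ol w}{-w}{v}$ gives $u|X|^2-\ol w\,\ol XY-wX\ol Y+v|Y|^2$, which is not the circle's equation. Your matrix belongs to the transposed pairing $(X,Y)H(\ol X,\ol Y)^T$, and that pairing transports as $(M^{-1})^{T}H_0\ol{M^{-1}}$, not as $(M^{-1})^{*}H_0M^{-1}$. Carrying out your plan literally with $H_0=\sm{0}{i}{-i}{0}$ gives
\[(M^{-1})^{*}H_0M^{-1}=\lm{-i(\beta\ol\delta-\ol\beta\delta)}{i(\alpha\ol\delta-\gamma\ol\beta)}{-i(\ol\alpha\delta-\beta\ol\gamma)}{-i(\alpha\ol\gamma-\ol\alpha\gamma)}.\]
Your dictionary then reads $w=i(\ol\alpha\delta-\beta\ol\gamma)$, which equals $i$ at $M=I$, so your sign test accepts this $H_0$. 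You would conclude $u=-i(\beta\ol\delta-\ol\beta\delta)$, $v=-i(\alpha\ol\gamma-\ol\alpha\gamma)$ and $w=i(\ol\alpha\delta-\beta\ol\gamma)$, i.e.\ $-(u,v,\ol w)$ in terms of the claimed values.

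Your two test matrices cannot catch this. Every $M\in\SL(2,\RR)$ gives $(0,0,i)$ under either set of formulas, and also under the untransposed labelling of $\beta,\gamma$. So the check with $z\mapsto z/(z+1)$ confirms nothing, including nothing about the footnoted transposition. A non-real test does work. For $\alpha=\delta=1$, $\beta=0$, $\gamma=i$, the stated formulas give $(0,2,i)$, which is the line $\Im z=1$ with interior above, as it must be. Your formulas give $(0,-2,i)$, which is the line $\Im z=-1$.

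To repair the argument, use one pairing consistently. With $P^{*}HP$, the circle $(u,v,w)$ has matrix $\sm{u}{-w}{-\ol w}{v}$, and $\widehat{\RR}$ with interior the upper half-plane has $H_0=\sm{0}{-i}{i}{0}$. Then $(M^{-1})^{*}H_0M^{-1}$ is exactly $\sm{u}{-w}{-\ol w}{v}$ with the stated $u,v,w$, and the determinant being $-1$ is the relation $uv=|w|^2-1$.

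Separately, your sentence ``conjugating by $M$ preserves the form and hence the orientation'' needs the fact that makes it true. For such a normalized $H$, the interior of the oriented circle is exactly $\{P:P^{*}HP<0\}$. At $P=(z,1)^T$ the form is $u(|z-w/u|^2-u^{-2})$ when $u\neq0$, and $v-2\Re(w\ol z)$, which decreases in the direction $w$, when $u=0$. Since $P^{*}HP=(M^{-1}P)^{*}H_0(M^{-1}P)$, the negative region of $H$ is $M$ applied to the upper half-plane. Hence $H$ represents $M(\widehat{\RR})$ with its image orientation. This is what fixes the global sign for all $M$ at once; a single check at $M=I$ does not.
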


Specializing Proposition~\ref{prop:mobiuscurv} to the case of $\PSL(2, \ZZ[\om])$ produces the following result.

\begin{proposition}\label{prop:eisensteincircleformulae}
    Let 
    \[M=\lm{a+b\om}{c+d\om}{e+f\om}{g+h\om}\in\PSL(2, \ZZ[\om]),\]
    and assume that $M(\widehat{\RR})$ produces the tuple $(u, v, w)$, where $u$ is the curvature, $v$ is the co-curvature, and $w$ is the curvature-centre. Then
    \[u = s\sqrt{3}, \qquad v = t\sqrt{3},\qquad w=\frac{x\sqrt{3}+yi}{2},\quad \text{where}\quad s,t,x,y\in\ZZ,\quad x\equiv y\pmod{2},\quad y\equiv 2\pmod{3}.\]
    The terms $s,t,x,y$ can be computed with the formulae
    \[s = eh-fg,\qquad t = ad-bc, \qquad x = ah-bg+de-cf, \qquad y = 3ag - 3ce - 1,\]
    and they satisfy $3x^2+y^2-12st=4$.
\end{proposition}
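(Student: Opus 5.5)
The plan is to substitute the entries of $M$ into Proposition~\ref{prop:mobiuscurv} and simplify, using $\omega-\ol{\omega}=i\sqrt{3}$, $\omega\ol{\omega}=1$, $\omega+\ol{\omega}=1$ and $\omega^2=\omega-1$. Put $\alpha=a+b\om$, $\gamma=c+d\om$, $\beta=e+f\om$, $\delta=g+h\om$. First note that replacing $M$ by $-M$ leaves every expression in Proposition~\ref{prop:mobiuscurv} unchanged, since each is quadratic in the entries. The proposed formulas for $s,t,x,y$ are likewise quadratic, so everything is well defined on $\PSL(2,\ZZ[\om])$.

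For the curvature, I will use $\beta\ol{\delta}-\ol{\beta}\delta=2i\,\mathrm{Im}(\beta\ol\delta)$, so that $u=-2\,\mathrm{Im}(\beta\ol\delta)$. Expanding,
\[(e+f\om)(g+h\ol\om)=eg+fh+eh\ol\om+fg\om,\]
whose imaginary part is $\frac{\sqrt3}{2}(fg-eh)$. This gives $u=\sqrt3(eh-fg)$, so $s=eh-fg$. The identical computation with $(\alpha,\gamma)$ in place of $(\beta,\delta)$ gives $t=ad-bc$.

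For $w=i(\alpha\ol\delta-\gamma\ol\beta)$, I will expand both products the same way:
\[\alpha\ol\delta=ag+bh+\tfrac{ah+bg}{2}+\tfrac{\sqrt3}{2}(bg-ah)\,i,\qquad \gamma\ol\beta=ce+df+\tfrac{cf+de}{2}+\tfrac{\sqrt3}{2}(de-cf)\,i.\]
Multiplying the difference by $i$ makes the real part of $w$ equal to $\frac{\sqrt3}{2}(ah-bg+de-cf)$, which gives $x$ directly. The imaginary part comes out as $y/2$ with
\[y=2ag+2bh-2ce-2df+ah+bg-cf-de.\]
To bring $y$ to the stated form, I will use the determinant condition $\alpha\delta-\gamma\beta=1$ and split it into its $1$- and $\om$-components via $\om^2=\om-1$:
\[ag-bh-ce+df=1,\qquad ah+bg+bh-cf-de-df=0.\]
The second equation replaces $ah+bg-cf-de$ by $df-bh$, which gives $y=2ag+bh-2ce-df$. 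The first then replaces $bh-df$ by $ag-ce-1$, yielding $y=3ag-3ce-1$. This immediately gives $y\equiv 2\pmod 3$.

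The parity claim follows from the same two relations read modulo $2$. We have $x\equiv ah+bg+cf+de\equiv bh+df\equiv ag+ce+1\equiv y\pmod 2$. Finally, the relation $uv=|w|^2-1$, which holds for every oriented circle, becomes $3st=\frac{3x^2+y^2}{4}-1$, i.e. $3x^2+y^2-12st=4$.

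The main obstacle is only bookkeeping in the $w$ computation, specifically reducing $y$ to $3ag-3ce-1$. The key point is to split the determinant identity into its two integral components, and that split drives both the formula for $y$ and the congruence $x\equiv y\pmod 2$.
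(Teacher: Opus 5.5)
Your proposal is correct and follows the paper's route: the paper obtains this proposition just by specializing Proposition~\ref{prop:mobiuscurv} to entries in $\ZZ[\om]$, without writing out the algebra. Your computation supplies that omitted algebra, and each expansion checks out. This includes the split of $\alpha\delta-\gamma\beta=1$ into its $1$- and $\om$-components, which gives both $y=3ag-3ce-1$ and $x\equiv y\pmod 2$, and the use of $uv=|w|^2-1$ for the final identity.
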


The modulo 2 condition on $y$ can be removed by replacing $y$ with $z=\frac{x-y}{2}$, where the modulo 3 condition becomes $x+z\equiv 2\pmod{3}$.

\begin{definition}\label{def:schmidtreducedquadruple}
    The circle $M(\widehat{\RR})$ is called an \emph{Eisenstein circle}. We call $s$ the \emph{reduced curvature}, $t$ the \emph{reduced co-curvature}, and $(s, t, x, z)$ is the \emph{reduced coordinates} of $M(\widehat{\RR})$. This satisfies 
    \begin{equation}
        \label{eqn:sheet}
        x^2-xz+z^2-1 = 3st,\qquad x+z\equiv 2\pmod{3}.
    \end{equation}
    The inversive distance in terms of reduced coordinates is
    \[\langle (s_1, t_1, x_1, z_1), (s_2, t_2, x_2, z_2)\rangle = \frac{-3s_1t_2-3s_2t_1-x_1z_2-x_2z_1}{2}+x_1x_2+z_1z_2.\]
\end{definition}

A converse to Proposition~\ref{prop:eisensteincircleformulae} holds, and gives a convenient description of these circles. 
Before we give this in Proposition~\ref{prop:eisensteincircleconverse}, we do a computation to demonstrate how the coordinates $(s, t, x, z)$ transform under translations and inversion in $\PSL(2, \ZZ[\om])$.

\begin{proposition}\label{prop:stxzbasictransformations}
    Let the circle $\cir$ correspond to the reduced coordinates $(s, t, x, z)$. The image of $\cir$ under translation by $e+f\om\in\RR[\om]$ is
    \[(s, t+(e^2+ef+f^2)s+(e+f)x-fz, x+(2e+f)s, z+(e-f)s).\]
    The image of $\cir$ under the inversion $\sm{0}{-1}{1}{0}$ is
    \[(t, s, -x, z-x).\]
    In particular, if $s, t, x, z, e, f\in\ZZ$, then the image is again an integral 4-tuple.
\end{proposition}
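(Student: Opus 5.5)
The plan is to verify both transformation rules directly in the $(u,v,w)$ coordinates, and then translate back into reduced coordinates using $u=s\sqrt{3}$, $v=t\sqrt{3}$ and $w=\frac{x\sqrt3+yi}{2}$ with $y=x-2z$. The cleanest uniform tool is Proposition~\ref{prop:mobiuscurv}. Write $\cir=M(\widehat{\RR})$ for some $M\in\SL(2,\CC)$ (in the transposed convention of that proposition). The translated circle is then $TM(\widehat{\RR})$, where $T=\sm{1}{\tau}{0}{1}$, and the inverted circle is $JM(\widehat{\RR})$, where $J=\sm{0}{-1}{1}{0}$. Multiplying out the entries and applying the formulas for $u,v,w$ gives everything we need. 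Only the entries of $M$ enter, so the argument holds for every oriented circle: every oriented circle is an image of $\widehat{\RR}$ under some element of $\SL(2,\CC)$, and this representation respects orientation.

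\textbf{Step 1 (translation).} I expect $u'=u$ and $w'=w+u\tau$. The co-curvature is then forced by $u'v'=|w'|^2-1$, which gives $v'=v+(\ol{w}\tau+w\ol{\tau})+u|\tau|^2$; the matrix computation confirms this without dividing by $u$. Now substitute $\tau=e+f\om=(e+\tfrac f2)+\tfrac{f\sqrt3}{2}i$. First, $u|\tau|^2=s\sqrt3(e^2+ef+f^2)$. Second,
\[
\ol{w}\tau+w\ol{\tau}=2\,\mathrm{Re}(\ol{w}\tau)=\sqrt3\bigl(x(e+\tfrac f2)+\tfrac{yf}{2}\bigr)=\sqrt3\bigl((e+f)x-fz\bigr),
\]
where the last equality uses $y=x-2z$. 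Together these give $t'=t+(e^2+ef+f^2)s+(e+f)x-fz$. For $w'$, the real part gives $x'=x+(2e+f)s$. The imaginary part gives $y'=y+3fs$, and hence $z'=(x'-y')/2=z+(e-f)s$.

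\textbf{Step 2 (inversion).} Using Proposition~\ref{prop:mobiuscurv}, the product $JM$ has its rows permuted and one row negated. This yields $u'=v$ and $v'=u$, so $s$ and $t$ swap, and $w'=-\ol{w}$. With orientation conventions, the sign of $w'$ is the step I expect to need most care, since both the transposed convention and the overall sign can flip; I would sanity-check it on $\widehat{\RR}$ itself and on a unit circle centred at $i$. Granting $w'=-\ol{w}$, we get $x'=-x$ and $y'=y$. Then $z'=\frac{-x-y}{2}=\frac{-x-(x-2z)}{2}=z-x$, as claimed.

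\textbf{Step 3 (integrality).} This is immediate. Every new coordinate is a polynomial with integer coefficients in $s,t,x,z,e,f$, so integer inputs give an integral 4-tuple. As a consistency check, one can verify that both images still satisfy \eqref{eqn:sheet}. In particular, the congruence $x'+z'\equiv x+z+3es\equiv x+z\pmod 3$ holds for translations, and $-x+(z-x)\equiv x+z\pmod 3$ holds for the inversion.
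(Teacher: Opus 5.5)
Your proposal is correct and follows essentially the same route as the paper: write $\cir=M(\widehat{\RR})$, left-multiply by the translation or inversion matrix, and read off the new coordinates from the M\"obius curvature formulas. The only difference is bookkeeping. The paper checks the polynomial identities directly in the $\om$-basis with real coefficients, while you pass through the cleaner intermediate identities $w'=w+u\tau$, $v'=v+2\,\mathrm{Re}(\ol{w}\tau)+u|\tau|^2$, and $u'=v$, $v'=u$. The sign you hedged on needs no sanity check, because Proposition~\ref{prop:mobiuscurv} settles it: $JM=\sm{-\beta}{-\delta}{\alpha}{\gamma}$ gives $w'=i(\delta\ol{\alpha}-\beta\ol{\gamma})=-\ol{w}$.
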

\begin{proof}
    Note that the algebraic formulae in Proposition~\ref{prop:eisensteincircleformulae} and below still hold even if our coefficients lie in $\RR$ instead of $\ZZ$. To this end, we can write 
    \[\cir = M(\widehat{\RR}) = \lm{A+B\om}{C+D\om}{E+F\om}{G+H\om}(\widehat{\RR}),\quad A,B,C,D\in\RR,\]
    multiply $M$ on the left by $\sm{1}{e+f\om}{0}{1}$ (for translation) or $M=\sm{0}{-1}{1}{0}$ (for inversion), and verify the claimed formulae.

    As the outputs are integral polynomials in $s, t, x, z, e, f$, if the inputs are integral, the outputs remain integral.
\end{proof}

A priori, given an integral solution $(s, t, x, z)$ to Equation~\eqref{eqn:sheet}, we don't know if it corresponds to reduced coordinates from $\PSL(2, \ZZ[\om])$. However, by  Proposition~\ref{prop:stxzbasictransformations}, we can perform a reduction process, where we translate, invert, and repeat, until we end back with $\widehat{\RR}$. As we are only applying $\PSL(2, \ZZ[\om])$ transformations, the result will follow.

\begin{lemma}\label{lem:reducedquadruplereduction}
Let $(s, t, x, z)$ be any integral solution to $x^2-xz+z^2-1=3st$ which corresponds to the circle $\cir$ and satisfies $s>0$. Then, there exists an $M\in\PSL(2, \ZZ[\om])$ for which $M(\cir)$ has reduced coordinates $(s', t', x', z')$ where $s',t',x',z'\in\ZZ$ and $0\leq s'<s$.
\end{lemma}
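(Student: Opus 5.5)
The plan is a Euclidean-style descent: first translate by an Eisenstein integer to shrink the curvature-centre, then invert to swap curvature and co-curvature, and check that the new curvature $t'$ satisfies $0\leq t'<s$. Both moves are in $\PSL(2,\ZZ[\om])$, and by Proposition~\ref{prop:stxzbasictransformations} integrality of the reduced coordinates is preserved throughout. Equation~\eqref{eqn:sheet} is preserved as well, since these maps preserve the inversive distance.

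\textbf{Step 1: translate.} By Proposition~\ref{prop:stxzbasictransformations}, translation by $e+f\om$ sends
\[(x,z)\mapsto \bigl(x+(2e+f)s,\; z+(e-f)s\bigr).\]
The quadratic form $x^2-xz+z^2$ is the norm form of the lattice in which the vectors $(2e+f,\,e-f)$ range over the sublattice $L=\{(m,n): m\equiv -n \pmod 3\}$, which has index $3$ in $\ZZ^2$. The key observation is that translating by $e+f\om$ changes $x+z$ by $3es$, so the congruence $x+z\equiv 2\pmod 3$ is automatically preserved. I will therefore reduce $(x,z)$ modulo $sL$ to a point of minimal norm. A fundamental domain for $L$ is a hexagon, with covering radius $R$ in the norm $N(m,n)=m^2-mn+n^2$. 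Since $\det L=3$, the sublattice $L$ is a scaled copy of the Eisenstein lattice, so I expect $R=1$: $L$ has minimal vectors of norm $3$, and a hexagonal lattice of minimal norm $3$ has covering norm $1$. This gives
\[x'^2-x'z'+z'^2\leq s^2.\]

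\textbf{Step 2: invert.} After the translation, $3st'=x'^2-x'z'+z'^2-1\leq s^2-1$, so $t'<s/3<s$. Also $t'\geq 0$ provided $x'^2-x'z'+z'^2\geq 1$. This holds because the norm form vanishes only at $(0,0)$, and $(0,0)$ violates $x+z\equiv 2\pmod 3$, so the norm is at least $1$. Now apply the inversion $\sm{0}{-1}{1}{0}$, which by Proposition~\ref{prop:stxzbasictransformations} maps $(s,t',x',z')$ to $(t',s,-x',z'-x')$. The new reduced curvature is $t'$, which satisfies $0\leq t'<s$, as required.

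\textbf{Main obstacle.} The delicate step is the covering-radius bound in Step 1. I would verify it explicitly: $L$ is generated by $(1,2)$ and $(2,1)$, equivalently by $(1,-1)$ and $(1,2)$, all of norm $3$. The Voronoi cell of a hexagonal lattice with minimal norm $3$ has deep holes at norm $3/3=1$, so every point of $\RR^2$ lies within norm $s^2$ of $sL$. Any slack here is harmless, since we only need $t'<s$, and the factor $3$ already gives a wide margin. If needed, I would check directly that the deep holes, for example $(1,1)s$, have norm $s^2$.
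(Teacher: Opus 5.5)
Your proof is correct and follows the paper's route. Both arguments translate by an element of $\ZZ[\om]$ to bring the centre near the origin, bound $3st'=x'^2-x'z'+z'^2-1$ to get $0\le t'<s$, and then invert by $\sm{0}{-1}{1}{0}$. The only difference is the size of the bound: you use the exact covering radius of the hexagonal lattice, giving norm $\le s^2$ and so $t'<s/3$, while the paper uses the cruder fact that the parallelogram with vertices $-1,0,\om,-1+\om$ lies in the unit disc, giving norm $\le 3s^2$ and so $t'<s$. One small point: you don't need $x+z\equiv 2\pmod 3$, which the lemma does not assume, because $(x',z')=(0,0)$ would force $3st'=-1$, which is impossible.
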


\begin{proof}
As $s>0$, $\cir$ is a circle with finite radius. We can translate the circle by $\ZZ[\om]$ so that the the centre lies in the parallelogram bounded by $-1, 0, \om, -1+\om$. If the centre of the circle is $(c_x, c_y)$, note that $c_x^2+c_y^2\leq 1$, as this fundamental domain lies entirely inside the unit circle. This is depicted in Figure~\ref{fig:circlereduction}.

\begin{figure}[htb]
    \includegraphics[scale=1.0]{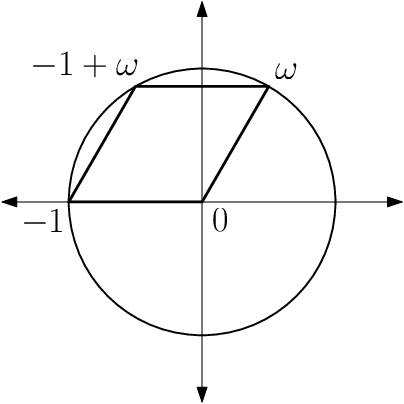}
    \caption{Each circle with non-zero curvature can be translated by $\ZZ[\om]$ so that the centre lies in this parallelogram, which lies inside the unit circle.}\label{fig:circlereduction}
\end{figure}

Assume that the translated circle has reduced coordinates $(S, T, X, Z)$, where by Proposition~\ref{prop:stxzbasictransformations}, $S, T, X, Z\in\ZZ$ and $S=s$. Using the transformation of $Y=X-2Z$, we have that $3X^2+Y^2=4+12sT$. Furthermore, by the formulae of Proposition~\ref{prop:eisensteincircleformulae}, $c_x=\frac{X}{2s}$ and $c_y=\frac{Y}{2\sqrt{3}s}$. Thus,
\[1\geq c_x^2+c_y^2=\frac{X^2}{4s^2}+\frac{Y^2}{12s^2},\]
hence
\[12s^2\geq 3X^2+Y^2=12sT+4>12sT,\]
whence $s>T$. As $12sT=3X^2+Y^2-4\geq -4$ is also a multiple of 12, $12sT\geq 0$, whence $s>T\geq 0$. Applying an inversion by $\sm{0}{-1}{1}{0}$ gives
where $0\leq s'=T<s$. This was obtained by the action of an $M$ constructed as a translation by $\ZZ[\om]$ and then inversion by $\sm{0}{-1}{1}{0}$, whence $M\in\PSL(2, \ZZ[\om])$, as desired.
\end{proof}

By iterating this result, we can reduce any integral solution back to $(0, 0, 0, -1)$, i.e. $\widehat{\RR}$. The following result is analogous to \cite[Definition 3.1 and Theorem 3.11]{MartinGeneralSchmdit}.

\begin{proposition}\label{prop:eisensteincircleconverse}
    Let $(s, t, x, z)$ be any integral solution to $x^2-xz+z^2-1=3st$. Then there exists a choice of $\pm\in\{-, +\}$ and an $M\in\PSL(2, \ZZ[\om])$ for which $\pm(s, t, x, z)$ is the reduced coordinates of $M(\widehat{\RR})$. In particular, the Eisenstein circles of curvature $s\sqrt{3}$ are exactly those centred at 
    \[\frac{x}{2s}+\frac{x-2z}{2s\sqrt{3}}i\]
    for all $(x, z)\in\ZZ^2$ for which $3s\mid x^2-xz+z^2-1$.
\end{proposition}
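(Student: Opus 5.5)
We argue by strong induction on $|s|$, using Lemma~\ref{lem:reducedquadruplereduction} as the inductive step and Proposition~\ref{prop:stxzbasictransformations} to keep track of coordinates. First replace $(s,t,x,z)$ by $-(s,t,x,z)$ if $s<0$; the equation $x^2-xz+z^2-1=3st$ is invariant under negation, and this is where the sign $\pm$ enters. When $s>0$, Lemma~\ref{lem:reducedquadruplereduction} supplies $M_1\in\PSL(2,\ZZ[\om])$ sending the circle to one with integral reduced coordinates $(s',t',x',z')$ and $0\le s'<s$. By induction, $(s',t',x',z')$ equals $\pm$ the reduced coordinates of $M_2(\widehat{\RR})$, so $(s,t,x,z)$ equals $\pm$ the reduced coordinates of $M_1^{-1}M_2(\widehat{\RR})$. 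The transformation rules in Proposition~\ref{prop:stxzbasictransformations} are linear in $(s,t,x,z)$, so an overall sign passes through each step; alternatively, a sign flip is just an orientation reversal, i.e. the same unoriented circle.

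It remains to treat the base case $s=0$, where the equation becomes $x^2-xz+z^2=1$. Its integral solutions are the six units, namely $(x,z)\in\{\pm(1,0),\pm(0,1),\pm(1,1)\}$. In this case $t$ is arbitrary, because $3st=0$. I expect this step to be the main obstacle: one has to show that every such line really is a $\PSL(2,\ZZ[\om])$-image of $\widehat{\RR}$. The plan is as follows.
\begin{itemize}
\item By the translation formula, translating by $e+f\om$ sends $t\mapsto t+(e+f)x-fz$. Since $(x,z)$ is a unit vector, the linear form $(e,f)\mapsto (e+f)x-fz$ is surjective onto $\ZZ$, so a translation brings us to $t=0$.
\item Next, realise each direction $(x,z)$ using the rotations $z\mapsto \om^{2k}z$, given by $\sm{\om^k}{0}{0}{\om^{-k}}\in\PSL(2,\ZZ[\om])$, applied to $\widehat{\RR}$, whose coordinates are $(0,0,0,-1)$. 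Computing these through Proposition~\ref{prop:mobiuscurv} should give three of the six units.
\item The remaining three are their negatives, which are absorbed by $\pm$. As a consistency check, the congruence $x+z\equiv 2\pmod 3$ should select exactly one sign from each pair $\pm(x,z)$.
\end{itemize}
If Lemma~\ref{lem:reducedquadruplereduction} is taken to land directly at $\widehat{\RR}$ after finitely many steps, this base case is the only remaining check.

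For the final sentence of the statement, fix $s>0$ (or $s\ne 0$). By the above, the Eisenstein circles with reduced curvature $s$, i.e. curvature $s\sqrt3$, are exactly the integral solutions of the equation with that $s$, taking whichever sign satisfies $x+z\equiv 2\pmod 3$. Given $(x,z)$, a $t$ exists precisely when $3s\mid x^2-xz+z^2-1$. The centre is $w/u$. Using $w=(x\sqrt3+yi)/2$ with $y=x-2z$ and $u=s\sqrt3$, this gives $\frac{x}{2s}+\frac{x-2z}{2s\sqrt3}i$.

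One further check: the divisibility $3\mid x^2-xz+z^2-1$ already forces $(x,z)\not\equiv(0,0)$ and $x+z\not\equiv 0\pmod 3$. One of $\pm(x,z)$ then satisfies $x+z\equiv 2$, and negation preserves the unoriented circle, so no extra condition is needed in the parametrisation.
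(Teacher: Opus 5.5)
Your proposal is correct and follows the paper's proof: negate to make $s\ge 0$, iterate Lemma~\ref{lem:reducedquadruplereduction} down to $s=0$, and then realise the unit solutions of $x^2-xz+z^2=1$ (up to sign) as images of $\widehat{\RR}$. The only cosmetic difference is in the base case. The paper writes down explicit upper-triangular matrices carrying $\widehat{\RR}$ to $(0,t_1,1,1)$, $(0,t_1,-1,0)$ and $(0,t_1,0,-1)$, and these are exactly your rotations $z\mapsto\om^{2k}z$ followed by a translation that adjusts $t$.
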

\begin{proof}
    If $s<0$, replace $(s, t, x, z)$ with $-(s, t, x, z)$. By Lemma~\ref{lem:reducedquadruplereduction} we can apply a $\PSL(2, \ZZ[\om])$ transformation to reduce the value of $s$. Iterating, we eventually find an $M_1\in\PSL(2, \ZZ[\om])$ which produces coordinates $(s_1, t_1, x_1, z_1)$ where $s_1=0$. Writing $y_1=x_1-2z_1$, we have $3x_1^2+y_1^2=4+12s_1t_1=4$. By possibly negating the coordinates again, we can assume that $y_1\equiv -1\pmod{3}$, giving the solutions $(x_1, y_1)=(\pm 1, -1), (0, 2)$. Translating back to $z_1$, we have the coordinates
    \[(s_1, t_1, x_1, z_1) =\quad (0, t_1, 1, 1), \quad (0, t_1, -1, 0), \quad (0, t_1, 0, -1).\]
    These are the reduced coordinates for  $M(\widehat{\RR})$, where the matrices $M\in\PSL(2, \ZZ[\om])$ are
    \[\lm{1-\om}{t_1\om}{0}{\om}, \quad \lm{\om}{-t_1}{0}{1-\om},\quad \lm{1}{t_1\om}{0}{1},\]
    respectively. Thus, $M_1^{-1}M(\widehat{\RR})$ has reduced coordinates $\pm(s, t, x, z)$, proving it corresponds to an Eisenstein circle.
\end{proof}

Note that the circle corresponding to $-(s, t, x, z)$ is the opposite orientation of the circle corresponding to $(s, t, x, z)$. Furthermore, it is easy to tell if integral reduced coordinates corresponds to a circle in $\PSL(2, \ZZ[\om])(\widehat{\RR})$, or the opposite orientation of such a circle. Indeed, we are in $\PSL(2, \ZZ[\om])(\widehat{\RR})$ if $x+z\equiv 2\pmod{3}$, and are the opposite orientation if $x+z\equiv 1\pmod{3}$.

\subsection{Reducing circles to $\widehat{\FF_2[\om]}$}

In order to understand the intersection and tangency structure of the full Schmidt arrangement, we reduce the picture modulo 2. It turns out that the reduced tangency structure lifts to the global circles.

In general, if $F$ is a number field whose ring of integers $\mathcal{O}_F$ is a unique factorization domain, then we can identify the projectivization $\PP^1(F)$ with $\widehat{F} = F\cup \{\infty\}$ via $(a:b)\rightarrow \frac{a}{b}$. Thus, given a prime $\mathfrak{p}$ of $\mathcal{O}_F$, there exists a well-defined notion of reduction modulo $\mathfrak{p}$ in $\widehat{F}$: scale the projectified point to be integral in both entries and coprime to $\mathfrak{p}$ in at least one, and reduce these entries modulo $\mathfrak{p}$.

First, we show that the $\widehat{\QQ(\om)}-$points on the circle $M(\widehat{\RR})$ are the images of $\widehat{\QQ}$.

\begin{lemma}\label{lem:Qomegarationalpoints}
    Let $M\in\PSL(2, \ZZ[\om])$. Then $\widehat{\QQ(\om)}\cap M(\widehat{\RR}) = M(\widehat{\QQ})$.
\end{lemma}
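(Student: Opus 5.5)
One inclusion is immediate. $M$ has entries in $\ZZ[\om]\subset\QQ(\om)$, so it maps $\widehat{\QQ}$ into $\widehat{\QQ(\om)}$. Since $\widehat{\QQ}\subset\widehat{\RR}$, we also get $M(\widehat{\QQ})\subset M(\widehat{\RR})$, giving $M(\widehat{\QQ})\subseteq\widehat{\QQ(\om)}\cap M(\widehat{\RR})$.

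For the reverse inclusion, take $P\in\widehat{\QQ(\om)}\cap M(\widehat{\RR})$ and set $z=M^{-1}(P)$. The matrix $M^{-1}$ also has entries in $\ZZ[\om]$, because $\det M=1$. Hence $z=M^{-1}(P)$ lies in $\widehat{\QQ(\om)}$, and it lies in $\widehat{\RR}$ because $P\in M(\widehat{\RR})$. It remains to check that $\widehat{\QQ(\om)}\cap\widehat{\RR}=\widehat{\QQ}$. The point $\infty$ lies in both. For a finite $z$, write $z=p+q\om$ with $p,q\in\QQ$. Then
\[
\Im(z)=q\frac{\sqrt{3}}{2},
\]
so $z\in\RR$ forces $q=0$ and therefore $z=p\in\QQ$. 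Thus $z\in\widehat{\QQ}$, and $P=M(z)\in M(\widehat{\QQ})$.

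None of these steps is a genuine obstacle. The only care needed is with the point at infinity on either side, which the projective description $(a:b)$ handles uniformly: $M^{-1}$ preserves the set of points with coordinates in $\QQ(\om)$, and a point of $\PP^1(\QQ(\om))$ that is real can be scaled to have rational coordinates.
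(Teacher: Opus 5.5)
Your proof is correct and follows essentially the same approach as the paper: both use that $M$ and $M^{-1}$ preserve $\widehat{\QQ(\om)}$ and then reduce to $\widehat{\QQ(\om)}\cap\widehat{\RR}=\widehat{\QQ}$. You also spell out the last identity via $\Im(p+q\om)=q\sqrt{3}/2$, which the paper leaves implicit.
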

\begin{proof}
    For $x\in\widehat{\CC}$, it is clear that $M(x)\in\widehat{\QQ(\om)}$ if and only if $x\in\widehat{\QQ(\om)}$. Since $\widehat{\QQ(\om)}\cap\widehat{\RR}=\widehat{\QQ}$, the lemma follows.
\end{proof}

The prime $2$ is inert in $\ZZ[\om]$, hence reduction modulo 2 gives
\[\rho_2:\ZZ[\om]\rightarrow \FF_2[\om]\cong\FF_4.\]
As $\ZZ[\om]$ is a unique factorization domain, this reduction map extends to $\widehat{\QQ(\om)}$:
\[\rho_2:\widehat{\QQ(\om)}\rightarrow\widehat{\FF_2[\om]}.\]
This reduced space has 5 points,
\[\widehat{\FF_2[\om]} = \{ 0, 1, \om, 1 + \om, \infty \}.\]
The group $\PSL(2, \ZZ[\om])$ also reduces nicely to an order 60 group
\[\rho_2:\PSL(2, \ZZ[\om])\rightarrow \PSL(2, \FF_2[\om]) = \SL(2, \FF_2[\om]),\]
which again acts on $\widehat{\FF_2[\om]}$ via M\"obius transformations. 

\begin{definition}
    A $\FF_2[\om]-$circle is the image $\rho_2\left(\widehat{\QQ(\om)}\cap M(\widehat{\RR})\right)$ for some $M\in\PSL(2, \ZZ[\om])$. In other words, we take a circle in the Eisenstein Schmidt arrangement, consider the $\widehat{\QQ(\om)}-$points on the circle, and reduce them modulo 2 to $\widehat{\FF_2[\om]}$.
\end{definition}

Circles in $\PP^1(\CC)$ are defined by 3 points, and the same is true for $\FF_2[\om]-$circles.

\begin{lemma}
    The $\FF_2[\om]-$circles are exactly those subsets of $\widehat{\FF_2[\om]}$ of size $3$.
\end{lemma}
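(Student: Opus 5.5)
The plan has two halves. First, every $\FF_2[\om]$-circle has exactly three points. Second, every $3$-subset of $\widehat{\FF_2[\om]}$ occurs as one.

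For the first half, start from the base circle $\widehat{\RR}$. By Lemma~\ref{lem:Qomegarationalpoints}, its $\widehat{\QQ(\om)}$-points are exactly $\widehat{\QQ}$. A point of $\widehat{\QQ}$ written as a coprime pair $(p:q)$ of rational integers reduces modulo $2$ to one of $(0:1)$, $(1:1)$ or $(1:0)$. All three values occur, for instance from $0$, $1$ and $\infty$. So $\rho_2(\widehat{\QQ})=\{0,1,\infty\}$, which is a set of size $3$.

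Next take a general $M\in\PSL(2,\ZZ[\om])$. By Lemma~\ref{lem:Qomegarationalpoints}, the $\widehat{\QQ(\om)}$-points of $M(\widehat{\RR})$ are $M(\widehat{\QQ})$. Reduction modulo $2$ on $\widehat{\QQ(\om)}=\PP^1(\QQ(\om))$ is equivariant: $\rho_2(M(x))=\rho_2(M)(\rho_2(x))$. This holds because we apply $M$ to an integral representative with at least one entry coprime to $2$. Since $\det M=1$ is invertible modulo $2$, the image vector is again integral and not divisible by $2$, and reducing commutes with matrix multiplication. Hence the $\FF_2[\om]$-circle of $M(\widehat{\RR})$ is $\rho_2(M)(\{0,1,\infty\})$. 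Since $\rho_2(M)$ acts bijectively on $\widehat{\FF_2[\om]}$, this set also has size $3$.

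For the converse, the reduction map $\PSL(2,\ZZ[\om])\to\SL(2,\FF_4)$ should be surjective (strong approximation for $\SL_2$, or directly because elementary matrices generate $\SL(2,\FF_4)$ and lift). It then suffices to show that $\SL(2,\FF_4)\cong A_5$ acts transitively on $3$-subsets of the $5$ points of $\PP^1(\FF_4)$. One option is to note that $\PGL(2,\FF_4)=\SL(2,\FF_4)$ is sharply $3$-transitive on $\PP^1(\FF_4)$, so it is in particular transitive on $3$-subsets.

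Alternatively, one can avoid surjectivity and write down explicit elements of $\PSL(2,\ZZ[\om])$:
\begin{itemize}
\item translations $z\mapsto z+\om$ and $z\mapsto z+1+\om$;
\item the inversion $\sm{0}{-1}{1}{0}$;
\item unit diagonal matrices giving $z\mapsto \om^2 z$.
\end{itemize}
Applying these to $\{0,1,\infty\}$ and recording the resulting triples should produce all $\binom{5}{3}=10$ subsets.

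The only step needing care is the equivariance and well-definedness of $\rho_2$ on $\widehat{\QQ(\om)}$, which relies on $\ZZ[\om]$ being a UFD and on $2$ being inert. The rest is a finite check.
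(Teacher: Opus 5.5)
Your proposal is correct and follows the same route as the paper: use Lemma~\ref{lem:Qomegarationalpoints} to reduce to $\rho_2(\widehat{\QQ})=\{0,1,\infty\}$, then transport this triple by the triply transitive M\"obius action of $\SL(2,\FF_2[\om])$. You also spell out two facts the paper uses without comment: that $\rho_2$ is equivariant, and that $\PSL(2,\ZZ[\om])\to\SL(2,\FF_2[\om])$ is surjective.
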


\begin{proof}
    By Lemma~\ref{lem:Qomegarationalpoints}, it suffices to find the reduction of $\widehat{\QQ}$, and take all M\"obius images under $\SL(2, \FF_2[\om])$. This reduction is computed to be
    \[\rho_2\left(\widehat{\QQ}\right) = \{\infty, 0, 1\}\]
    a set of three points. As $\SL(2, \FF_2[\om])$ acts triply transitively on points, the result follows.
\end{proof}

Take $\cir_0$ to be the $\FF_2[\om]-$circle $\{\infty, 0, 1\}$; there are ${5 \choose 3}=10$ $\FF_2[\om]-$circles.

\begin{lemma}
    Let $M\in\PSL(2, \ZZ[\om])$. A circle $M(\widehat{\RR})$ reduces to $\cir_0$ if and only if $M\in\PSL(2, \ZZ[2\om])$. 
\end{lemma}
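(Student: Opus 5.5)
The plan is to reduce the question entirely to the finite action of $\SL(2,\FF_2[\om])$ on the five points of $\widehat{\FF_2[\om]}$. I read $\PSL(2,\ZZ[2\om])$ as the image in $\PSL(2,\ZZ[\om])$ of matrices all of whose entries lie in $\ZZ[2\om]=\ZZ+2\om\ZZ$. Equivalently, these are the $M$ whose reduction $\rho_2(M)$ has all entries in the prime subfield $\FF_2\subset\FF_2[\om]$, so that $\rho_2(M)\in\SL(2,\FF_2)$. The sign ambiguity in $\PSL$ is harmless, since $-1\equiv 1\pmod 2$ and $\ZZ[2\om]$ is closed under negation.

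\textbf{Step 1: equivariance of reduction.} First I will check that $\rho_2(M(x))=\rho_2(M)(\rho_2(x))$ for all $x\in\widehat{\QQ(\om)}$. Write $x=(p:q)$ with $p,q\in\ZZ[\om]$ not both divisible by $2$. Then $M(x)=(\alpha p+\gamma q:\beta p+\delta q)$ in the paper's transposed convention. This pair is again integral, and since $\det M=1$ is a unit mod $2$, not both entries are divisible by $2$. Reducing these coordinates entrywise gives exactly $\rho_2(M)$ applied to $\rho_2(x)$.

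\textbf{Step 2: the image of the circle.} Combining Step 1 with Lemma~\ref{lem:Qomegarationalpoints}, the $\FF_2[\om]$-circle of $M(\widehat{\RR})$ is
\[\rho_2(M(\widehat{\QQ}))=\rho_2(M)(\rho_2(\widehat{\QQ}))=\rho_2(M)(\cir_0).\]
The statement therefore becomes: $\rho_2(M)$ maps the set $\cir_0=\{\infty,0,1\}$ to itself if and only if $\rho_2(M)\in\SL(2,\FF_2)$.

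\textbf{Step 3: the stabilizer of $\cir_0$.} The inclusion of $\SL(2,\FF_2)$ in the stabilizer is clear, because matrices over $\FF_2$ preserve $\PP^1(\FF_2)=\cir_0$. For the reverse inclusion I will use a counting argument. The group $\SL(2,\FF_2[\om])$ has order $60$ and acts faithfully on the five points. It is transitive on the $10$ three-element subsets, since it is triply transitive (as used in the previous lemma). Hence the stabilizer of $\cir_0$ has order $60/10=6=|\SL(2,\FF_2)|$, and the two groups coincide.

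\textbf{Step 4: lifting back.} Finally, $\rho_2(M)\in\SL(2,\FF_2)$ means every entry of $M$ is congruent mod $2$ to an element of $\{0,1\}$, that is, lies in $\ZZ+2\ZZ[\om]=\ZZ[2\om]$. This is exactly $M\in\PSL(2,\ZZ[2\om])$.

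I expect the main obstacle to be Step 3, specifically its reliance on exact transitivity on $3$-subsets. That transitivity follows from triple transitivity, so the count is solid; as a fallback, the six elements of $\SL(2,\FF_2)$ can be listed explicitly. Step 1 is where care is needed to make sure that integral, coprime-to-$2$ representatives are preserved under $M$.
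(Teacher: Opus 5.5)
Your proposal is correct and follows the same route as the paper: reduce to whether $\rho_2(M)$ stabilizes $\cir_0=\{\infty,0,1\}$, identify that stabilizer with $\SL(2,\FF_2)$, and lift back using $\ZZ+2\ZZ[\om]=\ZZ[2\om]$. The paper asserts these equivalences in a single line, so your equivariance check in Step 1 and your orbit--stabilizer count $60/10=6$ in Step 3 supply justification that the paper leaves implicit.
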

\begin{proof}
    The circle $M(\widehat{\RR})$ reduces to $\cir_0$ if and only if $\rho_2(M)$ maps $\{\infty, 0, 1\}$ to itself, i.e. $\rho_2(M)\in\SL(2, \FF_2)$. This is equivalent to $M\in\PSL(2, \ZZ[2\om])$.
\end{proof}

In particular, the 10 sets of $\FF_2[\om]-$circles correspond exactly to the reductions of the 10 cosets of $\PSL(2, \ZZ[2\om])$ in $\PSL(2, \ZZ[\om])$. Representatives $g_i$ for these cosets are enumerated in Table~\ref{table:cosets}.

\begin{table}[hbt]
\centering
    \begin{tabular}{lllll}
     $g_0 = \lm{1}{0}{0}{1}$, & $g_1 = \lm{1}{\om}{0}{1}$, & $g_2 = \lm{\om}{0}{\om}{1-\om}$, & $g_3 = \lm{1-\om}{1}{0}{\om}$, & $g_4 = \lm{1}{0}{\om}{1}$,\\ \addlinespace[6pt]
     $g_5 = \lm{1}{\om}{1}{1+\om}$, & $g_6 = \lm{1-\om}{0}{1}{\om}$, & $g_7 = \lm{\om}{0}{0}{1-\om}$, & $g_8 = \lm{1-\om}{0}{0}{\om}$, & $g_9 = \lm{\om}{1}{0}{1-\om}$.
    \end{tabular}
    \caption{Matrices in $\PSL(2, \ZZ[\om])$ which reduce to $\SL(2, \FF_2[\om])$ to produce the 10 distinct circles in $\widehat{\FF_2[\om]}$. These are also representatives for the left cosets of $\PSL(2, \ZZ[2\om])$ in $\PSL(2, \ZZ[\om])$.}\label{table:cosets}
\end{table}

\begin{definition}
    Given an Eisenstein circle $\cir$, we say that it is in \emph{coset $i$} (for $0\leq i\leq 9$) if it is the image of $\widehat{\RR}$ by some matrix in the coset $g_i\PSL(2, \ZZ[2\om])$.
\end{definition}

For each of the cosets $g_i\PSL(2, \ZZ[2\om])$, the reductions of the reduced coordinates $(s, t, x, z)$ modulo 2 are well defined and distinct between distinct cosets: they parametrize the 10 solutions to $x^2-xz+z^2-1\equiv 3st\pmod{2}$ (Equation~\eqref{eqn:sheet} modulo 2). This data is represented in Table~\ref{table:F4circles}, and gives the following result.

\begin{proposition}
    Let $(s, t, x, z)\in\ZZ^4$ be the reduced coordinates of an Eisenstein circle. Then $(s, t, x, z)\pmod{2}$ determines which coset the circle is in, with reference to Table~\ref{table:F4circles}.
\end{proposition}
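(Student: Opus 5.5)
The plan is to show that the residue class of $(s,t,x,z)$ modulo $2$ is constant on each coset $g_i\PSL(2,\ZZ[2\om])$, and then to check that the ten values obtained are pairwise distinct. For the first part, we use the explicit polynomial formulae of Proposition~\ref{prop:eisensteincircleformulae}: $s=eh-fg$, $t=ad-bc$, $x=ah-bg+de-cf$, and $z=(x-y)/2$ where $y=3ag-3ce-1$. These give $(s,t,x,z)$ in terms of the integer coordinates of $M=\sm{a+b\om}{c+d\om}{e+f\om}{g+h\om}$. Note that $M$ is only defined up to sign in $\PSL$, and negating $M$ leaves $s,t,x,y$ unchanged, since every term is quadratic in the entries.

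To get invariance under right multiplication by $\PSL(2,\ZZ[2\om])$, the key observation is that $N\in\PSL(2,\ZZ[2\om])$ fixes $\widehat{\RR}$ as a set only modulo $2$, not globally, so we cannot argue from fixing the circle. Instead we argue algebraically. Write $N\equiv N_0\pmod{2}$ with $N_0\in\SL(2,\ZZ)$; this is possible because an element of $\ZZ[2\om]$ reduces mod $2$ into $\ZZ/2$. Then
\[MN\equiv MN_0\pmod{2\ZZ[\om]},\]
and $MN_0(\widehat{\RR})=M(\widehat{\RR})$, because $N_0$ preserves $\widehat{\RR}$ with its orientation. Hence $MN$ and $MN_0$ give circles whose matrix entries agree modulo $2$.

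It then suffices to know that $s,t,x \pmod 2$ and $z\pmod 2$ depend only on the entries of $M$ modulo $2$. For $s$, $t$ and $x$ this is immediate, since they are integral polynomials. For $z$, division by $2$ is the obstacle. To handle it, I would use the alternative description $z=(x-y)/2$ and compute $z$ directly as an integral polynomial in $a,\dots,h$, using the determinant condition $\det M=1$, i.e. $(a+b\om)(g+h\om)-(c+d\om)(e+f\om)=1$, to clear the $-1$ and the factor $2$. Concretely, expanding the determinant in the basis $1,\om$ gives two integer relations, namely $ag-ce-bh+df=1$ and $ah+bg+bh-cf-de-df=0$. Substituting these into $x-y$ should exhibit $x-y$ as twice an integral polynomial, say $z=P(a,\dots,h)$.

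Once $z$ is given by such a polynomial $P$, the reduced coordinates modulo $2$ are functions of $M\bmod 2$, and these agree for $MN$ and $MN_0$. Since $MN_0$ is the same oriented circle as $M$, its coordinates are literally equal to those of $M$. Therefore $(s,t,x,z)\bmod 2$ is a well-defined function of the coset.

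The remaining step is a direct computation: evaluate $(s,t,x,z)\bmod 2$ for the ten representatives $g_0,\dots,g_9$ of Table~\ref{table:cosets}. These values should be exactly the ten solutions of $x^2-xz+z^2-1\equiv 3st\pmod 2$ listed in Table~\ref{table:F4circles}. Distinctness of these values gives the converse direction: the mod $2$ data determines the coset. I expect the main obstacle to be the subtle point about $z$, namely writing $z$ polynomially using $\det M=1$. An alternative route, if that expansion is messy, is to reduce modulo $4$ the quantity $y$ and check that $x-y\bmod 4$ is determined by $M\bmod 2$ via the same determinant identities.
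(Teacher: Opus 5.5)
Your proposal is correct and follows the same route as the paper, which simply asserts that $(s,t,x,z)\bmod 2$ is constant on each coset $g_i\PSL(2,\ZZ[2\om])$ and records the ten distinct values for the representatives $g_i$ in Table~\ref{table:F4circles}; your lifting of $N\bmod 2$ to $N_0\in\SL(2,\ZZ)$ supplies the well-definedness argument that the paper leaves implicit. The step you hedge on does go through, even without the determinant relations: taking $\Im w$ before simplifying gives $y=2ag+2bh-2ce-2df+ah+bg-cf-de$, hence $z=-ag-bg-bh+ce+de+df$, and evaluating at the $g_i$ reproduces the table.
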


\begin{table}[hbt]
\centering
    \begin{tabular}{cccccc}
        \upshape{Coset} & $0$ & $1$ & $2$ & $3$ & $4$ \\
        \hline
        \upshape{Points} 
        & $\infty, 0, 1$ 
        & $\infty, \omega, 1+\omega$
        & $1, 0, \omega$
        & $\infty, 1+\omega, 1$
        & $1+\omega, 0, \omega$\\
        \hline
        \upshape{Red. curv. parity} & even & even & odd & even & odd \\
        \hline
                \upshape{Red. coord. $\pmod{2}$} & $0001$ & $0101$ & $1010$ & $0111$ & $1001$ \\
        \\
        \upshape{Coset} & $5$ & $6$ & $7$ & $8$ & $9$ \\
        \hline
        \upshape{Points} 
        & $1, 1+\omega, \omega$
        & $1+ \omega, 0, 1$
        & $\infty, 0, 1+\omega$
        & $\infty, 0, \omega$
        & $\infty, \omega, 1$\\
                \hline
        \upshape{Red. curv. parity} & odd & odd & even & even & even \\
        \hline
                \upshape{Red. coord. $\pmod{2}$} & $1100$ & $1011$ & $0010$ & $0011$ & $0110$ \\
    \end{tabular}
\caption{The $\widehat{\FF_2[\om]}-$circles, where coset $i$ corresponds to $g_i\PSL(2, \ZZ[2\om])$, and the reduced coordinates $(s, t, x, z)\pmod{2}$ are concatenated to a length 4 binary word.}\label{table:F4circles}
\end{table}

Before moving on to tangencies, we explain the appearance of the Eisenpint group,
\[\GammaE=\{M\in\PSL(2, \ZZ[\om]):M\equiv\sm{1}{0}{\ast}{1}\!\!\!\pmod{2}\}.\]
Despite $\GammaE$ not being a subgroup or supergroup of $\PSL(2, \ZZ[2\om])$, it produces circles in cosets 0 and 4.

\begin{proposition}\label{prop:GammaEcoset04}
    An Eisenstein circle $\cir$ lies in coset 0 or 4 if and only if $\cir=M(\widehat{\RR})$ for some $M\in\GammaE$.
\end{proposition}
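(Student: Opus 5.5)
The plan is to reduce everything modulo $2$ and work inside $\SL(2,\FF_2[\om])$, which acts on the five points of $\widehat{\FF_2[\om]}$. By the preceding lemma and Table~\ref{table:F4circles}, an Eisenstein circle $M(\widehat{\RR})$ lies in coset $i$ exactly when $\rho_2(M)$ sends $\cir_0=\{\infty,0,1\}$ to the $\FF_2[\om]$-circle of coset $i$. Coset $0$ is $\{\infty,0,1\}$ and coset $4$ is $\{1+\om,0,\om\}$. So the statement is equivalent to two claims:
\begin{enumerate}[label={\upshape(\alph*)}]
\item every $M\in\GammaE$ has $\rho_2(M)(\cir_0)\in\{\cir_0,\cir_4\}$;
\item every $M$ with $\rho_2(M)(\cir_0)\in\{\cir_0,\cir_4\}$ satisfies $M(\widehat{\RR})=N(\widehat{\RR})$ for some $N\in\GammaE$.
\end{enumerate}

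For (a), note that $\rho_2(\GammaE)$ is the group of lower unipotent matrices $\sm{1}{0}{\lambda}{1}$ with $\lambda\in\FF_4$. These act by $z\mapsto z/(\lambda z+1)$, and each fixes $0$. I will check directly where $\cir_0$ goes.
\begin{itemize}
\item For $\lambda=0$ the image is $\cir_0$.
\item For $\lambda=1$ the map sends $\infty\mapsto 1$ and $1\mapsto\infty$ (since $1+1=0$), so the image is again $\cir_0$.
\item For $\lambda=\om$ it sends $\infty\mapsto 1/\om=\om^2=1+\om$ and $1\mapsto 1/(1+\om)=\om$, so the image is $\{0,1+\om,\om\}=\cir_4$.
\item For $\lambda=1+\om$, the analogous computation again gives $\cir_4$.
\end{itemize}
Since the coset of $M(\widehat{\RR})$ depends only on $\rho_2(M)$, this settles (a). I must take care that $\rho_2$ is well defined on $\PSL$; over $\FF_4$, $-1=1$, so it is.

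For (b), suppose $\cir=M(\widehat{\RR})$ is in coset $0$ or $4$. Then $\rho_2(M)\in h\,\mathrm{Stab}(\cir_0)$, where $h=I$ or $h=\sm{1}{0}{\om}{1}$. Here $\mathrm{Stab}(\cir_0)=\SL(2,\FF_2)$, of order $6$. I will lift this coset to $M=H M'$, with $H\in\GammaE$ reducing to $h$ (for instance $H=I$ or $H=\sm{1}{0}{\om}{1}$) and $M'\in\PSL(2,\ZZ[2\om])$. This uses the preceding lemma: the coset is determined by reduction, and the kernel of $\rho_2$ lies in $\PSL(2,\ZZ[2\om])$.

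It then suffices to show that every $M'\in\PSL(2,\ZZ[2\om])$ can be written as $M'=N'P$, with $N'\in\GammaE$ and $P$ stabilizing $\widehat{\RR}$. Then $\cir=HN'P(\widehat{\RR})=HN'(\widehat{\RR})$ with $HN'\in\GammaE$. Take $P\in\PSL(2,\ZZ)$, which preserves $\widehat{\RR}$ with its orientation. It is then enough to show $\rho_2(\PSL(2,\ZZ[2\om]))=\SL(2,\FF_2)$ equals $\rho_2(\GammaE\cap \PSL(2,\ZZ[2\om]))\cdot\rho_2(\PSL(2,\ZZ))$, and that the kernel of $\rho_2$ lies in $\GammaE$. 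The latter is immediate, since kernel elements are $\equiv I \pmod 2$. The former holds because $\rho_2(\PSL(2,\ZZ))$ already surjects onto $\SL(2,\FF_2)$.

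Hence in fact $M'=K\cdot P$ with $K\in\ker\rho_2\subseteq\GammaE$. Concretely, I choose $P\in\PSL(2,\ZZ)$ with $\rho_2(P)=\rho_2(M')$ and set $K=M'P^{-1}$.

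The main obstacle is bookkeeping rather than depth. I need to confirm that the coset labels and points in Table~\ref{table:F4circles} match the $\rho_2$-action as computed, in particular that the $\lambda=1+\om$ case really lands on coset $4$ and not another circle through $0$. I also need orientation to behave: $P\in\PSL(2,\ZZ)$ preserves orientation of $\widehat{\RR}$, so the identity $\cir=N(\widehat{\RR})$ holds as oriented circles. I will verify the $\FF_4$ computations by hand using $\om^2=\om+1$ in $\FF_4$.
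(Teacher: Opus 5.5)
Your proof is correct. The converse direction is the paper's argument. The paper also writes $M=g_iM_1$ with $g_0=I$, $g_4=\sm{1}{0}{\om}{1}\in\GammaE$ and $M_1\in\PSL(2,\ZZ[2\om])$. It then factors $M_1=M_2M_3$ with $M_3\in\PSL(2,\ZZ)$ and $M_2\equiv I\pmod 2$, and discards $M_3$ because it fixes $\widehat{\RR}$. Your $M'=KP$ is exactly this factorization.

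The forward direction is where you take a different route. The paper uses the formulas of Proposition~\ref{prop:eisensteincircleformulae}. When $b,c,d,h$ are even, $t=ad-bc$ and $x=ah-bg+de-cf$ are even, and the reduced-coordinate row of Table~\ref{table:F4circles} shows that cosets $0$ and $4$ are the only ones with $t\equiv x\equiv 0\pmod 2$.

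You instead stay inside $\widehat{\FF_2[\om]}$. You compute the image of $\{\infty,0,1\}$ under the four lower unipotent matrices over $\FF_4$, then combine the lemma characterizing $\PSL(2,\ZZ[2\om])$ as the matrices whose reduction stabilizes $\cir_0$ with the points row of the table.

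Each route has an advantage.
\begin{itemize}
\item Yours is more intrinsic. It needs only the definition of the cosets and a finite-field computation, and it avoids the paper's unproved assertion that reduced coordinates mod $2$ separate the cosets. It also effectively shows $\rho_2(\GammaE)\,\SL(2,\FF_2)=\SL(2,\FF_2)\cup\rho_2(g_4)\,\SL(2,\FF_2)$, which handles both directions at the level of reductions.
\item The paper's parity check is shorter. It also ties into the reduced-coordinate description of cosets that the paper reuses later, for example in Propositions~\ref{prop:enschmidtreflect} and~\ref{prop:dilate}.
\end{itemize}
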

\begin{proof}
    Assume $\cir=M(\widehat{\RR})$ with $M=\sm{a+b\om}{c+d\om}{e+f\om}{g+h\om}\in\GammaE$, whence $b\equiv c\equiv d\equiv h\equiv 0\pmod{2}$. From the formulae of Proposition~\ref{prop:eisensteincircleformulae},
    \[t=ad-bc\equiv 0\pmod{2},\quad x=ah-bg+de-cf\equiv 0\pmod{2}.\]
    Checking Table~\ref{table:F4circles}, this implies that the circle is in coset 0 or 4, as required.

    Now, assume $\cir$ is in coset 0 or 4. It follows that $\cir=M(\widehat{\RR})$, where $M=g_iM_1$ with $M_1\in\PSL(2, \ZZ[2\om])$ and $i=0, 4$. We will take advantage of the fact that $\PSL(2, \ZZ)$ preserves $\widehat{\RR}$ to modify the matrix $g_iM_1$ to make it land inside $\GammaE$, while preserving $\cir$.
    
    Since $\PSL(2, \ZZ[2\om])$ reduces modulo 2 to $\SL(2, \FF_2)$, we can write $M_1=M_2M_3$ with $M_3\in\PSL(2, \ZZ)$ and $M_2\equiv \sm{1}{0}{0}{1}\pmod{2}$. Since $M_3(\widehat{\RR})=\widehat{\RR}$, this implies that
    \[\cir=g_iM_2(\widehat{\RR}),\]
    where
    \[g_iM_2\equiv\lm{1}{0}{\ast}{1}\lm{1}{0}{0}{1}=\lm{1}{0}{\ast}{1}\pmod{2},\]
    so $g_iM_2\in \GammaE$.
\end{proof}

\subsection{Tangencies of $\widehat{\FF_2[\om]}-$circles}

We claim that the tangency structure of the $\widehat{\FF_2[\om]}-$circles will lift to the global circles.

\begin{definition}
    Two $\widehat{\FF_2[\om]}-$circles are \emph{tangent} if they intersect in exactly one point. 
\end{definition}

We can consider the tangency graph of the ten circles, depicted in Figure~\ref{fig:cosettangencies}. The M\"obius transformations of $\SL(2,\FF_2[\om])$ preserve circles and tangencies, so they represent automorphisms of this graph.

\begin{figure}[htb]
    \includegraphics[scale=0.7]{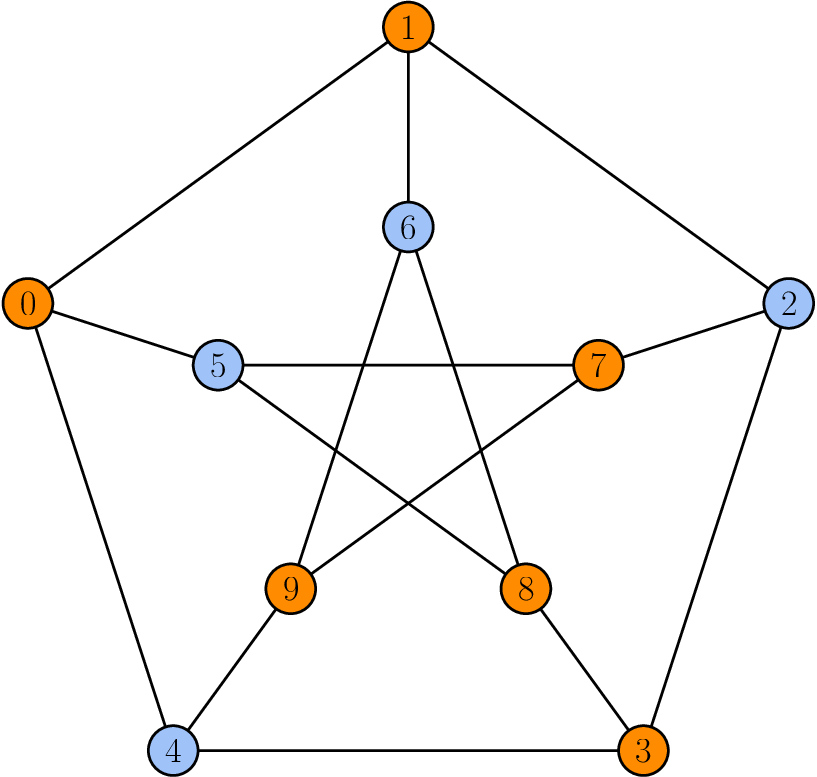}
    \caption{The circles of $\widehat{\FF_2[\om]}$, labeled by the coset they belong to, with edges representing tangencies.  This graph is known as the Petersen graph. The orange nodes (0, 1, 3, 7, 8, 9) represent even reduced curvatures, and the blue nodes (2, 4, 5, 6) represent odd reduced curvatures.}\label{fig:cosettangencies}
\end{figure}

We claim that this graph can detect Eisenstein circles that intersect in two places.

\begin{lemma}\label{lem:twointersect}
    Let $A$ and $B$ be two Eisenstein circles. If $A,B$ intersect in exactly two points, then $\rho_2(A),\rho_2(B)$ intersect in exactly two points. If $A,B$ are tangent, then $\rho_2(A),\rho_2(B)$ intersect in either one or three points.
\end{lemma}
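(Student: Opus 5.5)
The plan is to use equivariance of $\rho_2$ under $\PSL(2,\ZZ[\om])$ to normalise $A=\widehat{\RR}$, and then use $\PSL(2,\ZZ)$ to move an intersection point to $\infty$. This reduces both claims to explicit statements about straight lines. Reduction modulo $2$ commutes with M\"obius maps, since $\rho_2(M(p))=\rho_2(M)(\rho_2(p))$ for $p\in\widehat{\QQ(\om)}$. So for $M\in\PSL(2,\ZZ[\om])$ the intersection pattern of $\rho_2(A),\rho_2(B)$ is the same as that of $\rho_2(MA),\rho_2(MB)$.

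Writing $A=M(\widehat{\RR})$ and applying $M^{-1}$, I will assume $A=\widehat{\RR}$, so $\rho_2(A)=\cir_0=\{\infty,0,1\}$. I will also use that every intersection point of two Eisenstein circles lies in $\widehat{\QQ(\om)}$. For a tangency this follows from Proposition~\ref{prop:tangencypoint}, since the sum of the two rational coordinate vectors is a rational point. For transverse intersections I would invoke the analogue of \cite[Proposition 4.1]{StangeVisualizingArith}, or check it directly from the line case below. By Lemma~\ref{lem:Qomegarationalpoints}, such an intersection point lying on $\widehat{\RR}$ is in $\widehat{\QQ}$. Since $\PSL(2,\ZZ)$ preserves $\widehat{\RR}$ and acts transitively on $\widehat{\QQ}$, I can move one intersection point to $\infty$ without changing $A$ or $\cir_0$. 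Then $B$ is a line with $s=0$. By Equation~\eqref{eqn:sheet} its reduced coordinates are $(0,t,x,z)$ with $x^2-xz+z^2=1$, so its direction is one of the six sixth roots of unity. Its offset is governed by the integer $t$, which I would read off from Proposition~\ref{prop:eisensteincircleformulae} or via the translations of Proposition~\ref{prop:stxzbasictransformations}.

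\textbf{Tangent case.} Here $B$ is parallel to $\RR$, so $B=\RR+ k\,\tfrac{\sqrt{-3}}{2}$ for some integer $k$ determined by $t$. Its $\widehat{\QQ(\om)}$-points are $\infty$ together with $r+k\om$, $r\in\QQ$, after absorbing $-k/2$ into $r$. Two distinct $3$-subsets of a $5$-point set share one or two points, so I must exclude the case of exactly two common points. If $k$ is even, the reduction of $B$ is exactly $\cir_0$, giving three common points. If $k$ is odd, the finite points $r+k\om$ reduce to $\om$, $1+\om$, or $\infty$ (the last when $r$ has even denominator), so $\rho_2(B)=\{\infty,\om,1+\om\}$, which meets $\cir_0$ only in $\infty$.

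\textbf{Two-point case.} Here $B$ is a line through $\infty$ and a second real point $q\in\QQ$, with direction $\om$ or $\om^2$; horizontal lines are excluded because they meet $\RR$ only at $\infty$. I will translate by an integer, which is allowed since $\sm{1}{1}{0}{1}\in\PSL(2,\ZZ)$, to assume $q\in[0,1)$. I then use the integrality conditions on $(0,t,x,z)$, namely $t\in\ZZ$ and $x+z\equiv 2\pmod 3$, to pin down $q$. I expect these to force $q\in\ZZ$, so $q=0$, with $B=\RR_{\ge}\om$ or its $\om^2$ analogue through $0$. The $\widehat{\QQ(\om)}$-points of $B$ are then $\infty$ and $r\om$ for $r\in\QQ$. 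These reduce to $\infty$, $0$ and $\om$ (or $1+\om$ in the other direction), so $\rho_2(B)\cap\cir_0=\{\infty,0\}$, exactly two points.

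The main obstacle is this last arithmetic step: showing that the second real intersection point reduces to something other than $\infty$, i.e.\ has odd denominator, and that the third point of $\rho_2(B)$ falls outside $\cir_0$. If the computation from $t$ does not immediately give $q\in\ZZ$, I will instead compute $\rho_2(B)$ directly from the coset data of Table~\ref{table:F4circles}. The idea is that the parity vector of $(0,t,x,z)$ determines $\rho_2(B)$ among the cosets whose points include $\infty$. Non-horizontal lines have $(x,z)\not\equiv(0,1)\pmod 2$, since $(x,z)\equiv(0,1)$ corresponds to the horizontal direction, so only cosets $1,3,7,8,9$ can occur. I will then verify that the non-horizontal ones, cosets $3,7,8,9$, each meet $\cir_0$ in exactly two points, namely $\infty$ and one of $0,1$. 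This second route also gives an independent check of the tangent case, via cosets $0$ and $1$.
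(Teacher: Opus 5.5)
Your proposal is correct and follows essentially the paper's route. Both normalise $A=\widehat{\RR}$, use \cite[Proposition 4.1]{StangeVisualizingArith} and $\PSL(2,\ZZ)$ to put an intersection point at $\infty$, and reduce the $\widehat{\QQ(\om)}$-points of the resulting line mod $2$ in the horizontal (tangent) and $\om$/$\om^2$-direction (transverse) cases; the paper merely parametrises the line as $\sm{u}{b}{0}{1/u}(\widehat{\RR})$ rather than via $(0,t,x,z)$. The step you flagged as the main obstacle, $q\in\ZZ$, does hold: such a line is $\Re(\overline{w}z)=t\sqrt{3}/2$ with $\Re(w)=\pm\sqrt{3}/2$, so it meets $\RR$ at $\pm t$ (only the citation, not your ``check from the line case'' alternative, can supply rationality of a transverse intersection point, since the line computation already assumes that point is at $\infty$).
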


\begin{proof}
    By applying a M\"obius transformation, we can assume that $A=\widehat{\RR}$. From \cite[Proposition 4.1]{StangeVisualizingArith}, the intersection point(s) must lie in $\widehat{\RR}\cap \widehat{\QQ(\om)} = \widehat{\QQ}$. Let $B=M(\widehat{\RR})$, and by pre and post composing $B$ with appropriate matrices in $\PSL(2, \ZZ)$, we can assume that one intersection point is $\infty$ and $M(\infty)=\infty$. Therefore we have
    \[M = \lm{u}{b}{0}{1/u} = \lm{1}{bu}{0}{1}\lm{u}{0}{0}{1/u},\]
    for some $u,d\in\ZZ[\om]$, where $u$ is necessarily a unit. This is a composition of a rotation (multiplication by $u^2$) and a translation (by $bu$).
    
    Since we are working up to $\pm 1$, we can take $u=1,\om,1-\om$. If $u=1$, then $M(\widehat{\RR})$ is either equal to $\widehat{\RR}$ or parallel to it, which corresponds to a tangency. The images of $0,1,\infty$ are then
    \[M(0)=b,\quad M(1)=1+b,\quad M(\infty)=\infty.\]
    Since $b\equiv 0, 1, \om, 1+\om\pmod{2}$, we compute that
    \[(M(0), M(1))\equiv (0, 1), (1, 0), (\om, 1+\om), (1+\om, \om) \pmod{2}.\]
    In all cases, our reduced circles intersect in exactly one or three points.

    Otherwise, $u=\om,1-\om$, so we rotate by either $\frac{2\pi}{3}$ or $\frac{4\pi}{3}$ respectively. In both cases we end up with an image which intersects $\widehat{\RR}$ exactly twice.

    If $u=1-\om$, we can instead consider $M^{-1}(\widehat{\RR})$ intersecting with $\widehat{\RR}$, which just reduces us to the case of $u=\om$. The images of $0, 1, \infty$ are then
    \[M(0) = b\om,\quad M(1) = \om^2+b\om = \om + 1 + b\om,\quad M(\infty) = \infty.\]
    Since $b\equiv 0, 1, \om, 1+\om\pmod{2}$, we compute that
    \[(M(0), M(1))\equiv (0, 1+\om), (\om, 1), (1+\om, 0), (1, \om) \pmod{2}.\]
    In all cases, we see that $\rho_2(A)$ and $\rho_2(B)$ intersect in exactly two points, as claimed.
\end{proof}

\begin{corollary}\label{cor:tangentonlypicture}
    Let $X$ be a subset of the Eisenstein circles containing only circles from cosets $i$ and $j$, where $i-j$ is an edge in Figure~\ref{fig:cosettangencies}. Then, circles in $X$ can only intersect tangentially.

    Furthermore, if a circle in coset $i$ and a circle in coset $j$ are tangent, either $i=j$ or $i-j$ is an edge in Figure~\ref{fig:cosettangencies}.
\end{corollary}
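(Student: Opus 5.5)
The corollary follows from Lemma~\ref{lem:twointersect} by a contrapositive argument, together with the combinatorics of the Petersen graph in Figure~\ref{fig:cosettangencies}.

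\emph{First claim.} Take two distinct circles $A,B\in X$. They lie in cosets $i$ or $j$, so their reductions $\rho_2(A),\rho_2(B)$ are either both the $\FF_2[\om]$-circle of coset $i$, both that of coset $j$, or one of each. Suppose $A$ and $B$ meet non-tangentially. Two distinct circles in $\widehat{\CC}$ meet in $0$, $1$ or $2$ points, so they meet in exactly two points. Lemma~\ref{lem:twointersect} then says $\rho_2(A)$ and $\rho_2(B)$ share exactly two points.
\begin{itemize}
\item If $\rho_2(A)=\rho_2(B)$, they share all three points, which is a contradiction.
\item If one reduction is coset $i$ and the other is coset $j$, they are adjacent in Figure~\ref{fig:cosettangencies}. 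By definition of tangency of $\FF_2[\om]$-circles, they share exactly one point, again a contradiction.
\end{itemize}
Hence any two circles of $X$ are disjoint or tangent.

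One check is needed: each edge of the Petersen graph in Figure~\ref{fig:cosettangencies} really is a pair of $3$-subsets of the $5$-point set $\widehat{\FF_2[\om]}$ sharing exactly one point. Two distinct $3$-subsets of a $5$-set share one or two points. The one-point pairs give the Kneser graph $K(5,2)$, which is the Petersen graph, so this matches the figure. This can be verified directly against the point lists in Table~\ref{table:F4circles}.

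\emph{Second claim.} Let $A$ in coset $i$ and $B$ in coset $j$ be tangent. By Lemma~\ref{lem:twointersect}, $\rho_2(A)$ and $\rho_2(B)$ share one or three points.
\begin{itemize}
\item If they share three points, the two $3$-subsets are equal. Since the reduction determines the coset (Table~\ref{table:F4circles}), this gives $i=j$.
\item If they share exactly one point, the circles are tangent in the $\FF_2[\om]$ sense, so $i-j$ is an edge of Figure~\ref{fig:cosettangencies}.
\end{itemize}

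The main obstacle is only bookkeeping: confirming that the edges drawn in the figure are exactly the pairs of reductions meeting in one point. Everything else is immediate from Lemma~\ref{lem:twointersect}.
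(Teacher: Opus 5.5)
Your proposal is correct and takes the same approach as the paper: argue by contrapositive from Lemma~\ref{lem:twointersect}, using that reductions in the same coset share three points and reductions in adjacent cosets share exactly one point. Your $K(5,2)$ check is a correct sanity verification of Figure~\ref{fig:cosettangencies}; in the paper the graph is simply defined by one-point intersection, so that check is not logically needed.
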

\begin{proof}
    For the first part, assume otherwise. Then there exists circles $A, B\in X$ that intersect exactly twice. By Lemma~\ref{lem:twointersect}, $\rho_2(A)$ and $\rho_2(B)$ intersect exactly twice. However, if they are in the same coset, they intersect three times, and if they are in opposite cosets, they intersect exactly once, as $i-j$ was an edge.

    The second part follows similarly from Lemma~\ref{lem:twointersect}.
\end{proof}

Proposition~\ref{prop:GammaEcoset04} and Corollary~\ref{cor:tangentonlypicture} demonstrate that the Eisenpint Schmidt arrangement only contains tangencies, as it consists of cosets 0 and 4, which are connected by an edge in Figure~\ref{fig:cosettangencies}.

\begin{corollary}
    Circles in $\pint$ can only intersect tangentially.
\end{corollary}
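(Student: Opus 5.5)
The statement follows almost immediately from two results already established: Proposition~\ref{prop:GammaEcoset04}, which identifies which cosets the Eisenpint circles lie in, and Corollary~\ref{cor:tangentonlypicture}, which shows that certain pairs of cosets contain only tangential intersections. The plan is to combine them.

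First, observe that every circle of $\pint$ is, as an unoriented circle, of the form $M(\widehat{\RR})$ for some $M\in\GammaE$. By the forward direction of Proposition~\ref{prop:GammaEcoset04}, each such circle lies in coset $0$ or coset $4$. This uses the reduced coordinates mod $2$ from Table~\ref{table:F4circles}: $t$ and $x$ are even, which forces the codes $0001$ or $1001$. Orientation plays no role, since intersection is a property of the underlying unoriented circles.

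Second, check that $0$ and $4$ are adjacent in the Petersen graph of Figure~\ref{fig:cosettangencies}. Coset $0$ is $\{\infty,0,1\}$ and coset $4$ is $\{1+\om,0,\om\}$, and these share exactly the single point $0$. So $\rho_2$ of the two circles meet in one point, which is the definition of tangency of $\widehat{\FF_2[\om]}$-circles, and hence an edge.

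Finally, apply Corollary~\ref{cor:tangentonlypicture} with $X=\pint$, $i=0$, $j=4$. Internally that corollary rules out two-point intersections as follows:
\begin{itemize}
\item Suppose two Eisenstein circles meet in exactly two points. By Lemma~\ref{lem:twointersect}, their reductions meet in exactly two points.
\item Two circles in the same coset have identical reductions, so they share three points.
\item Circles in cosets $0$ and $4$ have reductions sharing exactly one point.
\end{itemize}
Either way, two-point intersections cannot occur within $\pint$.

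One subtlety is that two distinct circles in the arrangement might be disjoint, which is permitted. Also, coincident circles are the same circle of $\pint$. So the only intersections that remain are tangencies.

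There is no real obstacle, since everything is a chaining of prior results. The only point needing care is confirming the adjacency $0$--$4$ directly from the point sets, rather than trusting the figure.
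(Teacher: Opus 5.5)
Your proof is correct and is exactly the paper's argument: Proposition~\ref{prop:GammaEcoset04} puts every Eisenpint circle in coset $0$ or $4$. These cosets are adjacent in the Petersen graph, and Corollary~\ref{cor:tangentonlypicture} then rules out non-tangential intersections. Your explicit check that the reductions $\{\infty,0,1\}$ and $\{1+\om,0,\om\}$ share only the point $0$ confirms the adjacency directly, where the paper reads it off Figure~\ref{fig:cosettangencies}.
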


\subsection{The other tangency edges}

The Eisenpint Schmidt arrangement combines cosets 0 and 4, but we could have equally combined any other pair of tangent cosets. This choice is not canonical: we chose cosets 0 and 4 as they correspond to a very natural choice of subgroup $\GammaE$ of $\PSL(2, \ZZ[\om])$. It turns out that all of these lifts are related, as the 15 edges correspond to the 15 cosets of $\GammaE$. Furthermore, these can be seen as combinations of simple translations, dilations, and rotations of $\pint$, demonstrating that any choice is equivalent up to symmetry.

Consider the three elements
\[T_1 = \lm{1}{1}{0}{1},\quad T_{\om} = \lm{1}{\om}{0}{1},\quad R=\lm{\om}{0}{0}{1-\om},\]
which respectively act on $\widehat{\CC}$ as translation by $1$, translation by $\om$, and rotation counterclockwise about the origin by $2\pi/3$. Each move permutes the 10 cosets of circles, and the exact permutation can be computed using the explicit cosets in Table~\ref{table:cosets}. The translations are depicted in Figure~\ref{fig:cosettranslations}, and the rotations in Figure~\ref{fig:cosetrotations}.

\begin{figure}[htb]
    \includegraphics[scale=0.85]{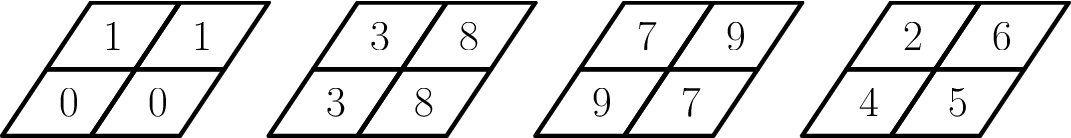}
    \caption{How the cosets change with translations. To read the diagram, locate the coset you wish to follow under translation, e.g. lower left 4 in the last diagram. Then translation by $\omega$ is a NNE move, so coset $4$ becomes coset $2$.  The diagrams wrap like tori, so that the same translation takes coset 6 to coset 5. The first three parallelograms consist of even curvatures, and the last one is the odd curvatures.}\label{fig:cosettranslations}
\end{figure}

\begin{figure}[htb]
    \includegraphics[scale=0.85]{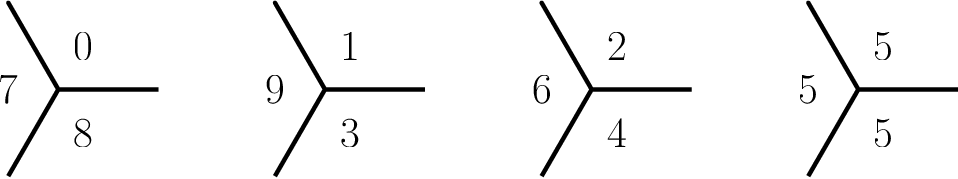}
    \caption{How the cosets change with rotations about the origin. To read the diagram, locate the coset you wish to follow under rotation, e.g. lower-right 4 in the third diagram. Then rotation by $2\pi/3$ is counterclockwise one notch, taking 4 to 2. The first two figures consist of even curvatures, and the last two of odd curvatures.}\label{fig:cosetrotations}
\end{figure}

The fact that coset 5 is preserved by rotations about the origin does not make it ``distinguished''. Indeed, rotating about $1$, $\om$, or $1+\om$ will preserve cosets $4$, $6$, and $2$ respectively.

Using these moves, we can make 12 cosets of $\GammaE$, which cover exactly the edges in Figure~\ref{fig:cosettangencies} consisting of nodes of opposite colours. The other 3 cosets require an inversion.

\begin{corollary}\label{cor:GammaEcosetsgiveedges}
    Twelve of the fifteen cosets of $\GammaE$ have representatives
    \[\text{Id}, T_1, T_{\om}, T_1T_{\om}, R, RT_1, RT_{\om}, RT_1T_{\om}, R^2, R^2T_1, R^2T_{\om}, R^2T_1T_{\om},\]
    which represent translation by $0$, $1$, $\om$, or $1+\om$, followed by a rotation of $0$, $2\pi/3$, or $4\pi/3$.

    The images of $\widehat{\RR}$ under these cosets produce the 12 edges of Figure~\ref{fig:cosettangencies} between nodes of opposite colour.

    Let $U=\sm{0}{-1}{1}{0}$. The remaining three cosets of $\GammaE$ have representatives $U$, $RU$, and $R^2U$, and they correspond to the three edges between pairs of orange nodes in Figure~\ref{fig:cosettangencies} (i.e. packings with only even curvatures). 
\end{corollary}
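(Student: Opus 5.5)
Since $\GammaE$ has index 15 in $\PSL(2,\ZZ[\om])$, the plan is to identify the fifteen cosets $g\GammaE$ with the fifteen edges of the Petersen graph in Figure~\ref{fig:cosettangencies}. The map sends $g\GammaE$ to the pair of cosets of circles appearing in $g\GammaE(\widehat{\RR}) = g(\pint)$. By Proposition~\ref{prop:GammaEcoset04}, $\GammaE(\widehat{\RR})$ is exactly the set of Eisenstein circles in cosets $0$ and $4$, that is, the edge $0$--$4$. Left multiplication by $g$ permutes the ten cosets $g_i\PSL(2,\ZZ[2\om])$ according to the action of $\rho_2(g)\in\SL(2,\FF_2[\om])$ on the ten $\FF_2[\om]$-circles. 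This action preserves tangency, so $g(\pint)$ is precisely the set of circles in the two cosets forming the image edge $\rho_2(g)\cdot\{0,4\}$. Consequently the map $g\GammaE\mapsto \rho_2(g)\cdot\{0,4\}$ is well defined on cosets.

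Next I would show this map is a bijection between the cosets of $\GammaE$ and the edges. Since there are $15$ of each, injectivity suffices, and it is equivalent to showing that the stabiliser of the edge $\{0,4\}$ in $\PSL(2,\ZZ[\om])$ is exactly $\GammaE$. This is a finite computation in $\SL(2,\FF_4)\cong A_5$ of order $60$. The edge stabiliser has order $60/15=4$. On the other side, $\rho_2(\GammaE)$ is the group of lower unitriangular matrices over $\FF_4$, which also has order $4$ and is contained in the stabiliser by Proposition~\ref{prop:GammaEcoset04}. Hence the two groups coincide. Since $\GammaE$ is the full preimage of its reduction, this proves injectivity.

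With the bijection in hand, the corollary reduces to reading off explicit images. Using the permutations recorded in Figures~\ref{fig:cosettranslations} and~\ref{fig:cosetrotations} (computed from Table~\ref{table:cosets}), I would apply each of the twelve words $R^kT_1^{\epsilon}T_{\om}^{\delta}$ to the edge $\{0,4\}$ and check that the results are twelve distinct edges, each joining an even (orange) node to an odd (blue) node. The translations and rotations preserve curvature parity, and $\{0,4\}$ is a mixed edge, so every image is automatically mixed; the only thing to verify is distinctness. For the remaining three cosets, $U$ swaps curvature and co-curvature (Proposition~\ref{prop:stxzbasictransformations}). Table~\ref{table:F4circles} gives $U$ sending coset $0$ (coordinates $0001$) to coset $0$ (coordinates $(0,0,0,1)\mapsto(0,0,0,1)$) and coset $4$ ($1001$) to coset with coordinates $0101$, namely coset $1$. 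So $U$ yields the even--even edge $0$--$1$, and applying $R$ and $R^2$ yields the other two even--even edges. The bijection guarantees these are new cosets.

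The main obstacle is the bookkeeping in the third step: confirming that the twelve translation and rotation words land on distinct edges, and that the $R$-orbit of $\{0,1\}$ consists of three distinct even--even edges (the Petersen graph has exactly three edges between orange nodes). I would do this with the figure permutations, supported by a direct computer check.
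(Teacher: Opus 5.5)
Your proposal is correct and matches the paper's proof, which likewise reads the translation and rotation permutations off Figures~\ref{fig:cosettranslations} and~\ref{fig:cosetrotations} and uses Proposition~\ref{prop:stxzbasictransformations} modulo $2$ to show that $U$, $RU$, $R^2U$ give the edges $0$--$1$, $7$--$9$, $8$--$3$. Your added argument that $\GammaE$ is exactly the stabiliser of the edge $\{0,4\}$ makes explicit a correspondence the paper leaves implicit, but it is more than you need: well-definedness of $g\GammaE\mapsto\rho_2(g)\cdot\{0,4\}$, together with the fifteen images being distinct edges, already places the fifteen words in distinct cosets; moreover, your count $60/15=4$ tacitly assumes $\SL(2,\FF_2[\om])$ acts transitively on the edges, which is exactly what that explicit computation supplies.
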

\begin{proof}
    By analyzing Figures~\ref{fig:cosettranslations} and~\ref{fig:cosetrotations}, the first claim follows. For the second, by Proposition~\ref{prop:stxzbasictransformations}, we see that $U$ sends the coset represented by $(s, t, x, z)\pmod{2}$ to $(t, s, -x, z-x)\equiv (t, s, x, z+x)\pmod{2}$. In particular, this preserves coset 0, and sends coset 4 to coset 1, giving the $0-1$ edge. Rotations by $R$ and $R^2$ produce the $7-9$ and $8-3$ edges, completing the claim.
\end{proof}

Intriguingly, the inversion $U$ can be replaced by a dilation!

\begin{proposition}\label{prop:dilate}
    The set of circles in coset 0 or 1 are obtained from the Eisenpint circles $\pint$ by a dilation with factor $\frac{1}{2}$ about the origin.
\end{proposition}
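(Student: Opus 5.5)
We plan to prove this through reduced coordinates. The key tool is Proposition~\ref{prop:eisensteincircleconverse}: the Eisenstein circles, up to orientation, are exactly the integral solutions of Equation~\eqref{eqn:sheet}, and Table~\ref{table:F4circles} says which coset a circle lies in.

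First compute how a dilation by $\lambda$ about the origin acts on $(u,v,w)$. It sends $u\mapsto u/\lambda$, $w\mapsto w$, and $v\mapsto \lambda v$, since $uv=|w|^2-1$ must be preserved. In reduced coordinates, dilation by $\frac12$ therefore sends
\[(s,t,x,z)\mapsto (2s,\, t/2,\, x,\, z).\]
This preserves Equation~\eqref{eqn:sheet}, and also the orientation condition $x+z\equiv 2\pmod 3$.

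Next characterize both sets modulo 2 using Table~\ref{table:F4circles}.
\begin{itemize}
\item Cosets 0 and 4 have reduced coordinates $0001$ and $1001$. So a circle lies in $\pint$ exactly when $t\equiv x\equiv 0$ and $z\equiv 1\pmod 2$, with $s$ arbitrary.
\item Cosets 0 and 1 have reduced coordinates $0001$ and $0101$. So a circle lies there exactly when $s\equiv x\equiv 0$ and $z\equiv 1\pmod 2$, with $t$ arbitrary.
\end{itemize}

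Now compare the two sets under the dilation.
\begin{itemize}
\item Take a circle of $\pint$ with coordinates $(s,t,x,z)$. Here $t$ is even, so its image $(2s,t/2,x,z)$ is integral. It satisfies Equation~\eqref{eqn:sheet}, so by Proposition~\ref{prop:eisensteincircleconverse} it is an Eisenstein circle with the same orientation class. Its coordinates modulo 2 are $(0,\ast,0,1)$, which by the table places it in coset 0 or 1.
\item Conversely, take a circle in coset 0 or 1 with coordinates $(s,t,x,z)$. Here $s$ is even, so the dilation by $2$, namely $(s/2,2t,x,z)$, is integral and satisfies Equation~\eqref{eqn:sheet}. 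Its coordinates modulo 2 are $(\ast,0,0,1)$, so it lies in coset 0 or 4, that is, in $\pint$ by Proposition~\ref{prop:GammaEcoset04}.
\end{itemize}
The two maps are mutually inverse, which gives the claimed equality of sets.

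The main point needing care is the bookkeeping of orientation and sign. Proposition~\ref{prop:eisensteincircleconverse} yields a circle only up to the sign $\pm$, so we must check that the coset labels in Table~\ref{table:F4circles} are insensitive to negation. They are, because negation does not change residues modulo 2. We must also check that the mod 3 condition is unchanged, which holds because $x$ and $z$ are fixed by the dilation.

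A second thing to verify is the formula for $v$ under dilation. If the dilation is written as the matrix $\sm{1}{0}{0}{2}$, up to scaling into $\SL(2,\CC)$, then Proposition~\ref{prop:mobiuscurv} confirms it directly.
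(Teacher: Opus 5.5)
Your proposal is correct and follows the paper's proof essentially step for step. In both, the dilation acts on reduced coordinates by $(s,t,x,z)\mapsto(2s,t/2,x,z)$, which preserves Equation~\eqref{eqn:sheet} and the condition $x+z\equiv 2\pmod 3$. Table~\ref{table:F4circles} then identifies cosets $0,4$ with residues $(\ast,0,0,1)$ and cosets $0,1$ with residues $(0,\ast,0,1)$, so the dilation and its inverse give the bijection. Your extra checks (deriving the action on $v$ via Proposition~\ref{prop:mobiuscurv}, and noting that coset labels are unchanged by negation) just fill in details the paper states without proof.
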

\begin{proof}
    By Proposition~\ref{prop:eisensteincircleconverse} and the subsequent discussion, the circles in $\PSL(2, \ZZ[\om])(\widehat{\RR})$  are precisely given by the reduced coordinates of integers $(s, t, x, z)$ satisfying $x^2-xz+z^2-1=3st$ and $x+z\equiv 2\pmod{3}$. Those circles that lie in a given coset correspond to the solutions $(s, t, x, z)\pmod{2}$ with reference to Table~\ref{table:F4circles}. In particular, for a circle $\cir$ corresponding to reduced coordinates $(s, t, x, z)$, we have
    \[(s, t, x, z)\pmod{2}\equiv\begin{cases}
        (0, 0, 0, 1) & \text{if and only if $\cir$ is in coset 0;}\\
        (0, 1, 0, 1) & \text{if and only if $\cir$ is in coset 1;}\\
        (1, 0, 0, 1) & \text{if and only if $\cir$ is in coset 4.}
    \end{cases} \]
    A dilation by a factor of $\frac{1}{2}$ about the origin maps
    \[(s, t, x, z) \rightarrow (2s, t/2, x, z).\]
    Therefore, if $\cir$ is in coset 0 or 4, then $t\equiv 0\pmod{2}$, hence the output circle will again be an integral solution with $x+z\equiv 2\pmod{3}$, and therefore correspond to a circle in $\PSL(2, \ZZ[\om])(\widehat{\RR})$. Its reduced coordinates modulo 2 is of the form $(0, \ast, 0, 1)$, i.e. in coset 0 or 1.

    Similarly, starting with a circle inside coset 0 or 1 and applying a dilation by $2$ will end up with a circle in coset 0 or 4. These maps are inverse to each other, proving that they are bijections.
\end{proof}

Given two tangent circles in the full Schmidt arrangement, by Corollary~\ref{cor:tangentonlypicture}, there exist cosets $i,j$ where the set of circles where $i-j$ is an edge in Figure~\ref{fig:cosettangencies}, and this pair of circles occur in the union of circles in cosets i and j. By Corollary~\ref{cor:GammaEcosetsgiveedges} and Proposition~\ref{prop:dilate}, this packing is the image of $\pint$ by a combination of a dilation (about origin, factor $1,\frac{1}{2}$), translation (by $0, 1, \om, 1+\om$), and rotation (about origin, angle of $0,\frac{2\pi}{3}, \frac{4\pi}{3}$). In particular, Theorem~\ref{thm:pintschmditiscanonical} follows.

\section{Symmetries of the Eisenpint Schmidt arrangement}\label{sec:schmidtsymmetries}

In order to demonstrate that the Eisenpint Schmidt arrangement produces scaled primitive Eisenstein circle packings, we must introduce equivalence classes of circles in the arrangement. Recall that $\widepint$ denotes the set of Eisenpint circles with both orientations. Equivalently, by Proposition~\ref{prop:eisensteincircleconverse}, we can consider integral coordinates $(s, t, x, z)$ where $-3st+x^2-xz+z^2=1$.

The coset of this circle is identified with $(s, t, x, z)\pmod{2}$, as in Table~\ref{table:F4circles}. Since negation acts trivially modulo 2, the coset of a circle and its opposite orientation are identical.

\subsection{Fundamental domain}

Translation by $2\ZZ+2\om\ZZ$ descends to the identity map on $\widehat{\FF_2[\om]}-$circles, hence it preserves cosets.

\begin{proposition}\label{prop:enschmidttranslate}
    The set $\widepint$ is invariant under translation by $2\ZZ+2\om\ZZ$.
\end{proposition}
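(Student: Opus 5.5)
The plan is to reduce the claim to the coset description of $\pint$. By Proposition~\ref{prop:GammaEcoset04}, an Eisenstein circle is in $\pint$ exactly when it lies in coset 0 or coset 4. Since $\widepint$ consists of these circles with both orientations, it suffices to show two things: a translation by $\tau\in 2\ZZ+2\om\ZZ$ sends Eisenstein circles to Eisenstein circles, and it preserves the coset of each circle.

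For the first point, translation by $\tau$ is the M\"obius map $\sm{1}{\tau}{0}{1}\in\PSL(2,\ZZ[\om])$. It therefore maps $\PSL(2,\ZZ[\om])(\widehat{\RR})$ to itself, and it commutes with reversing orientation, so oppositely oriented circles are also carried to oppositely oriented circles.

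For the second point I will use reduced coordinates. Write $\tau=e+f\om$ with $e,f$ even. By Proposition~\ref{prop:stxzbasictransformations}, the image of $(s,t,x,z)$ is
\[(s,\; t+(e^2+ef+f^2)s+(e+f)x-fz,\; x+(2e+f)s,\; z+(e-f)s).\]
Every correction term is divisible by $2$ because $e$ and $f$ are even. So the image is congruent to $(s,t,x,z)\pmod 2$. By the proposition preceding Table~\ref{table:F4circles}, the reduction mod $2$ of the reduced coordinates determines the coset, so the coset is unchanged. In particular, circles in cosets 0 and 4, with either orientation, go to circles in cosets 0 and 4.

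An alternative route avoids coordinates. We have $\sm{1}{\tau}{0}{1}\equiv I\pmod 2$, and conjugation by it preserves $\GammaE$, since $\GammaE$ is defined by a congruence condition mod $2$ and $\sm{1}{\tau}{0}{1}$ is trivial mod $2$. Hence $\sm{1}{\tau}{0}{1}M\in\GammaE$ whenever $M\in\GammaE$, which gives the result directly.

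Both routes are routine. The only point that needs care is confirming that the coset is determined by the coordinates mod $2$ regardless of orientation. This holds because negation is trivial mod $2$, as the paper notes just before this proposition. The inverse translation, by $-\tau$, gives invariance as an equality of sets rather than only an inclusion.
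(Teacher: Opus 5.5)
Your proposal is correct and is essentially the paper's argument. The paper's justification is a one-line remark: translation by $2\ZZ+2\om\ZZ$ reduces to the identity on $\widehat{\FF_2[\om]}$-circles and so preserves cosets. Your reduced-coordinates check mod $2$ verifies the same fact explicitly, using that $\widepint$ is cosets 0 and 4 with both orientations.

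One small correction to your alternative route: what you need is not that conjugation by $\sm{1}{\tau}{0}{1}$ preserves $\GammaE$. It is that $\sm{1}{\tau}{0}{1}\equiv I\pmod{2}$ itself lies in the group $\GammaE$, so left multiplication by it maps $\GammaE$ to $\GammaE$.
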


Next, we consider a rotation.

\begin{proposition}\label{prop:enschmidtrotatebypi}
    The set $\widepint$ is invariant under a rotation by $\pi$ about the origin.
\end{proposition}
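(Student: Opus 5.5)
Rotation by $\pi$ about the origin is the M\"obius map $z\mapsto -z$. It is not an element of $\PSL(2,\ZZ[\om])$, since $-1$ is not a square of a unit in a way that yields a determinant-one matrix of the form $\sm{u}{0}{0}{u^{-1}}$ with $u^2=-1$; indeed $i\notin\ZZ[\om]$. So the argument has to go through the reduced coordinates. I will use the characterization from Proposition~\ref{prop:eisensteincircleconverse} and the discussion after it. The set $\widepint$ consists exactly of the integral $(s,t,x,z)$ satisfying $x^2-xz+z^2-1=3st$ whose reduction $(s,t,x,z)\pmod 2$ is the pattern of coset 0 or coset 4 in Table~\ref{table:F4circles}, namely $0001$ or $1001$. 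Both orientations are allowed, so the condition $x+z\equiv 2\pmod 3$ is dropped. That coset $0$ and $4$ circles are exactly $\pint$ is Proposition~\ref{prop:GammaEcoset04}.

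Next I compute how $z\mapsto -z$ acts on coordinates $(u,v,w)$. Negating the point negates the centre and preserves the radius and the co-curvature, so $(u,v,w)\mapsto(u,v,-w)$. I will double-check this on the defining equation of the circle, replacing $X$ by $-X$. Since $w=\frac{x\sqrt3+yi}{2}$ with $y=x-2z$, negating $w$ sends $(x,y)\mapsto(-x,-y)$, that is, $(s,t,x,z)\mapsto(s,t,-x,-z)$. This map is integral, it preserves $x^2-xz+z^2-1=3st$, and it preserves the reduction modulo $2$. Hence it maps the coset $0$ and $4$ solutions to themselves. Because the map is an involution, it is a bijection of $\widepint$.

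An orientation check is needed. The map sends $x+z\equiv 2$ to $x+z\equiv 1\pmod 3$, so a positively oriented circle goes to the opposite orientation of a circle in $\PSL(2,\ZZ[\om])(\widehat{\RR})$. Because $\widepint$ includes both orientations, this does not matter; it does mean that $\pint$ is invariant as a set of unoriented circles, but the orientation convention flips. The main point requiring care is confirming the sign convention for $w$ under the point map, since orientation depends on the left-hand interior. A rotation preserves orientation of the plane, so the curvature sign is unchanged and $w=u\cdot\text{centre}$ simply negates. That step is the only real obstacle, and it is handled directly.
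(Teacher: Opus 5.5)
Your proposal is correct and follows the paper's proof. Both note that rotation by $\pi$ acts on reduced coordinates as $(s,t,x,z)\mapsto(s,t,-x,-z)$, and that this preserves the mod $2$ pattern singling out cosets $0$ and $4$. Your extra checks fill in details the paper leaves implicit: the derivation that $w\mapsto -w$ with $u$ unchanged, and the observation that the orientation flips ($x+z\equiv 2$ becomes $x+z\equiv 1 \pmod 3$), which is harmless since both orientations are included.
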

\begin{proof}
    This corresponds to the map $(s, t, x, z)\rightarrow (s, t, -x, -z)$, which clearly preserves the coordinates modulo 2.
\end{proof}

Note that Proposition~\ref{prop:enschmidtrotatebypi} applies to any coset, not just cosets 0 and 4. This will not be true of our final symmetry.

\begin{proposition}\label{prop:enschmidtreflect}
    The set $\widepint$ is invariant under reflection across the imaginary axis.
\end{proposition}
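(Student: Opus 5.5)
The plan is to argue as in Proposition~\ref{prop:enschmidtrotatebypi}: compute the effect of the reflection $\sigma: z\mapsto -\overline{z}$ on reduced coordinates $(s, t, x, z)$, then check that the image is still an integral solution of Equation~\eqref{eqn:sheet} lying in coset 0 or 4. Since $\widepint$ contains both orientations of every circle, I only need the image up to an overall sign. This matters because $\sigma$ is orientation reversing, so it does not preserve the sign conventions attached to $(u,v,w)$.

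First, the effect on $(u,v,w)$, ignoring orientation. The reflection is a Euclidean isometry, so $|u|$ is unchanged. It also commutes with $z\mapsto 1/z$, because $1/(-\overline{z})=-\overline{(1/z)}$, so the co-curvature is unchanged up to the same sign. If $u\neq 0$, the centre $w/u$ goes to $-\overline{w/u}$, so $w\mapsto -\overline{w}$; the line case $u=0$ follows from the same formula, or by continuity. Writing $w=\frac{x\sqrt{3}+yi}{2}$ with $y=x-2z$, the new curvature-centre has $x'=-x$ and $y'=y$. Hence
\[z'=\frac{x'-y'}{2}=\frac{-x-(x-2z)}{2}=z-x,\]
and up to a global sign $\sigma$ sends $(s,t,x,z)\mapsto(s,t,-x,z-x)$. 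This agrees with the inversion formula $U:(s,t,x,z)\mapsto(t,s,-x,z-x)$ from Proposition~\ref{prop:stxzbasictransformations}, since $U$ followed by swapping $s$ and $t$ is again a reflection-type map. This agreement serves as a sanity check on the signs, which I expect to be the only delicate step.

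Next, I check that the image is a valid Eisenstein circle, up to orientation. Integrality is clear. For Equation~\eqref{eqn:sheet},
\[x'^2-x'z'+z'^2=x^2+x(z-x)+(z-x)^2=x^2-xz+z^2,\]
so the relation $x^2-xz+z^2-1=3st$ is preserved. By Proposition~\ref{prop:eisensteincircleconverse}, $\pm(s,t,-x,z-x)$ are therefore the reduced coordinates of a circle in $\PSL(2,\ZZ[\om])(\widehat{\RR})$. As a consistency check, $x'+z'=z-2x\equiv x+z\pmod 3$, so the sign is in fact $+$; this is not needed, because $\widepint$ includes both orientations.

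Finally, the coset. Modulo $2$ the map is $(s,t,x,z)\mapsto(s,t,x,z+x)$. Cosets $0$ and $4$ have reduced coordinates $0001$ and $1001$ in Table~\ref{table:F4circles}, so $x$ is even in both and the reduction modulo $2$ is unchanged. The image therefore stays in coset $0$ or $4$, and by Proposition~\ref{prop:GammaEcoset04} it is an Eisenpint circle. Since $\sigma$ is an involution, this gives invariance of $\widepint$. In contrast with rotation by $\pi$, this map sends $x\mapsto z+x$ modulo $2$ and so moves cosets with $x$ odd. This is why the symmetry is special to cosets 0 and 4.
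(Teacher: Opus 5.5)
Your proof is correct and is essentially the paper's argument. The paper simply observes that the reflection acts by $(s,t,x,z)\mapsto(s,t,-x,z-x)$, which preserves the coordinates modulo $2$ exactly when $x$ is even, as in cosets $0$ and $4$. You additionally justify the coordinate formula, the preservation of Equation~\eqref{eqn:sheet}, and the appeals to Propositions~\ref{prop:eisensteincircleconverse} and~\ref{prop:GammaEcoset04}. One small slip: in your closing remark the map modulo $2$ sends $z\mapsto z+x$, not $x\mapsto z+x$.
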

\begin{proof}
    This corresponds to the map $(s, t, x, z)\rightarrow (s, t, -x, z-x)$. The coordinates are preserved modulo 2 if and only if $x\equiv 0\pmod{2}$, which occurs in cosets 0 and 4 (as well as 1 and 5, but not the rest).
\end{proof}

Putting together Propositions~\ref{prop:enschmidttranslate}-\ref{prop:enschmidtreflect} yields an equivalence class of circles, and a fundamental domain for their equivalence classes, depicted in Figure~\ref{fig:pintschmidtfdom}.

\begin{figure}[htb]
    \includegraphics[scale=0.85]{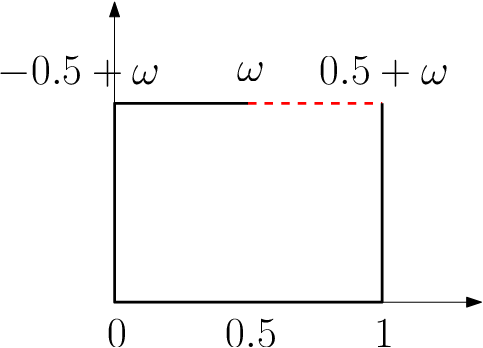}
    \caption{Fundamental domain for the (centres of) circles in $\widepint$.}\label{fig:pintschmidtfdom}
\end{figure}

\begin{definition}
    Call two circles in $\widepint$ \emph{equivalent} if they are related by a sequence of translations by $2\ZZ+2\om\ZZ$, rotations about the origin by $\pi$, and reflections across the imaginary axis.
\end{definition}

\begin{proposition}\label{prop:schmidtequivclasses}
    There is one equivalence class of curvature zero circles, which has representative $\widehat{\RR}$. Each non-zero curvature circle has a unique representative whose centre lies in the rectangle with corners $(0, 1, 0.5+\om, -0.5+\om)$, i.e. of width 1 and height $\sqrt{3}/2$. We include the boundary of this rectangle, except for the line segment $\{a+\om:0<a\leq 0.5\}$.
\end{proposition}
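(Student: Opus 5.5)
The plan is to treat the curvature-zero and nonzero cases separately. For circles of nonzero curvature, reduce to a statement about centres and check that the group generated by the three symmetries acts on centres with the stated rectangle as a fundamental domain.

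\textbf{Curvature zero.} An Eisenpint circle with $s=0$ is a line. By Equation~\eqref{eqn:sheet} its coordinates satisfy $x^2-xz+z^2=1$, so $(x,z)$ is one of $\pm(1,1),\pm(1,0),\pm(0,1)$, up to the orientation sign. Lying in coset $0$ or $4$ forces $s$ even, $x$ even and $t$ even (Table~\ref{table:F4circles}), so $(x,z)=\pm(0,1)$: the line is horizontal. With $y=x-2z=\mp 2$ and $w=\frac{x\sqrt3+yi}{2}$, the line is $\{\Im z=\text{const}\}$ with a shift governed by $t$. I will compute this shift from Proposition~\ref{prop:stxzbasictransformations}: translating $\widehat{\RR}$ by $e+f\om$ changes $t$ by $(e+f)x-fz=-f$. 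Hence the horizontal Eisenstein lines are exactly $\Im z=f\frac{\sqrt3}{2}$, and the coset condition ($t$ even) gives $f$ even. Translation by $2\om$ then identifies all of these lines. Orientation is reversed by the rotation by $\pi$, which sends $(s,t,x,z)\mapsto(s,t,-x,-z)$ and swaps $z=\pm1$. So there is one class, represented by $\widehat{\RR}$.

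\textbf{Nonzero curvature.} The three symmetries (Propositions~\ref{prop:enschmidttranslate}--\ref{prop:enschmidtreflect}) are Euclidean isometries. They generate the group $G$ of maps $z\mapsto \pm z+\lambda$ and $z\mapsto\pm\bar z'+\lambda$, where $\lambda\in 2\ZZ[\om]$ and the reflection is $z\mapsto-\bar z$. Equivalence of circles is then equivalence of centres under $G$, together with the fact that the curvature and orientation are preserved. Orientation is preserved because these maps are translations, rotations by $\pi$ and reflections, all acting on $(s,t,x,z)$ without changing $s$. Therefore it suffices to show that the stated half-open rectangle is a strict fundamental domain for $G$ acting on $\CC$.

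First, $2\ZZ[\om]$ has fundamental parallelogram $\{2a+2b\om : 0\le a,b<1\}$ of area $2\sqrt3$. Together with $-1$ and the reflection $z\mapsto-\bar z$ (which generate a Klein four-group of linear parts), $G$ is a crystallographic group containing the translations by $2\ZZ[\om]$ with index $4$. A fundamental domain therefore has area $\frac{2\sqrt3}{4}=\frac{\sqrt3}{2}$, which matches the $1\times\frac{\sqrt3}{2}$ rectangle.

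\textbf{Reducing an arbitrary centre into the rectangle.} Take any $z\in\CC$.
\begin{enumerate}
\item Translate by multiples of $2\om$ so that $0\le\Im z<\sqrt3$.
\item If $\Im z>\frac{\sqrt3}{2}$, apply $z\mapsto -z+2\om$, which sends $\Im z$ to $\sqrt3-\Im z$. This brings $\Im z$ into $[0,\frac{\sqrt3}{2}]$.
\item Translate by multiples of $2$ (which preserve $\Im z$) so that $-1<\Re z\le1$.
\item Apply the reflection $z\mapsto-\bar z$ if needed to get $\Re z\in[0,1]$.
\end{enumerate}
The vertical extent of the rectangle is consistent with this: its top edge runs from $-0.5+\om=\frac{\sqrt3}{2}i$ to $0.5+\om$, so the top edge has real part in $[0,1]$ and height $\frac{\sqrt3}{2}$. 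The rectangle is therefore $[0,1]\times[0,\frac{\sqrt3}{2}]$.

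\textbf{Uniqueness and the boundary.} The remaining work is to identify boundary points.
\begin{itemize}
\item The edges $\Re=0$ and $\Re=1$ are fixed by the reflections $z\mapsto-\bar z$ and $z\mapsto 2-\bar z$. So these edges involve no identifications between distinct points and are fully included.
\item The bottom edge $\Im=0$ is mapped to itself by $z\mapsto -z+2$, i.e.\ $a\mapsto 2-a$. Combined with the reflection $-\bar z$, this gives $a\mapsto a-2$, a translation that sends no point of $[0,1]$ back into $[0,1]$. So the bottom edge is fully included.
\item The top edge is mapped to itself by $z\mapsto-z+2\om$, which is $a+\om\mapsto -a+\om$. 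Composing with the reflection $z\mapsto-\bar z$ gives the identification $a+\om\sim(1-a)+\om$ on the top edge in rectangle coordinates. This identifies the points with $0<a\le0.5$ with those with $0.5\le a<1$. The point $a=0.5$ is matched with itself, so it must be kept, which is why the excluded set is $\{a+\om : 0<a\le 0.5\}$ written in the paper's shifted coordinates ($-0.5+\om$ is the left corner). I will check this carefully.
\end{itemize}

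The main obstacle is this boundary bookkeeping. I need to confirm that the stabilizers and identifications listed above are the only ones. I will do this by enumerating the elements of $G$ that map the closed rectangle to a set meeting it; these are the translates by $0$, $\pm2$, $2\om$ and $2\om-2$ combined with the four linear parts. I will then verify the half-open convention edge by edge.
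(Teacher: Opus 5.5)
Your proposal is correct and is essentially the paper's argument. You reduce the centre into the closed rectangle using translations by $2\ZZ[\om]$, the rotation by $\pi$ and the reflection, and then settle the boundary by writing every symmetry as $(a,b)\mapsto(\pm a+u,\pm b+v\sqrt{3})$ with $u\equiv v\pmod{2}$; your enumeration of the few translates that meet the rectangle does exactly this, and your explicit treatment of the horizontal lines and the area count are harmless extras.

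One slip needs fixing in the boundary bookkeeping. The corner $1+\frac{\sqrt{3}}{2}i$ lies on the edge $\Re z=1$ but is identified with $\frac{\sqrt{3}}{2}i$ by $z\mapsto -z+2\om$. That map alone already gives $r\mapsto 1-r$ on the top edge, with no composition with the reflection needed. So the edge $\Re z=1$ is not ``fully included'': its top endpoint is the $a=0.5$ point of the excluded segment $\{a+\om:0<a\leq 0.5\}$.
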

\begin{proof}
    The claim for curvature zero is immediate, so consider the non-zero curvature case. Let $C=a+bi$ denote the centre of our circle. By Proposition~\ref{prop:enschmidttranslate}, $C$ can be translated to lie in the parallelogram with vertices $(0, 2, 2+2\om, 2\om)$. If $b\geq \sqrt{3}/2$, we can use Proposition~\ref{prop:enschmidtrotatebypi} to rotate by $\pi$, and then apply a translation by $2+2\om$ to land inside the parallelogram with vertices $(0, 2, 2+\om, \om)$. From here,
    \begin{itemize}
        \item If $0\leq a\leq 1$, do nothing, as we are already in the fundamental domain.
        \item If $1<a<2$, then use Proposition~\ref{prop:enschmidtreflect} to reflect across the imaginary axis, then apply a translation by 2 to land in the fundamental domain.
        \item If $2\leq a$, apply a translation by $-2$ to land in the fundamental domain.
    \end{itemize}
    This process is depicted in Figure~\ref{fig:fundamentaldomainreduction}. In particular, this proves we can always reduce to this domain including the boundary. For the final reduction, if the centre is $a+ \om$ with $0<a\leq 0.5$, then apply a rotation by $\pi$ and a translation by $2\om$ to obtain centre $-a+\om$. This completes the existence.

    For uniqueness, note that a translation by $2m+2n\om$ (with $m, n\in\ZZ$) corresponds to $(a, b)\rightarrow (a+2m+n, b+\sqrt{3}n)$. Rotation by $\pi$ about the origin is $(a, b)\rightarrow (-a, -b)$, and reflection in the imaginary axis is $(a, b)\rightarrow (-a, b)$. By chaining these operations, it is clear that the general equivalence takes the form
    \[(a, b)\rightarrow(\pm a+u, \pm b+v\sqrt{3}),\]
    where $u, v\in\ZZ$ and $u\equiv v\pmod{2}$. If $0\leq a\leq 1$ and $0\leq b\leq\sqrt{3}/2$, first assume that $b\neq\sqrt{3}/2$. Then, it is clear that $\pm b+\sqrt{3}n$ can only lie within $[0, \sqrt{3}/2]$ if $\pm = +$ and $n=0$. For $a$, we have $\pm a+m$ where $m$ is necessarily even, and this will only lie inside $[0, 1]$ when it equals $a$. (Note that if $a=1$, you need not have $\pm=+$ and $m=0$, as $-1+2=1$, but this will still equal $a$.)

    The final case is $b=\sqrt{3}/2$. In this case, the only other way for $\pm b+\sqrt{3}n$ to lie within $[0, \sqrt{3}/2]$ is with $-b+\sqrt{3}=b$. This is paired with $\pm a+m$, where $m$ is odd. If $\pm=+$ we get no new solutions, but we have one final equivalence of $1-a$, which pairs up $(a, \sqrt{3}/2)$ with $(1-a, \sqrt{3}/2)$, and has been accounted for.
\end{proof}

\begin{figure}[htb]
	\centering
	\begin{subfigure}{.5\textwidth}
		\centering
		\includegraphics[width=.9\linewidth]{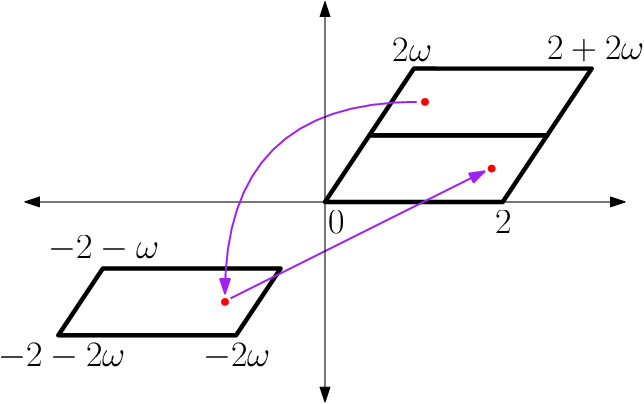}
		\caption{Halving the first parallelogram: rotation by $\pi$\\ about the origin, then translation by $2+2\om$.}
	\end{subfigure}%
	\begin{subfigure}{.5\textwidth}
		\centering
		\includegraphics[width=.9\linewidth]{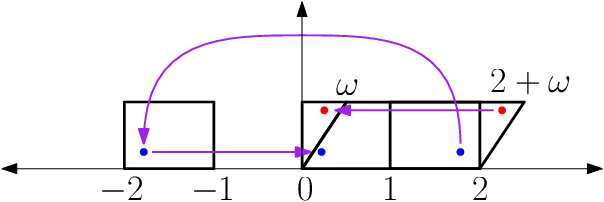}
		\caption{Final reduction step: either nothing, reflect across the imaginary axis then translate by 2, or translate by $-2$..}
	\end{subfigure}
    \caption{Reduction of the circles to their fundamental domain.}\label{fig:fundamentaldomainreduction}
\end{figure}

\subsection{Bijection with quadratic forms}

In Section~\ref{sec:quadraticforms}, we bijected circles in Eisenstein circle packings with equivalence classes of first-odd binary quadratic forms. By bijecting these quadratic forms with equivalence classes of circles in $\widepint$, we will demonstrate that the Eisenpint tangency packings are exactly the (scaled) primitive Eisenstein circle packings. Furthermore, this places the Eisenstein circle packing into the plane in a canonical way, up to equivalence.

\begin{definition}
    Let $\Nm:\QQ(\sqrt{-3})\rightarrow\QQ$ denote the norm map, and let $M=\sm{\alpha}{\gamma}{\beta}{\delta}\in\GammaE$. Associate to $M$ the quadratic form
    \[f_M(x, y):=\Nm(\delta x-\beta y).\]
\end{definition}

\begin{proposition}\label{prop:schmidtcircletoquadratic}
    Assume that $M(\widehat{\RR})$ has reduced curvature $s$. Then the quadratic form $f_M$ is a primitive positive semidefinite first-odd binary quadratic form of discriminant $-3s^2$. Furthermore, this associates equivalence classes of circles in the Eisenpint Schmidt arrangement with $\Gamma_0^{\PGL}(2)-$equivalence classes of primitive positive semidefinite first-odd binary quadratic forms.
\end{proposition}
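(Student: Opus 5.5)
The plan is to compute $f_M$ explicitly, then check in turn its discriminant, the first-odd property, primitivity, and finally that the construction is compatible with the equivalence relations on both sides.

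\textbf{Discriminant and definiteness.} Write $M=\sm{\alpha}{\gamma}{\beta}{\delta}$. Expanding the norm gives
\[f_M(x,y)=|\delta|^2x^2-(\delta\ol{\beta}+\ol{\delta}\beta)xy+|\beta|^2y^2 .\]
Since $\delta\ol{\beta}+\ol{\delta}\beta=\Tr(\delta\ol\beta)\in\ZZ$ and the norms are integers, $f_M$ is integral. Its discriminant is
\[(\delta\ol\beta+\ol\delta\beta)^2-4|\delta|^2|\beta|^2=(\delta\ol\beta-\ol\delta\beta)^2 .\]
By Proposition~\ref{prop:mobiuscurv}, $u=i(\beta\ol\delta-\ol\beta\delta)$, and by Proposition~\ref{prop:eisensteincircleformulae}, $u=s\sqrt3$. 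Hence the discriminant equals $-u^2=-3s^2$. The leading coefficient $A=\Nm(\delta)$ is $\geq 0$, so $f_M$ is positive semidefinite.

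\textbf{First-odd.} Because $M\in\GammaE$, we have $\delta\equiv 1\pmod 2$. Writing $\delta=g+h\om$ with $g$ odd and $h$ even, we get $\Nm(\delta)=g^2+gh+h^2$, which is odd.

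\textbf{Primitivity.} Suppose a rational prime $p$ divides all three coefficients. Then $p\mid\Nm(\ell)$ for every $\ell$ in the lattice $L=\ZZ\delta+\ZZ\beta$.
\begin{itemize}
\item If $p$ is inert, $p$ itself is prime in $\ZZ[\om]$, so $p\mid\ell$ for all $\ell\in L$.
\item If $p=\pi\ol\pi$ (including the ramified case $p=3$), every $\ell\in L$ lies in $L\cap\pi\ZZ[\om]$ or in $L\cap\ol\pi\ZZ[\om]$. A group cannot be the union of two proper subgroups, so $L$ lies entirely in one of them.
\end{itemize}
In either case a prime of $\ZZ[\om]$ divides both $\beta$ and $\delta$. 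This contradicts $\alpha\delta-\beta\gamma=1$.

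\textbf{Circles to forms, up to $\Gamma_0^{\PGL}(2)$.} This is the bookkeeping step, and I expect it to be the main obstacle. Let $M,M'\in\GammaE$.
\begin{itemize}
\item $M(\widehat\RR)=M'(\widehat\RR)$ with the same orientation if and only if $M'=MN$ with $N\in\PSL(2,\ZZ)$. Since both matrices lie in $\GammaE$, we get $N\in\GammaE\cap\PSL(2,\ZZ)$, the matrices that are lower triangular modulo $2$.
\item Right multiplication by $N$ replaces the bottom row $(\beta,\delta)$ by $(\beta,\delta)N$. This changes $f_M$ by the substitution attached to a transposed and inverted version of $N$.
\item The paper's transposed convention for $\beta,\gamma$ should convert this substitution into the action of $\Gamma_0(2)$ in equation~\eqref{eqn:matrixactqf}.
\end{itemize}
I will verify this on the generators $\sm{1}{0}{2}{1}$ and $\sm{1}{1}{0}{1}$ rather than in general.

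\textbf{Orientation and the symmetries.}
\begin{itemize}
\item Orientation reversal is realised by precomposing with $z\mapsto\ol z$, i.e.\ by $\sm{1}{0}{0}{-1}$. On forms this sends $B\mapsto -B$, which is the determinant $-1$ generator of $\Gamma_0^{\PGL}(2)$ from Proposition~\ref{prop:gamma0pgl2generators}. Both orientations therefore give the same class.
\item Translation by $2\ZZ[\om]$ is left multiplication by $\sm{1}{2\tau}{0}{1}$, which fixes the bottom row, so $f_M$ is unchanged.
\item Rotation by $\pi$ is $z\mapsto-z$. It can be realised, up to the $\PGL$ ambiguity, by changing the sign of the bottom row, which leaves the norm form unchanged.
\item Reflection across the imaginary axis is $z\mapsto-\ol z$. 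It conjugates the entries of $M$ and negates the bottom row, and $\Nm(\ol\delta x-\ol\beta y)=\Nm(\delta x-\beta y)$.
\end{itemize}

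\textbf{Conclusion.} Each equivalence of circles induces at most a $\Gamma_0^{\PGL}(2)$-move on $f_M$. Hence $\cir\mapsto[f_M]$ is well defined on equivalence classes of circles, and it sends a circle of reduced curvature $s$ to a class of discriminant $-3s^2$.
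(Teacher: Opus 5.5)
Your proposal is correct and follows the paper's proof. The steps match: the discriminant via Proposition~\ref{prop:mobiuscurv}, first-oddness from $\delta\equiv 1\pmod 2$, well-definedness via the stabilizer $\GammaE\cap\PSL(2,\ZZ)$ acting on the bottom row, and the three symmetry checks; your union-of-two-subgroups argument also usefully fills in the primitivity step that the paper states in one line. The deferred bookkeeping needs no generator check, and in any case $\sm{1}{0}{2}{1},\sm{1}{1}{0}{1}$ generate the target $\Gamma_0(2)$ rather than $\GammaE\cap\PSL(2,\ZZ)$: for $N=\sm{a}{b}{c}{d}$ with $b$ even one gets $f_{MN}=\sm{d}{-c}{-b}{a}\circ f_M$ directly. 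Likewise, for rotation and reflection the paper stays inside $\GammaE$ by using $\sm{\alpha}{-\gamma}{-\beta}{\delta}$ and $\sm{\ol\alpha}{-\ol\gamma}{-\ol\beta}{\ol\delta}$, giving $B\mapsto -B$, which is exactly what your determinant $-1$ representatives combined with $\sm{1}{0}{0}{-1}$ produce.
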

\begin{proof}
    The quadratic form $f_M$ has discriminant $(\delta\overline{\beta}-\overline{\delta}\beta)^2$, which by Propositions~\ref{prop:mobiuscurv} and~\ref{prop:eisensteincircleformulae} is equal to $-3s^2$. As $\delta,\beta\in\ZZ[\om]$, the form $f_M$ is necessarily integral and positive semidefinite. Primitivity follows from the fact that $\delta,\beta$ must be coprime in $\ZZ[\om]$, and first-oddness is equivalent to $f_M(1, 0)=\Nm(\delta)$ being odd, which is true as $\delta\equiv 1\pmod{2\ZZ[\om]}$.

    Next, consider another matrix $M'\in\GammaE$ which represents the same circle. The stabilizer of $\widehat{\RR}$ in $\GammaE$ is
    \[\GammaE\cap\PSL(2, \RR) = \left\{\lm{a}{b}{c}{d}\in\PSL(2, \ZZ):a-1\equiv b\equiv d-1\equiv 0\pmod{2}\right\}.\]
    Thus, $M'=MN$ for some integral matrix $N=\genmtx$ where $a$ is odd, $b$ is even, and $d$ is odd. The bottom row of $M'$ is $(a\beta+c\delta, b\beta+d\delta)$, hence
    \begin{align*}
        f_{M'}(x, y) & = \Nm((b\beta+d\delta)x-(a\beta+c\delta)y) = \Nm((dx-cy)\delta-(-bx+ay)\beta) \\
        & = f_M(dx-cy, -bx+ay) = \lm{d}{-c}{-b}{a}\circ f_M(x, y).
    \end{align*}
    As $\sm{d}{-c}{-b}{a}\in\Gamma_0^{\PGL}(2)$, this demonstrates that we land inside the same $\Gamma_0^{\PGL}(2)-$equivalence class of quadratic forms, as desired.

    At the moment, we have associated each circle in the Eisenpint Schmidt arrangement with a well-defined $\Gamma_0^{\PGL}(2)-$equivalence class of quadratic forms. The next step is to demonstrate that equivalent circles give the same equivalence classes.

    To this end, begin with an $M\in\GammaE$, and first consider a translation. This replaces $M$ with $M'=\sm{1}{2x}{0}{1}M$, where $x\in\ZZ[\om]$. Since $(\beta, \delta)$ remains unchanged, $f_{M'}=f_M$.

    Next, consider a rotation by $\pi$ about the origin. This is equivalent to conjugating $M$ by $\sm{1}{0}{0}{-1}$ (conjugation is required to retain determinant 1), and replaces $(\beta, \delta)$ with $(-\beta, \delta)$, i.e. $f_M\rightarrow \sm{1}{0}{0}{-1}\circ f_M$. As $\sm{1}{0}{0}{-1}\in \Gamma_0^{\PGL}(2)$, we are again done.

    Finally, we consider the reflection across the imaginary axis. Replace $M=\sm{\alpha}{\gamma}{\beta}{\delta}$ with $M'=\sm{\overline{\alpha}}{-\overline{\gamma}}{-\overline{\beta}}{\overline{\delta}}$, where $M'\in\GammaE$. By Proposition~\ref{prop:mobiuscurv}, it preserves the curvature and co-curvature, and sends the curvature centre $w$ to $-\overline{w}$, i.e. represents the reflection over the imaginary axis. It is associated to the quadratic form
    \[f_{M'}(x, y)=\Nm(\overline{\delta}x+\overline{\beta}y)=\Nm(\delta x+\beta y)=\sm{1}{0}{0}{-1}\circ f_M(x, y),\]
    once again preserving the equivalence class.
\end{proof}

\begin{remark}
    Note that we are identifying $(s, t, x, z)$ with $-(s, t, x, z)$, so both orientations of a circle are sent to the same quadratic form. If we wanted to separate these, we would also need to retain the information of the sign of $s$. This could be accomplished by allowing both positive and negative definite forms.
\end{remark}

In order to demonstrate that this association is a bijection, we need to go backwards. Let
\[Q(x, y)=\Nm(x+y\om)=x^2+xy+y^2\]
denote the unique primitive positive definite reduced quadratic form of discriminant $-3$.

%f(x, y)=x^2-xy+2y^2 has automorph of [1, -1;1/2, 1/2]. So, in Q, the automorphism group is non-trivial. Bleh.
%In fact for x^2+xy+y^2, even here we have [5/7, -3/7;3/7, 8/7].

\begin{proposition}\label{prop:qfblift}
    Let $f(x, y)$ be a primitive integral quadratic form of discriminant $-3s^2$, for some nonzero $s\in\ZZ$. Then there exists a primitive matrix $N=\genmtx\in\Mat(2, \ZZ)$ of determinant $s$ such that $f(x, y)=N\circ Q(x, y)=Q(ax+by, cx+dy)$. This matrix is uniquely identified up to left multiplication by one an integral automorph of $Q(x, y)$.
\end{proposition}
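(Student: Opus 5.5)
To prove the statement, I would work with the Eisenstein integers and factor $f$ over $\ZZ[\om]$. Write $f=[A,B,C]$ with discriminant $-3s^2$. Assume first $A\neq 0$; the case $A=0$ forces $B^2=-3s^2$, hence $s=0$, which is excluded, so $A\neq 0$. Over $\QQ(\sqrt{-3})$ we have
\[
Af(x,y)=\left(Ax+\tfrac{B+s\sqrt{-3}}{2}y\right)\left(Ax+\tfrac{B-s\sqrt{-3}}{2}y\right),
\]
so $f(x,y)=\Nm(\lambda x+\mu y)/A$ with $\lambda=A$ and $\mu=\frac{B+s\sqrt{-3}}{2}\in\ZZ[\om]$. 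This uses $B\equiv s\pmod 2$, which follows from $B^2\equiv -3s^2\pmod 4$. The goal is to rewrite $f$ as $\Nm(\alpha x+\beta y)$ with $\alpha,\beta\in\ZZ[\om]$. Writing $\alpha=a+c\om$ and $\beta=b+d\om$ then gives $\Nm(\alpha x+\beta y)=Q(ax+by,cx+dy)$, i.e.\ $f=N\circ Q$ with $N=\sm{a}{b}{c}{d}$.

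To produce $\alpha,\beta$, I would use the correspondence between forms and ideals. The $\ZZ$-module $I=A\ZZ+\mu\ZZ$ is a lattice whose norm form, divided by its index-based normalization, is $f$. Since $\ZZ[\om]$ is a PID, it would suffice that $I$ is an $\mathcal{O}$-ideal; in general, however, it is only a proper ideal of the order $\ZZ+s\ZZ[\om]$ of conductor $|s|$.

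The cleaner route is to avoid orders altogether. Note that $f$ is represented by $Q$ over $\QQ$ because $-3s^2$ and $-3$ differ by a square. Concretely, $\alpha=\sqrt{A}$-type factorizations fail, so instead I would argue via the ideal $\mathfrak{a}=A\ZZ[\om]+\mu\ZZ[\om]$. Primitivity of $f$ and $\mu\bar\mu=AC$ should give $\Nm\mathfrak{a}=A$ times a factor controlled by $\gcd$'s. Writing $\mathfrak{a}=(\gamma)$ and then setting $\alpha=A/\gamma$ and $\beta=\mu/\gamma$, both in $\ZZ[\om]$, I would verify that $\Nm(\alpha x+\beta y)=A f/\Nm(\gamma)$ equals $f$. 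This step is the main obstacle: showing $\Nm(\mathfrak{a})=A$ exactly, using $\gcd(A,B,C)=1$ together with $\mu+\bar\mu=B$, $\mu\bar\mu=AC$, $\mu-\bar\mu=s\sqrt{-3}$. I would do it prime by prime: for a rational prime $p$, compare $v_\mathfrak{p}(A)$, $v_\mathfrak{p}(\mu)$ and $v_\mathfrak{p}(\bar\mu)$, using that $\mathfrak{p}$ cannot divide both $\mu$ and $\bar\mu$ to high order unless $p\mid\gcd(A,B,C)$. Inert and ramified primes need the separate check that an inert $p$ dividing $\mu$ divides $B$ and $C$.

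The remaining claims follow from the construction. The determinant is $\det N=(\alpha\bar\beta-\bar\alpha\beta)/\sqrt{-3}$ up to sign, and since the discriminant of $\Nm(\alpha x+\beta y)$ is $(\alpha\bar\beta-\bar\alpha\beta)^2=-3(\det N)^2$, we get $\det N=\pm s$. Replacing $\beta$ by $\bar{}$-compatible choices, or composing with the automorph $\sm{1}{0}{0}{-1}$-type maps of $Q$ (such as conjugation $\om\mapsto\bar\om$, which is $(x,y)\mapsto(x+y,-y)$ of determinant $-1$), fixes the sign to $s$. $N$ is primitive because a common divisor of its entries would divide $f$.

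For uniqueness, if $N\circ Q=N'\circ Q$ with both matrices of determinant $s\neq 0$, then $P=N'N^{-1}\in\GL(2,\QQ)$ satisfies $P\circ Q=Q$. The relevant notion is an automorph with possibly rational entries; the paper's comment about $\sm{5/7}{-3/7}{3/7}{8/7}$ suggests "integral" must be meant carefully. In terms of Eisenstein integers, $\Nm(\alpha x+\beta y)=\Nm(\alpha'x+\beta'y)$ as forms forces $(\alpha',\beta')=u(\alpha,\beta)$ or $u(\bar\alpha,\bar\beta)$ with $|u|=1$ and $u\in\QQ(\om)$. Comparing factorizations of the binary form over $\QQ(\om)$ into linear factors, unique up to scalars, gives this. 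Since the pair $(\alpha,\beta)$ is pinned down by $\mathfrak{a}$ up to units, the scalar $u$ is a unit of $\ZZ[\om]$, hence integral. Multiplication by units and conjugation are exactly the integral automorphs of $Q$, which gives the uniqueness statement.
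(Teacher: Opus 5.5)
Your uniqueness argument has a genuine gap at its decisive step. Unique factorization in $\QQ(\om)[x,y]$ correctly gives $(\alpha',\beta')=u(\alpha,\beta)$ or $u(\bar\alpha,\bar\beta)$ with $u\in\QQ(\om)$ and $u\bar u=1$. The problem is the next sentence, ``the pair $(\alpha,\beta)$ is pinned down by $\mathfrak a$ up to units''. That only compares pairs \emph{produced by your construction}, i.e.\ different generators $\gamma$ of $\mathfrak a$. The second matrix $N'$ is an arbitrary integral matrix with $N'\circ Q=f$, and nothing you say ties its pair to $\mathfrak a$.

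This is exactly where primitivity of $f$ has to be applied to the second representation, and you do not do so. Elements $u$ with $|u|=1$ that are not units give the non-integral rational automorphs of $Q$, and such $u$ really do relate integral representations of non-primitive forms. For example, with $\pi=2+\om$, the primitive matrices $\sm{2}{-1}{1}{3}$ and $\sm{3}{1}{-1}{2}$ (coming from the pairs $(\pi,\pi\om)$ and $(\bar\pi,\bar\pi\om)$) both have determinant $7$ and both send $Q$ to $7Q$. Yet they differ by $\frac17\sm{8}{5}{-5}{3}$.

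The missing observation is that for \emph{every} pair $\alpha',\beta'\in\ZZ[\om]$ with $\Nm(\alpha'x+\beta'y)=f$, the elements $\alpha',\beta'$ are coprime. A common prime factor $\pi$ would make the rational integer $\pi\bar\pi>1$ divide $A=\alpha'\bar\alpha'$, $B=\alpha'\bar\beta'+\bar\alpha'\beta'$ and $C=\beta'\bar\beta'$, contradicting primitivity. Hence $\ZZ[\om]=(\alpha',\beta')=u\,(\alpha,\beta)=u\ZZ[\om]$, and likewise in the conjugate case, so $u$ is a unit. This plays the role of the paper's argument that a prime $p$ dividing the denominator $k$ of $L=N_2N_1^{-1}$ would divide every coefficient of $f$. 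Once you add it, your factorization route is a clean alternative to that computation.

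The step you flag as the main obstacle in the existence part is actually immediate, and your prime-by-prime sketch of it contains a false claim. An inert prime dividing $\mu$ need not divide $C$: take $f=[4,2,1]$ with $s=2$, so $\mu=2\om$ and $C=1$. Instead argue directly. We have $\mathfrak a\bar{\mathfrak a}=(A^2,A\mu,A\bar\mu,\mu\bar\mu)=A\,(A,\mu,\bar\mu,C)$. The last ideal contains $A$, $C$ and $B=\mu+\bar\mu$, hence contains $1$. So $\mathfrak a\bar{\mathfrak a}=(A)$ and $\Nm(\gamma)=A$; this uses $A>0$, i.e.\ $f$ positive definite, which the statement tacitly assumes.

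With that fix, your existence argument is the paper's --- extend the ideal from the order of conductor $|s|$ to $\ZZ[\om]$, then use norm preservation and class number one --- made concrete through an explicit generator $\gamma$.
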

\begin{proof}
    See Chapter 7 of \cite{Coxprimesx2ny2} for the background on quadratic forms and orders in quadratic fields required.

    There exists bijections between primitive positive definite integral quadratic forms of discriminant $-3s^2$ and invertible fractional ideals of $\mathcal{O}_{-3s^2}$, the unique quadratic order of discriminant $-3s^2$ living inside $\ZZ[\om]$. If the quadratic form is $q=[A, B, C]$, it is sent to the quadratic ideal
    \[\mathfrak{a}_q=A\ZZ+\frac{-B+s\sqrt{-3}}{2}\ZZ = A\ZZ+\left(\frac{-B-s}{2}+s\om\right)\ZZ.\]
    For the other direction, let $\mathfrak{a}=\alpha_1\ZZ+\alpha_2\ZZ$ be an invertible fractional ideal of $\mathcal{O}_{-3s^2}$, where the basis is oriented to satisfy $(\alpha_2\overline{\alpha_1}-\alpha_1\overline{\alpha_2})/\sqrt{-3}>0$. This is mapped to
    \[q_{\mathfrak{a}}=\frac{\Nm(\alpha_1x-\alpha_2y)}{\Nm(\mathfrak{a})}.\]

    Now, start with $f(x, y)$ as prescribed by the proposition, which is associated to the invertible fractional $\mathcal{O}_{-3s^2}-$ideal $\mathfrak{a}_f$. Let $\mathfrak{a}'=\mathfrak{a}_f\ZZ[\om]=\alpha_1\ZZ+\alpha_2\ZZ$ be the lift of this fractional ideal to a fractional $\mathcal{O}_{-3}-$ideal of the same norm, where $(\alpha_1, \alpha_2)$ is oriented as above. Since $\mathfrak{a}_f\subseteq\mathfrak{a'}$, it follows that there exists integers $a,b,c,d$ so that an oriented basis of $\mathfrak{a}_f$ is $(a\alpha_1+b\alpha_2, c\alpha_1+d\alpha_2)$. Therefore,
    \begin{align*}
        f(x, y) & = \frac{\Nm((a\alpha_1+b\alpha_2)x-(c\alpha_1+d\alpha_2)y)}{\Nm(\mathfrak{a}_f)}\\
        & = \frac{\Nm((ax-cy)\alpha_1-(-bx+dy)\alpha_2)}{\Nm(\mathfrak{a}')}\\
        & = q_{\mathfrak{a}'}(ax-cy, bx-dy) = \lm{a}{-c}{-b}{d}\circ q_{\mathfrak{a}'}(x, y).
    \end{align*}

    Since $q_{\mathfrak{a}'}(x, y)$ is an integral quadratic form of discriminant $-3$, it is $\SL(2, \ZZ)-$equivalent to $Q(x, y)$, say $N_1\circ Q=q_{\mathfrak{a}'}$. Let $N=N_1\sm{a}{-c}{-b}{d}\in\Mat(2, \ZZ)$, and then $f=N\circ Q$. Since $f$ has discriminant $-3s^2$ and is primitive, it follows that $\det(N)=\pm s$ and it is primitive. The sign being positive follows from both bases being oriented correctly.

    The final part is to demonstrate the uniqueness of this lift, up to an automorph. Let $N_1$ and $N_2$ be two possible matrices $N$, and we have $(N_2N_1^{-1})\circ Q = Q$. Let $N_2N_1^{-1}=L\in\SL(2, \QQ)$, which is thus an automorphism of $Q$, albeit not necessarily with integer entries. Scale this matrix to be primitive and integral, i.e. write
    \[L=\frac{1}{k}\lm{e}{f}{g}{h},\quad e,f,g,h\in\ZZ,\quad k\in\ZZ^+,\quad \gcd(e,f,g,h)=1.\]
    If $k=1$, then $L\in\SL(2, \ZZ)$ is integral, and the claim is proven. Thus, assume $k>1$, and we seek a contradiction. The final contradiction will be derived from the assumption that $f$ is primitive. Otherwise, this claim would be false!
    
    Let $p$ be any prime divisor of $k$. We claim that $e$ is coprime to $p$. Expanding $L\circ Q=Q$ and equating the $x^2$ and $y^2$ coefficients gives the equations
    \[e^2+eg+g^2\quad =\quad f^2+fh+h^2\quad =\quad k^2.\]
    Since $L^{-1}=\frac{1}{k}\sm{h}{-f}{-g}{e}$ also preserves $Q$, we analogously derive
    \[g^2-gh+h^2\quad =\quad e^2-ef+f^2\quad =\quad k^2.\]
    If $p\mid e$, then $p\mid k^2-e^2-eg=g^2$, whence $p\mid g$. Thus, $p\mid k^2-g^2+gh=h^2$, so $p\mid h$. Finally, $p\mid k^2-h^2-fh=f^2$, so $p\mid f$, contradicting $p\mid \gcd(e, f, g, h)=1$. Thus, $p\nmid e$.

    Write $N_1=\sm{a}{b}{c}{d}$, whence
    \[f(x, y)=N_1\circ Q = (a^2+ac+c^2)x^2+(2ab+ad+bc+2cd)xy+(b^2+bd+d^2)y^2.\]
    Observe that 
    \[N_2=LN_1=\frac{1}{k}\lm{ea+fc}{eb+fd}{ga+hc}{gb+hd}\in\Mat(2, \ZZ).\]
    As $e$ is coprime to $p$, we derive $a\equiv -fc/e\pmod{p}$, whence
    \[a^2+ac+c^2\equiv \frac{c^2}{e^2}(f^2-ef+e^2)\equiv 0\pmod{p}.\]
    Similarly, $b\equiv -fd/e$ gives
    \[b^2+bd+d^2\equiv \frac{d^2}{e^2}(f^2-ef+e^2)\equiv 0\pmod{p}.\]
    Finally,
    \[2ab+ad+bc+2cd\equiv \frac{f^2}{e^2}2cd-\frac{f}{e}cd-\frac{f}{e}cd+2cd\equiv \frac{2cd}{e^2}(f^2-ef+e^2)\equiv 0\pmod{p}.\]
    Thus, all coefficients of $f(x, y)$ are multiples of $p$, contradicting its primitivity, and completing the proof.
\end{proof}

Proposition~\ref{prop:qfblift} enables us to construct the reverse map. Let $M=\sm{\alpha}{\gamma}{\beta}{\delta}\in\GammaE$, and write $\beta=e+f\om$ and $\delta=g+h\om$, for $e,f,g,h\in\ZZ$. Then,
\begin{align}\label{eqn:fMtoQ}
    f_M(x, y) & =\Nm(\delta x-\beta y) = \Nm((g+h\om)x-(e+f\om)y)\\
    & =\Nm((gx-ey)+(hx-fy)\om)=\lm{g}{-e}{h}{-f}\circ Q(x, y).
\end{align}
With this motivation, start with $f(x, y)$, a first-odd primitive integral quadratic form of discriminant $-3s^2$, for some $s\in\ZZ$. Proposition~\ref{prop:qfblift} gives a primitive integral matrix $N=\genmtx$ of determinant $s$ for which $f(x, y)=\genmtx\circ Q(x, y)$. Motivated by Equation~\eqref{eqn:fMtoQ}, the natural choice would be to construct $\beta=-b-d\om$ and $\delta=a+c\om$. However, we want to construct $M=\sm{\alpha}{\gamma}{\beta}{\delta}$ to live in $\GammaE$, which requires $a$ to be odd and $c$ to be even. Since $Q$ has non-trivial automorphs of $U=\sm{0}{1}{-1}{-1}$ and $U^2=\sm{-1}{-1}{1}{0}$, we have three possibilities for $N$ to choose from:
\[N = \lm{a}{b}{c}{d},\quad UN = \lm{c}{d}{-a-c}{-b-d},\quad U^2N = \lm{-a-c}{-b-d}{a}{b}.\]
The first-oddness assumption on $f$ ensures that $f(1, 0)\equiv a^2+ac+c^2\equiv 1\pmod{2}$, which is equivalent to $a,c$ not being both even. By examining the first columns of $N, UN, U^2N$, we see that in each case ((odd, odd), (odd, even), (even, odd)), exactly one of these three gives the modulo 2 condition required.

Without loss of generality, assume that it was $N$ (as we can replace $N$ by $UN$ or $U^2N$, as required). Take $\beta=-b-d\om$ and $\delta=a+c\om$, and since (by construction) $f(x, y)=\Nm(\delta x-\beta y)$, the primitivity of $f$ implies that $\beta$ and $\delta$ are coprime in $\ZZ[\om]$. Thus, there exists $\alpha,\gamma\in\ZZ[\om]$ for which $M_f:=\sm{\alpha}{\gamma}{\beta}{\delta}$ has determinant 1. As $\delta\equiv 1\pmod{2}$, we can perform a row replacement of the first row to assume that $\gamma\equiv 0\pmod{2}$. It follows that $\alpha\equiv 1\pmod{2}$, and thus $M_f\in\GammaE$.

\begin{definition}\label{def:firstoddtocircleclass}
    Given $f(x, y)$, a first-odd primitive integral quadratic form of discriminant $-3s^2$, associate it to the equivalence class of circles given by $M_f(\widehat{\RR})$, with $M_f$ as above.
\end{definition}

We must check that this definition is well-founded. By Proposition~\ref{prop:qfblift}, there are exactly three possibilities (due to the automorphs) for $N$, and exactly one of them is valid to satisfy the congruence requirements (as seen above). The only ambiguity comes from the final row replacement for $M$. This corresponds to left multiplication by $\sm{1}{2x}{0}{1}$ for some $x\in\ZZ[\om]$, which is a translation by $2\ZZ[\om]$, and thus does not change the equivalence class of $M_f(\widehat{\RR})$.

\begin{proposition}
    The associations in Proposition~\ref{prop:schmidtcircletoquadratic} and Definition~\ref{def:firstoddtocircleclass} are inverse bijections between the equivalence classes of (unoriented) circles in the Eisenpint Schmidt arrangement, and $\Gamma_0^{\PGL}(2)-$equivalence classes of primitive positive semidefinite first-odd binary quadratic forms.
\end{proposition}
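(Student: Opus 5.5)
The plan is to check directly that the two associations compose to the identity in both directions; since both are already well defined on equivalence classes, this gives the bijection.

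\textbf{Forms to circles to forms.} Start with a first-odd primitive form $f$ of discriminant $-3s^2$. Proposition~\ref{prop:qfblift} gives $N=\genmtx$, which we normalise by the automorphs $U, U^2$ so that $a$ is odd and $c$ is even. Set $\beta=-b-d\om$ and $\delta=a+c\om$, and complete these to $M_f\in\GammaE$. Equation~\eqref{eqn:fMtoQ} gives
\[f_{M_f}(x,y)=\Nm(\delta x-\beta y)=\lm{a}{b}{c}{d}\circ Q(x,y)=f(x,y)\]
on the nose. So Proposition~\ref{prop:schmidtcircletoquadratic} applied to $M_f(\widehat{\RR})$ returns the class of $f$.

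The class of $M_f(\widehat{\RR})$ must also depend only on the $\Gamma_0^{\PGL}(2)$-class of $f$, not on the representative $f$. The generators of $\Gamma_0^{\PGL}(2)$ are $\sm{1}{0}{0}{-1}$, $\sm{1}{1}{0}{1}$ and $\sm{1}{0}{2}{1}$.
\begin{itemize}
\item For $\gamma=\sm{1}{1}{0}{1}$ or $\sm{1}{0}{2}{1}$, we have $\gamma\circ f=N\gamma\circ Q$. Right multiplication of $N$ by $\gamma$ leaves the parity of the first column as (odd, even). It corresponds to replacing $M_f$ by $M_fN'$ with $N'\in\GammaE\cap\PSL(2,\ZZ)$, which is the computation in the proof of Proposition~\ref{prop:schmidtcircletoquadratic} read backwards. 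Since $N'$ stabilises $\widehat{\RR}$, the circle is unchanged.
\item For $\gamma=\sm{1}{0}{0}{-1}$, the determinant is $-1$, so instead we use the reflection and rotation-by-$\pi$ computations from that proof. These show that $[A,-B,C]$ arises from the circle $M'(\widehat{\RR})$ with $M'=\sm{\overline{\alpha}}{-\overline{\gamma}}{-\overline{\beta}}{\overline{\delta}}$, and this circle is equivalent to $M_f(\widehat{\RR})$.
\end{itemize}

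\textbf{Circles to forms to circles.} Start with $M=\sm{\alpha}{\gamma}{\beta}{\delta}\in\GammaE$ and write $\beta=e+f\om$, $\delta=g+h\om$. Equation~\eqref{eqn:fMtoQ} shows that $N=\sm{g}{-e}{h}{-f}$ is a valid lift of $f_M$. Since $\delta\equiv1\pmod 2$, $g$ is odd and $h$ is even, so $N$ is exactly the normalised choice. By the uniqueness in Proposition~\ref{prop:qfblift} up to automorphs, together with the parity argument preceding Definition~\ref{def:firstoddtocircleclass}, the construction recovers the same bottom row $(\beta,\delta)$. The matrix $M_f$ then differs from $M$ only by left multiplication by $\sm{1}{2x}{0}{1}$, which is a translation by $2\ZZ[\om]$. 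Hence $M_f(\widehat{\RR})$ is equivalent to $M(\widehat{\RR})$.

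\textbf{Degenerate and orientation checks.} Two loose ends remain.
\begin{itemize}
\item The semidefinite case $s=0$ needs separate treatment. There is one class of forms, $[1,0,0]$, and one class of circles, $\widehat{\RR}$, by Proposition~\ref{prop:schmidtequivclasses}, and they match directly. This also handles the fact that Proposition~\ref{prop:qfblift} assumes $s\neq0$.
\item Unoriented circles are used throughout, so the sign of $s$ plays no role.
\end{itemize}

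The main obstacle is the uniqueness step in the second direction: showing that the lift $N$ extracted from $f_M$ is forced to be $\sm{g}{-e}{h}{-f}$. The plan there is to invoke the automorph uniqueness of Proposition~\ref{prop:qfblift}, using that the only integral automorphs of $Q$ are $\pm1,\pm U,\pm U^2$, and then use the parity condition to pin down the choice up to sign. A sign $-1$ is harmless because we work in $\PSL$.
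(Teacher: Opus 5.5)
Your route is the paper's. You check both composites: forms $\to$ circles $\to$ forms holds on the nose by Equation~\eqref{eqn:fMtoQ}, and circles $\to$ forms $\to$ circles reduces to recovering the bottom row $(\beta,\delta)$, leaving only a translation by $2\ZZ[\om]$. Two of your additions are correct and worthwhile. First, you check that Definition~\ref{def:firstoddtocircleclass} is constant on $\Gamma_0^{\PGL}(2)$-classes, which injectivity needs and the paper does not write out. Second, you handle $s=0$ separately, where Proposition~\ref{prop:qfblift} does not apply.

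The step that fails as written is your claim that the sign of $s$ plays no role because circles are unoriented. Each unoriented circle occurs in $\GammaE(\widehat{\RR})$ with only one orientation, and that orientation can have negative reduced curvature. For example, $M=\sm{-1+2\om}{2\om}{\om}{1}\in\GammaE$ sends $\widehat{\RR}$ to $-\Dcir_3$ (Equation~\eqref{eqn:4wheelbase}), with $s=-1$. For such $M$, your matrix $\sm{g}{-e}{h}{-f}$ has determinant $s<0$. The lift from Proposition~\ref{prop:qfblift} used in Definition~\ref{def:firstoddtocircleclass} has positive determinant, because its proof fixes the orientation of the bases. So the two lifts differ by an \emph{improper} integral automorph of $Q$, such as $\sm{1}{1}{0}{-1}$ (complex conjugation on $x+y\om$), which is not among $\pm1,\pm U,\pm U^2$. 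The construction therefore returns $\pm(\overline{\beta},\overline{\delta})$, not $(\beta,\delta)$.

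The repair is short. The matrix $\overline{M}$ lies in $\GammaE$, since conjugation acts on $\ZZ[\om]/2$ as Frobenius and fixes $0$ and $1$. It satisfies $f_{\overline{M}}=f_M$, and by Proposition~\ref{prop:mobiuscurv} its reduced curvature is $-s$. Moreover, $\overline{M}(\widehat{\RR})$ is the reflection of $M(\widehat{\RR})$ in the real axis, which is rotation by $\pi$ composed with reflection in the imaginary axis, so it is an equivalent circle. You may therefore assume $s>0$ before invoking uniqueness, and your argument then goes through. The paper's two-line proof, which simply asserts that $\beta$ and $\delta$ are uniquely identified, passes over the same point.
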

\begin{proof}
    Start with $M=\sm{\alpha}{\gamma}{\beta}{\delta}\in\GammaE$, which is associated to $\Nm(\delta x-\beta y)$. When constructing the inverse, the values of $\beta$ and $\delta$ were uniquely identified, and the top row of $M$ was identified up to translation by $2\ZZ[\om]$, which produces equivalent circles. Therefore we must end back in the starting equivalence class of circles.

    The other direction is immediate as well: starting with $f(x, y)$, we write $f(x, y)=\Nm(\delta x-\beta y)$, and the resulting matrix gets associated back to this quadratic form.
\end{proof}

\subsection{Maximal tangency graphs}

We now settle Theorem~\ref{thm:Qrt3packingsareEisenstein}, which claims that the maximal tangency packings in the Eisenpint Schmidt arrangement (up to equivalence) biject with primitive Eisenstein packings. We approach this with a series of lemmas.

\begin{lemma}\label{lem:mincurv}
    The minimal non-zero absolute curvature of a circle in $\widepint$ is $\sqrt{3}$.
\end{lemma}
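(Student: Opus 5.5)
Every circle in $\widepint$ is $M(\widehat{\RR})$ for some $M\in\GammaE\subseteq\PSL(2,\ZZ[\om])$, possibly with reversed orientation. So I would work entirely with reduced coordinates, in two steps.

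\textbf{Lower bound.} By Proposition~\ref{prop:eisensteincircleformulae}, the curvature of such a circle is $u=s\sqrt{3}$ with $s\in\ZZ$. Reversing orientation only negates $s$. Hence any non-zero absolute curvature is $|s|\sqrt{3}$ with $|s|\geq 1$, so it is at least $\sqrt{3}$.

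\textbf{Attainment.} It remains to exhibit a circle in $\widepint$ with $|s|=1$. By Proposition~\ref{prop:GammaEcoset04}, the circles of $\pint$ are exactly the Eisenstein circles in cosets $0$ or $4$. Table~\ref{table:F4circles} shows that coset $4$ consists of the circles with reduced coordinates $(s,t,x,z)\equiv(1,0,0,1)\pmod 2$. So I would look for an integral solution of
\[x^2-xz+z^2-1=3st,\qquad x+z\equiv 2\pmod 3,\]
with $s=1$, $t$ even, $x$ even and $z$ odd. The choice $(s,t,x,z)=(1,0,0,-1)$ works: the equation reads $0+0+1-1=0$, the congruence holds since $-1\equiv 2\pmod 3$, and the parities are $(1,0,0,1)$. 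Proposition~\ref{prop:eisensteincircleconverse} then guarantees this is the reduced coordinate vector of some $M(\widehat{\RR})$ with $M\in\PSL(2,\ZZ[\om])$. Its residue modulo $2$ places it in coset $4$, so it lies in $\pint$.

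As a cross-check, one can instead exhibit an explicit matrix. Take $M=\sm{1}{0}{\om}{1}$, which reduces modulo $2$ to $\sm{1}{0}{\ast}{1}$ and so lies in $\GammaE$. Proposition~\ref{prop:eisensteincircleformulae} with $e=0,f=1,g=1,h=0$ gives $s=eh-fg=-1$, so this circle has absolute curvature $\sqrt{3}$.

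\textbf{Main obstacle.} There is essentially none. The only care needed is making sure the exhibited circle genuinely lies in $\GammaE$'s orbit rather than merely in $\PSL(2,\ZZ[\om])(\widehat{\RR})$. The explicit lower-triangular matrix settles this directly, without relying on the coset bookkeeping.
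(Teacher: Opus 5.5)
Your proof is correct and takes the same route as the paper: Proposition~\ref{prop:eisensteincircleformulae} gives curvature $s\sqrt{3}$ with $s=eh-fg\in\ZZ$, so every non-zero absolute curvature is at least $\sqrt{3}$. The paper simply says this bound ``is realized''; your explicit witnesses (the reduced coordinates $(1,0,0,-1)$, which is the paper's circle $\Dcir_4$, and the matrix $\sm{1}{0}{\om}{1}\in\GammaE$ with $s=-1$) fill in that step correctly.
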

\begin{proof}
    Consider the circle $M(\widehat{\RR})$, where $M=\sm{a+b\om}{c+d\om}{e+f\om}{g+h\om}\in\PSL(2, \ZZ[\om])$. By Proposition~\ref{prop:eisensteincircleformulae}, the curvature is $(eh-fg)\sqrt{3}$. As $e,f,g,h\in\ZZ$, the smallest non-zero absolute value this can be is $\sqrt{3}$, which is realized.
\end{proof}

The previous and following lemma also apply to the full Eisenstein Schmidt arrangement, though this is not necessary for us.

\begin{lemma}\label{lem:notangenttriangles}
    There do not exist three circles in $\widepint$ which are pairwise externally tangent at three distinct points.
\end{lemma}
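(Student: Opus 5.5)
Suppose for contradiction that $\cir_1,\cir_2,\cir_3\in\widepint$ are pairwise externally tangent at three distinct points. The plan is to normalize with a M\"obius map from $\PSL(2,\ZZ[\om])$, which is harmless: it maps Eisenstein circles to Eisenstein circles, preserves tangency, and sends the Schmidt arrangement to itself. Lemma~\ref{lem:mincurv} and the arithmetic of Proposition~\ref{prop:eisensteincircleconverse} then force a contradiction.

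\textbf{Step 1: normalize two circles to parallel lines.} Let $P$ be the tangency point of $\cir_1$ and $\cir_2$. By \cite[Proposition 4.1]{StangeVisualizingArith}, $P\in\widehat{\QQ(\om)}$. Since $\PSL(2,\ZZ[\om])$ is transitive on $\widehat{\QQ(\om)}$ (as $\ZZ[\om]$ is a PID), we may send $P$ to $\infty$. Then $\cir_1$ and $\cir_2$ become parallel lines, and $\cir_3$ becomes a circle tangent to both and lying between them. Applying a further rotation by a unit and a translation, we take $\cir_1=\widehat{\RR}$. By Proposition~\ref{prop:eisensteincircleformulae}, a line through $\infty$ parallel to $\widehat{\RR}$ has the form $\widehat{\RR}+b$ with $b\in\ZZ[\om]$, so it lies at distance $\frac{\sqrt{3}}{2}k$ from $\widehat{\RR}$ for some integer $k\geq 1$. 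The image circles are still Eisenstein circles, though not necessarily in $\widepint$; I will work in the full arrangement, as the remark after Lemma~\ref{lem:mincurv} suggests the statement holds there too.

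\textbf{Step 2: constrain the third circle.} $\cir_3$ is tangent to both lines, so its diameter is $\frac{\sqrt{3}}{2}k$ and its curvature is $\frac{4}{\sqrt{3}k}=\frac{4\sqrt{3}}{3k}$. By Proposition~\ref{prop:eisensteincircleformulae}, every curvature has the form $s\sqrt{3}$ with $s\in\ZZ$, so $4/(3k)$ must be an integer. This is impossible for every $k\geq 1$, which gives the contradiction. If the lemma is meant only for $\widepint$, Lemma~\ref{lem:mincurv} gives the same conclusion directly in the normalized picture.

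\textbf{Main obstacle.} The delicate point is the normalization. A general element of $\PSL(2,\ZZ[\om])$ does not preserve $\widepint$, so I must be sure that integrality of reduced curvature (Proposition~\ref{prop:eisensteincircleformulae}) applies to all Eisenstein circles, not just those in $\widepint$. It does, because the images are still of the form $M(\widehat{\RR})$ with $M\in\PSL(2,\ZZ[\om])$. I also need to confirm that the parallel line really lies at distance a multiple of $\frac{\sqrt{3}}{2}$. This follows from the translation formula of Proposition~\ref{prop:stxzbasictransformations}: the line is $\widehat{\RR}+b$, and the imaginary parts of elements of $\ZZ[\om]$ lie in $\frac{\sqrt{3}}{2}\ZZ$. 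Distinctness of the three tangency points is what guarantees that $\cir_3$ is a genuine finite circle between the two lines rather than passing through $\infty$.
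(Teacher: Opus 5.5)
Your proof is correct. It uses the same normalization as the paper: send $\cir_1$ to $\widehat{\RR}$ and the tangency point with $\cir_2$ to $\infty$, so that $\cir_3$ is squeezed between two parallel lines and its diameter equals their distance. The finishing step differs.

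\textbf{The paper's finish.} It asserts that the second line is $\Im z=\sqrt{3}n$, the Eisenpint spacing. It then applies the size bound of Lemma~\ref{lem:mincurv}, since $\frac{2}{\sqrt{3}n}<\sqrt{3}$.

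\textbf{Your finish.} You work in the full Schmidt arrangement, where lines parallel to $\widehat{\RR}$ are spaced only by $\frac{\sqrt{3}}{2}$. You then use exact integrality of the reduced curvature from Proposition~\ref{prop:eisensteincircleformulae}, namely $\frac{4}{3k}\notin\ZZ$.

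\textbf{Why your route is more robust.} The normalizing maps need not preserve the Eisenpint arrangement. For example, $z\mapsto -1/z$ preserves $\widehat{\RR}$ but sends $\Dcir_4$, which is tangent to $\widehat{\RR}$ at $0$, to the line $\Im z=\frac{\sqrt{3}}{2}$. At that distance the size bound gives nothing, because $\frac{4}{\sqrt{3}}>\sqrt{3}$. Integrality disposes of every $k$ at once, and it also proves the full-arrangement version mentioned after Lemma~\ref{lem:mincurv}. The paper's route is shorter, but as written it tacitly needs the normalized configuration to stay inside the Eisenpint arrangement, which an arbitrary element of $\PSL(2,\ZZ)$ does not guarantee.

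For the same reason, delete your closing aside that Lemma~\ref{lem:mincurv} settles the matter ``directly in the normalized picture''. It is not justified there, and your integrality argument does not need it.
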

\begin{proof}
    Assume otherwise, where $\cir_1,\cir_2,\cir_3$ are pairwise externally tangent. By applying a $\PSL(2, \ZZ[\om])-$ transformation, we can assume that $\cir_1$ is the real axis, with interior being the lower half plane. By further applying a $\PSL(2, \ZZ)-$transformation, we can assume that the tangency point with $\cir_2$ is $\infty$, hence $\cir_2$ is a horizontal line in the upper half place. Thus, $\cir_2$ has an equation of the form $y=\sqrt{3}n$, for some $n\in\ZZ^+$. 
    
    As the three intersection points are distinct, $\cir_3$ must be a circle with positive curvature. Thus, the radius of $\cir_3$ must be exactly $\sqrt{3}n/2$, hence curvature $\frac{2}{\sqrt{3}n}\leq \frac{2}{\sqrt{3}}<\sqrt{3}$. This contradicts Lemma~\ref{lem:mincurv}.
\end{proof}

Lemma~\ref{lem:notangenttriangles} is powerful in demonstrating that certain tangencies are immediate. First, another useful lemma.

\begin{lemma}\label{lem:4wheelseparatingcircle}
    Let $\cir_1, \cir_2, \cir_3, \cir_4$ be oriented circles with disjoint interiors, where $\cir_i$ is externally tangent to $\cir_{i+1}$ at point $P_i$ for $1\leq i\leq 4$. Let $\cir'$ be a circle which is tangent to $\cir_1$ at $P_1$, whose interior contains $\cir_2$ and exterior contains $\cir_1$, and who does not intersect $\cir_3$ or $\cir_4$ in more than one point. Then, $\cir'$ must be tangent to $\cir_3$ and $\cir_4$ at $P_3$.
\end{lemma}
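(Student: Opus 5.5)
The plan is to normalize with a M\"obius transformation sending the tangency point $P_1$ to $\infty$, and then argue with elementary planar geometry. Tangency, intersection counts, containment of circles in interiors and exteriors, and disjointness of interiors are all preserved by M\"obius maps. So we may apply such a map, possibly composed with a Euclidean similarity, to assume $P_1=\infty$. Then $\cir_1$ and $\cir_2$ are parallel lines; we arrange $\cir_1=\{y=0\}$ with interior the lower half plane. Since $\cir_2$ is externally tangent to $\cir_1$ at $\infty$ and has disjoint interior, $\cir_2=\{y=h\}$ for some $h>0$, with interior $\{y>h\}$.

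Next we locate $\cir'$ and the remaining circles. Since $\cir'$ is tangent to $\cir_1$ at $\infty$, it is a horizontal line $\{y=k\}$. Its interior contains $\cir_2$ and its exterior contains $\cir_1$, which forces the interior to be the upper side and $0\le k\le h$. The circles $\cir_3$ and $\cir_4$ do not pass through $\infty$: otherwise they would meet $\cir_1$ or $\cir_2$ there, contradicting disjointness of interiors together with the prescribed tangency pattern. Hence they are genuine Euclidean circles, and disjointness of interiors places them in the closed strip $0\le y\le h$. Moreover $\cir_3$ touches the top line $y=h$ (at $P_2$), and $\cir_4$ touches the bottom line $y=0$ (at $P_4$).

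Now we use the hypothesis that $\cir'$ meets each of $\cir_3$ and $\cir_4$ in at most one point. A horizontal line meeting a circle in at most one point lies weakly above or weakly below it. The circle $\cir_3$ contains the point $P_2$ at height $h\ge k$, so unless $k=h$ it lies in $\{y\ge k\}$. If $k=h$, the strip containment already gives $\cir_3\subseteq\{y\le k\}$, and we treat this case separately below. Symmetrically, $\cir_4$ contains $P_4$ at height $0\le k$, so it lies in $\{y\le k\}$. The tangency point $P_3$ of $\cir_3$ and $\cir_4$ lies on both circles, so its height is simultaneously $\ge k$ and $\le k$; hence $P_3\in\cir'$. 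A line through a point of a circle that meets the circle only once is tangent to it there. Therefore $\cir'$ is tangent to both $\cir_3$ and $\cir_4$ at $P_3$, and undoing the normalization gives the lemma.

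I expect the main obstacle to be the bookkeeping in the degenerate cases rather than any deep step. These are $k=h$ (so $\cir'$ coincides with $\cir_2$ as a set) and $k=0$, and in each we must confirm that the ``weakly above/below'' dichotomy still assigns $\cir_3$ and $\cir_4$ to opposite sides. For $k=h$, $\cir_3$ lies in the strip and meets $y=h$, so it lies below $\cir'$ and is tangent to it at $P_2$. This would force $P_3$ to be the single point at height $h$ on $\cir_3$, i.e.\ $P_3=P_2$, which contradicts the disjointness of interiors and the distinctness of tangency points. The case $k=0$ is handled symmetrically using $\cir_4$ and $P_4$. I would dispatch these two cases first, so that the main argument may assume $0<k<h$.
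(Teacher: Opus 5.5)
Your argument for $0<k<h$ is correct, and it is essentially the paper's proof in normalized form. The paper sweeps the pencil of circles tangent to $\cir_1$ at $P_1$ from $\cir_2$ down to $\cir_1$, observing that it first cuts $\cir_3$ and then cuts $\cir_4$. Sending $P_1$ to $\infty$ turns that pencil into your horizontal lines $y=k$. Your height comparison at $P_3$ then makes explicit the step the paper leaves implicit: the sweep stops cutting $\cir_3$ exactly when it starts cutting $\cir_4$, namely when it passes through $P_3$.

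Your treatment of the endpoint cases, however, does not work as written.
\begin{itemize}
\item For $k=h$, both $\cir_3$ and $\cir_4$ lie in $\{y\le k\}$. So the dichotomy does not put them on opposite sides, and nothing forces $P_3\in\cir'$.
\item There is also no contradiction with the hypotheses on $\cir_1,\dots,\cir_4$ to be found. Take $\cir'=\cir_2$: it meets $\cir_3$ only at $P_2$ and meets $\cir_4$ in at most one point, so it satisfies the other hypotheses.
\item What your computation actually shows is that the \emph{conclusion} would force $P_3=P_2$. So $k=h$ is a genuine exception to the literal statement, not an impossible case. The same holds for $k=0$, where $\cir'$ is $\cir_1$ with reversed orientation.
\end{itemize}
These cases must therefore be excluded by the hypotheses rather than derived away. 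Read ``interior contains $\cir_2$'' and ``exterior contains $\cir_1$'' as saying that $\cir_2\setminus\{P_1\}$ lies in the open interior of $\cir'$ and $\cir_1\setminus\{P_1\}$ lies in its open exterior. This reading gives the orientation of $\cir'$ and $0<k<h$ directly. It is also how the lemma is used in Lemma~\ref{lem:4wheelisimmediatetangency}, where $\cir'$ strictly encloses $\cir_2$. The paper's sweep likewise considers only circles strictly between $\cir_2$ and $\cir_1$.
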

\begin{proof}
    This setup is depicted in Figure~\ref{fig:onewheel}.

    \begin{figure}[htb]
        \includegraphics[scale=0.8]{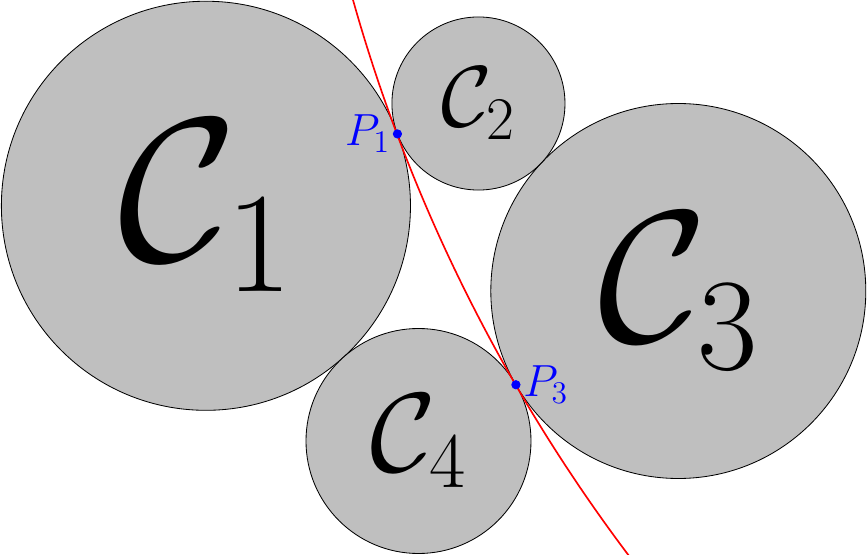}
        \caption{Four consecutively tangent circles, with the unique possible $\cir'$ from Lemma~\ref{lem:4wheelseparatingcircle} in red.}\label{fig:onewheel}
    \end{figure}

    Consider $\cir'=\cir_2$, and continuously decrease the curvature, while keeping the circle tangent to $\cir_1$ (and $\cir_2$) at $P_1$. Immediately, it will intersect $\cir_3$ in two places, until eventually becoming tangent to $\cir_3$. After this it must intersect $\cir_4$ in two places, until eventually becoming $\cir_1$.
    
    In particular, the only possibility for $\cir'$ to be at most tangential to all four circles is if it is tangent to $\cir_3$ and $\cir_4$ simultaneously, necessarily at $P_3$.
\end{proof}

We can now demonstrate that 4-wheels must all be a part of the same maximal tangency packing. 

\begin{lemma}\label{lem:4wheelisimmediatetangency}
    Let $(\cir_1, \cir_2, \cir_3, \cir_4)$ be a 4-wheel with each circle $\cir_i$ being an Eisenpint circle. Then, the maximal immediate tangency packing inside $\widepint$ corresponding to $\cir_1$ contains the entire 4-wheel.
\end{lemma}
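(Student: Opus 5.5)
The plan is to show that each consecutive tangency in the wheel is an \emph{immediate} tangency in $\widepint$. Once this holds, greedy growth from $\cir_1$ adds $\cir_2$ and $\cir_4$, then $\cir_3$ from $\cir_2$, so the packing contains the whole wheel. Concretely, fix $i$ and consider the tangency point $P_i$ of $\cir_i$ and $\cir_{i+1}$. Suppose some circle $\cir'\in\widepint$, externally tangent to $\cir_i$ at $P_i$, is strictly larger than $\cir_{i+1}$ (smaller signed curvature). Then $\cir'$ is tangent to $\cir_{i+1}$ at $P_i$ and contains it in its interior. We want to derive a contradiction from this.

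By the symmetry of the 4-wheel (cyclic relabelling, and the $D_8$ symmetry of Remark~\ref{rem:quadruplesymmetries}), it suffices to treat $i=1$. The hypotheses of Lemma~\ref{lem:4wheelseparatingcircle} then hold for $\cir'$ and the wheel. The circles have disjoint interiors, and consecutive circles are externally tangent at $P_1,\dots,P_4$. The circle $\cir'$ is tangent to $\cir_1$ at $P_1$, has $\cir_2$ in its interior and $\cir_1$ in its exterior. Finally, $\cir'$ meets $\cir_3,\cir_4$ in at most one point, because Eisenpint circles only intersect tangentially (Theorem~\ref{thm:pintschmditiscanonical}) or coincide. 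Coincidence is ruled out: $\cir'=\cir_4$ is impossible since $\cir_4$ is not tangent to $\cir_1$ at $P_1$, and $\cir'=\cir_3$ is impossible since $\langle\cir_1,\cir_3\rangle=-2$. Lemma~\ref{lem:4wheelseparatingcircle} then forces $\cir'$ to be tangent to both $\cir_3$ and $\cir_4$ at the single point $P_3$.

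Next I would derive the contradiction from Lemma~\ref{lem:notangenttriangles}. The circles $\cir'$, $\cir_1$, $\cir_4$ are pairwise tangent: $\cir'$ meets $\cir_1$ at $P_1$, $\cir_1$ meets $\cir_4$ at $P_4$, and $\cir_4$ meets $\cir'$ at $P_3$. These three points are distinct in a 4-wheel, since opposite circles are disjoint and $P_1,P_3,P_4$ lie on different pairs. To apply the lemma, the three circles must be externally tangent with suitable orientations. So I would orient $\cir'$ with interior containing $\cir_2$ and check that $\cir_1$ and $\cir_4$ lie outside it. This follows from the continuity picture in the proof of Lemma~\ref{lem:4wheelseparatingcircle}: $\cir'$ sits between $\cir_2$ and $\cir_1$ in the pencil at $P_1$ and stops at the tangency with $\cir_3,\cir_4$. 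Hence the orientations give inversive distance $-1$ for each pair, contradicting Lemma~\ref{lem:notangenttriangles}.

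The main obstacle is making the immediate-tangency argument airtight. One must check that the only competitor to $\cir_2$ at $P_1$ is a circle whose interior contains $\cir_2$, i.e.\ that the pencil of circles tangent at $P_1$ is totally ordered by containment, so that ``largest'' makes sense. One must also handle degenerate cases where some circle of the wheel is a line or has negative curvature. I would address the latter by first applying an element of $\GammaE$ or a symmetry so that $P_1\neq\infty$, and then reading orientations through the inversive-distance signs rather than Euclidean pictures.
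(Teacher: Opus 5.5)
Your proposal is correct and follows the paper's proof. You assume a larger Eisenpint circle $\cir'$ tangent to $\cir_1$ at $P_1$; Lemma~\ref{lem:4wheelseparatingcircle} then forces $\cir'$ to touch $\cir_3$ and $\cir_4$ at $P_3$, so $\cir_1,\cir',\cir_4$ are pairwise tangent at the distinct points $P_1,P_4,P_3$, contradicting Lemma~\ref{lem:notangenttriangles}, exactly as in the paper, which leaves implicit your extra checks (excluding $\cir'\in\{\cir_3,\cir_4\}$, distinctness of the points, orientations, and reducing the other tangencies to $i=1$ by relabelling the wheel).
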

\begin{proof}
    It suffices to show that $\cir_2$ is in the maximal immediate tangency packing. Assume otherwise: thus, there exists an Eisenpint circle $\cir'$, which is tangent to $\cir_1$ at $P_1$ (the tangency point of $\cir_1$ and $\cir_2$), which contains $\cir_2$ and is in the exterior of $\cir_1$. As $\cir'$ and all the $\cir_i$'s are in the Eisenpint arrangement, they have at most tangential intersections, and therefore satisfy the conditions of Lemma~\ref{lem:4wheelseparatingcircle}. In particular, $\cir_1, \cir', \cir_4$ is a triple of Eisenpint circles which are mutually tangent at distinct places, contradicting Lemma~\ref{lem:notangenttriangles}.
\end{proof}

Not only do we have 4-wheels, but also all of their swaps.

\begin{lemma}\label{lem:4wheelandallswapsinsideEnEisenstein}
    Let $(\cir_1, \cir_2, \cir_3, \cir_4)$ be a 4-wheel inside of $\widepint$. For $1\leq i\leq 4$, let $\cir_i'$ be the circle obtained by swapping $\cir_i$, as in Definition~\ref{def:circleswap}. Then, each $\cir_i'\in\widepint$, and they are all a part of the maximal immediate tangency packing corresponding to $\cir_1$.
\end{lemma}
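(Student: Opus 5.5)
Two things have to be shown: that each swapped circle $\cir_i'$ lies in $\widepint$, and that it then belongs to the same maximal immediate tangency packing as $\cir_1$.

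For membership I would realise the swap as a Möbius map in the Eisenpint group. By Proposition~\ref{prop:4wheelmobius} and the symmetry of the construction, it suffices to treat one wheel. By Lemma~\ref{lem:C4swap} (and its conjugates), $\cir_i'$ is an integral linear combination of the wheel, for instance $\cir_4'=2\cir_1+2\cir_3-\cir_4$. I would first normalise the wheel by an element of $\GammaE$ (composed with the symmetries of $\widepint$ from Propositions~\ref{prop:enschmidttranslate}--\ref{prop:enschmidtreflect}) so that $\cir_1=\widehat{\RR}$ with the lower half plane as interior. Next I would use the stabiliser of $\widehat{\RR}$ in $\GammaE$ to move the tangency point $P_1$ of $\cir_1$ and $\cir_2$ to $\infty$. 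This stabiliser contains $\PSL(2,\ZZ)$ elements with $a,d$ odd and $b$ even; since both $0$ and $\infty$ are cusps, I must check which cusp class $P_1$ lies in. Then $\cir_2$ is a horizontal line $y=\sqrt{3}n$, and by Lemma~\ref{lem:mincurv} together with the coset constraint, $n$ is pinned down. Finally $\cir_3,\cir_4$ are circles tangent to both lines with curvature-centres in the lattice given by Proposition~\ref{prop:eisensteincircleconverse}.

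The step I expect to be the main obstacle is carrying out this normalisation cleanly, since one must track the coset (0 or 4) of each circle. An alternative that avoids geometry is to argue in reduced coordinates directly. The map $\cir\mapsto 2\cir_1+2\cir_3-\cir$ sends integral vectors satisfying Equation~\eqref{eqn:sheet} to integral vectors; the $\sqrt{3}$ scaling is linear, so integrality is preserved. Modulo $2$, $\cir_4'\equiv\cir_4$, so the coset is unchanged by Table~\ref{table:F4circles}. It then remains to check the orientation condition $x+z\equiv 2\pmod 3$ (or its negative), which I would verify by reducing the linear relation modulo $3$: it reads $\cir_4'\equiv -(\cir_1+\cir_3+\cir_4)$. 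I would then use the fact that $\pm$ each circle has $x+z\not\equiv 0$. I'd attempt this coordinate argument first, and fall back on normalisation if the mod $3$ bookkeeping fails. Either way, Proposition~\ref{prop:eisensteincircleconverse} plus the coset being $0$ or $4$ (Proposition~\ref{prop:GammaEcoset04}) gives $\cir_i'\in\widepint$.

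For the packing claim, $(\cir_1,\dots,\cir_i',\dots,\cir_4)$ is again a 4-wheel inside $\widepint$. Applying Lemma~\ref{lem:4wheelisimmediatetangency}, with a relabelling that starts the wheel at any member already known to be in the packing of $\cir_1$, shows that the whole swapped wheel, and hence $\cir_i'$, lies in that maximal immediate tangency packing.
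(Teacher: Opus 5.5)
Your coordinate route is exactly the paper's proof: write $\cir_i'$ in reduced coordinates as the integral combination from Proposition~\ref{prop:circleswapmatrices}, note that $\cir_i'\equiv\cir_i\pmod 2$ so the coset (0 or 4) is unchanged, invoke Proposition~\ref{prop:eisensteincircleconverse}, and get the packing claim from Lemma~\ref{lem:4wheelisimmediatetangency} applied to the swapped wheel. The mod $3$ orientation check is superfluous, because $\widepint$ contains both orientations and $x^2-xz+z^2-1=3st$ already forces $(x+z)^2\equiv 1\pmod 3$; so you never need the normalisation fallback, whose appeal to Proposition~\ref{prop:4wheelmobius} would not respect $\widepint$ anyway.
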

\begin{proof}
    Let $\cir_i$ correspond to reduced coordinates $(s_i, t_i, x_i, z_i)\in\ZZ^4$. As this is a linear change of basis from the general $(u, v, p, q)$ coordinates representing oriented circles, we see that Proposition~\ref{prop:circleswapmatrices} still applies. In particular,
    \[\cir_1'=-\cir_1+2(\cir_2+\cir_4),\quad \cir_2'=-\cir_2+2(\cir_1+\cir_3),\quad \cir_3'=-\cir_3+2(\cir_2+\cir_4),\quad \cir_4'=-\cir_4+2(\cir_1+\cir_3).\]
    Thus, we see that $\cir_i'\in\ZZ^4$ and $\cir_i'\equiv \cir_i\pmod{2}$ for all $i$. By Proposition~\ref{prop:eisensteincircleconverse}, $\cir_i'$ is in the full Schmidt arrangement. Since $\cir_i'\equiv \cir_i\pmod{2}$ and $\cir_i$ is an Eisenpint circle, so $\cir_i'$ is as well. The final claim follows from Lemma~\ref{lem:4wheelisimmediatetangency}, as each $\cir_i'$ is in a 4-wheel containing three of the $\cir_j$'s.
\end{proof}

Note that the Eisenpint Schmidt arrangement contains the 4-wheel with reduced coordinates
\[\Dcir_1=(0, 0, 0, 1),\quad \Dcir_2 = (0, 2, 0, -1), \quad \Dcir_3 = (1, 2, 2, -1),\quad \Dcir_4=(1, 0, 0, -1),\]
giving circles with respective equations
\[\Dcir_1: y=0,\quad \Dcir_2:y=\sqrt{3},\quad \Dcir_3: (x-1)^2+(y-2/\sqrt{3})^2=1/3,\quad \Dcir_4: x^2+(y-1/\sqrt{3})^2=1/3,\]
and correspond to
\begin{equation}\label{eqn:4wheelbase}
    \Dcir_1 = -\lm{1}{0}{0}{1}(\widehat{\RR}),\quad \Dcir_2 = \lm{1}{2\om}{0}{1}(\widehat{\RR}),\quad \Dcir_3 = -\lm{-1+2\om}{2\om}{\om}{1}(\widehat{\RR}),\quad \Dcir_4 = \lm{1}{0}{-\om}{1}(\widehat{\RR}).
\end{equation}
The circle $\Dcir_1$ is immediately tangent to $\Dcir_2$ at $\infty$, and to $\Dcir_4$ at $0$. These four base circles form the start of the Eisenstein strip packing, depicted in Figure~\ref{fig:strip} and \ref{fig:stripbase}. They are useful in the following claim, as well as in Section~\ref{sec:strongapproximation}.

\begin{lemma}\label{lem:everytangencyis4wheel}
    Let $\cir,\cir'\in\widepint$ be immediately tangent. Then, there exists a 4-wheel in $\widepint$ containing $\cir$ and $\cir'$.
\end{lemma}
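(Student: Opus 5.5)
Our plan is to reduce to the base 4-wheel $(\Dcir_1,\Dcir_2,\Dcir_3,\Dcir_4)$ of Equation~\eqref{eqn:4wheelbase} by a symmetry of the arrangement. First, $\GammaE$ acts transitively on unoriented Eisenpint circles by definition. It also preserves $\widepint$, orientations and inversive distances, so it maps 4-wheels in $\widepint$ to 4-wheels in $\widepint$ and immediate tangencies to immediate tangencies. We may therefore assume $\cir=\Dcir_1$, the real line oriented with interior the lower half plane (after possibly reversing both orientations, which by symmetry of the setup changes nothing). Since $\cir'$ is externally tangent to $\cir$, it lies in the closed upper half plane. The stabilizer of $\widehat{\RR}$ in $\GammaE$ is the group $\{N\in\PSL(2,\ZZ): N\equiv\sm{1}{0}{\ast}{1}\pmod 2\}$ computed in the proof of Proposition~\ref{prop:schmidtcircletoquadratic}, namely $\Gamma^0(2)$ up to transposition. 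On $\widehat{\QQ}$ it has orbits represented by $\infty$, $0$, and $1$ (mod 2: points $\infty$ as $(1:0)$, $0$ and $1$ giving $(0:1)$, $(1:1)$). Tangency points lie in $\widehat{\QQ}$ by \cite[Proposition 4.1]{StangeVisualizingArith}. Hence we may move the tangency point to one of $\infty,0,1$ while fixing $\Dcir_1$.

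Next we pin down $\cir'$. If the tangency point is $\infty$, then $\cir'$ is a horizontal line $y=\sqrt{3}n$ in the arrangement. Immediacy means no Eisenpint line lies strictly between, and $\Dcir_2$ ($y=\sqrt3$) is such a line, so $n=1$ and $\cir'=\Dcir_2$. Similarly, if the tangency point is $0$, the immediately tangent circle is the largest Eisenpint circle tangent to $\widehat{\RR}$ at $0$ from above. By Lemma~\ref{lem:mincurv} its curvature is at least $\sqrt3$, and $\Dcir_4$ attains this, so $\cir'=\Dcir_4$. We must also check that this circle is genuinely immediate, i.e.\ no larger tangent circle exists. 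This follows from the curvature bound together with Lemma~\ref{lem:4wheelisimmediatetangency}, which already shows $\Dcir_4$ and $\Dcir_2$ lie in the immediate packing of $\Dcir_1$. Either way $\{\cir,\cir'\}$ lies in the 4-wheel $(\Dcir_1,\ldots,\Dcir_4)$. Pulling back by the applied element of $\GammaE$ gives the required 4-wheel.

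The main obstacle is the third orbit, tangency point $1$, which may not occur at all. We must either show via the mod 2 coset data of Table~\ref{table:F4circles} that no circle of coset 0 or 4 is tangent to $\widehat{\RR}$ at $1$ from the correct side, or else exhibit a 4-wheel there. For the latter, we would translate $\Dcir_4$ by $1$, which is allowed if it remains in cosets 0/4, and check reduced coordinates mod 2 via Proposition~\ref{prop:stxzbasictransformations}. If the translate is not Eisenpint, then combining the parity computation with the reflection symmetry of Proposition~\ref{prop:enschmidtreflect} should rule out this orbit, or map it to the $0$ case. A further subtlety is that "immediately tangent" means largest: in the $\infty$ case we must exclude Eisenpint lines $y=\sqrt3 n$ with $0<n<1$, which Lemma~\ref{lem:mincurv}'s integrality argument (applied to co-curvature after inversion) provides.
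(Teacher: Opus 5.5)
Your overall strategy matches the paper's proof. You move $\cir$ to $\Dcir_1$ by $\GammaE$, move the tangency point to a cusp using the stabilizer of $\widehat{\RR}$, and identify $\cir'$ as $\Dcir_2$ or $\Dcir_4$ by integrality and minimal curvature. Your treatment of the points $\infty$ and $0$ is fine.

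The genuine gap is the orbit count. The stabilizer is exactly $\Gamma^0(2)=\{\sm{a}{b}{c}{d}\in\SL(2,\ZZ):2\mid b\}$. Its reduction modulo $2$ is $\{I,\sm{1}{0}{1}{1}\}$, not the identity, and $\sm{1}{0}{1}{1}$ swaps $(1:0)$ and $(1:1)$. So this group has two cusps, not three; three is the count for $\Gamma(2)$. The orbits on $\widehat{\QQ}$ are:
\begin{itemize}
\item the $p/q$ in lowest terms with $p$ even, which is the orbit of $0$;
\item the $p/q$ with $p$ odd, which is the orbit of $\infty$ and contains $1$.
\end{itemize}
Concretely, $S=\sm{1}{0}{1}{1}$ sends $\infty\mapsto 1$. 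In general:
\begin{itemize}
\item for $p$ even, some $\sm{q}{-p}{\ast}{\ast}$ in the stabilizer sends $p/q\mapsto 0$;
\item for $p$ odd, some $\sm{\ast}{\ast}{q}{-p}$ in the stabilizer sends $p/q\mapsto\infty$. Add multiples of the bottom row to the top row to make the top-right entry even; the determinant then forces the top-left entry to be odd.
\end{itemize}

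Your ``main obstacle'' is therefore just the $\infty$ case, and the remedies you sketch for it would not work.
\begin{itemize}
\item The case cannot be ruled out. $S(\Dcir_2)$ is an Eisenpint circle immediately tangent to $\widehat{\RR}$ at $1$, with reduced curvature $2$.
\item Translation by $1$ is not a symmetry of the Eisenpint arrangement. By Proposition~\ref{prop:stxzbasictransformations} it sends $\Dcir_4=(1,0,0,-1)$ to $(1,1,2,0)$, which lies in coset $5$.
\item The symmetries of Propositions~\ref{prop:enschmidttranslate}--\ref{prop:enschmidtreflect} that preserve $\widehat{\RR}$ act on $\widehat{\QQ}$ by $p/q\mapsto(\pm p+2kq)/q$. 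These preserve the parity of $p$, so they cannot carry $1$ to $0$.
\end{itemize}

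As written, the proposal leaves this case open. Replacing the three-orbit claim by the two-orbit fact closes it, and your argument then coincides with the paper's.
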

\begin{proof}
    Write $\cir=\pm M_1(\widehat{\RR})$ for $M_1\in \GammaE$. Applying $M_1^{-1}$ and possibly $\pm$, we can assume that $\cir=\widehat{\RR}$. The intersection point with $\cir'$ is thus in $\widehat{\QQ}$. We claim that we can further move this intersection point to $0$ or $\infty$.

    If it is not $\infty$, it is $p/q$ for $p,q\in\ZZ$ coprime. If $p$ is even, then $q$ is odd, and there exists a matrix $M_2=\sm{q}{-p}{\ast}{\ast}\in\Gamma_1^T(2)\subseteq\GammaE$, which preserves $\widehat{\RR}$ and sends the intersection point to $0$. If $p$ is odd, then there exists a matrix $M_2=\sm{\ast}{\ast}{q}{-p}\in\Gamma_1^T(2)\subseteq\GammaE$, which preserves $\widehat{\RR}$ and sends the intersection point to $\infty$.

    Now, the image of $\cir$ is $\widehat{\RR}=\Dcir_1$, and the image of $\cir'$ is immediately tangent to $\widehat{\RR}$ at either $0$ or $\infty$. In the first case, the image must be $\Dcir_4$, and in the second, it must be $\Dcir_2$. In any case, the image is part of the 4-wheel $(\Dcir_1, \Dcir_2, \Dcir_3, \Dcir_4)$, so working backwards, we see that our original tangency $\cir-\cir'$ is a part of a 4-wheel.
\end{proof}

We have all the geometric setup required to prove we get Eisenstein circle packings.

\begin{proposition}
    Every Eisenpint tangency packing is an Eisenstein circle packing.
\end{proposition}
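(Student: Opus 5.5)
Let $\mathcal{P}$ be an Eisenpint tangency packing, and pick any circle $\cir\in\mathcal{P}$. The plan is to exhibit a 4-wheel inside $\mathcal{P}$ and then show that $\mathcal{P}$ coincides with the Eisenstein circle packing generated from that 4-wheel by swaps.

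\textbf{Step 1: find a starting 4-wheel.} A single circle of $\widepint$ has at least one immediately tangent neighbour in $\widepint$. This holds for $\Dcir_1$ (it is immediately tangent to $\Dcir_2$ at $\infty$), and transporting by $\GammaE$ and orientation reversal gives it for every circle. So take $\cir'\in\mathcal{P}$ immediately tangent to $\cir$. By Lemma~\ref{lem:everytangencyis4wheel} there is a 4-wheel $W$ in $\widepint$ containing $\cir$ and $\cir'$. By Lemma~\ref{lem:4wheelisimmediatetangency}, applied with the wheel cyclically relabelled so that $\cir$ is first, all of $W$ lies in the maximal immediate tangency packing of $\cir$, which is $\mathcal{P}$. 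Orientations also match: externally tangent circles with disjoint interiors are exactly the pairs with inversive product $-1$, which is how both $\mathcal{P}$ and 4-wheels are oriented. Hence the Eisenstein packing $E$ generated by $W$ is defined.

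\textbf{Step 2: show $E\subseteq\mathcal{P}$.} Argue by induction on the number of swaps. If a 4-wheel lies in $\widepint$ and in $\mathcal{P}$, then Lemma~\ref{lem:4wheelandallswapsinsideEnEisenstein} shows that all four swapped circles lie in $\widepint$ and in the same maximal immediate tangency packing, namely $\mathcal{P}$. The swapped wheels are again 4-wheels in $\widepint$, so the induction continues. Therefore every circle produced by Definition~\ref{def:eisensteinpacking} lies in $\mathcal{P}$.

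\textbf{Step 3: show $\mathcal{P}\subseteq E$.} This is the main obstacle. Since $\mathcal{P}$ is tangency-connected, and is built by greedily adding immediately tangent circles, it suffices to prove that $E$ is closed under immediate tangency. That is, if $\cir\in E$ and $\cir''\in\widepint$ is immediately tangent to $\cir$ (with the correct orientation), then $\cir''\in E$.

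My approach is as follows. By Lemma~\ref{lem:everytangencyis4wheel}, the pair $\cir,\cir''$ sits in a 4-wheel $W''\subset\widepint$, and by Lemma~\ref{lem:4wheelisimmediatetangency}, $W''\subset\mathcal{P}$. I then need to show that $W''$ appears among the wheels of $E$. For this I would use that, in an Eisenstein packing, the circles of $E$ tangent to $\cir$ accumulate densely on $\cir$: the tangency points around a hub form the orbit of the stabiliser $\langle S_2,S_3,S_4\rangle$ (or its analogue), which corresponds to $\Gamma_0^{\PGL}(2)$ acting on $\widehat{\QQ}$ via Proposition~\ref{prop:quadraticformtangentcircles}. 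Concretely, normalise by $\GammaE$ so that $\cir=\widehat{\RR}=\Dcir_1$ lies in the strip wheel. The neighbours of $\widehat{\RR}$ in $E$ then touch it at every point of $\widehat{\QQ}$, since the orbit of $\{0,\infty\}$ under the generated group is all of $\widehat{\QQ}$. Uniqueness of the immediately tangent circle at a given point then forces $\cir''$ to be the neighbour of $E$ at that tangency point.

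The alternative, if the density argument proves awkward, is a measure argument. $E$ has complement of measure zero and its circles have disjoint interiors, whereas $\cir''$ has disjoint interior from all of $\mathcal{P}\supseteq E$. A nonzero-area interior would then have to be covered by interiors of circles of $E$, which is a contradiction unless $\cir''\in E$. This handles all circles except lines, and the line case reduces to $\widehat{\RR}$ by normalising.

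Combining Steps 2 and 3 gives $\mathcal{P}=E$, which proves the proposition.
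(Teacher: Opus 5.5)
Your argument is correct. Steps 1 and 2 are the paper's first paragraph; you also justify that $\cir$ has an immediate neighbour, which the paper leaves implicit.

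Step 3 takes a different route. The paper takes an arbitrary immediate neighbour $\cir'$ of $\cir$ and builds, with the same two lemmas, a second Eisenstein packing $E'\subseteq\mathcal{P}$ through $\cir,\cir'$. It then normalises $\cir=\Dcir_1$, says both $E$ and $E'$ are real translates of the strip packing scaled by $1/\sqrt{3}$, and concludes $E=E'$ because the curvature-$\sqrt{3}$ circles on the line ``cannot fit another in between''. You instead show that $E$ itself is closed under immediate tangency. The hub $\Dcir_1$ has $E$-neighbours at every point of $\widehat{\QQ}$. So $\cir''$ and the $E$-neighbour at its tangency point are two members of $\mathcal{P}$ tangent to $\Dcir_1$ at the same point on the same side. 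They are therefore nested, and disjointness of interiors in $\mathcal{P}$ forces them to be equal. This makes the paper's informal interleaving step precise and avoids the second packing.

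Two points need tightening.
\begin{enumerate}
\item The density does not come from Proposition~\ref{prop:quadraticformtangentcircles}, which concerns forms and curvatures, not tangency points. It comes from the fact that a swap word applied to a wheel equals the image of that wheel under the corresponding product of dual-circle inversions. Hence the wheels of $E$ with hub $\Dcir_1$ are the orbit of the base wheel under $\langle\mathcal{T}_2,\mathcal{T}_3,\mathcal{T}_4\rangle$. Since $\mathcal{T}_4\mathcal{T}_3=T$ and $\mathcal{T}_2\mathcal{T}_3=S$, this group contains $\Gamma_1^T(2)$, whose orbits of $0$ and $\infty$ together cover $\widehat{\QQ}$.
\item You must check that the normalised $E$ is the packing of the base wheel, not merely some packing containing $\Dcir_1$. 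It is. Reducing a wheel of $E$ through $\Dcir_1$ never swaps $\Dcir_1$, since curvature $0$ is minimal. So the reduction ends at a wheel containing a second line tangent at $\infty$, which your same-point argument identifies as $\Dcir_2$. The remaining two circles of curvature $\sqrt{3}$ are then a real translate of $(\Dcir_3,\Dcir_4)$ or of its mirror image. Membership in $\widepint$ forces the translation to lie in $2\ZZ$, and all such wheels belong to the base strip packing. The paper glosses exactly this point with ``unique up to $\RR$-translation''.
\end{enumerate}

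Your measure-theoretic fallback is also sound, and it covers lines as well, since half-planes have positive measure. However, it rests on the full-measure property of Eisenstein packings, which the paper asserts in the introduction but never proves.
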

\begin{proof}
    Pick a circle $\cir\in\widepint$. By Lemma~\ref{lem:everytangencyis4wheel}, it is a part of a 4-wheel $W$ in $\widepint$. By Lemma~\ref{lem:4wheelandallswapsinsideEnEisenstein}, $W$ and all of its circle swaps are inside the immediate tangency packing generated by $\cir$. Iterating, it implies that the Eisenstein circle packing generated by $W$ is contained inside the Eisenpint tangency packing generated from $\cir$.

    To demonstrate that it is everything, consider any immediately tangent circle $\cir'$ to $\cir$. By Lemma~\ref{lem:everytangencyis4wheel}, this tangency is described by a 4-wheel in $\widepint$. Repeating the above argument, we embed $\cir$ and $\cir'$ inside another Eisenstein circle packing. It suffices to demonstrate that these Eisenstein circle packings are equal.

    For this, it suffices to demonstrate for $\cir=\Dcir_1$, the real line whose interior is the upper half plane, as any example can be M\"obius transformed to this case. Then, there is a unique packing that $\Dcir_1$ is a part of up to $\RR-$translation: the strip packing (Figure~\ref{fig:strip}), scaled by $\frac{1}{\sqrt{3}}$. Consider the curvature 1 circles from the primitive strip packing: we see that we cannot fit another in between two consecutive, without them intersecting. Thus, the two packings must overlay the curvature 1 circles, and thus be identical.
\end{proof}

\begin{figure}[htb]
    \includegraphics[scale=1.2]{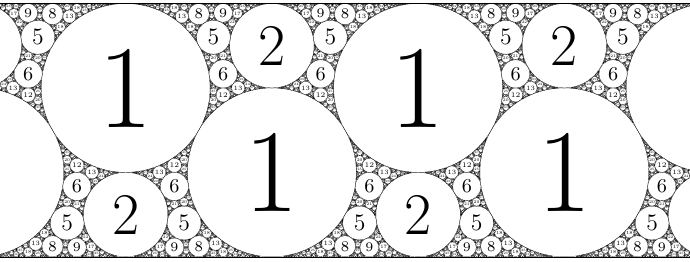}
    \caption{A slice of the Eisenstein strip circle packing, i.e. generated from root quadruple $(0, 0, 1, 1)$. Circles of curvatures up to 300 are shown.}\label{fig:strip}
\end{figure}

The final step is to demonstrate that the Eisenpint tangency packings are primitive, and show that the quadratic form equivalence class associated to a circle is the same as the one associated to the Eisenstein packing it generates.

To begin, let $M=\sm{*}{*}{e+f\om}{g+h\om}\in\GammaE$. Take the 4-wheel in Equation~\eqref{eqn:4wheelbase}, and multiply by $M$ on the left to get a 4-wheel containing $-M(\widehat{\RR})$. From Proposition~\ref{prop:eisensteincircleformulae}, we compute the four curvatures to be $(a, b, c, d)$, where
\begin{align*}
    a & = fg-eh,\\
    b & = 2e^2+2f^2+2ef+eh-fg,\\
    c & = 2e^2+2f^2+g^2+h^2+2ef+2eg+3eh-fg+2fh+gh,\\
    d & = g^2+h^2+eh-fg+gh.
\end{align*}

\begin{lemma}
    Every Eisenpint tangency packing is primitive.
\end{lemma}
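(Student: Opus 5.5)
We need to show the reduced curvatures $(a,b,c,d)$ of a 4-wheel in any Eisenpint tangency packing have no common prime factor. By the preceding proposition the packing is an Eisenstein circle packing, and all its circles are integral, since reduced curvatures of Eisenstein circles are integers by Proposition~\ref{prop:eisensteincircleformulae}. By Lemma~\ref{lem:everytangencyis4wheel}, together with the fact that $\GammaE$ acts transitively on the relevant circles, every Eisenpint tangency packing contains a 4-wheel of the form $\pm M(\Dcir_1,\Dcir_2,\Dcir_3,\Dcir_4)$ with $M\in\GammaE$. Its curvatures are therefore given (up to a global sign) by the explicit formulas for $a,b,c,d$ in terms of the bottom row $(e+f\om,\, g+h\om)$ of $M$. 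So it suffices to show $\gcd(a,b,c,d)=1$ for these formulas whenever $e+f\om$ and $g+h\om$ are coprime in $\ZZ[\om]$ with $g+h\om\equiv 1$ and $e+f\om$ arbitrary mod $2$.

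The plan is to rewrite the formulas in terms of norms. Set $\beta=e+f\om$ and $\delta=g+h\om$. Then $e^2+ef+f^2=\Nm(\beta)$, $g^2+gh+h^2=\Nm(\delta)$, and $a=fg-eh=-s$ up to sign. This gives
\[b=2\Nm(\beta)-a,\qquad d=\Nm(\delta)-a.\]
Similarly,
\[c=2\Nm(\beta)+\Nm(\delta)+(\text{cross terms})-2a,\]
and I expect $c+2a-2\Nm(\beta)-\Nm(\delta)=2eg+2fh+eh+fg$, which is a multiple of the trace of $\beta\overline{\delta}$.

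Now suppose a prime $p$ divides $a,b,c,d$. Then $p\mid \Nm(\delta)$ and $p\mid 2\Nm(\beta)$.

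\emph{Case $p$ odd.} Here $p\mid\Nm(\beta)$ and $p\mid\Nm(\delta)$. Using $p\mid a$, the discriminant of $f_M(x,y)=\Nm(\delta x-\beta y)$ is $-3a^2$, which is divisible by $p^2$, while its outer coefficients $\Nm(\delta)$ and $\Nm(\beta)$ are divisible by $p$. Its middle coefficient is $-\Tr(\beta\overline{\delta})$, whose square equals the discriminant plus $4\Nm(\beta)\Nm(\delta)$, so $p$ divides it as well. Thus $p$ divides $f_M$, contradicting the primitivity in Proposition~\ref{prop:schmidtcircletoquadratic}.

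\emph{Case $p=2$.} Since $\delta\equiv 1\pmod 2$, $\Nm(\delta)$ is odd. So $d=\Nm(\delta)-a$ and $a$ cannot both be even, which is a contradiction.

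The main obstacle is the bookkeeping: confirming the exact identity for $c$, and correctly handling the orientation sign and the justification that every packing contains such an $M$-image of the base wheel. That justification follows because any circle of the packing is $\pm M(\widehat{\RR})=\pm M(\Dcir_1)$ with $M\in\GammaE$, and the packing generated is exactly the Eisenstein packing of the wheel $M(\Dcir_\bullet)$ by the previous proposition. In fact the odd case needs only $p\mid a,b,d$, so $c$ can likely be ignored entirely, which removes the first part of the obstacle.
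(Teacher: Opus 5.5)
Your proof is correct, and its skeleton matches the paper's. Both arguments reduce to the three divisibilities $p\mid a$, $p\mid \Nm(\beta)$, $p\mid\Nm(\delta)$. Both dispose of $p=2$ by parity; your observation that $a+d=\Nm(\delta)$ is odd is a tidier form of the paper's case split on the parity of $f$.

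The difference is in how the odd prime is eliminated.
\begin{itemize}
\item The paper argues directly in $\ZZ[\om]$. Coprimality of $\beta,\delta$ forces $p=\pi\overline{\pi}$ to split with $\pi\mid\beta$ and $\overline{\pi}\mid\delta$. Then $\pi\mid\beta\overline{\delta}-\overline{\beta}\delta=\sqrt{-3}\,a$ produces a common factor of $\beta$ and $\delta$.
\item You instead use $\Tr(\beta\overline{\delta})^2=-3a^2+4\Nm(\beta)\Nm(\delta)$ to get $p$ dividing every coefficient of $f_M$. You then invoke the primitivity of $f_M$ from Proposition~\ref{prop:schmidtcircletoquadratic}.
\end{itemize}
In effect you have noticed that $(a,b,c,d)$ is the image of $f_M$ under the map $\theta$ of Proposition~\ref{prop:eisensteinpackingbijectionqf}. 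So the lemma becomes the $\theta$-direction of that proposition applied to a primitive first-odd form. Your odd and even cases are exactly that proof's ``$p\mid a, A, 2C, B$, and $A$ is odd.'' This route is shorter, and it anticipates the paper's later computation that $\phi(a,b,c,d)=f_M$.

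The price is that the arithmetic content moves onto the primitivity of $f_M$. The paper asserts that in one line from the coprimality of $\beta$ and $\delta$. Making it rigorous for split primes needs essentially the argument the paper gives in this lemma, with the middle coefficient in place of $a$. So the paper's proof does the $\ZZ[\om]$ work explicitly, while yours delegates it to an earlier proposition.

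One bookkeeping correction on orientations. We have $M(\widehat{\RR})=-M(\Dcir_1)$, not $+M(\Dcir_1)$. Also $-M(\Dcir_\bullet)$ has negative curvature sum, so it is not a 4-wheel. If your circle is $M(\widehat{\RR})$ rather than $-M(\widehat{\RR})$, use $M\sm{1}{-2\om}{0}{1}\in\GammaE$ instead, which sends $\Dcir_2$ to $M(\widehat{\RR})$. Its bottom row $(\beta,\delta-2\om\beta)$ is still coprime with second entry $\equiv 1\pmod 2$, so your gcd argument applies unchanged.
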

\begin{proof}
    Take the above equations for $a,b,c,d$ in terms of $e,f,g,h$. First, we claim that their $\gcd$ is odd. Since $M\in\GammaE$, $g$ is odd and $h$ is even. If $f$ is odd, then $a=fg-eh\equiv 1\pmod{2}$, and if $f$ is even, then $d=g^2+h^2+eh-fg+gh\equiv 1\pmod{2}$. In both cases either $a$ or $d$ is odd, so the $\gcd$ is odd. Also note that $a\equiv b\pmod{2}$ and $c\equiv d\pmod{2}$, so $(a, b, c, d)$ is in standard position.

    Now, let $p$ be an odd prime, and assume that $p\mid a, b, c, d$. It follows that
    \begin{align*}
        p & \mid fg-eh = \frac{1}{\sqrt{-3}}(\beta\overline{\delta}-\overline{\beta}\delta),\\
        p & \mid e^2+ef+f^2 = \Nm(\beta), \\
        p & \mid g^2+gh+h^2 = \Nm(\delta),
    \end{align*}
    where $\beta=e+f\om$ and $\delta=g+h\om$ as usual. Since $\beta$ and $\delta$ must be coprime, this implies that $p$ must split in $\ZZ[\om]$ into $p=\pi\overline{\pi}$, where $\pi\mid \beta$ and $\overline{\pi}\mid\delta$. However, we then have $\pi\mid \beta\overline{\delta}-\overline{\beta}\delta$, whence $\pi\mid \overline{\beta}$ or $\pi\mid\delta$. In either case we find a common factor of $\beta$ and $\delta$, contradiction.

    Therefore we have no common factors, so the quadruple $(a, b, c, d)$ is primitive.
\end{proof}

Now, we know that the image is a primitive 4-wheel, which is in standard position. Applying the map $\phi$ from Proposition~\ref{prop:eisensteinpackingbijectionqf}, this is sent to the ($\Gamma_0^{\PGL}(2)-$equivalence class of the) quadratic form
\[\left[a+d, b+d-c, \frac{a+b}{2}\right] = [g^2+gh+h^2, -2eg-eh-fg-2fh, e^2+ef+f^2].\]
On the other hand, we compute that
\begin{align*}
    f_M(x, y) & =\Nm(\delta x-\beta y)\\
    & =\Nm((gx-ey)+(hx-fy)\om)\\
    & =(gx-ey)^2+(gx-ey)(hx-fy)+(hx-fy)^2\\
    & = (g^2+gh+h^2)x^2 - (2eg-eh-fg-2fh)xy + (e^2+ef+f^2)y^2,
\end{align*}
which is the same quadratic form as above. As both of these forms were defined up to $\Gamma_0^{\PGL}(2)-$equivalence, the following result holds.

\begin{theorem}
    Let $\cir=\pm M(\widehat{\RR})$ be an Eisenpint circle, where $M\in\GammaE$. Then, the Eisenpint tangency packing generated by $\cir$ is a primitive Eisenstein circle packing scaled by $1/\sqrt{3}$. 
    
    Next, let $q$ be an Eisenstein quadruple inside this Eisenstein circle packing, with $\cir$ representing the first curvature. Then, the $\Gamma_0^{\PGL}(2)-$equivalence class of $\phi(q)(x, y)$ is equal to the $\Gamma_0^{\PGL}(2)-$equivalence class of $f_M(x, y)$.

    Finally, this association is bijective on equivalence classes: any other circle which gives rise the the same equivalence class of quadratic forms must be equivalent (up to orientation) to $\cir$.
\end{theorem}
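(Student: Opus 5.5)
The proof assembles results already established in this section; the one genuinely new input is the ``any other circle'' injectivity claim.

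\emph{Part one.} By the preceding proposition, the Eisenpint tangency packing generated by $\cir$ is an Eisenstein circle packing. The preceding lemma says it is primitive once curvatures are measured as reduced curvatures. By Proposition~\ref{prop:eisensteincircleformulae}, each actual curvature is $\sqrt{3}$ times the reduced curvature. Hence the packing is a primitive Eisenstein circle packing scaled by $1/\sqrt{3}$.

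\emph{Part two.} Multiply the base 4-wheel $(\Dcir_1,\Dcir_2,\Dcir_3,\Dcir_4)$ of Equation~\eqref{eqn:4wheelbase} on the left by $M$. This gives a 4-wheel containing $-M(\widehat{\RR})$, with reduced curvatures $(a,b,c,d)$ as computed above, and the quadruple is in standard position. The computation just before the theorem shows $\phi(a,b,c,d)=f_M$. Any other quadruple $q$ in the packing with $\cir$ as its first entry lies in the $\Egp_1$-orbit of this one, possibly after swapping $b,d$ to restore standard position. By Proposition~\ref{prop:quadraticformtangentcircles}, $\phi(q)$ is then $\Gamma_0^{\PGL}(2)$-equivalent to $f_M$. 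Two points need care here:
\begin{itemize}
\item The orientation sign: $\cir=\pm M(\widehat{\RR})$, whereas the wheel contains $-M(\widehat{\RR})$. If the sign is $+$, I would replace $M$ by a matrix giving the opposite orientation, or note that $f_M$ depends only on the unoriented circle.
\item The swap $(b,d)$: it corresponds to $\sm{1}{0}{0}{-1}$-type equivalence, which lies in $\Gamma_0^{\PGL}(2)$.
\end{itemize}
If part one holds only for the wheel through $-M(\widehat{\RR})$, this sign issue is the only subtlety.

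\emph{Injectivity.} Suppose $\cir'=\pm M'(\widehat{\RR})$ has $f_{M'}$ equivalent to $f_M$. The proposition just before this subsection gives a bijection between equivalence classes of unoriented Eisenpint circles and $\Gamma_0^{\PGL}(2)$-classes of primitive positive semidefinite first-odd forms. It follows that $\cir'$ is equivalent to $\cir$ up to orientation.

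The main obstacle is reconciling the two notions of equivalence. The packing-side equivalence uses $\Gamma_0^{\PGL}(2)$ orbits, coming from $\Egp_1$ acting on quadruples, and I will use Proposition~\ref{prop:quadruplequadraticbijection} to pass to whole packings via reduced quadruples. The circle-side equivalence uses translations, the rotation by $\pi$, and the reflection across the imaginary axis. The bijection is on the level of classes, so it must be consistent with the map (circle $\mapsto$ packing it generates). Part two supplies exactly this compatibility.
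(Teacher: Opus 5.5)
Your outline matches the paper's argument:
\begin{itemize}
\item Part one comes from the proposition that every Eisenpint tangency packing is an Eisenstein packing, together with the primitivity lemma.
\item Part two comes from the identity $\phi(a,b,c,d)=f_M$ for the wheel $M(\Dcir_1,\Dcir_2,\Dcir_3,\Dcir_4)$. You are more explicit than the paper in invoking Proposition~\ref{prop:quadraticformtangentcircles} for the other quadruples with $\cir$ first.
\item Part three comes from the inverse bijections between classes of circles and classes of forms.
\end{itemize}

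\textbf{The gap: the orientation sign.} You flag this issue yourself, but neither of your remedies closes it.
\begin{itemize}
\item You cannot ``replace $M$ by a matrix giving the opposite orientation''. By the remark after Proposition~\ref{prop:eisensteincircleconverse}, the orbit $\PSL(2,\ZZ[\om])(\widehat{\RR})$ contains exactly one orientation of each circle, detected by $x+z \bmod 3$. So $+M(\widehat{\RR})$ is never of the form $-M'(\widehat{\RR})$.
\item Noting that the class of $f_M$ depends only on the unoriented circle is beside the point. The oriented circle $+M(\widehat{\RR})$ generates the packing on the \emph{other} side of the circle, which does not contain the wheel $M(\Dcir_i)$. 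So you have not yet computed $\phi(q)$ for any $q$ in that packing.
\end{itemize}
The paper's displayed computation also only covers $-M(\widehat{\RR})$, so some extra argument is needed either way.

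One way to close it: set $M'=M\sm{1}{-2\om}{0}{1}\in\GammaE$, so that $+M(\widehat{\RR})=M'(\Dcir_2)$. Let $(a',b',c',d')$ be the curvatures of $M'(\Dcir_i)$, and write $(\beta',\delta')$ for the bottom row of $M'$.
\begin{itemize}
\item The wheel $M'(\Dcir_2,\Dcir_1,\Dcir_4,\Dcir_3)$ has curvatures $(b',a',d',c')$ and is in standard position.
\item The bottom row of $M=M'\sm{1}{2\om}{0}{1}$ is $(\beta',\delta'+2\om\beta')$.
\item A direct expansion gives $b'+c'=\Nm(\delta'+2\om\beta')$ and $(a'+b')/2=\Nm(\beta')$.
\item The coefficient $a'+c'-d'$ equals minus the middle coefficient of $f_M$.
\end{itemize}
Hence $\phi(b',a',d',c')=\sm{1}{0}{0}{-1}\circ f_M$, which is $\Gamma_0^{\PGL}(2)$-equivalent to $f_M$.

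Alternatively, use complex conjugation $z\mapsto\overline{z}$, which is the rotation by $\pi$ composed with the reflection in the imaginary axis.
\begin{itemize}
\item By Propositions~\ref{prop:enschmidtrotatebypi} and~\ref{prop:enschmidtreflect}, it preserves the set of oriented Eisenpint circles.
\item It preserves curvatures and 4-wheels.
\item It sends $+M(\widehat{\RR})$ to $-\overline{M}(\widehat{\RR})$, with $\overline{M}\in\GammaE$ and $f_{\overline{M}}=f_M$.
\end{itemize}

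\textbf{A smaller error in your second bullet.} The matrix $\sm{1}{0}{0}{-1}$ realizes the swap $S_3$, not the reordering $b\leftrightarrow d$. That reordering destroys standard position by Lemma~\ref{lem:twoeventwoodd}, so $\phi$ is never applied to it. Simply take $q$ in standard position. Such $q$ with $\cir$ first lie in the $\Egp_1$-orbit of the base quadruple, because the swaps preserve the parity of every entry.
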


A simplified version of this is Theorem~\ref{thm:Qrt3packingsareEisenstein}, which is now proven.

\section{Strong approximation and density one local-global}\label{sec:strongapproximation}

The goal is to apply the results of \cite{FSZ19}, which give strong approximation and a density one local-global theorem in a very general setting. To accomplish this, we need to describe an Eisenstein circle packing as a union of orbits of circles acted upon by a Kleinian group possessing certain properties. We start by constructing the appropriate group.

Recall the ``base quadruple'' $(\Dcir_1, \Dcir_2, \Dcir_3, \Dcir_4)$ given in Equation~\eqref{eqn:4wheelbase}, which is also depicted in Figure~\ref{fig:stripbase}. The four basic circle swaps applied to this base quadruple are inversions in the following circles:
\begin{enumerate}
    \item $\mathcal{T}_1$, circle with radius $1$ centred at $\sqrt{3}i$, i.e. $z \mapsto  \frac{\sqrt{-3}\,\bar{z} - 2}{\bar{z} + \sqrt{-3}} $;
    \item $\mathcal{T}_2$, circle with radius $1$ centred at $1$, i.e. $z \mapsto \frac{ \overline{z} }{\overline{z}-1}$;
    \item $\mathcal{T}_3$, line $\Re(z) = 0$, i.e. $z \mapsto -\overline{z}$;
    \item $\mathcal{T}_4$, line $\Re(z) = 1$, i.e. $z \mapsto -\overline{z} + 2$.
\end{enumerate}

\begin{figure}
\newcommand{\sq}{1.73205}    % sqrt(3)
\newcommand{\isq}{0.57735}   % 1/sqrt(3)
\newcommand{\tisq}{1.15470}  % 2/sqrt(3)
\newcommand{\hsq}{0.86603}   % sqrt(3)/2

\begin{tikzpicture}[scale=3.4, every node/.style={font=\small}]

  \clip (-0.85, -0.6) rectangle (2.18, 2.0);

  % === T circles (red dashed, drawn first so solid elements sit on top) ===

  % T_1: circle at (0, sqrt(3)), r = 1
  \draw[dashed, thick, red] (0, \sq) circle[radius=1];
  \node[red, left] at ({-0.68}, 0.85) {$\mathcal{T}_1$};

  % T_2: circle at (1, 0), r = 1
  \draw[dashed, thick, red] (1, 0) circle[radius=1];
  \node[red, below] at (1.8, -0.22) {$\mathcal{T}_2$};

  % T_3: line x = 0
  \draw[dashed, thick, red] (0, -0.6) -- (0, 2.0);
  \node[red, left] at (-0.05, -0.42) {$\mathcal{T}_3$};

  % T_4: line x = 1
  \draw[dashed, thick, red] (1, -0.6) -- (1, 2.0);
  \node[red, right] at (1.05, -0.42) {$\mathcal{T}_4$};

  % === D circles (solid black, on top) ===

  % D_1: real axis
  \draw (-0.85, 0) -- (2.18, 0);
  \node[above] at (1.85, 0) {$\Dcir_1$};

  % D_2: line y = sqrt(3)
  \draw (-0.85, \sq) -- (2.18, \sq);
  \node[below] at (1.85, \sq) {$\Dcir_2$};

  % D_3
  \draw (1, \tisq) circle[radius=\isq];
  \node[right] at ({1 + \isq + 0.07}, \tisq) {$\Dcir_3$};

  % D_4
  \draw (0, \isq) circle[radius=\isq];
  \node[left] at ({-\isq - 0.07}, \isq) {$\Dcir_4$};

  % === Labeled points ===

  % Origin
  \fill (0, 0) circle[radius=0.017];
  \node[below left] at (0, 0) {$0$};

  % sqrt(3) i
  \fill (0, \sq) circle[radius=0.017];
  \node[above left] at ({-0.03}, {\sq + 0.02}) {$\sqrt{3}\,i$};

  % 1 + sqrt(3) i
  \fill (1, \sq) circle[radius=0.017];
  \node[above right] at ({1 + 0.03}, {\sq + 0.02}) {$1{+}\sqrt{3}\,i$};

  % Center of D_1
  \fill (0, \isq) circle[radius=0.017];
  \node[below left] at (-0.03, {\isq + 0.04}) {$\frac{i}{\sqrt{3}}$};

  % Center of D_2
  \fill (1, \tisq) circle[radius=0.017];
  \node[above right] at ({1 + 0.04}, {\tisq + 0.02}) {$1+\frac{2i}{\sqrt{3}}$};

  % Tangent point D_1 ∩ D_2
  \fill (0.5, \hsq) circle[radius=0.017];
  \draw[-stealth, thin, shorten >=1.5pt]
    ({1.15}, {\hsq - 0.25}) -- ({0.52}, {\hsq - 0.01});
  \node[right] at ({1.12}, {\hsq - 0.32}) {$\frac{1}{2}{+}\frac{\sqrt{3}}{2}\,i$};
  
\end{tikzpicture}
  \caption{The base quadruple, consisting of two horizontal lines through $0$ and $\sqrt{-3}$, as well as circles of radius $1/\sqrt{3}$ centred at $i/\sqrt{3}$ and $1 + \frac{2i}{\sqrt{3}}$. In dotted lines are the dual circles, representing the inversions giving the four circle swaps.
  }
  \label{fig:stripbase}
  \end{figure}

\begin{remark}
\label{rmk:crystl}
    Using the $\Dcir_i$ as a cluster and the $T_i$ as a co-cluster, this structure gives the Eisenpint packing as a crystallographic superpacking.  The Gram matrix of inversive products for $\Dcir_1,\Dcir_2,\Dcir_3,\Dcir_4,\mathcal{T}_1,\mathcal{T}_2,\mathcal{T}_3,\mathcal{T}_4$ is
    \[
    \begin{pmatrix}
1 & -1 & -2 & -1 & -\sqrt{3} & 0 & 0 & 0 \\
-1 & 1 & -1 & -2 & 0 & -\sqrt{3} & 0 & 0 \\
-2 & -1 & 1 & -1 & 0 & 0 & -\sqrt{3} & 0 \\
-1 & -2 & -1 & 1 & 0 & 0 & 0 & -\sqrt{3} \\
-\sqrt{3} & 0 & 0 & 0 & 1 & -1 & 0 & -1 \\
0 & -\sqrt{3} & 0 & 0 & -1 & 1 & -1 & 0 \\
0 & 0 & -\sqrt{3} & 0 & 0 & -1 & 1 & -1 \\
0 & 0 & 0 & -\sqrt{3} & -1 & 0 & -1 & 1
\end{pmatrix}.
    \]
\end{remark}

We consider the \emph{Ford circles}, which are those circles in $\widepint$ which are immediately tangent to the real axis. The tangency point is necessarily in $\widehat{\QQ}$, and these circles are in the Eisenpint tangency packing corresponding to $\widehat{\RR}$: call this the \emph{Eisenpint strip packing}.

Given one Ford circle, we can produce an orbit of Ford circles by any matrix action of

\[\GammaE\cap\SL(2, \ZZ) =\Gamma_1^T(2) := \left\{ \begin{pmatrix} a & b \\ c & d \end{pmatrix} \in \SL(2,\ZZ) : a \equiv d \equiv 1 \!\! \pmod{2}, \;\; b \equiv 0 \!\!\pmod{2} \right\}.\]

Acting on $\widehat{\FF_2}$, there are two orbits: $\{0\}$ and $\{1, \infty\}$. Accordingly, it breaks $\widehat{\QQ}$ into two orbits also, being the preimages under reduction. The orbit $\Gamma_1^T(2)\cdot \Dcir_4$ produces the Ford circles tangent to $\widehat{\RR}$ as $p/q$ with $p$ even, and the orbit $\Gamma_1^T(2)\cdot \Dcir_2$ is the Ford circles circles tangent to $\widehat{\RR}$ at $p/q$ with $p$ odd.

As in the introduction, let
\[
V: z \mapsto -z + 1 + \sqrt{-3}, \quad \text{i.e.} \quad V =
\begin{pmatrix}
    -1 & 1 + \sqrt{-3} \\
    0 & 1
\end{pmatrix},
\]
which is a $180-$degree rotation about the tangency point between $\Dcir_3$ and $\Dcir_4$, and has determinant $-1$. It swaps the pairs $(\Dcir_1, \Dcir_2)$ and $(\Dcir_3, \Dcir_4)$, and preserves curvature. As in the introduction, define two groups:
\[
\Egp' := \langle \Gamma_1^T(2), V \rangle, \quad
\Egp'' := \Egp' \cap \SL(2,\ZZ[w]]).
\]
Our first main goal is to demonstrate that $\Egp'$ applied to $\Dcir_1$ (the lower line) and $\Dcir_4$ (the left circle) produce the circles in the Eisenpint strip packing.

\begin{lemma}\label{lem:Egpevenvsodd}
    Circles in the orbit of $\Egp'$ acting on $\Dcir_1$ and $\Dcir_4$ all lie in the Eisenpint Schmidt arrangement. Furthermore, those that are the images of $\Dcir_1$ have even reduced curvature, and those that are the images of $\Dcir_4$ have odd reduced curvature.
\end{lemma}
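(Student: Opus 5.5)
The plan is to check the claim on generators. Two things need to be verified: that $\Gamma_1^T(2)$ and $V$ each map $\widepint$ to itself, and that each generator either preserves reduced-curvature parity or moves both base circles in a controlled way. Since $\Egp'=\langle \Gamma_1^T(2), V\rangle$, both statements then follow by induction on word length, starting from $\Dcir_1$ (reduced curvature $0$, even) and $\Dcir_4$ (reduced curvature $1$, odd).

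\textbf{Step 1: $\Gamma_1^T(2)$.} We have $\Gamma_1^T(2)\subseteq\GammaE$, and $\widepint$ is defined as the $\GammaE$-orbit of $\widehat{\RR}$ with both orientations. So left multiplication by an element of $\Gamma_1^T(2)$ preserves $\widepint$. To handle curvature, observe that $\Gamma_1^T(2)\subseteq\PSL(2,\ZZ[2\om])$ reduces mod $2$ into $\SL(2,\FF_2)$, and that by Table~\ref{table:F4circles} parity of reduced curvature is constant on cosets. It therefore suffices to check that elements of $\Gamma_1^T(2)$ send coset $0$ to coset $0$ and coset $4$ to coset $4$. Because the group is generated by $\sm{1}{2}{0}{1}$ and $\sm{1}{0}{1}{1}$, this can be read off from how these act on the $\widehat{\FF_2[\om]}$-circles: coset $0$ is $\{\infty,0,1\}$, coset $4$ is $\{1+\om,0,\om\}$, and the mod-$2$ reductions fix $0$ while permuting $\{1,\infty\}$ and $\{\om,1+\om\}$. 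Alternatively, one can use the explicit transformation formulas. Translation by $2$ is the case $(e,f)=(2,0)$ of Proposition~\ref{prop:stxzbasictransformations}, which preserves $s$. The lower-triangular generator is conjugate to a translation by the inversion $U$, and $U$ sends $(s,t,x,z)\mapsto(t,s,-x,z-x)$; combining these and reducing mod $2$ should give the same conclusion.

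\textbf{Step 2: $V$.} This is the main obstacle: $V$ has determinant $-1$ and involves $\sqrt{-3}$, so it is not a M\"obius map in $\GammaE$. The approach is to write $V$ as a composition of symmetries already known to preserve $\widepint$. Concretely, $V(z)=-z+(1+\sqrt{-3})$ is the rotation by $\pi$ about the origin, $z\mapsto -z$, followed by translation by $1+\sqrt{-3}=2\om$. Proposition~\ref{prop:enschmidtrotatebypi} shows the rotation preserves $\widepint$, and Proposition~\ref{prop:enschmidttranslate} shows translation by $2\om\in 2\ZZ[\om]$ does too. Both preserve $s$ up to sign: the rotation is $(s,t,x,z)\mapsto(s,t,-x,-z)$, and Proposition~\ref{prop:stxzbasictransformations} shows translation keeps $s$. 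Hence $V$ preserves $\widepint$ and preserves the parity of reduced curvature.

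\textbf{Step 3: conclusion.} Every generator preserves $\widepint$ and preserves the parity of $s$. Therefore any image of $\Dcir_1$ under $\Egp'$ is an Eisenpint circle with even reduced curvature, and any image of $\Dcir_4$ is an Eisenpint circle with odd reduced curvature. The one remaining detail is orientation. Since $\widepint$ includes both orientations, the sign ambiguities $\pm M(\widehat{\RR})$ that arise along the way are harmless.
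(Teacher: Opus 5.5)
Your proof is correct, but it handles $V$ and the parity question differently from the paper.

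\textbf{The paper's route.} It never shows that $V$ is a symmetry of the arrangement. Instead it notes that every word in $\Gamma_1^T(2)$ and $V=\sm{-1}{2\om}{0}{1}$ is $\equiv\sm{1}{0}{\ast}{1}\pmod 2$. A word with an even number of $V$'s therefore has determinant $1$ and lies in $\GammaE$. A word with an odd number is reduced to that case by rewriting $W(\Dcir_1)=WV(\Dcir_2)$ and $W(\Dcir_4)=WV(\Dcir_3)$. For parity, the paper uses that $V$ is an isometry. It then computes directly that $N\equiv\sm{1}{0}{1}{1}$ sends $s$ to $s+t+x\equiv s\pmod 2$, because $t\equiv x\equiv 0$ on Eisenpint circles.

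\textbf{Your route.} You factor $V$ as $z\mapsto -z$ followed by translation by $2\om$, and cite Propositions~\ref{prop:enschmidttranslate} and~\ref{prop:enschmidtrotatebypi}. You get parity from the fact that $\rho_2(\Gamma_1^T(2))=\{I,\sm{1}{0}{1}{1}\}$ fixes each of the $\FF_2[\om]$-circles $\{\infty,0,1\}$ and $\{0,\om,1+\om\}$. Hence cosets $0$ and $4$ are each preserved.

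\textbf{What each buys.} Your version proves a cleaner, basepoint-free statement: all of $\Egp'$ acts on the two-orientation Eisenpint arrangement and preserves each of cosets $0$ and $4$. The cost is reliance on the rotation-by-$\pi$ symmetry. That rotation carries circles of the $\PSL(2,\ZZ[\om])$-orbit to oppositely oriented ones; for example, $(1,0,0,-1)\mapsto(1,0,0,1)$ has $x+z\equiv 1\pmod 3$. So working with both orientations, as you do, is essential rather than cosmetic. The paper's argument stays inside $\GammaE$ and needs only matrix congruences plus the fact that $V$ swaps $\Dcir_1\leftrightarrow\Dcir_2$ and $\Dcir_3\leftrightarrow\Dcir_4$.
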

\begin{proof}
    Consider a word $W$ in $\Egp'$, consisting of elements of $\Gamma_1^T(2)$ and $V$'s. As $V=\sm{-1}{2\om}{0}{1}\equiv \sm{1}{0}{0}{1}\pmod{2}$, we see that $W\equiv \sm{1}{0}{\ast}{1}\pmod{2}$. If there are an even number of $V$'s, then it has determinant 1, hence is in $\Egp$, whence $W(\Dcir_i)$ is in the Eisenpint Schmidt arrangement. If there are an odd number of $V$'s, then $W(\Dcir_i)=WV(\Dcir_j)$ where $(i, j)=(1, 2)$ or $(4, 3)$. Then, $WV\in\Egp$, and again $WV(\Dcir_j)=W(\Dcir_i)$ is in the Eisenpint Schmidt arrangement.

    For the second claim, $V$ does not change a circle's curvature. Thus, it suffices to show that $\Gamma_1^T(2)$ acting on the Eisenpint Schmidt arrangement also preserves parity of reduced curvature.

    Pick any such circle, say it has reduced coordinates $(s, t, x, z)$, and is the image of $M(\widehat{\RR})$, where $M=\sm{A+B\om}{C+D\om}{E+F\om}{G+H\om}\in\GammaE$. From Proposition~\ref{prop:eisensteincircleformulae}, $s=EH-FG$, $t=AD-BC$, and $x=AH-BG+DE-CF$. From Proposition~\ref{prop:GammaEcoset04} and Table~\ref{table:F4circles}, we know that $t\equiv x\equiv 0\pmod{2}$. Given an element $N\in\Gamma_1^T(2)$, if $N\equiv \sm{1}{0}{0}{1}\pmod{2}$, then $NM\equiv M\pmod{2}$, hence the parity of $s$ is unchanged. Otherwise,
    \[NM\equiv \lm{1}{0}{1}{1}M\equiv \lm{A+B\om}{C+D\om}{(E+A)+(F+B)\om}{(G+C)+(H+D)\om}\pmod{2},\]
    so the new reduced curvature is
    \begin{align*}
        s' & \equiv (E+A)(H+D)-(F+B)(G+C)\\
        & \equiv (EH-FG)+(AD-BC)+(AH-BG+ED-FC)\equiv s+t+x\equiv s\pmod{2},
    \end{align*}
    also unchanged.
\end{proof}

We can now prove that $\Egp'$ and $\Egp''$ produce the strip packing.

\begin{proposition}\label{prop:orbitsE}
    The group $\Egp'$, applied to $\Dcir_1$ and $\Dcir_4$, will give the complete set of circles of the Eisenpint strip packing. The group $\Egp''$, applied to $\Dcir_1$, $\Dcir_2$, $\Dcir_3$ and $\Dcir_4$, will give the complete set of circles of the Eisenpint strip packing.
\end{proposition}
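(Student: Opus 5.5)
We prove the two claims in turn: first that the $\Egp'$-orbits lie inside the strip packing, then that they exhaust it, and finally pass to $\Egp''$.

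\textbf{Orbits lie in the packing.} By Lemma~\ref{lem:Egpevenvsodd}, every circle in $\Egp'\cdot\Dcir_1\cup\Egp'\cdot\Dcir_4$ is an Eisenpint circle. We show each lies in the strip packing by checking that the generators preserve the packing. The group $\Gamma_1^T(2)$ preserves $\widehat{\RR}=\Dcir_1$ together with its orientation and lies in $\GammaE$. Hence it maps $\widepint$ to itself and preserves tangency and disjointness of interiors. It therefore sends the maximal immediate tangency packing containing $\Dcir_1$ to itself.

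The rotation $V$ swaps $\Dcir_1$ and $\Dcir_2$, and swaps $\Dcir_3$ and $\Dcir_4$. We argue this as in the proof of Lemma~\ref{lem:Egpevenvsodd}, writing $V$ as an element of $\GammaE$ composed with the symmetries used there; alternatively, $V$ is a Euclidean isometry carrying $\widepint$ to itself, by Propositions~\ref{prop:enschmidttranslate} and~\ref{prop:enschmidtrotatebypi}, since it is rotation by $\pi$ followed by translation by $1+\sqrt{-3}=2\om$. So $V$ permutes Eisenpint tangency packings. Because it maps the 4-wheel $(\Dcir_1,\ldots,\Dcir_4)$ to a 4-wheel of the same packing, it preserves the strip packing. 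It follows that both orbits are contained in the strip packing.

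\textbf{Orbits exhaust the packing.} By the earlier proposition that Eisenpint tangency packings are Eisenstein circle packings, the strip packing equals the set of circles in the swap-orbit of the base 4-wheel $(\Dcir_1,\Dcir_2,\Dcir_3,\Dcir_4)$. It therefore suffices to show the following claim: the set $X$ of 4-wheels $g\cdot(\Dcir_1,\Dcir_2,\Dcir_3,\Dcir_4)$ with $g\in\Egp'$, allowing reorderings, is closed under the four swaps. Once this holds, every circle of the packing lies in $\Egp'\Dcir_i$ for some $i$. Since $\Dcir_2=V\Dcir_1$ and $\Dcir_3=V\Dcir_4$, this gives exactly the two orbits.

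Because swaps commute with M\"obius transformations, so that $g$ applied to the swap of $W$ equals the swap of $gW$, it is enough to check the claim at the base wheel. Swapping $\Dcir_i$ is inversion in $\mathcal{T}_i$. We exhibit, for each $i$, an element of $\Egp'$ carrying the base wheel to the swapped wheel, possibly up to the dihedral reordering of Remark~\ref{rem:quadruplesymmetries}.

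\begin{itemize}
\item For $\mathcal{T}_3$: the swapped wheel $(\Dcir_1,\Dcir_2,\Dcir_3',\Dcir_4)$ is the base wheel reflected across the line through the centre of $\Dcir_4$ parallel to... Rather than rely on such a reflection, we use translations. Here $T:z\mapsto z+2$ lies in $\Gamma_1^T(2)$, and $-\bar z$ composed with translations relates $\Dcir_3'$ to $\Dcir_3-2$.
\item For the other swaps: the elements $S:z\mapsto z/(z+1)$ and $V$ give the swaps $\mathcal{T}_2$ and $\mathcal{T}_1$, via conjugation by $V$, which exchanges the roles of indices $1\leftrightarrow 2$ and $3\leftrightarrow 4$.
\end{itemize}

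Concretely, we verify by computation with reduced coordinates (Proposition~\ref{prop:stxzbasictransformations}) that each swapped circle $\Dcir_i'$ equals $g\Dcir_j$ for an explicit word $g$ in $S,T,V$. This matching of swaps with group elements is the main obstacle. The swaps are orientation-reversing inversions, whereas $\Gamma_1^T(2)$ is orientation-preserving. To get around this, we compose each inversion with the reflection symmetry of the strip packing, for instance $z\mapsto-\bar z+1$ combined with translations, so that the swapped wheel becomes a reordered image under an orientation-preserving map.

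\textbf{Passing to $\Egp''$.} The group $\Egp''$ is the index-2 subgroup of $\Egp'$ of words containing an even number of $V$'s. Hence $\Egp'=\Egp''\cup\Egp''V$, and therefore
\[
\Egp'\Dcir_1=\Egp''\Dcir_1\cup\Egp''\Dcir_2,\qquad
\Egp'\Dcir_4=\Egp''\Dcir_4\cup\Egp''\Dcir_3.
\]
This yields the second statement.
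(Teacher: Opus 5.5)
Your containment step is fine: both generators of $\Egp'$ preserve $\widepint$ and fix the strip packing. Your passage from $\Egp'$ to $\Egp''$ is also fine. The exhaustion step, however, rests on a claim that is false. No swapped wheel is an $\Egp'$-image of a relabelling of the base wheel $W_0=(\Dcir_1,\Dcir_2,\Dcir_3,\Dcir_4)$, so your set $X$ is not closed under any of the four swaps.

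To see this, let $\iota_i$ denote inversion in $\mathcal{T}_i$. Then $\iota_iW_0=W_0S_i$, and the labelled wheels of the packing form the free orbit $W_0\Egp$.
\begin{itemize}
\item One checks $S=\iota_2\iota_3$ and $T=\iota_4\iota_3$, hence $SW_0=W_0S_2S_3$ and $TW_0=W_0S_4S_3$.
\item $VW_0=W_0\Pi$, where $\Pi$ is the permutation matrix of $(12)(34)$; conjugation by $\Pi$ swaps $S_1\leftrightarrow S_2$ and $S_3\leftrightarrow S_4$.
\item By induction on word length, every $g\in\Egp'$ satisfies $gW_0=W_0E\Pi^{\epsilon}$ with $E\in\Egp$ and $\det E=1$.
\item Suppose $W_0S_i=gW_0\Pi'$ for a dihedral relabelling $\Pi'$. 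Then $E^{-1}S_i=\Pi^{\epsilon}\Pi'$ is a dihedral relabelling lying in $\Egp$.
\item $\Egp$ contains no nontrivial dihedral relabelling. Such an element would carry the reduced quadruple $(-3,5,14,10)$ to a different reduced quadruple in its own orbit, contradicting Proposition~\ref{prop:uniquereduction}(iii).
\item Hence $S_i=E$, contradicting $\det S_i=-1$.
\end{itemize}

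Composing with reflections cannot get around this. Any M\"obius $h$ with $hW_0$ a relabelling of $W_0S_i$ has the form $\iota_i\sigma$ for an orientation-reversing symmetry $\sigma$ of $W_0$. Such $h$ do exist: for $i=3$, taking $\sigma$ to be the inversion in $|z-2\om|=\sqrt2$ gives $h(z)=(2-2\bar{\om}z)/(z-2\om)$, of determinant $2$. By the argument above, none of them lie in $\Egp'$. Separately, the reflection $z\mapsto-\bar z+1$ you mention is not a symmetry of the strip packing. It sends $\Dcir_4$ to a circle of radius $1/\sqrt3$ tangent to $\RR$ at $1$, whereas the packing's circle there is $S\Dcir_2$, of radius $1/(2\sqrt3)$.

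What does hold is the circle-level version: $\Dcir_1'=VSV^{-1}\Dcir_1$, $\Dcir_2'=S\Dcir_2$, $\Dcir_3'=T^{-1}\Dcir_3$ and $\Dcir_4'=T\Dcir_4$. This is not enough for your induction, which must carry a whole wheel back to $W_0$ before the next swap.

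A repair within your framework:
\begin{itemize}
\item Every wheel of the packing is $\gamma W_0$ with $\gamma\in\langle\iota_1,\dots,\iota_4\rangle$.
\item All products $\iota_j\iota_k$ lie in $\Egp''$, since $\iota_2\iota_3=S$, $\iota_3\iota_4=T^{-1}$ and $\iota_1\iota_4=VSV^{-1}$.
\item When $\gamma$ has odd length, $\gamma\Dcir_j=(\gamma\iota_k)\Dcir_j$ for any $k\neq j$.
\end{itemize}
Hence every circle lies in some $\Egp''\Dcir_j$. In fact $\Egp''$ is exactly the orientation-preserving subgroup of this reflection group.

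The paper avoids wheels altogether. It identifies the Ford circles as $\Gamma_1^T(2)\Dcir_4\cup\Gamma_1^T(2)V\Dcir_1$. It then transports them to the neighbours of any even-curvature circle $M\Dcir_1$ with $M\in\Egp'$ (Lemma~\ref{lem:Egpevenvsodd}). Finally, it links every circle to $\Dcir_1$ through even-curvature circles using the reduction process.
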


\begin{proof}
First, we claim that if $W\in\Egp'$, then $W(\Dcir_i)$ is in the Eisenpint strip packing for all $i=1,2,3,4$. Consider the word $W$ as comprised of \emph{chunks}, where $V$ is one chunk, and any element of $\Gamma_1^T(2)$ is one chunk. We induct on the number of chunks in $W$.

First, $\Dcir_1$ through $\Dcir_4$ are all consecutively immediately tangent and lie in the Eisenpint strip packing, giving the result for the empty word. In general, if the word $W$ ends in $V$, then write $W=W'V$, and $W(\Dcir_i)=W'(\Dcir_j)$ for $\{i, j\}=\{1, 2\}$ or $\{3, 4\}$, hence we have already found this circle. Otherwise, the word $W$ ends in a non-trivial element of $\Gamma_1^T(2)$: $W=W'M$ with $M\in\Gamma_1^T(2)$ being the final chunk. If $i=1$, then $M(\Dcir_1)=M(\widehat{\RR})=\widehat{\RR}$, so $W(\Dcir_1)=W'(\Dcir_1)$, and this circle was already found to be in the Eisenpint strip packing. We see that this is immediately tangent to $W(\Dcir_2)$ and $W(\Dcir_4)$, whence they are in the Eisenpint strip packing as well. Finally, each of these circles is immediately tangent to $W(\Dcir_3)$, finishing the induction.  This shows $\Egp'(\Dcir_1)\cup\Egp'(\Dcir_4)$ is a subset of the Eisenpint strip packing.

Next, let $R_0=\{\Dcir_1\}$, and for all $n\geq 1$, let $R_n$ denote all the Eisenpint strip packing circles which are in $R_{n-1}$ or immediately tangent to a circle in $R_{n-1}$ of \emph{even} reduced curvature. We claim by induction that $R_n$ is in the union $\Egp'(\Dcir_1)\cup\Egp'(\Dcir_4)$ for all $n$. The claim is immediate for $n=0$, and for $n=1$ this is the claim that all Ford circles are in $\Egp'(\Dcir_1)\cup\Egp'(\Dcir_4)$, which was demonstrated at the start of this section. In general, take a circle $\cir$ in $R_{n+1}$ that is not in $R_n$. Thus, it is tangent to a circle $\cir'\in R_n$ of even reduced curvature. By Lemma~\ref{lem:Egpevenvsodd}, $\cir'=M(\Dcir_1)=M(\widehat{\RR})$ for $M\in\GammaE$. Consider all the circles in $M(R_1)$: this is $M$ acting on $\widehat{\RR}$ and all Ford circles, i.e. $\widehat{\RR}$ and all immediately tangent circles. In particular, the result will be $\cir'$ and all immediately tangent circles in the Eisenpint arrangement, which therefore includes $\cir$. Thus, the induction holds.

Finally, we show that all Eisenpint strip packing circles are found in $\cup_{n=0}^{\infty} R_n$. Pick any Eisenpint strip packing circle $\cir$, and it suffices to show that we can walk via tangencies across \emph{only even curvature} circles to reach $\widehat{\RR}$.  Pick any 4-wheel containing our starting circle $\cir$, and consider reducing this 4-wheel, necessarily ending up at a cyclic permutation of either $(\Dcir_1, \Dcir_2, \Dcir_3, \Dcir_4)$ or $(\Dcir_4, \Dcir_3, \Dcir_2, \Dcir_1)$. Take the union of the circles we see along the way: of all the $4$-wheels in the reduction process. We claim this union contains a tangency path from $\cir$ to $\Dcir_1$, consisting only of even curvatures (after the starting circle). 

They key observation is that in a given 4-wheel with primitive integral curvatures, every circle is tangent to exactly 1 odd and 1 even curvature circle: this is the content of Lemma~\ref{lem:twoeventwoodd}. Now, start a tangency path at $\cir$.  We consider step-by-step the 4-wheel reduction process. When we get to a point where we are swapping out $\cir$, then to the tangency path we add the even curvature circle $\cir'$ that is tangent to  $\cir$ in the current 4-wheel.  This is neither $\cir$ nor the circle replacing it in the swap.   Continue reducing the 4-wheel, and when we swap out $\cir'$, we again extend the tangency path by adding the even curvature circle $\cir''$ that is tangent to $\cir'$ in the current 4-wheel.   In this way the last circle in our tangency path is always in the current $4$-wheel.  Repeating this procedure, at the very end, we have a tangency path from $\cir$ to one of $\Dcir_1, \Dcir_2, \Dcir_3, \Dcir_4$, where all curvatures (save possibly the first) are even. If we are at $\Dcir_1$ we are done, and otherwise we finally go from $\Dcir_2$ to $\Dcir_1$, and are done. This completes the proof that $\Egp'(\Dcir_1)\cup\Egp'(\Dcir_4)$ is the Eisenpint strip packing.

The claim for $\Egp''$ follows quickly from this. For any word $W$ with an odd number of $V$'s, we have $W(\Dcir_1)=WV(\Dcir_2)$ and $W(\Dcir_4)=WV(\Dcir_3)$, which implies the result.
\end{proof}

\begin{proposition}
    \label{prop:moietyorbit}
    The orbits $\Egp'' \cdot \Dcir_1$ and $\Egp'' \cdot \Dcir_3$ form one moiety of the strip Eisenstein packing, and $\Egp'' \cdot \Dcir_2$ and $\Egp''\cdot \Dcir_4$ form the other.
\end{proposition}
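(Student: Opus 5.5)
The moieties are the two sides of the bipartite tangency graph of the strip packing, with $\Dcir_1,\Dcir_3$ on one side and $\Dcir_2,\Dcir_4$ on the other. The plan has three steps. First, show that every element of $\Egp''$ preserves this bipartition. Second, conclude that the four orbits land in the claimed moieties. Third, use Proposition~\ref{prop:orbitsE}, which says the four orbits together cover the whole strip packing, to get equality.

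The key step is the first one. Every element $g\in\Egp''$ is a M\"obius transformation in $\PSL(2,\ZZ[\om])$, and it maps the Eisenpint strip packing to itself: the proof of Proposition~\ref{prop:orbitsE} showed that $W(\Dcir_i)$ lies in the strip packing for every $i$ and every $W\in\Egp'$. Since $g$ is a homeomorphism of $\widehat{\CC}$, it preserves tangency between circles. So $g$ induces an automorphism of the tangency graph of the packing.

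The tangency graph is connected and bipartite. It is connected because the packing is tangency-connected by construction. It is bipartite because the swaps $S_1,S_3$ only replace circles in positions $1,3$, and $S_2,S_4$ only replace circles in positions $2,4$; every tangency occurs inside some 4-wheel, and within a 4-wheel circles in positions $1,3$ are only tangent to circles in positions $2,4$. A graph automorphism of a connected bipartite graph either preserves both sides or swaps them. So it is enough to check one vertex. The real line $\Dcir_1$ lies in the first moiety, and the image $g(\Dcir_1)$ is a circle tangent to $g(\Dcir_2)$. The question is which side $g(\Dcir_1)$ is on.

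To decide this, I would use parity of reduced curvature. Circles in the $\Egp''$-orbit of $\Dcir_1$ have even reduced curvature by Lemma~\ref{lem:Egpevenvsodd}, and $\Egp''\subseteq\Egp'$. Applying the same lemma to $\Dcir_2=V(\Dcir_1)$, the orbit of $\Dcir_2$ also has even reduced curvature. Hence reduced curvature parity does not distinguish the two moieties, and a different invariant is needed.

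I would instead use the coset structure from Table~\ref{table:F4circles} and Lemma~\ref{lem:Egpevenvsodd}. By Lemma~\ref{lem:twoeventwoodd}, in standard position each moiety contains both even and odd curvatures: $\Dcir_1,\Dcir_2$ are even and $\Dcir_3,\Dcir_4$ are odd. A finer invariant is needed. For an even circle $\cir$ in a 4-wheel $(a,b,c,d)$ with $a$ even, the two tangent neighbours are $b$ (same parity as $a$) and $d$ (opposite parity). The even circles in the first moiety are exactly those reached from $\Dcir_1$ by a path $\Dcir_1\to\Dcir_2$-type step applied an even number of times. So the plan is to track a $\ZZ/2$ invariant along even-to-even tangencies. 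I would try the class of the tangency point in $\widehat{\FF_2}$, or equivalently whether $t/2$ is even or odd in the reduced coordinates mod $4$: $\Dcir_1=(0,0,0,1)$ and $\Dcir_2=(0,2,0,-1)$ differ in $t\bmod 4$. The claim to verify is that $\Gamma_1^T(2)$ preserves this class and $V$ also preserves it, while an even-even tangency flips it. This verification is the main obstacle.

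If that invariant is awkward, there is a cleaner alternative. Show directly that each generator of $\Egp''$ fixes the side, using the fact that it is enough to check one vertex. Elements of $\Gamma_1^T(2)$ fix $\Dcir_1=\widehat{\RR}$ setwise, and they also fix its orientation because they have determinant $1$ and real entries, so they preserve the bipartition. It remains to handle the generators of $\Egp''$ involving $V$, which can be taken to be $V\Gamma V$ with $\Gamma$ ranging over $\Gamma_1^T(2)$. Each such element is a product $VMV$ with $M\in\Gamma_1^T(2)$. Here $V$ swaps the two sides, since it exchanges $\Dcir_1$ and $\Dcir_2$, and $M$ preserves them, so the product $VMV$ preserves them. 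This settles the first step. Since $\Egp''$ preserves sides, the orbits of $\Dcir_1$ and $\Dcir_3$ lie in one moiety and the orbits of $\Dcir_2$ and $\Dcir_4$ lie in the other. Equality then follows from the covering statement in Proposition~\ref{prop:orbitsE}.
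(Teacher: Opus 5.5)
Your final argument (the ``cleaner alternative'') is correct and is essentially the paper's proof. The paper inducts on the number of chunks in a word of $\Egp''$, using exactly that $\Gamma_1^T(2)$ fixes $\Dcir_1$ and that $VMV(\Dcir_2)=\Dcir_2$; this is your observation that $V$ swaps the two sides while $M\in\Gamma_1^T(2)$ preserves them, which you repackage as automorphisms of the connected bipartite tangency graph. Two small remarks. The reduced-curvature-parity and mod-$4$ invariant detour is unnecessary and can be deleted. Your explicit use of the covering half of Proposition~\ref{prop:orbitsE}, both to see that $\Egp'$ acts on the packing and to upgrade containment to equality, is a step the paper leaves implicit.
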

\begin{proof}
    If a circle is in one moiety of the packing, then any immediately tangent circles must be in the other. With this in mind, call the two moieties of our packing ``blue'' and ``red'', and say $\Dcir_1$ is blue. It follows that $\Dcir_2,\Dcir_4$ are red, hence $\Dcir_3$ is blue.

    We claim that for all $W\in\Egp''$, $W(\Dcir_1)$ and $W(\Dcir_3)$ are blue, whereas $W(\Dcir_2)$ and $W(\Dcir_4)$ are red. As in Proposition~\ref{prop:orbitsE}, write $W$ as a product of chunks, where $V$ is one chunk and any element of $\Gamma_1^T(2)$ is one chunk. We induct on the number of chunks.

    The base case is where $W$ is empty, and was covered above. For the induction step, note that it suffices to prove only one of the four claims ($W(\Dcir_1)$ is blue, etc.), as the others will all follow immediately from the immediate tangencies swapping moieties.

    If $W$ ends in a non-trivial word in $\Gamma_1^T(2)$, then write $W=W'M$ with $M\in\Gamma_1^T(2)$ the final chunk. Then, $W(\Dcir_1)=W'(M(\Dcir_1))=W'(\Dcir_1)$, where $W'$ has one less chunk than $W$. By induction, $W'(\Dcir_1)$ is blue, which implies the result.

    Otherwise, $W$ ends in a $V$, and since $W\in\Egp''$, we can write $W=W'VMV$ for some $W'\in\Egp''$ and $M\in\Gamma_1^T(2)$. Now, we see that
    \[W(\Dcir_2)=W'VMV(\Dcir_2)=W'VM(\Dcir_1)=W'V(\Dcir_1)=W'(\Dcir_2).\]
    As $W'$ has 3 less chunks than $W$, by induction, $W'(\Dcir_2)=W(\Dcir_2)$ is red, which again implies the result.
.
\end{proof}

In an analogous fashion to Proposition~\ref{prop:gamma0pgl2generators}, the following elements generate $\Gamma_1^T(2)$: 
\[
T: z \mapsto z + 2, \quad
S: z \mapsto z/(z+1).
\]
The fundamental domain of $\Gamma_1^T(2)$ in the upper half plane is bounded by $\Re(z) = 1$, $\Re(z) = -1$, the circle centred at $1$ of radius $1$, and the circle centred at $-1$ of radius $1$, having two cusps, as in Figure~\ref{fig:fundreg}.

\begin{figure}
\begin{tikzpicture}[scale=2.2, every node/.style={font=\small}]

  % --- Clip region for shading ---
  % The fundamental domain: between x=-1 and x=1, above the two unit circles,
  % up to some height (representing the cusp at infinity).
  \pgfmathsetmacro{\htop}{3.2}

  % Shade the fundamental domain
  % Path: go up x=-1 from where it meets |z+1|=1 (which is at y=0, but the
  % circle only contributes in the upper half), up to top; across to x=1;
  % down to where x=1 meets |z-1|=1; then arc along |z-1|=1 to origin;
  % then arc along |z+1|=1 back to x=-1.
  \begin{scope}
    \fill[colblue]
      (-1, 0) -- (-1, \htop) -- (1, \htop) -- (1, 0)
      arc[start angle=0, end angle=90, radius=1]     % along |z-1|=1 from (1,0) up to (0,1)
      -- (0, 1)                                        % top of right arc (redundant but clear)
      arc[start angle=90, end angle=180, radius=1]    % along |z+1|=1 from (0,1) to (-1,0)
      -- cycle;
    % Now cut out the interiors of the two circles by filling white
    \fill[white]
      (1, 0) arc[start angle=0, end angle=180, radius=1] -- (1,0) -- cycle;
  \end{scope}
  % Redo: cleaner approach
  \begin{scope}
    \clip (-1, 0) rectangle (1, \htop);
    \fill[colblue]
      (-1, 0) -- (-1, \htop) -- (1, \htop) -- (1, 0) -- cycle;
    % Cut out circle |z-1|=1
    \fill[white] (1, 0) circle[radius=1];
    % Cut out circle |z+1|=1
    \fill[white] (-1, 0) circle[radius=1];
  \end{scope}

  % --- Boundary curves ---

  % Vertical lines (only upper half plane portion shown)
  \draw[thick] (-1, 0) -- (-1, \htop);
  \draw[thick] (1, 0) -- (1, \htop);

  % Circle arcs in upper half plane
  % |z+1|=1: center (-1,0), from angle 0 (at origin) to angle 90 (at (-1,1))
  % But we want the arc from (-1,0) side... 
  % Center (-1,0): angle 0 -> point (0,0), angle 90 -> point (-1,1)
  \draw[thick] (0,0) arc[start angle=0, end angle=90, x radius=1, y radius=1];
  % Actually let's just draw the upper semicircles and clip
  \begin{scope}
    \clip (-1, 0) rectangle (1, 1.05);
    \draw[thick] (-1, 0) circle[radius=1];
    \draw[thick] (1, 0) circle[radius=1];
  \end{scope}

  % --- Dashed continuation of circles below to show they're circles ---
  % (optional, skip for cleanliness)

  % --- Axes for context (light) ---
  \draw[gray, thin, ->] (-1.8, 0) -- (1.8, 0) node[right, black] {$\operatorname{Re}$};
  \draw[gray, thin, ->] (0, -0.15) -- (0, \htop + 0.15);

  % --- Tick marks and labels on real axis ---
  \foreach \x/\lab in {-1/{-1}, 1/{1}} {
    \draw (\x, 0.04) -- (\x, -0.04);
    \node[below] at (\x, -0.06) {$\lab$};
  }

  % --- Label the cusp at 0 ---
  \fill (0, 0) circle[radius=0.03];
  \node[below] at (0, -0.06) {$0$};

  % --- Label the circles ---
  \node[gray] at (-1.35, 0.6) {$|z{+}1|{=}1$};
  \node[gray] at (1.35, 0.6) {$|z{-}1|{=}1$};

  % --- Label boundary lines ---
  \node[left] at (-1, 2.2) {$\operatorname{Re}(z)={-}1$};
  \node[right] at (1, 2.2) {$\operatorname{Re}(z)=1$};

  % --- Corner labels ---
  \fill (-1, 1) circle[radius=0.03];
  \node[left] at (-1.06, 1) {$-1{+}i$};
  \fill (1, 1) circle[radius=0.03];
  \node[right] at (1.06, 1) {$1{+}i$};

\end{tikzpicture}
  \caption{The fundamental domain of $\Gamma_1^T(2)$.}
  \label{fig:fundreg}
\end{figure}

Let $K$ be a number field. 
An algebraic group $G$ has \emph{strong approximation} if $G(K)$ is dense in $G(\mathbb{A}_K^{\infty})$, where $\mathbb{A}_K^\infty$ denotes the finite adeles. A subgroup $\Gamma \subseteq G(\mathcal{O}_K)$, which is Zariski dense in $G$, is said to have strong approximation if the closure in $G(\mathbb{A}_K^\infty)$ is an open subgroup. More directly, $\Gamma$ has strong approximation if $\Gamma$ and $G(\mathcal{O}_K)$ have the same reduction modulo $\mathfrak{p}$ for all but finitely many prime ideals $\mathfrak{p} \subseteq \mathcal{O}_K$, and at those finitely many \emph{bad} primes $\mathfrak{p}$,
\[
\Gamma \bmod{\mathfrak{p}^k} / \Gamma \bmod{\mathfrak{p}^{k-1}}
= 
G(\mathcal{O}_K) \bmod{\mathfrak{p}^k} / G(\mathcal{O}_K) \bmod{\mathfrak{p}^{k-1}}
\]
for sufficiently large $k$, say $k > m_\mathfrak{p}$.  The \emph{bad modulus} is $\prod_{\mathfrak{p} \text{ bad}} \mathfrak{p}^{m_\mathfrak{p}}$.  

In \cite[Theorem 8.1]{FSZ19}, a slight variation on this definition of strong approximation is given, where the bad modulus is a rational integer. This integer controls the congruence obstructions that can occur in certain types of packings arising from Kleinian groups.

\begin{theorem}
\label{thm:hypotheses}
    The group $\Egp''$ satisfies the hypotheses of \cite[Theorem 1.3, 1.6]{FSZ19}, namely:
    \begin{enumerate}
    \item \label{it:PSL} it is a Kleinian group contained in $\PSL(2,\ZZ[\omega])$;
    \item \label{it:congruence} it contains a congruence subgroup of $\PSL(2,\ZZ)$;
    \item \label{it:fg} it is finitely generated;
     \item \label{it:Hausdorff} the limit set has Hausdorff dimension greater than $1$;
    \item \label{it:Zariski} it is Zariski dense (in the sense of \cite{FSZ19});
    \item \label{it:covolume} it has infinite covolume;
    \item \label{it:geofinite} it is geometrically finite.
    \end{enumerate}
\end{theorem}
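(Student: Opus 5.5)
I will verify the seven hypotheses one at a time. Most are structural facts about $\Egp''$ and the strip packing; the main obstacle is Zariski density.

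For \eqref{it:PSL}, note that $\Egp''=\Egp'\cap\SL(2,\ZZ[\om])$. The element $V=\sm{-1}{2\om}{0}{1}$ has entries in $\ZZ[\om]$, because $1+\sqrt{-3}=2\om$. Hence every element of $\Egp''$ is integral over $\ZZ[\om]$, and its image lies in $\PSL(2,\ZZ[\om])$. Discreteness is then inherited from $\PSL(2,\ZZ[\om])$, so $\Egp''$ is Kleinian.

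For \eqref{it:congruence}, observe that $\Gamma_1^T(2)\subseteq\Egp''$. This subgroup contains the principal congruence subgroup $\Gamma(2)$ of $\PSL(2,\ZZ)$.

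For \eqref{it:fg}, recall that $\Gamma_1^T(2)=\langle S,T\rangle$. Since $V$ has determinant $-1$, the group $\Egp''$ has index $2$ in $\Egp'=\langle S,T,V\rangle$. Concretely, $\Egp''$ is generated by $S$, $T$, $VSV$ and $VTV$; this follows by a Reidemeister--Schreier argument, using that $V^2=I$.

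For \eqref{it:covolume} and \eqref{it:Hausdorff}, I will use the geometry of the Eisenpint strip packing. By Proposition~\ref{prop:orbitsE}, the orbit of $\Dcir_1,\dots,\Dcir_4$ under $\Egp''$ is exactly the strip packing. The interiors of these circles are open disks that miss the limit set, so the limit set is a proper subset of $\widehat{\CC}$. Therefore $\Egp''$ is not a lattice and has infinite covolume.

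The limit set is in fact the residual set of the packing. It contains the limit circle $\widehat{\RR}$ of the Fuchsian subgroup $\Gamma_1^T(2)$, and so has Hausdorff dimension at least $1$. To get strict inequality I will appeal to a standard result: a geometrically finite non-elementary Kleinian group whose limit set is not contained in a circle has Hausdorff dimension strictly greater than $1$. This is the theorem of Sullivan and Bishop--Jones. Alternatively, I could cite the computation $\delta\approx1.41$ only as supporting evidence.

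For \eqref{it:geofinite}, I plan to exhibit a finite-sided fundamental polyhedron in $\mathbb{H}^3$. I will take the region above the hemispheres over the circles $\mathcal{T}_1,\mathcal{T}_2$, between the planes over $\mathcal{T}_3,\mathcal{T}_4$, and cut by the half-turn $V$ together with the side pairings $S,T$. Equivalently, I can use the reflection group generated by inversions in $\mathcal{T}_1,\dots,\mathcal{T}_4$, which has finitely many sides; $\Egp''$ is commensurable with the orientation-preserving part of $\langle\Egp'',\text{reflections}\rangle$. A finite-sided polyhedron gives geometric finiteness. The cusps at $\infty$ and $0$ are handled by the parabolic elements $T$ and $S$.

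Finally, \eqref{it:Zariski} is the main obstacle. The Zariski closure must be the full $\SL_2$ over $\QQ(\om)$, viewed via restriction of scalars as $\mathrm{Res}_{\QQ(\om)/\QQ}\SL_2$, and not merely $\SL_2(\RR)$ or a real form. My argument is as follows. The subgroup $\Gamma_1^T(2)$ is already Zariski dense in $\SL_2$ over $\QQ$. The element $VSV$ conjugates $\Gamma_1^T(2)$ to a Fuchsian group preserving the line $V(\widehat{\RR})=\Dcir_2$, a different circle. So the Zariski closure contains two distinct conjugates of $\SL_2(\RR)$ inside $\SL_2(\CC)$. Such conjugates generate the whole group, since $\SL_2(\RR)$ is a maximal connected subgroup of $\SL_2(\CC)$ regarded as a real group. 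I will check this against the precise definition in \cite{FSZ19}. That definition likely asks that the limit set is not contained in a circle, which follows from the limit set containing both $\widehat{\RR}$ and $\Dcir_2$ while not being a single circle, and that the trace field is $\QQ(\om)$. The trace field condition holds because, for example, $\operatorname{tr}(VSV\cdot T)$ involves $\om$.
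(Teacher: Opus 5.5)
Your plan covers all seven items and is correct apart from one misquoted theorem and two small slips. Items (1)–(3) match the paper; your Reidemeister–Schreier generators $S,T,VSV,VTV$ are right. Items (5)–(7) follow a different route from the paper.

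\textbf{The step that fails as written is item (4).} The ``standard result'' you state is false. Take a classical Schottky group built from small, widely separated disks, with one generator of non-real multiplier. It is geometrically finite and non-elementary, and it preserves no circle, so its limit set lies in no circle. Yet that limit set is a Cantor set of dimension as small as you like. The correct tool is Corollary~1.8 of Bishop--Jones, which is what the paper uses: for finitely generated $G$, the limit set $\Lambda(G)$ is totally disconnected, a circle, or of dimension $>1$. You already hold the inputs. $\widehat{\RR}\subseteq\Lambda$ rules out total disconnectedness. $V\Gamma_1^T(2)V\subseteq\Egp''$ is a Fuchsian group with limit set $\Dcir_2$, so $\Lambda$ is not a circle.

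The two slips are in item (5). First, the conjugate preserving $\Dcir_2$ is $V\Gamma_1^T(2)V$; conjugating by $VSV$ instead gives a group preserving $VSV(\widehat{\RR})$, though either serves your purpose. Second, $\operatorname{tr}(VSV\cdot T)=0$, so it does not exhibit $\om$; use $\operatorname{tr}(VSV\cdot S)=2\sqrt{-3}$.

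\textbf{Item (5).} The paper uses the spin map to $\mathrm{O}_{3,1}$ and a classification of possible Zariski closures, relying on (4) to exclude the small ones. Your argument has three ingredients:
Borel density of $\Gamma_1^T(2)$ in $\SL_2(\RR)$;
two distinct conjugates of $\SL_2(\RR)$ in the closure;
maximality of $\mathfrak{sl}_2(\RR)$ in $\mathfrak{sl}_2(\CC)$, since the quotient $i\,\mathfrak{sl}_2(\RR)$ is an irreducible adjoint module.
This is valid, self-contained, and independent of (4).

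\textbf{Item (6).} You avoid a fundamental domain altogether. Add one justification: the residual set of the packing is closed, nonempty and $\Egp''$-invariant. Invariance holds because group elements permute packing circles and, interiors being disjoint, send interiors to interiors. The residual set therefore contains the limit set.

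\textbf{Item (7).} Your Coxeter-group idea is sound, but the phrase ``cut by the half-turn $V$'' should go, since $V\notin\Egp''$. The idea should also be made exact. Write $R_{\mathcal{T}_i}$ for inversion in $\mathcal{T}_i$. Then $S=R_{\mathcal{T}_2}R_{\mathcal{T}_3}$ and $T=R_{\mathcal{T}_4}R_{\mathcal{T}_3}$. Also $V$ conjugates $R_{\mathcal{T}_1}\leftrightarrow R_{\mathcal{T}_2}$ and $R_{\mathcal{T}_3}\leftrightarrow R_{\mathcal{T}_4}$. Hence $VSV=R_{\mathcal{T}_1}R_{\mathcal{T}_4}$ and $VTV=T^{-1}$. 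So $\Egp''$ is \emph{equal} to the orientation-preserving subgroup of the reflection group of $\mathcal{T}_1,\dots,\mathcal{T}_4$, not merely commensurable with it.

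The Coxeter polyhedron has dihedral angles $\pi/2$ and $0$. Doubling it across $\mathcal{T}_3$ gives the finite-sided domain $\{|\Re z|\le 1,\ |z\pm 1|\ge 1,\ |z-\sqrt{3}\,i|\ge 1\}$. This route buys more than the paper's six-wall list, which omits the hemisphere over $\mathcal{T}_1$. The half-turn $VSV\cdot T=R_{\mathcal{T}_1}R_{\mathcal{T}_3}\in\Egp''$ has its axis passing through the interior of the region the paper describes, so your description is the more reliable one.
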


\begin{proof}
Parts \eqref{it:PSL} through \eqref{it:fg} are immediate from the definition. Next, \cite[Corollary 1.8]{BishopJones} implies that the limit set of a finitely-generated Kleinian group is either totally disconnected, a circle, or of Hausdorff dimension $>1$. The limit set of $\Egp'$ (hence equivalently $\Egp''$) includes $\widehat{\RR}$ and other circles, whence it is of Hausdorff dimension $>1$.  This verifies \eqref{it:Hausdorff}.

Now we consider Zariski density.  The sense in \cite{FSZ19} is to consider $\PGL(2, \CC)$ as a \emph{real} algebraic group; that is, use Weil restriction.  In this setup, we can move between Kleinian groups and subgroups of the orthogonal group $\operatorname{O}_{3,1} < \operatorname{GL}_4$ via the spin homomorphism $\SL(2,\CC) \rightarrow \operatorname{SO}_{3,1}^+(\RR)$.  In particular, consider the Zariski closure $G$ of the image of $\Egp''$.  The possibilities for this group are classified as in the proofs of \cite[Theorem 9.4]{StangeApollonianStructureBianchi} and \cite[Lemma 1.6(ii)]{FuchsStrongApproximation}.  In particular, since $G$ is not finite and satisfies \eqref{it:Hausdorff}, it is either $\operatorname{O}_{3,1}$ or $\operatorname{SO}_{3,1}$, exactly as in \cite[Theorem 9.4]{StangeApollonianStructureBianchi}.  Pulling back through the spin map, we find that $\Egp''$ is Zariski dense, i.e. \eqref{it:Zariski}.

For infinite covolume and geometrical finiteness, we describe a fundamental domain.  Using the generators and fundamental domain already listed for $\Gamma_1^T(2)$ above, together with $V$, we obtain a fundamental domain with the following geodesic walls (given by their boundaries in $\widehat{\CC}$):
\begin{equation*}
\begin{alignedat}{3}
A &: |z - 1| = 1, \qquad &
C &: \Re(z) = 1, \qquad &
E &: \Im(z) = \sqrt{3}/2, \\
B &: |z + 1| = 1, \qquad &
D &: \Re(z) = -1, \qquad &
F &: \Im(z) = -\sqrt{3}/2.
\end{alignedat}
\end{equation*}
It is therefore both of infinite covolume and geometrically finite.  This verifies \eqref{it:covolume} and \eqref{it:geofinite}.
\end{proof}

These include all the hypotheses necessary to apply \cite[Theorem 1.3, 8.1]{FSZ19}, obtaining the following.

\begin{corollary}
The group $\Egp''$ satisfies strong approximation, and has a geometric and combinatorial spectral gap.
\end{corollary}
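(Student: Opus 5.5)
The corollary will follow by feeding Theorem~\ref{thm:hypotheses} into the general machinery of \cite{FSZ19}. No new geometric input should be needed, only a check that the precise form of each hypothesis matches what those theorems require.

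\textbf{Strong approximation.} I would invoke \cite[Theorem 8.1]{FSZ19}. In the setting of that paper, any finitely generated, Zariski dense subgroup of $\PSL(2,\mathcal{O}_K)$ containing a congruence subgroup of $\PSL(2,\ZZ)$ has strong approximation with a rational integer bad modulus. These are items \eqref{it:PSL}, \eqref{it:congruence}, \eqref{it:fg} and \eqref{it:Zariski} of Theorem~\ref{thm:hypotheses}.

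Zariski density here is in the Weil-restricted sense, and it has already been verified via the spin homomorphism to $\operatorname{SO}_{3,1}$. The underlying mechanism I would recall is Matthews--Vaserstein--Weisfeiler/Nori. Zariski density of the image in the real form gives surjectivity of $\Egp''$ modulo all but finitely many primes of $\ZZ[\om]$. The presence of $\Gamma_1^T(2)$, which contains the principal congruence subgroup $\Gamma(2)\cap\SL(2,\ZZ)$, then pins the behaviour at the remaining primes to a controlled form.

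\textbf{Spectral gap.} I would apply \cite[Theorem 1.3]{FSZ19}. It requires in addition that $\Egp''$ be geometrically finite with infinite covolume and with limit set of Hausdorff dimension $\delta>1$. These are items \eqref{it:Hausdorff}, \eqref{it:covolume} and \eqref{it:geofinite}, coming from the explicit finite-sided fundamental domain with walls $A$ through $F$.

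Under these hypotheses, Lax--Phillips gives that the bottom of the spectrum $\delta(2-\delta)$ is an isolated eigenvalue. The expansion results of Bourgain--Gamburd--Sarnak, made uniform over congruence quotients in \cite{FSZ19}, then give a uniform geometric spectral gap for the congruence covers $\Egp''(\mathfrak{q})\backslash\mathbb{H}^3$. The equivalent combinatorial expansion of the Cayley graphs of $\Egp''\bmod\mathfrak{q}$ with respect to a fixed generating set follows in the same way.

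\textbf{Main obstacle.} The only step I expect to need real care is confirming that the group used is exactly one to which \cite{FSZ19} applies. The group $\Egp'$ contains $V$ of determinant $-1$, so it is not in $\PSL(2,\ZZ[\om])$. This is precisely why the statement is made for the index-two subgroup $\Egp''$, and I would check three things about it:
\begin{enumerate}
\item $\Egp''$ is still finitely generated, which holds because it has finite index in a finitely generated group.
\item $\Egp''$ is geometrically finite, which holds because geometric finiteness passes to finite-index subgroups.
\item $\Egp''$ has the same limit set as $\Egp'$, so $\delta>1$ persists.
\end{enumerate}
With these checked, the corollary is a direct citation.
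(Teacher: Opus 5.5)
Your proposal is correct and matches the paper: the corollary is obtained there as a direct citation of \cite[Theorem 1.3, 8.1]{FSZ19}, with every hypothesis supplied by Theorem~\ref{thm:hypotheses}. The checks you flag as the main obstacle (finite generation, geometric finiteness, and $\delta>1$ for the index-two subgroup $\Egp''$) are already handled in that theorem. Your remarks on the underlying mechanisms (Matthews--Vaserstein--Weisfeiler, Lax--Phillips, expansion) are background rather than steps the paper needs.
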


Furthermore by \cite[Theorem 8.1]{FSZ19} and the remark following, the only bad primes for strong approximation are potentially $p=2,3$. Adopt the notation of this theorem.

\begin{proposition}\label{prop:SAmod3}
    The prime $3$ is a good prime for strong approximation for $\Egp''$.
\end{proposition}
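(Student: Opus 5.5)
The plan is to show directly that $\Egp''$ surjects onto $\SL(2,\ZZ[\om]/3^k\ZZ[\om])$, up to $\pm 1$, for every $k\geq 1$. That is exactly the statement that $3$ imposes no level in the sense of \cite[Theorem 8.1]{FSZ19}. The approach is to exhibit enough elementary (unipotent) matrices inside $\Egp''$ modulo $3^k$, and then use the fact that $\SL_2$ of a local ring is generated by elementary matrices.

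First I will collect two kinds of elements of $\Egp''$.
\begin{enumerate}
    \item Since $\Gamma_1^T(2)\subseteq \Egp''$ and $\Gamma_1^T(2)$ is a congruence subgroup of $\SL(2,\ZZ)$ of level $2$, coprime to $3$, its image modulo $3^k$ is all of $\SL(2,\ZZ/3^k)$. In particular $\Egp''$ contains, modulo $3^k$, every $\sm{1}{b}{0}{1}$ and every $\sm{1}{0}{c}{1}$ with $b,c\in\ZZ$.
    \item For $M\in\Gamma_1^T(2)$, the product $VMV$ has determinant $1$, so it lies in $\Egp''$. Writing $V=\sm{1}{2\om}{0}{1}\sm{-1}{0}{0}{1}$, conjugation by $\sm{-1}{0}{0}{1}$ preserves $\Gamma_1^T(2)$. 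Hence $\Egp''$ contains $N\Gamma_1^T(2)N^{-1}$ with $N=\sm{1}{2\om}{0}{1}$.
\end{enumerate}

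Next I will conjugate a lower unipotent $\sm{1}{0}{c}{1}$ by $N$. This produces a matrix whose entries involve $2\om c$ and $4\om^2c$. Multiplying by suitable elements from the first step should cancel the rational parts, leaving an upper unipotent $\sm{1}{\lambda\om}{0}{1}$ modulo $3^k$, with $\lambda$ a unit multiple of $c$. Since $2$ is a unit modulo $3$, I expect every $\sm{1}{x}{0}{1}$ with $x\in\ZZ[\om]/3^k$ to appear. Conjugating by $\sm{0}{-1}{1}{0}$, which is available modulo $3^k$ from the first step, then gives all lower unipotents over $\ZZ[\om]/3^k$. Because $\ZZ[\om]/3^k\ZZ[\om]$ is a local ring (the ideal $3$ is $(\sqrt{-3})^2$, ramified), $\SL_2$ of it is generated by upper and lower elementary matrices. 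This yields surjectivity for every $k$, so no power of $3$ appears in the bad modulus.

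The main obstacle is the explicit unipotent bookkeeping. The commutator or conjugate $N\sm{1}{0}{c}{1}N^{-1}$ is not itself unipotent-upper, so I will need a short product, for instance $N\sm{1}{0}{c}{1}N^{-1}$ times appropriate integral matrices, or a commutator of it with an integral upper unipotent. The aim is to isolate an upper unipotent with a genuinely $\om$-valued off-diagonal entry, and to check that the resulting $\om$-coefficient is a $3$-adic unit times $c$ rather than a multiple of $\sqrt{-3}$. If a direct product is messy, my fallback is to work only modulo $3$, or modulo $9$, where a small computation suffices. I would then invoke the standard lifting lemma, in the form used in \cite{FSZ19}, that surjectivity at a low level together with Zariski density (Theorem~\ref{thm:hypotheses}) propagates to all $3^k$.
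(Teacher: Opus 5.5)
Your target is the right one: surjectivity of $\Egp''$ onto $\SL(2,\ZZ[\om]/3^k)$ for all $k$. However, the step that carries all the content is only asserted, and the mechanism you sketch for it provably fails.

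Since $2$ is invertible, $\ZZ[\om]/3^k=(\ZZ/3^k)\oplus\sqrt{-3}\,(\ZZ/3^k)$. Write matrices as $A+\sqrt{-3}B$ with $A,B$ integral. What you need is some $\sm{1}{y}{0}{1}$ whose $\sqrt{-3}$-coordinate is a $3$-adic unit.

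Now $N=\sm{1}{1}{0}{1}\sm{1}{\sqrt{-3}}{0}{1}$. For integral $h=\sm{a}{b}{c}{d}$ of determinant $1$, the $\sqrt{-3}$-part of $NhN^{-1}$ is the $\sm{1}{1}{0}{1}$-conjugate of $\sm{c}{d-a}{0}{-c}$, which has determinant $-c^2$. Multiplying on either side by elements of $\SL(2,\ZZ/3^k)$ does not change this determinant. So if $g_1NhN^{-1}g_2\equiv\sm{1}{y}{0}{1}$, then $c^2\equiv 0$, so $3\mid c$, so $a\equiv d\pmod 3$. Hence the $\sqrt{-3}$-coordinate of $y$ is divisible by $3$, and already modulo $3$ you only recover rational unipotents.

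This rules out both of your candidates:
\begin{itemize}
\item $N\sm{1}{0}{c}{1}N^{-1}$ times integral matrices;
\item its commutator with an integral $u=\sm{1}{b}{0}{1}$, which equals $N(huh^{-1})N^{-1}u^{-1}$ because $N$ commutes with $u$.
\end{itemize}
The torus trick fails in the same way: it gives $\sm{1}{2\om(1-t^2)}{0}{1}$, and $1-t^2\equiv 0\pmod 3$ for every unit $t$.

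Moreover, no element of $\Egp''$ is exactly $\sm{1}{y}{0}{1}$ with $y\notin\ZZ$. Such a translation would have to preserve the pair of lines $\{\Dcir_1,\Dcir_2\}$ of the strip packing, which $\Egp''$ preserves, and this forces $y\in\RR$. So any product that works modulo $3^k$ must change with $k$. Producing these for every $k$ is a genuine lifting problem, not bookkeeping.

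Your fallback does not supply that lifting. Zariski density only gives finitely many bad primes; it says nothing about the level at which surjectivity stabilizes at a bad prime. At $p=3$, surjectivity modulo $p$ does not propagate on its own: Serre's lifting lemma needs $p\ge 5$, and $\SL(2,\ZZ/9)\to\SL(2,\FF_3)$ splits, so $\SL(2,\ZZ_3)$ has proper closed subgroups surjecting onto $\SL(2,\FF_3)$.

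The paper gets the missing quantitative input from Theorem 8.1 of \cite{FSZ19}, namely $m_3\le m_3'+\iota_3$:
\begin{itemize}
\item $m_3'=1$, because the conditions defining $\Gamma_1^T(2)$ are vacuous at $3$;
\item $\iota_3=0$, because the conjugates $WHW^{-1}$, $WRW^{-1}$, $SWHW^{-1}S^{-1}$ (with $W=VSV$), together with $\mathfrak{sl}(2,\ZZ_3)$, span $\mathfrak{sl}(2,\ZZ_3[\om])$.
\end{itemize}
It then verifies surjectivity modulo $3$ directly.

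If you want a self-contained argument, use the filtration by the kernels $K_j$ of reduction modulo $(\sqrt{-3})^j$:
\begin{itemize}
\item each $K_j/K_{j+1}\cong\mathfrak{sl}_2(\FF_3)$ is irreducible under $\SL(2,\FF_3)$;
\item the even levels come from $\SL(2,\ZZ_3)\subseteq\overline{\Egp''}$;
\item level $1$ comes from a single conjugate such as $N\sm{1}{0}{1}{1}N^{-1}$ times an integral matrix;
\item cubing sends level $1$ (with nilpotent leading term) to level $3$, and level $j\ge 2$ to level $j+2$.
\end{itemize}
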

\begin{proof}
First, we bound the exponent of $3$ in the bad modulus to be at most $1$.  Observe that the conditions defining $\Gamma_1^T(2)$ are vacuous modulo powers of $3$.  We can take $\Gamma$ to be a two-power principal congruence subgroup in \cite[Theorem 8.1]{FSZ19}, so that $m_3' = 1$ (in the notation of that theorem).

To apply the theorem, we must also compute the theorem statement's $\iota_3$, i.e. the smallest non-negative integer such that
\[
3^{\iota_3}\mathfrak{sl}(2,\ZZ_3[\omega]) \subseteq \operatorname{Span}_{\ZZ_3} \left( \Egp'' \cdot \mathfrak{sl}(2,\ZZ_3) \right),
\]
where the action in $\Egp'' \cdot \mathfrak{sl}(2,\ZZ_3)$ is the usual conjugation action.  Observe that $\mathfrak{sl}(2,\ZZ)$ is generated by 
\begin{equation}\label{eqn:HRL}
H = \begin{pmatrix} 1 & 0 \\ 0 & - 1 \end{pmatrix}, \quad
R = \begin{pmatrix} 0 & 1 \\ 0 & 0 \end{pmatrix}, \quad
L = \begin{pmatrix} 0 & 0 \\ 1 & 0 \end{pmatrix}.
\end{equation}

Since $\det(V) = -1$, an element of $\Egp''$ is given by
\begin{equation}
\label{eqn:W}
W := VSV = \begin{pmatrix}
    -\sqrt{-3} & 2\sqrt{-3} - 2 \\
    -1 & \sqrt{-3}+2\end{pmatrix}
\end{equation}

Amongst the elements of $\Egp'' \cdot \mathfrak{sl}(2,\ZZ_3)$ are, with $a = \sqrt{-3}$, the images in $\ZZ_3[\sqrt{-3}]$ of these matrices over $\ZZ[\sqrt{-3}]$:
\begin{equation}
\label{eqn:3sl}
\begin{split}
    WHW^{-1} &= \begin{pmatrix} -4a+5 & -4a-12 \\ -2a-4 & 4a -5 \end{pmatrix}, \\
     WRW^{-1} &= \begin{pmatrix} -a & -3 \\ -1 & a \end{pmatrix}, \\
     % WLW^{-1} &\equiv \begin{pmatrix} 4a & 16 \\ 3 & -4a \end{pmatrix} \pmod{3}\\
       SWHW^{-1}S^{-1} &= \begin{pmatrix} 17 & -4a-12 \\ -6a+18 & -17 \end{pmatrix}.\end{split}
\end{equation}

Thus,
\[
\begin{pmatrix} 0 & -4a \\ -2a & 0 \end{pmatrix}, \;
\begin{pmatrix} a & 0 \\ 0 & -a \end{pmatrix}, \;
\begin{pmatrix} 0 & -4a \\ -6a & 0 \end{pmatrix} \in \operatorname{Span}_{\ZZ_3} \left( \Egp'' \cdot\mathfrak{sl}(2,\ZZ_3) \right).
\]
Therefore
\[
\mathfrak{sl}(2,\ZZ_3[\omega]) = \left\langle H, R , L, 
\begin{pmatrix} 0 & 0 \\ a & 0 \end{pmatrix}, \; 
\begin{pmatrix} a & 0 \\ 0 & -a \end{pmatrix}, \;
\begin{pmatrix} 0 & a \\ 0 & 0 \end{pmatrix} \right\rangle \subseteq \operatorname{Span}_{\ZZ_3} \left( \Egp'' \cdot \mathfrak{sl}(2,\ZZ_3) \right),
\]
implying that $\iota_3 = 0$ and $m_3 \leq 1 + 0 = 1$.

To finish, we check surjectivity modulo $3$, i.e. that the reduction of $\Egp''$ modulo 3 gives the reduction of $\SL(2, \ZZ[\om])$ modulo 3, whence $3$ is a good prime.

Observe that $\ZZ[\omega]/(3) \cong \FF_3[x]/(x^2)$, and $(\FF_3[x]/(x^2))^* = \{ a + xb : a \neq 0 \}$.  Similarly,
\[
\SL(2, \FF_3[x]/(x^2)) = \{ A + xB : A \in \SL(2, \FF_3), B \in \Mat(2, \FF_3) \}.
\]
Let $G$ be a subgroup of $\SL(2, \FF_3[x]/(x^2))$ containing $\SL(2, \FF_3)$.  Then consider the subgroup $G' \leq G$ given as follows, where we define the set $D_I$ in terms of $G'$:
\[
G' = \{ I + xB  \in G \} =: I + xD_I.
\]
Then $D_I$ is an $\FF_3$-vector space, since $(I + xA)(I + xB) = I + x(A + B)$.  Observe that for any $M \in \SL(2, \FF_3)$, we have
\[
M D_I M^{-1} \subseteq D_I,
\]
since $M G' M^{-1} \in G'$.
Therefore it is a vector space preserved by conjugation by $\SL(2, \FF_3)$.

Now specialize to the case that $G$ is the image of $\Egp''$ under reduction modulo $3$.
Reducing $W$, $SW$, $WT$, and $WS$, we have
\[
\begin{pmatrix} 2 & 2 \\ 0 & 1 \end{pmatrix}, \;
\begin{pmatrix} 2 & 2 \\ 2 & 0 \end{pmatrix}, \; 
\begin{pmatrix} 2 & 0 \\ 0 & 1 \end{pmatrix}, \; 
\begin{pmatrix} 1 & 2 \\ 1 & 1 \end{pmatrix} \in D_I.
\]
These four matrices are sufficient to generate $\Mat(2, \FF_3)$ by subspace closure.  Therefore, $D_I = \Mat(2, \FF_3)$.  Together with the fact that $G$ contains $\SL(2, \FF_3)$, this implies that $\Egp''$ reduces modulo $3$ to all of $\SL(2, \FF_3[x]/(x^2))$, so $3$ is a good prime.
\end{proof}

\begin{proposition}\label{prop:SAmod2power}
    Strong approximation for $\Egp''$ modulo $2$ has bad modulus $16$.
\end{proposition}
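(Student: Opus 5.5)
We will follow the template of Proposition~\ref{prop:SAmod3}: first bound the $2$-part $m_2$ of the bad modulus using \cite[Theorem 8.1]{FSZ19}, then compute the image of $\Egp''$ modulo successive powers of $2$ until the quotients $\Egp'' \bmod 2^k / \Egp'' \bmod 2^{k-1}$ agree with those of $\SL(2,\ZZ[\om])$. Since $2$ is inert in $\ZZ[\om]$, we work in $\ZZ_2[\om]$, whose residue ring modulo $2^k$ is $\ZZ[\om]/2^k$ with residue field $\FF_4$.

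\textbf{Upper bound.} In \cite[Theorem 8.1]{FSZ19} we take $\Gamma$ to be a principal congruence subgroup of $\SL(2,\ZZ)$ of $2$-power level inside $\Gamma_1^T(2)$, for instance $\Gamma(2)$. This gives $m_2' = 1$, or possibly $2$ depending on the normalisation in that theorem. We then compute $\iota_2$, the least integer with
\[2^{\iota_2}\mathfrak{sl}(2,\ZZ_2[\om]) \subseteq \operatorname{Span}_{\ZZ_2}\left(\Egp''\cdot\mathfrak{sl}(2,\ZZ_2)\right).\]
We conjugate $H,R,L$ from \eqref{eqn:HRL} by $W=VSV$ of \eqref{eqn:W}, by $S$, and by $T$. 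Subtracting the integral parts should leave the matrices $\om H$, $\om R$, $\om L$ scaled by small powers of $2$. Judging from the entries in \eqref{eqn:3sl}, such as $-4a$ and $-2a$, we expect $\iota_2$ to be $2$ or $3$, which gives $m_2 \le 4$, so that $16$ is an upper bound for the bad modulus.

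\textbf{Sharpness.} We must show that the image of $\Egp''$ modulo $8$ already fails to exhaust the relevant layer, so that $m_2$ is exactly $4$. Let $G_k$ denote the image of $\Egp''$ in $\SL(2,\ZZ[\om]/2^k)$. For $k\ge2$, the layer $\ker(G_k\to G_{k-1})$ is $I+2^{k-1}D_k$ for an additive subgroup $D_k \subseteq \mathfrak{sl}(2,\FF_4)$, and $D_k$ is stable under conjugation by $G_1$. Modulo $2$, $\Egp''$ is contained in the lower-triangular unipotent group $\sm{1}{0}{\ast}{1}$, which already shows that $2$ is bad. We will compute $D_2, D_3, D_4, D_5$ explicitly by reducing words in $S$, $T$, $VSV$ and $VTV$ (enough of them to generate $\Egp''$), and by taking commutators and squares of elements that are trivial modulo $2^{k-1}$ to produce elements of the next layer. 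The claim is that $D_k=\mathfrak{sl}(2,\FF_4)$ for $k\ge5$, but not for $k=4$. The upper bound from Step 1 already makes the full layers at $k>4$ automatic, so only the failure at level $16$ needs an explicit witness.

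\textbf{Main obstacle.} The hardest step is exhibiting that failure: it requires an invariant showing that some element of $\mathfrak{sl}(2,\FF_4)$ does not lie in the layer at level $16$. We will first look for a linear functional on the entries modulo $2$ or $4$, for example a condition on the $\om$-coefficient of the diagonal entries, preserved by all the generators. If no such functional works, we will simply enumerate $G_4$ by computer, with the orbit computation in the PARI/GP package, and compare the layer sizes $|G_4|/|G_3|$ and $|G_5|/|G_4|$ with those for $\SL(2,\ZZ[\om])$. This is consistent with Remark~\ref{rmk:SA}, which records genuine orbit phenomena modulo $16$ such as $(0,1,1,1)\cdot\Egp$.
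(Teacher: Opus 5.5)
Your plan has the same skeleton as the paper's proof: the bound $m_2\le m_2'+\iota_2$ from \cite[Theorem 8.1]{FSZ19}, then an analysis of the layers with a computer fallback. You split the work differently, though. The paper extracts only bad modulus $\le 32$ from \cite{FSZ19}, proves the obstruction at level $4$ by hand (entries in $\ZZ[2\om]$, upper-right entry in $2\ZZ[2\om]$), and closes the gap between $4$ and $32$ with a PARI/GP computation. You want \cite{FSZ19} to give $16$ directly.

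\textbf{The upper bound is a guess, and the data you cite does not support it.} Modulo $\mathfrak{sl}(2,\ZZ_2)$, the three matrices in \eqref{eqn:3sl} contribute exactly the span of $aH$, $4aR+2aL$ and $4aL$. That gives $\iota_2\le 4$ and nothing better, which is the paper's bound $32$. Moreover $\iota_2=2$ is impossible:
\begin{itemize}
\item Every $g=\sm{\alpha}{\beta}{\gamma}{\delta}\in\Egp''$ has $\alpha\in 1+2\ZZ[\om]$ and $\beta\in 2\ZZ[2\om]$.
\item So the upper-right entries $\alpha^2$, $-2\alpha\beta$, $-\beta^2$ of $gRg^{-1}$, $gHg^{-1}$, $gLg^{-1}$ all have $\om$-coefficient divisible by $8$. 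For $\alpha^2$, write $\alpha=1+2y$ and use $y+y^2\in\ZZ+2\ZZ[\om]$.
\item Hence every element of the span has this property, and $4\om R$ is not in the span.
\end{itemize}
Your hedge $m_2'\in\{1,2\}$ also matters: with $m_2'=2$, even $\iota_2=3$ only gives $32$.

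\textbf{The repair is one more conjugation}, using that the span is $\Egp''$-stable. Since $aH$ lies in the span (from $WRW^{-1}$), so do $T(aH)T^{-1}=a(H-4R)$ and $S(aH)S^{-1}=a(H+2L)$. Hence $4aR$ and $2aL$ are in the span. As $8\om=4+4a$, this puts $8\,\mathfrak{sl}(2,\ZZ_2[\om])$ inside the span. So $\iota_2=3$, and with $m_2'=1$ you get $m_2\le 4$. This is correct and sharper than what the paper proves analytically.

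\textbf{Sharpness, which is the real content, is not supplied.} The invariant you propose to look for cannot be a congruence condition modulo $2$ or $4$ on elements of $\Egp''$. The layer $\ker(G_4\to G_3)$ consists of matrices $\equiv I\pmod 8$, which satisfy every such condition. Any witness must therefore be a genuinely mod-$16$ property of the group. Your mod-$2$ remark only gives bad modulus $\ge 2$, and the paper's $\ZZ[2\om]$ remark gives $\ge 4$. Neither you nor the paper exhibits such an invariant. Your fallback, computing $G_3$ and $G_4$ and checking $|\ker(G_4\to G_3)|<64$, is the same computation the paper relies on. With the repaired upper bound, that single check is all that still needs the machine.
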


\begin{proof}
Using the methodology of \cite{FSZ19}, it suffices to compute $\Egp'' \cdot \mathfrak{sl}(2,\ZZ_2)$ and $\Gamma_1^T(2)/(2)$.  

Observe that for $k_2' \ge 1$ (in the notation of \cite{FSZ19}), we have
\[
\Gamma_1^T(2)(2^1) / \Gamma_1^T(2)(2^{k_2'})
\cong 
\SL(2,\ZZ)(2^1) / \SL(2,\ZZ)(2^{k_2'}),
\]
simply because the conditions specifying $\Gamma_1^T(2)$ are given modulo $2$ (with no further conditions modulo $4$). Therefore $m_2' = 1$ in the notation of \cite{FSZ19}.  

Let us now compute $\Egp'' \cdot \mathfrak{sl}(2,\ZZ_2)$, with the goal to determine the smallest $\iota_2\geq 0$ for which
\[
2^{\iota_2}\mathfrak{sl}(2,\ZZ_2[\omega]) \subseteq \operatorname{Span}_{\ZZ_2} \left( \Egp'' \cdot \mathfrak{sl}(2,\ZZ_2) \right).
\]
Recall that $\mathfrak{sl}(2,\ZZ)$ is generated by $H,R,L$, as in Proposition~\ref{prop:SAmod3}, Equation~\eqref{eqn:HRL}.

Using $W$ as in \eqref{eqn:W}, 
%\[ WRONG
%W := VSV = \begin{pmatrix}
%    -2a-1 & 4a - 4 \\
%    -2 & 2a+3
%\end{pmatrix}
%\]
amongst the elements of $\Egp'' \cdot \mathfrak{sl}(2,\ZZ_2)$ are those displayed in \eqref{eqn:3sl}, now considered in $\ZZ_2[\om]$.
Thus,
\[
\begin{pmatrix} 0 & 8a \\ 0 & 0 \end{pmatrix}, \;
\begin{pmatrix} a & 0 \\ 0 & -a \end{pmatrix}, \;
\begin{pmatrix} 0 & 0 \\ 4a & 0 \end{pmatrix} \in \operatorname{Span}_{\ZZ_2} \left\langle \Egp'' \cdot \mathfrak{sl}(2,\ZZ_2) \right\rangle.
\]
Therefore
\[
2^4 \mathfrak{sl}(2,\ZZ_2[\omega]) = \left\langle 16H, 16R, 16L, 
\begin{pmatrix} 0 & 0 \\ 8a & 0 \end{pmatrix}, \; 
\begin{pmatrix} 8a & 0 \\ 0 & -8a \end{pmatrix}, \;
\begin{pmatrix} 0 & 8a \\ 0 & 0 \end{pmatrix} \right\rangle \subseteq \operatorname{Span}_{\ZZ_2} \left( \Egp'' \cdot \mathfrak{sl}(2,\ZZ_2) \right).
\]
Thus $\iota_2 \leq 4$, and $m_2 \leq 5$. Therefore we have at most bad modulus $32$.

Now we ask about the fibre above $I$ when lifting from mod $2^k$ to mod $2^{k+1}$.  Let $M \in \Mat(2, \ZZ_2[\omega])$. We have (evaluating the characteristic polynomial of $-M$ at $2^{-k}$), that
\[
\det(I + 2^k M) = 1 + 2^k \Tr(M) + 2^{2k} \det(M).
\]
Therefore the lifts having determinant $1$ modulo $2^{k+1}$, at least when $k \ge 1$, are given by the condition $\Tr(M) \equiv 0 \pmod{2}$.  That is 1/4 of the total $4^4$ matrices, i.e. $2^6 = 64$ determinant $1$ lifts.

Now we turn instead to our subgroup $\Egp''$.  This is generated by determinant one words in $V$ and $G := \Gamma_1^T(2)$.  These can all be generated by products from $V G V$ and $G$.

We compute $VgV$ for some $g \in G$:
\[
\begin{pmatrix}
    -1 & 2\omega \\ 0 & 1 
\end{pmatrix}
\begin{pmatrix}
    A & B \\ C & D 
\end{pmatrix}
\begin{pmatrix}
    -1 & 2\omega \\ 0 & 1 
\end{pmatrix}
=
\begin{pmatrix}
   A - 2\omega C & -B + 2\omega(D-A) + 4 \omega^2 C \\
   -C & D + 2\omega C
\end{pmatrix},
\]
where $2\omega = 1+a$, 
which demonstrates that the upper-right entry lies in $2\ZZ[2\omega] = 2\ZZ[\sqrt{-3}]$, and the entries overall lie in $\ZZ[2\omega]$.   Observe that all of $\Gamma_1^T(2)$ is of this same form, therefore all products from $\Gamma_1^T(2)$ and $V \Gamma_1^T(2) V$ are of this form.  This shows that the lifting process from $2$ to $4$ is indeed obstructed, and we have at least bad modulus 4.

To close the gap between modulus 4 and modulus 32, we ran a computation in Pari/GP to check whether all 64 lifts of the identity exist at each level.  We found that the bad modulus is 16.  This computation can be verified with the public code in \cite{GHEisenstein}, in particular, the file ``strongapprox.gp''.
\end{proof}

Combining these results, we have:

\begin{theorem}\label{thm:SA}
    The group $\Egp''$ has bad modulus $16$ for strong approximation.  Consequently, any congruence conditions that hold for the curvatures of a primitive Eisenstein packing or a moiety can be described modulo $16$.
\end{theorem}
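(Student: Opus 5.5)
The plan is to assemble the theorem from the three preceding results, then translate group-level strong approximation into a statement about curvatures. For the first sentence, Theorem~\ref{thm:hypotheses} lets us apply \cite[Theorem 8.1]{FSZ19}. By the remark following that theorem, the only possible bad primes are $2$ and $3$. Proposition~\ref{prop:SAmod3} shows that $3$ is good, and Proposition~\ref{prop:SAmod2power} shows that the $2$-part of the bad modulus is exactly $16$. So the bad modulus is $16$, and the reduction of $\Egp''$ modulo any integer $N$ is the full preimage in $\SL(2,\ZZ[\om]/N)$ of its image modulo $\gcd(N,16)$, taken within the appropriate product decomposition.

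For the consequence, first reduce to the strip packing. By Theorem~\ref{thm:Qrt3packingsareEisenstein}, every primitive Eisenstein packing is, after scaling by $\frac{1}{\sqrt3}$, an Eisenpint tangency packing. Choose $M\in\GammaE$ sending $\widehat{\RR}$ to one of its circles; then $M$ maps the Eisenpint strip packing onto the given packing. By Proposition~\ref{prop:orbitsE} and Proposition~\ref{prop:moietyorbit}, the packing is $\bigcup_{i} M\Egp''\Dcir_i$, and each moiety is the union of two such orbits. Proposition~\ref{prop:eisensteincircleformulae} shows that the reduced curvature of $M g(\widehat{\RR})$ is $s=eh-fg$, where $(e+f\om, g+h\om)$ is the bottom row of $Mg\,g_i$ and $g_i$ is the fixed matrix sending $\widehat{\RR}$ to $\Dcir_i$ from Equation~\eqref{eqn:4wheelbase}. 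Thus the reduced curvature is a polynomial function of the entries of $g\in\Egp''$, defined over $\ZZ$, and its value modulo $N$ depends only on $g \bmod N$.

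The key step is then: if $n$ is a curvature value modulo $N$ attained by the orbit modulo $16$, that is, compatible with some element of the image of $\Egp''$ modulo $\gcd(N,16)$, then $n$ is attained modulo $N$. This follows from the preimage property. The image modulo $N$ contains every lift of the image modulo $16$, and in particular it contains the congruence-subgroup elements of $\SL(2,\ZZ[\om])$ at all odd primes and at high $2$-powers. Hence the set of attained residues modulo $N$ is the full preimage of the set attained modulo $16$, so every congruence condition is detected modulo $16$. For a moiety, take the union over its two orbits. For the whole packing, take the union over all four orbits.

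I expect the main obstacle to be this last point: showing that the curvature map on the full lift set modulo $N$ really has image equal to the preimage of the mod-$16$ residues. Surjectivity of $\SL(2,\ZZ[\om]/p^k)$ (or its kernel-type fibres) onto reduced curvature values is not automatic, since the fibres over a fixed residue modulo $16$ are cosets of a principal congruence kernel. I would first try to verify that, on these kernels, the bottom row can be varied freely subject only to primitivity, so that $eh-fg$ takes every value in the required class. Because the statement only claims that obstructions are \emph{describable} modulo $16$, it is enough that the attained set is a union of residue classes modulo $16$, which is exactly what the preimage structure gives.
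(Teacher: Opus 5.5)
Your first paragraph is the paper's argument. The paper combines Theorem~\ref{thm:hypotheses}, the remark after \cite[Theorem 8.1]{FSZ19} that only $2$ and $3$ can be bad, Proposition~\ref{prop:SAmod3} and Proposition~\ref{prop:SAmod2power}. It then simply asserts the curvature consequence, relying on the orbit decomposition of Propositions~\ref{prop:orbitsE} and~\ref{prop:moietyorbit} and the \cite{FSZ19} framework, with no further argument. Deriving that consequence by hand is a reasonable alternative. As written, though, it has a gap exactly where you suspected, and your last sentence does not close it. The claim ``the attained set is a union of residue classes modulo $16$'' is the statement to be proved. It does not follow just from the image of $\Egp''$ modulo $N$ being a full preimage, because a polynomial function need not send a full preimage onto a full preimage. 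For example, squaring sends all odd residues modulo $2^k$ only onto the classes $\equiv 1\pmod 8$. So you really do need surjectivity of $s=eh-fg$ on each fibre.

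This is short to supply.
\begin{itemize}
\item \emph{Setup.} Write $N=2^km$ with $m$ odd and $q=\gcd(N,16)$, and let $K$ be the kernel of $\SL(2,\ZZ[\om]/N)\to\SL(2,\ZZ[\om]/q)$. The image of $\Egp''$ is a union of $K$-cosets and $K$ is normal. So for $\bar g_0$ in the image, the matrices $M\bar g_0 k g_i$ with $k\in K$ fill the whole coset $M\bar g_0g_iK$. Work one prime power at a time and combine by CRT.
\item \emph{Odd part.} The image is all of $\SL(2,\ZZ[\om]/m)$. The bottom row $(1,h\om)$ of $\sm{0}{-1}{1}{h\om}$ gives $s=h$, so every residue occurs.
\item \emph{The prime $2$.} There is nothing to do unless $k>4$. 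Right multiplication by $\sm{1}{16t\om}{0}{1}\in K$ sends the bottom row $(\beta,\delta)$ to $(\beta,\delta+16t\om\beta)$ and changes $s$ by $16t\Nm(\beta)$. Similarly, $\sm{1}{0}{16t\om}{1}$ changes $s$ by $-16t\Nm(\delta)$. Since the row is unimodular and $2$ is inert, one of $\Nm(\beta),\Nm(\delta)$ is odd. Hence $s$ sweeps the entire class of $s_0$ modulo $16$ inside $\ZZ/2^k$.
\end{itemize}
The shift must be by a non-real multiple: $\sm{1}{16t}{0}{1}$ stabilizes $\widehat{\RR}$ and leaves $s$ unchanged, which is why ``varying the bottom row freely'' needs this check. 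The argument gives the preimage property for each orbit, hence for moieties and whole packings as unions of two or four orbits. Also, in your formula the circle should be $Mgg_i(\widehat{\RR})$, not $Mg(\widehat{\RR})$.
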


This allows us to prove density one.

\begin{proof}[Proof of Theorem~\ref{thm:densityone}]
    We reference \cite[p.1121]{FSZ19}. By Theorem~\ref{thm:hypotheses}, $\Egp''$ is an infinite-covolume, geometrically finite, Zariski-dense, familial Kleinian group in $\PSL(2,\ZZ[\omega])$.  The circles $\Dcir_i$, $i=1,2,3,4$ are images of $\widehat{\RR}$ under elements of $\PSL(2,\QQ[\omega])$.  The orbits $\Egp'' \cdot \Dcir_2$ and $\Egp'' \cdot \Dcir_4$ satisfy the hypotheses of \cite[Theorem 1.6]{FSZ19}, since $\Dcir_2$ and $\Dcir_4$ are tangent to the real line.  Although $\Dcir_1$ and $\Dcir_3$ are not, $\Egp'' \cdot \Dcir_1 = V \Egp'' \cdot \Dcir_2$ and $\Egp'' \cdot \Dcir_3 = V \Egp'' \cdot \Dcir_4$, so that any Eisenstein packing is the union of four orbits of the form $M \Egp'' \Dcir_i$, $i \in \{2,4\}$, $M \in \PSL(2,\QQ[\omega])$.  Hence it is a union of four packings in the Fuchs-Stange-Zhang sense, and any moiety is the union of two by Proposition~\ref{prop:moietyorbit}. 
    We now apply \cite[Corollary 1.4 and Theorem 1.6]{FSZ19}.
\end{proof}

\section{Reciprocity obstructions}\label{sec:reciprocity}

We adapt the strategy of \cite{HKRS23}, which has also been subsequently used by \cite{WhiteheadEtAlReciprocity}. For each circle $\cir$, using the associated quadratic form, we define a symbol $\chi_2(\cir)=\pm 1$, which governs the reciprocity behaviour of the curvature with respect to the curvatures of tangent circles. We are then able to prove that this symbol is either always preserved or always swaps across a coprime tangency. Since the graph of coprime tangencies is connected and bipartite, this implies that we have a well-defined quadratic invariant on either the whole packing, or each of the two bipartite halves. In most of these cases, we derive reciprocity obstructions.

\subsection{The symbol $\chi_2$}

For now, forget the definition of $\chi_2$ from Definition~\ref{def:chi2fromtangentcircle}. We will recover this expression later in Proposition~\ref{prop:rhofromtangentcircle}.

Consider a circle $\cir$ of curvature $n$ in a primitive Eisenstein circle packing, and complete it to an Eisenstein quadruple in standard position $q=(n, b, c, d)$. This is associated to the primitive first-odd binary quadratic form $\phi(q)(x, y)=Ax^2+Bxy+Cy^2$ (Proposition~\ref{prop:eisensteinpackingbijectionqf}).

\begin{definition}
    If $n\not\equiv 4\pmod{8}$, define $\rho=\rho(\cir)$ to be any positive integer that is equivalent modulo $n$ to a value of $\phi(q)(x, y)$, where $x$ is odd, $y$ is even, $\gcd(x, y)=1$, and $\gcd(\rho, n)=1$ (which must exist). If $n\equiv 4\pmod{8}$, repeat this definition, except we only allow integers that are equivalent modulo $2n$ to a value of $\phi(q)(x, y)$ under the same conditions. 
\end{definition}

This is analogous to the definition of $\rho$ from \cite[Proposition 4.1]{HKRS23}. If $n=0$, we necessarily have $\rho=1$. Otherwise, $\rho$ captures the modulo $n$ quadratic residuosity of $\phi(q)$.

\begin{proposition}\label{prop:rhowelldefined}
    Taken modulo $n\neq 0$, all the possible $\rho$'s are in the same multiplicative coset of the squares. Additionally, if $n\equiv 4\pmod{8}$, $\rho$ is uniquely determined modulo 8.
\end{proposition}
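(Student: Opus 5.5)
The idea is to show that two values of $\phi(q)$ at admissible arguments differ multiplicatively by a square modulo $n$ (or modulo $2n$ when $n\equiv 4\pmod 8$). Write $\phi(q)=[A,B,C]$ with $A=n+d$, $B=b+d-c$, $C=\frac{n+b}{2}$, of discriminant $B^2-4AC=-3n^2$.

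The key identity is the classical one: for any $(x_1,y_1),(x_2,y_2)$,
\[
4f(x_1,y_1)f(x_2,y_2)=\bigl(2Ax_1x_2+B(x_1y_2+x_2y_1)+2Cy_1y_2\bigr)^2+3n^2(x_1y_2-x_2y_1)^2 .
\]
This is the Gauss composition or norm identity in $\QQ(\sqrt{-3n^2})$, and it is immediate from $4Af(x,y)=(2Ax+By)^2+3n^2y^2$, polarized. Reducing modulo $n$ gives $4f(x_1,y_1)f(x_2,y_2)\equiv L^2\pmod n$, where $L$ is the bracketed linear expression.

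For $n$ odd, with both values coprime to $n$, $4$ is a unit square, so $\rho_1\rho_2$ is a square and $\rho_1,\rho_2$ lie in the same square class modulo $n$. The parity conditions on $x,y$ play no role here.

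For $n$ even, I would use the parity constraints to get a cleaner identity. Since $y_i$ is even, write $y_i=2y_i'$. Then
\[
f(x_1,y_1)f(x_2,y_2)=\bigl(Ax_1x_2+B(x_1y_2'+x_2y_1')+4Cy_1'y_2'\bigr)^2+3n^2(x_1y_2'-x_2y_1')^2 ,
\]
which is an integer identity with no factor of $4$. I would check this by dividing the previous identity by $4$, using that $B$ is even when $n$ is even ($B^2=4AC-3n^2$). Modulo $n$ this gives $\rho_1\rho_2\equiv L'^2$, which proves the first claim for all $n$.

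When $n\equiv 4\pmod 8$, reduce the same identity modulo $2n$. The error term $3n^2(\cdots)^2$ is divisible by $n^2$, and $n^2\equiv 0\pmod{2n}$ because $n$ is even. So $\rho_1\rho_2\equiv L'^2\pmod{2n}$, and since $8\mid 2n$, also $\rho_1\rho_2\equiv L'^2\pmod 8$. The $\rho_i$ are odd, being coprime to the even $n$, so $L'$ is odd and $L'^2\equiv 1\pmod 8$. Hence $\rho_1\equiv\rho_2\pmod 8$, since odd residues are their own inverses modulo $8$.

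The remaining point is existence. The values $f(x,y)$ with $x$ odd, $y$ even and $\gcd(x,y)=1$ must include one coprime to $n$. Since $f$ is primitive, it represents integers coprime to any given modulus, and I would impose the congruence conditions via CRT. For the prime $2$, note that $f(x,y)\equiv Ax^2\equiv 1\pmod 2$ automatically, because $A$ is odd. For an odd prime $p\mid n$, I need some $(x,y)$ with $p\nmid f(x,y)$ in the prescribed parity class. This is possible since $f\not\equiv 0\pmod p$ as a form, and the parity and coprimality constraints can be met by choosing residues modulo $2p$ and lifting to coprime pairs by Dirichlet, or by the standard argument. I expect this existence step, together with confirming that $B$ is even so the $4$ can be removed in the $n\equiv 4\pmod 8$ case, to be the only places needing care.
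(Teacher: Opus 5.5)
Your core argument is correct and is a close variant of the paper's. The paper uses $\Gamma_0^{\PGL}(2)$ to move one admissible value into the leading coefficient, writes $\phi(q)=[\rho,B,C]$, and completes the square, $\phi(q)(x,y)=\rho\bigl(x+\tfrac{B}{2\rho}y\bigr)^2+\tfrac{3n^2}{4\rho}y^2$. It then reads off both claims modulo $n$ and modulo $8$, using that $B$ is even when $n$ is even so that $B/2$ makes sense.

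Your Gram-determinant identity is the same completion of the square in two-vector form: setting $(x_1,y_1)=(1,0)$ recovers $4Af(x,y)=(2Ax+By)^2+3n^2y^2$. What your version gains is a direct integral comparison of any two admissible values. You do not need to complete a vector to a matrix in $\Gamma_0^{\PGL}(2)$, and you never divide by $\rho$ inside a congruence. Contrary to what you anticipate, the parity of $B$ plays no role in your proof: the substitution $y_i=2y_i'$ alone removes the factor $4$, for every $n$. Your existence sketch goes beyond the paper, which simply asserts that an admissible $\rho$ exists.

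The one thing missing is independence of the completion. The definition of $\rho$ begins by choosing an arbitrary standard-position quadruple $q=(n,b,c,d)$ containing $\cir$. The proposition asserts that the $\rho$'s obtained from all such choices lie in one square class, and this is what makes $\chi_2(\cir)$ depend only on $\cir$. You only compare values of a single fixed form $\phi(q)$.

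The paper handles this point first, and you should add it:
\begin{itemize}
\item By Proposition~\ref{prop:quadraticformtangentcircles}, different completions give $\Gamma_0^{\PGL}(2)$-equivalent forms.
\item Any $\sm{\alpha}{\beta}{\gamma}{\delta}\in\Gamma_0^{\PGL}(2)$ has $\gamma$ even and $\alpha,\delta$ odd. So $(x,y)\mapsto(\alpha x+\beta y,\gamma x+\delta y)$ permutes the coprime pairs with $x$ odd and $y$ even.
\end{itemize}
Hence the set of admissible values, and so the set of possible $\rho$'s, does not depend on $q$. With this step added, your identity argument finishes the proof.
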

\begin{proof}
    By Proposition~\ref{prop:quadraticformtangentcircles}, the ambiguity in completing $\cir$ into an Eisenstein quadruple in standard position corresponds to the $\Gamma_0^{\PGL}(2)-$orbit of $\phi(q)$. The values of these forms on coprime odd $x$ and even $y$ are all the same, hence this set is well-defined.

    By taking $\rho$ to be one such value, we can make a $\Gamma_0^{\PGL}(2)$ change of coordinates to assume that $\phi(q)=[\rho, B, C]$. In that case, we find that (note that if $n$ is even, $B$ is even)
    \[\phi(q)(x, y)=\rho x^2+Bxy+Cy^2=\rho\left(x+\frac{B}{2\rho}y\right)^2+\frac{4\rho C-B^2}{4\rho}y^2=\rho\left(x+\frac{B}{2\rho}y\right)^2+\frac{3n^2}{4\rho}y^2.\]
    Since $y$ is even and $\rho$ is coprime to $n$, this is equivalent to $\rho(x+(B/2\rho)y)^2\pmod{n}$, giving the first result.

    If $n\equiv 4\pmod{8}$, then $x+(B/2\rho)y$ is necessarily odd and $8\mid \frac{3n^2}{4\rho}y^2$, hence $\phi(q)(x, y)\equiv \rho\pmod{8}$, as claimed.
\end{proof}

With this setup in hand, we can define $\chi_2$ on individual circles.

\begin{definition}
    Let $\cir$ be a circle of curvature $n\geq 0$ in a primitive Eisenstein circle packing of type $(3, t)$. Define $\chi_2(\cir)\in\{\pm 1\}$ as
    \[\chi_2(\cir):=\begin{cases}
        \kron{2\rho}{n} & \text{if $n$ is odd;}\\
        \kron{\rho+n}{n/2} & \text{if $n$ is even and $t=1$;}\\
        -\kron{-\rho+n}{n/2} & \text{if $n$ is even and $t=3$;}
    \end{cases}\]
    If $n<0$, we retain this definition, except we insist that $\rho>|n|$ if $n$ is even and $t=1$.
\end{definition}

\begin{remark}
    The case of $n<0$ corresponds to at most one circle in each packing. The only issue it causes in the above definitions is $\kron{x}{-1}=\text{sign}(x)$, hence we need to worry about changing the sign of the numerator. In any case, going forward, for simplicity we will generally assume that all curvatures are positive, though the cases for $n\leq 0$ can be checked with minimal extra effort.
\end{remark}

First, we verify that $\chi_2$ is well-defined.

\begin{proposition}
    The value of $\chi_2(\cir)$ is well defined.
\end{proposition}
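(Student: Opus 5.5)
We need to show that the value of $\chi_2(\cir)$ does not depend on the admissible choice of $\rho$. By Proposition~\ref{prop:rhowelldefined}, any two admissible values $\rho,\rho'$ satisfy $\rho'\equiv \rho u^2\pmod{n}$ for some $u$ coprime to $n$. When $n\equiv 4\pmod 8$ we additionally have $\rho'\equiv\rho\pmod 8$, and in fact both come from values of $\phi(q)$ modulo $2n$. Each of the three cases of the definition is therefore a Jacobi symbol whose numerator is determined by $\rho$ up to these ambiguities. The plan is to show that each symbol is invariant under them, treating the cases separately. The sign issues for $n<0$ are handled by the stipulation $\rho>|n|$ in the even case with $t=1$, and the analogous check there is routine.

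\emph{Case $n$ odd.} The symbol $\kron{\cdot}{n}$ is periodic modulo $n$ and multiplicative in the numerator. Hence $\kron{2\rho u^2}{n}=\kron{2\rho}{n}\kron{u}{n}^2=\kron{2\rho}{n}$.

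\emph{Case $n$ even, $n/2$ odd, i.e.\ $n\equiv 2\pmod 4$.} Here $\kron{\rho\pm n}{n/2}=\kron{\pm\rho}{n/2}$, since $n\equiv 0\pmod{n/2}$. Moreover $\rho'\equiv \rho u^2\pmod{n/2}$, so the symbol is unchanged by the odd-case argument.

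\emph{Case $4\mid n$.} The Jacobi symbol $\kron{m}{n/2}$ depends only on $m$ modulo $n/2$ when $n/2$ is odd. In general it depends on $m$ modulo $n/2$ only up to the character $\kron{\cdot}{2}$-type twist when $n/2$ is even. More precisely, for an even denominator one must interpret it as the Kronecker symbol, which is periodic modulo $2\cdot(n/2)=n$ or modulo $4\cdot(n/2)=2n$, depending on the $2$-part. So the plan is:

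\begin{enumerate}
\item Write $n/2=2^{k}m$ with $m$ odd, and split $\kron{\pm\rho+n}{n/2}=\kron{\pm\rho+n}{2}^{k}\kron{\pm\rho+n}{m}$.
\item The odd part is handled as before.
\item For the $2$-part, the factor $\kron{\cdot}{2}$ depends on the numerator modulo $8$.
 \begin{itemize}
 \item If $n\equiv 0\pmod 8$, then $\rho'\equiv\rho u^2\pmod n$ with $u$ odd, and $8\mid n$. So $\pm\rho'+n\equiv \pm\rho u^2\equiv\pm\rho\pmod 8$, because odd squares are $1\pmod 8$.
 \item If $n\equiv 4\pmod 8$, then $k=1$, and Proposition~\ref{prop:rhowelldefined} gives $\rho'\equiv\rho\pmod 8$ directly.
 \end{itemize}
\end{enumerate}

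In both subcases the $2$-adic factor is unchanged. The odd part is unchanged because $\rho'\equiv\rho u^2\pmod m$.

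The main obstacle is this last case. We must be careful that the relevant Kronecker symbol's period really is controlled by what Proposition~\ref{prop:rhowelldefined} provides. We must also confirm that adding $n$ does not disturb the numerator's residue modulo $8$ in a $\rho$-dependent way; it does not, since $n$ is fixed.

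We should also double-check that $\gcd(\pm\rho+n,n/2)=1$, so the symbols are nonzero. This follows from $\gcd(\rho,n)=1$.
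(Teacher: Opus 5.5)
Your proof is correct and follows the paper's argument: you case on $n$ modulo $8$, use Proposition~\ref{prop:rhowelldefined} (square class modulo $n$, plus the residue modulo $8$ when $n\equiv 4\pmod 8$), and split $\kron{\cdot}{n/2}$ into its $2$-part and odd part where periodicity modulo $n/2$ fails. The only difference is that for $n\equiv 0\pmod 8$ the paper just cites periodicity of $\kron{\cdot}{n/2}$ modulo $n/2$ (valid since $n/2\not\equiv 2\pmod 4$), whereas you split off the $2$-part there too and use that odd squares are $1\pmod 8$.
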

\begin{proof}
    If $n\not\equiv 2\pmod{4}$ is positive, then the Kronecker symbol $\kron{\ast}{n}$ is well defined modulo $n$. Furthermore, if $a$ is coprime to $n$, then $\kron{a^2b}{n}=\kron{b}{n}$. By Proposition~\ref{prop:rhowelldefined}, this demonstrates that $\chi_2$ is well-defined if $n$ is odd.

    If $n\equiv 0,2,6\pmod{8}$, then $n/2\not\equiv 2\pmod{4}$, and again, $\chi_2$ is well-defined. Finally, assume that $n\equiv 4\pmod{8}$. It suffices to show that
    \[\kron{\rho+n}{n/2}=\kron{\rho+n}{2}\kron{\rho+n}{n/4},\quad \kron{-\rho+n}{n/2}=\kron{-\rho+n}{2}\kron{-\rho+n}{n/4}\]
    are well-defined. By Proposition~\ref{prop:rhowelldefined}, $\rho$ is uniquely determined modulo 8, hence $\kron{\pm\rho+n}{2}$ is determined. We also have that $n/4$ is odd, so $\kron{\pm\rho+n}{n/4}$ is also determined, as desired.
\end{proof}

When considering a circle in an Eisenstein packing, there is no need to compute the corresponding quadratic form to determine $\rho$ or $\chi_2$! Indeed, we can use Proposition~\ref{prop:curvaturesoftangentcircles} to relate it to tangent circles.

\begin{proposition}\label{prop:rhofromtangentcircle}
    Let $\cir$ be a circle of curvature $n$ in a primitive Eisenstein circle packing, and let it be tangent to a circle of coprime curvature $b$. If $n$ and $b$ are odd, we can take $\rho=\frac{b+n}{2}$. Otherwise, we can take $\rho=b+n$. In particular,
    \[\chi_2(\cir):=\begin{cases}
        \kron{b}{n} & \text{if $n$ and $b$ are odd;}\\
        \kron{2b}{n} & \text{if $n$ is odd and $b$ is even;}\\
        \kron{b}{n/2} & \text{if $n$ is even and $t=1$;}\\
        -\kron{-b}{n/2} & \text{if $n$ is even and $t=3$;}
    \end{cases}\]
\end{proposition}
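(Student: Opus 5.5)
The plan is to read off tangent curvatures from the quadratic form via Proposition~\ref{prop:curvaturesoftangentcircles}, and then check directly that the resulting value of $\phi(q)$ meets the conditions in the definition of $\rho$. Complete $\cir$ to a standard position quadruple $q=(n,b',c',d')$ with $\phi(q)=[A,B,C]$. By Proposition~\ref{prop:curvaturesoftangentcircles}, a tangent circle of curvature $b$ has:
\begin{itemize}
\item $b=2\phi(q)(x,y)-n$ for some coprime $(x,y)$ with $y$ odd, if $b\equiv n\pmod 2$;
\item $b=\phi(q)(x,y)-n$ for some coprime $(x,y)$ with $y$ even, if $b\not\equiv n\pmod 2$.
\end{itemize}
Since $\gcd(b,n)=1$, the case of $n$ and $b$ both even does not occur. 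That leaves three cases.

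\textbf{$n$ odd, $b$ even, or $n$ even, $b$ odd.} These are the opposite-parity cases, so $b+n=\phi(q)(x,y)$ with $y$ even. Coprimality then forces $x$ odd, which is exactly the evaluation allowed in the definition of $\rho$. Also $\gcd(b+n,n)=\gcd(b,n)=1$. Positivity of $b+n$ follows from Lemma~\ref{lem:pairsnonneg} together with $b+n$ being odd. Because $b+n$ is an exact value of the form and not merely a residue, it also satisfies the stronger modulo $2n$ requirement when $n\equiv 4\pmod 8$. Hence we may take $\rho=b+n$.

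\textbf{$n$ and $b$ both odd.} This is the only delicate case. Here $\frac{b+n}{2}=\phi(q)(x,y)$ with $y$ odd, which is the wrong parity for the definition of $\rho$. I will reuse the computation in the proof of Proposition~\ref{prop:rhowelldefined}. After a $\Gamma_0^{\PGL}(2)$ change of variables, $\phi(q)=[\rho_0,B,C]$ for an admissible $\rho_0$ coprime to $n$, and
\[\phi(q)(x,y)=\rho_0\left(x+\tfrac{B}{2\rho_0}y\right)^2+\tfrac{3n^2}{4\rho_0}y^2 .\]
Since $n$ is odd, $2$ and $\rho_0$ are invertible modulo $n$, and the last term is $\equiv 0\pmod n$ whatever the parity of $y$. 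So every value of $\phi(q)$ coprime to $n$ lies in $\rho_0(\ZZ/n)^{\times 2}$. In particular $\frac{b+n}{2}$ lies in the correct square class, and it is coprime to $n$ because $\gcd(b,n)=1$. As $\chi_2$ depends only on this square class when $n$ is odd, we may use $\rho=\frac{b+n}{2}$ (or pass to any positive representative congruent to it modulo $n$).

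\textbf{Evaluating $\chi_2$.} Substitute into the definition of $\chi_2$ and use the periodicity of the Kronecker symbol.
\begin{itemize}
\item $n,b$ odd: $\kron{2\rho}{n}=\kron{b+n}{n}=\kron{b}{n}$.
\item $n$ odd, $b$ even: $\kron{2(b+n)}{n}=\kron{2b}{n}$.
\item $n$ even, $t=1$: $\kron{\rho+n}{n/2}=\kron{b+2n}{n/2}=\kron{b}{n/2}$. This uses that $\kron{\ast}{m}$ has period $m$ when $m\not\equiv 2\pmod 4$ and period $2m$ otherwise, and $2n$ is a multiple of both $n/2$ and $n$.
\item $n$ even, $t=3$: $-\kron{-\rho+n}{n/2}=-\kron{-b}{n/2}$.
\end{itemize}

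The main obstacle is the both-odd case, where the natural value of the form comes from the wrong parity class of $y$. It is resolved by the observation that for odd $n$ the parity of $y$ does not affect the square class modulo $n$. A second, minor point is the sign convention for $n\le 0$, in particular the requirement $\rho>|n|$ when $n$ is even and $t=1$. Following the remark after the definition of $\chi_2$, I will handle this separately as a small check, or restrict to $n>0$.
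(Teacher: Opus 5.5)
Your proof is correct. In the opposite-parity cases it is the paper's argument essentially verbatim, but in the case where $n$ and $b$ are both odd you take a different route.

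The paper works directly with the definition of $\rho$. Since $n$ is odd, it replaces the evaluation point $(x,y)$ (with $y$ odd) by a coprime pair $(x',y')\equiv(x,y)\pmod n$ with $x'$ odd and $y'$ even. Then $\frac{b+n}{2}\equiv Q(x',y')\pmod n$ is literally an admissible choice of $\rho$.

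You instead reuse the completed square from Proposition~\ref{prop:rhowelldefined}. It shows that every value of $\phi(q)$ coprime to odd $n$ lies in the single class $\rho_0(\ZZ/n)^{\times 2}$, whatever the parity of $y$. This gives a cleaner structural reason why the parity restriction is vacuous for odd $n$. As written, however, it only shows that $\frac{b+n}{2}$ produces the same $\chi_2$ as a valid $\rho$, not that it is one, which is what ``we can take $\rho=\frac{b+n}{2}$'' asserts. The same identity closes this in one line: every unit $\rho_0u^2$ in that class is attained at an admissible point. For instance, $\phi(q)(x',2n)\equiv\rho_0x'^2\pmod n$ with $x'$ odd and $x'\equiv u\pmod n$, and $\gcd(x',2n)=1$.

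One small correction in your evaluation step: for $m\equiv 2\pmod 4$ the period of $\kron{\ast}{m}$ in the numerator is $4m$, not $2m$. For example, $\kron{1}{2}=1\neq\kron{5}{2}=-1$. Your conclusion $\kron{b+2n}{n/2}=\kron{b}{n/2}$ still holds, because when $n\equiv 4\pmod 8$ the true period $4\cdot\frac{n}{2}$ is exactly $2n$.
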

\begin{proof}
    Let $\cir$ be associated to the quadratic form $Q$. If $n$ and $b$ have opposite parity, by Proposition~\ref{prop:curvaturesoftangentcircles}, there exists coprime $x,y$ with $y$ even for which $b=Q(x, y)-n$. Thus $Q(x, y)=b+n$ is coprime to $n$, and is a valid choice for $\rho$.

    If $n$ and $b$ have the same parity, they are necessarily odd, as they are coprime by assumption. In this case, by Proposition~\ref{prop:curvaturesoftangentcircles}, $b=2Q(x, y)-n$ for coprime $x,y$ with $y$ odd. Since we are working modulo the odd number $n$, the parities of $x,y$ are irrelevant, and we can find a coprime pair $(x', y')\equiv (x, y)\pmod{n}$ where $x'$ is odd and $y'$ is even. Then, $b\equiv 2Q(x', y')\pmod{n}$, completing the claim.

    The resulting expressions for $\chi_2(\cir)$ follow from using this value of $\rho$ in the definition of $\chi_2$ and basic properties of the Kronecker symbol.
\end{proof}

This proposition gives the definition of $\chi_2$ found in the introduction, Definition~\ref{def:chi2fromtangentcircle}.

\subsection{Propagation of $\chi_2$.}
Now, we study how the value of $\chi_2$ propagates through circle tangencies, culminating in the proof that it is constant in type $(3, 1)$, and alternating in type $(3, 3)$. The first step is to confirm our description of the moieties of the packing, which follows Definition~\ref{def:moiety}.

\begin{proposition}
    Let $W=(\cir_1, \cir_2, \cir_3, \cir_4)$ be a 4-wheel, set $E_1$ to be all circles that appear in the first or third position in the orbit $W\cdot \Egp$, and set $E_2$ to be all circles that appear in the second or fourth position. Then, $E_1$ and $E_2$ form the two moieties of the packing.
\end{proposition}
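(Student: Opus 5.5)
Taking the moieties of Definition~\ref{def:moiety} to be the two colour classes of the tangency graph, the statement amounts to three facts. First, the tangency graph of the packing is exactly the union over all 4-wheels $W\gamma$ ($\gamma\in\Egp$) of the 4-cycles $\cir_1-\cir_2-\cir_3-\cir_4-\cir_1$. Second, every circle appearing in positions 1 or 3 of some wheel in the orbit never appears in positions 2 or 4 of any wheel. Third, the graph is connected, so the bipartition is unique up to swapping.

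For the second fact, I will track positions under the swaps. The generator $S_i$ replaces only the $i$-th column, putting the new circle $\cir_i'$ back in position $i$. Hence if a circle occupies an odd position in one wheel, then every circle it produces or inherits stays in the same parity of position. Concretely, define $E_1$ and $E_2$ as in the statement. By induction on word length, the set of circles in odd positions of $W\gamma$ is contained in $E_1$ as generated from odd positions only, and likewise for even positions. Here I must check that the same unoriented circle cannot arise in both an odd position of one wheel and an even position of another.

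For disjointness, I will use curvature-centre data, or more simply the inversive-distance matrix. Within a wheel, odd-position circles have $\langle\cir_1,\cir_3\rangle=-2$, and a circle has inversive distance $-1$ with its even neighbours. A cleaner route uses arithmetic in the primitive case: $a-c\equiv b-d\equiv 1$ and the moduli of Proposition~\ref{prop:curvaturesmod4}. This is not sufficient in general, though, so instead I will argue geometrically. Tangencies in the packing occur only between consecutive circles of some wheel, since every tangency arises in a 4-wheel by the construction of Definition~\ref{def:eisensteinpacking}. These edges always join an odd-position circle to an even-position circle. So the coloring ``odd position $=$ blue, even position $=$ red'' is a proper 2-colouring of the tangency graph, provided it is well defined. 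Well-definedness follows from connectivity together with the bipartite structure. A circle with two colours would yield an odd closed walk in the tangency graph, and I will rule this out using the fact that every edge in the graph alternates position parity. A circle appearing in both parities would need a path, and every path in the graph is a sequence of wheel edges.

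The main obstacle is exactly this consistency: showing that no circle occupies both an odd and an even position. My first attempt will be the free-action structure (Proposition~\ref{prop:uniquereduction}). Any two wheels containing a common circle $\cir$ are connected by a reduced word, and along the unique reduced word the position of $\cir$ is only ever preserved by swaps at other indices. Those swaps keep $\cir$ in the same slot, while a swap at its own index removes it permanently, since increasing swaps never return. So a circle occupies a single fixed index across all wheels containing it. This gives disjointness of $E_1$ and $E_2$, and with connectivity and the alternation of edges it completes the proof.
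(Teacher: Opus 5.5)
Your overall architecture is the paper's: every tangency sits between consecutive slots of some wheel in $W\cdot\Egp$, consecutive slots have opposite parity, and connectivity makes the bipartition unique. You also correctly identify the crux, that no circle may occupy an odd slot in one wheel and an even slot in another. However, the argument you give for it fails.

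\begin{itemize}
\item The path in the Cayley graph between two wheels sharing $\cir$ is not monotone. Proposition~\ref{prop:uniquereduction}(ii) makes every swap increasing only when you start at the reduced quadruple; a general path has decreasing steps.
\item More seriously, ``increasing swaps never return'' (Lemma~\ref{lem:reducedwordnontrivial}) says a \emph{quadruple} cannot recur. It says nothing about a single \emph{circle}, once swapped out of slot $i$, reappearing later in slot $j\neq i$. Individual curvatures are not monotone and many distinct circles share a curvature, so curvature bookkeeping cannot follow a specific circle.
\item Proposition~\ref{prop:uniquereduction} is about curvature quadruples, where stationary swaps (such as those of the strip packing) destroy freeness. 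For wheels, freeness comes instead from invertibility of $W$.
\item The ``odd closed walk'' paragraph does not help. The claim that every edge alternates position parity already presupposes that slot parity is a well-defined function of the circle.
\end{itemize}

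The missing idea is a congruence. Each $S_i\equiv I\pmod 2$, so every $\gamma\in\Egp$ satisfies $\gamma\equiv I\pmod 2$. Let $e_i$ be the $i$-th standard basis vector, so the circle in slot $i$ of $W\gamma$ is $W\gamma e_i$. Suppose the same unoriented circle is in slot $j$ of $W\gamma'$. Then $W\gamma e_i=\pm W\gamma' e_j$, so $\gamma e_i=\pm\gamma' e_j$ by Proposition~\ref{prop:4wheelinvertible}. Reducing mod $2$ gives $e_i\equiv e_j$, i.e.\ $i=j$. So each circle has a unique slot and $E_1\cap E_2=\emptyset$, with no reduction theory needed. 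This is the same mechanism as $\cir_i'\equiv\cir_i\pmod 2$ in Lemma~\ref{lem:4wheelandallswapsinsideEnEisenstein}.

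Separately, your first fact is only asserted. Definition~\ref{def:eisensteinpacking} shows that consecutive wheel circles are tangent, not that every tangency in the packing is of this form. The paper leans on the same geometric input (a newly created circle touches only its two wheel neighbours), so you match its level of rigour. You should still flag this as a geometric input rather than a consequence of the definition.

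The congruence gives part of it for free. We have $\langle W\gamma e_i,W\gamma' e_j\rangle=e_i^T\gamma^T R_{\Eis}\gamma' e_j\equiv (R_{\Eis})_{ij}\pmod 2$, which is even for slots $\{1,3\}$ or $\{2,4\}$, so such circles are never tangent. It does not exclude tangency between two circles in the same slot.
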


\begin{proof}
    It suffices to show that no circles in $E_i$ are tangent. The Cayley graph for $\Egp$ on the generators $S_1, S_2, S_3, S_4$ is in Figure~\ref{fig:cayley}, and as elucidated upon in Section~\ref{sec:eispackingalgebra}, every circle corresponds to a unique vertex. In particular, as we generate the packing beginning at $W$, say we are at the 4-wheel $(\cir_1', \cir_2', \cir_3', \cir_4')$, and form a new circle with the move $S_i$. Then, the new circle is tangent to only $\cir_{i-1}'$ and $\cir_{i+1}'$, both of which are in the opposite moiety. Running this process forever generates the whole circle packing, proving the result.
\end{proof}

Next is the proof that $\chi_2$ alternates across tangencies in type $(3, 3)$, and is constant in type $(3, 1)$. In both cases, it is therefore well-defined on each moiety of the packing.

\begin{proof}[Proof of Proposition~\ref{prop:chi2invariance}]
    Consider tangent circles $\cir_1$ and $\cir_2$ of coprime curvatures $a$ and $b$ respectively. We will demonstrate that $\chi_2(\cir_1)=\chi_2(\cir_2)$ if the type is $(3, 1)$ and $\chi_2(\cir_1)=-\chi_2(\cir_2)$ otherwise. By combining this with Proposition~\ref{prop:coprimetangencypath}, the result will follow.

    First, assume the type is $(3, 1)$, and assume that $a,b$ are odd. Thus, $a\equiv b\equiv 1\pmod{4}$, and from Proposition~\ref{prop:rhofromtangentcircle},
    \[\chi_2(\cir_1)=\kron{b}{a}=\kron{a}{b}=\chi_2(\cir_2),\]
    where the middle step is quadratic reciprocity.

    Next, assume the type is $(3, 1)$, take $a$ to be odd, and $b$ to be even. By Proposition~\ref{prop:rhofromtangentcircle},
    \[\chi_2(\cir_2)=\kron{a}{b/2}=\kron{b/2}{a}=\kron{2b}{a}=\chi_2(\cir_1),\]
    as desired.

    It remains to consider type $(3, 3)$. If $a,b$ are both odd, then they are both $3\pmod{4}$, hence
    \[\chi_2(\cir_1)=\kron{b}{a}=-\kron{a}{b}=-\chi_2(\cir_2).\]
    If $a$ is odd and $b$ is even, let the odd part of $b$ be $b_o$, so that by quadratic reciprocity,
    \[\kron{a}{b/2}\kron{b/2}{a}=(-1)^{(a-1)(b_o-1)/4}=(-1)^{(b_o-1)/2}.\]
    By Proposition~\ref{prop:rhofromtangentcircle} we have
    \[\chi_2(\cir_2)=-\kron{-a}{b/2}=-(-1)^{(b_o-1)/2}\kron{a}{b/2}=-\kron{b/2}{a}=-\kron{2b}{a}=-\chi_2(\cir_1),\]
    completing the proof.
\end{proof}

The final piece is the existence of coprime tangency paths between any pairs of circles. In the Apollonian case (\cite[Corollary 4.7]{HKRS23}), given a pair of tangent circles whose curvatures share a non-trivial factor, we considered the family of circles tangent to both. Their curvatures were values of a quadratic form, and it was shown that one of them will be coprime to our two original circle curvatures.

In the Eisenstein case, we need a modification, as there are \emph{no} triples of mutually tangent circles. Instead, we must follow a procedure similar to \cite{WhiteheadEtAlReciprocity}, where we pass to at least two auxiliary circles to make a coprime path. To begin, we require a standard lemma about quadratics.

\begin{lemma}\label{lem:quadraticoddprime}
    Let $f(x)=Ax^2+Bx+C\in\ZZ[x]$. If $p$ is an odd prime with $p\nmid \gcd(A, B, C)$, then there exists a residue $r$ such that if $x\equiv r\pmod{p}$, then $p\nmid f(x)$.
\end{lemma}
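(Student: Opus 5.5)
The plan is to count zeros of $f$ modulo $p$. Since $\ZZ/p\ZZ$ is a field, a nonzero polynomial over it of degree at most $2$ has at most $2$ roots. The hypothesis $p\nmid\gcd(A,B,C)$ says exactly that the reduction $\bar f\in\FF_p[x]$ is not the zero polynomial. I will split into cases by the degree of $\bar f$.

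If $p\nmid A$, then $\bar f$ has degree $2$ and at most two roots in $\FF_p$. As $p\geq 3$, at least one residue $r$ is not a root; take it. If $p\mid A$ but $p\nmid B$, then $\bar f$ is linear with exactly one root, and $p\geq 3>1$ again leaves a non-root $r$ (indeed $p=2$ would also work here). If $p\mid A$ and $p\mid B$, then $p\nmid C$, so $\bar f$ is a nonzero constant and every residue works.

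In every case we obtain $r$ with $\bar f(r)\neq 0$. Since $f(x)\bmod p$ depends only on $x\bmod p$, any $x\equiv r\pmod p$ satisfies $p\nmid f(x)$.

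There is no real obstacle. The only point needing care is that the quadratic case requires $p\geq 3$: for $p=2$ the polynomial $x^2+x$ vanishes at both residues. This is why the lemma is stated for odd primes.
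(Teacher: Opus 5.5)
Your proof is correct and is essentially the paper's argument: both rest on the fact that a nonzero polynomial of degree at most $2$ over $\FF_p$ cannot vanish at three distinct residues. The paper makes this explicit by noting that $p$ dividing $f(0)=C$, $f(1)=A+B+C$ and $f(-1)=A-B+C$ forces $p\mid A,B,C$ (using that $2$ is invertible mod $p$), which also shows that $r$ can be taken in $\{0,\pm1\}$; you instead invoke the general root bound over a field, and your case split by degree is harmless but not needed.
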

\begin{proof}
    As divisibility by $p$ is preserved across the $x\pmod{p}$ residue class, it suffices to prove that at least one integer $x$ satisfies $f(x)$ is coprime to $p$. If not, then $p\mid f(0)=C$, $p\mid f(1)=A+B+C$, and $p\mid f(-1)=A-B+C$, whence $p\mid A,B,C$, contradiction.
\end{proof}

Next, we make the coprime path, but ignore the prime 2, zero curvature circles, and equal curvature circles.

\begin{lemma}\label{lem:onetangencyreplacement}
    Let $\cir$ and $\cir'$ be tangent circles with distinct and non-zero curvatures in a primitive Eisenstein circle packing. Then, there exist circles $\cir_1,\cir_2$ in the packing such that
    \begin{itemize}
        \item $(\cir, \cir_1, \cir_2, \cir')$ is a 4-wheel, hence $\cir-\cir_1-\cir_2-\cir'$ is a path of tangent circles;
        \item The curvatures of the four circles are distinct and non-zero;
        \item In each of the three pairs of consecutive tangent circles $(\cir, \cir_1)$, $(\cir_1, \cir_2)$, and $(\cir_2, \cir')$, their curvatures share no odd prime factors.
    \end{itemize}
\end{lemma}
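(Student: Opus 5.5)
The plan is to work entirely inside the dihedral family of 4-wheels that keep $\cir$ and $\cir'$ fixed, and to choose a member of that family by the Chinese remainder theorem. First, by Lemma~\ref{lem:everytangencyis4wheel} (transported to the Eisenstein packing via Theorem~\ref{thm:Qrt3packingsareEisenstein}), or directly from the swap description, the tangency $\cir-\cir'$ lies in some 4-wheel. After cyclically relabelling (a $D_8$ symmetry), this 4-wheel has the form $(\cir,\Dcir_2,\Dcir_3,\cir')$ with curvatures $(a,b,c,d)$, where $a\neq d$ and $a,d\neq 0$. The swaps $S_2$ and $S_3$ fix positions $1$ and $4$, so every element of the group $\langle S_2,S_3\rangle$ produces a 4-wheel $(\cir,\cir_1,\cir_2,\cir')$. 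The first bullet therefore holds automatically for every member of this orbit.

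Next I would make the orbit explicit. The swaps act by
\[b\mapsto 2a+2c-b,\qquad c\mapsto 2b+2d-c,\]
which are affine maps in $(b,c)$ with $a,d$ fixed. The composition $S_2S_3$ has unipotent linear part, so along the words $(S_2S_3)^k$ and $(S_2S_3)^kS_2$ the curvatures $b_k,c_k$ are quadratic polynomials in $k$ with integer coefficients. This agrees with Proposition~\ref{prop:curvaturesoftangentcircles}, where the curvatures of circles tangent to $\cir$ are values of $\phi(q)$ minus $a$. I would compute these polynomials, and check that their leading coefficients are nonzero multiples of $a+d>0$ (Lemma~\ref{lem:pairsnonneg}). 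Consequently, for $|k|$ large, $b_k$ and $c_k$ are nonzero, distinct from each other, and distinct from $a$ and $d$. This handles the second bullet, provided every congruence condition I impose on $k$ below is compatible with taking $|k|$ large; CRT classes are, so this is fine.

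For the third bullet, I would first identify which odd primes can cause trouble. If an odd prime $p$ divides both $b_k$ and $c_k$, then reducing the Eisenstein equation modulo $p$ gives $a^2+d^2\equiv 2ad$, so $p\mid a-d$. Hence the only relevant primes are the finitely many odd primes dividing $ad(a-d)$, which is nonzero by hypothesis. For each such $p$, I need a residue class of $k$ modulo $p$ satisfying the following: $p\nmid b_k$ if $p\mid a$; $p\nmid c_k$ if $p\mid d$; and $p\nmid b_kc_k$ (or at least one of them) if $p\mid a-d$. Lemma~\ref{lem:quadraticoddprime} provides such a residue as soon as the relevant quadratic is not identically $0$ modulo $p$. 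That nondegeneracy follows from primitivity: if, say, $b_k\equiv 0$ for all $k$ and $p\mid a$, then unwinding the affine recursion gives $p\mid a,b,c,d$. The CRT then combines the residues for all bad primes.

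The main obstacle is the case of a prime $p$ that must be avoided by both polynomials simultaneously, for instance $p\mid a$ and $p\mid d$. Each quadratic kills up to two residues, so for $p=3$ the two conditions could a priori exclude all three classes. I would first try to show that in such cases the two quadratics reduce modulo $p$ to linear, or even constant, polynomials, since $p\mid a,d$ collapses many terms of the recursion. Alternatively I would show that their roots coincide, using the Eisenstein relation modulo $p$ together with Proposition~\ref{prop:mod9} when $p=3$. Failing that, I would use both word families (even- and odd-length words in $S_2,S_3$) to enlarge the set of available residues.
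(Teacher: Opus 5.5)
Your strategy matches the paper's. You fix $\cir,\cir'$ in positions $1$ and $4$ and run through the $\langle S_2,S_3\rangle$-orbit, so that $b_k,c_k$ are quadratics in $k$ with leading coefficient $2(a+d)$. You observe that a common odd prime of $b_k,c_k$ must divide $a-d$, and you combine residues from Lemma~\ref{lem:quadraticoddprime} by CRT.

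\textbf{The gap.} The step you flag as the main obstacle is left open: you list candidate strategies but carry none out, so the proof is incomplete there. The missing observation is the mirror image of the reduction you already did.
\begin{itemize}
\item If an odd prime $p$ divides both $a$ and $d$, the Eisenstein equation modulo $p$ reads $b_k^2+c_k^2\equiv 2b_kc_k$, so $p\mid b_k-c_k$. Hence $p\mid b_k$ would force $p\mid a,b_k,c_k,d$, contradicting primitivity. Equivalently, modulo such $p$ the swaps act as $b\mapsto 2c-b\equiv b$ and $c\mapsto 2b-c\equiv c$. Both quadratics are then the nonzero constants $b_0,c_0$, which confirms your first suggested route.
\item So primes dividing $\gcd(a,d)$ impose no condition at all.
\item Every other relevant odd prime divides exactly one of $a$, $d$, $a-d$, since dividing two of them would force it to divide $\gcd(a,d)$. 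It therefore only has to avoid one quadratic: $b_k$ if $p\mid a$ or $p\mid a-d$, and $c_k$ if $p\mid d$.
\item That quadratic is not identically zero modulo $p$. Its $k^2$-coefficient vanishes only if $a\equiv -d$, and combined with $p\mid a$, $p\mid d$, or $p\mid a-d$ this again gives $p\mid\gcd(a,d)$.
\end{itemize}
Consequently no prime ever needs two simultaneous conditions, the $p=3$ worry never arises, and you need neither Proposition~\ref{prop:mod9} nor odd-length words. This is exactly how the paper sorts primes into the disjoint sets $P_1,P_2,P_3$, after first showing that primes of $\gcd(a,d)$ never divide $b_n$ or $c_n$.

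\textbf{A smaller point.} Since $\cir$ and $\cir'$ are tangent, Lemma~\ref{lem:pairsnonneg} only gives $a+d\ge 0$, not $a+d>0$. To get strictness, use $a\neq 0$ together with the $b\leftrightarrow d$ version of \eqref{eqn:eisensteinmod6}, namely $(a+c-b)^2+3a^2=(a+d)(3a-d+2c)$. This shows that $a+d=0$ forces $a=0$, so the leading coefficients are nonzero, as you need for the distinctness bullet.
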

\begin{proof}
    Call a pair of integers $(x, y)$ \emph{odd-coprime} if $\gcd(x, y)$ is a power of two.

    Complete the tangent circles $\cir,\cir'$ into any 4-wheel in the packing, say $(\cir, \cir_1, \cir_2, \cir')$, with curvatures $(a,b,c,d)$.  Applying words of the form $(S_3S_2)^n$ will alternately swap out $\cir_2$ and $\cir_3$, and run through an infinite family of 4-wheels containing $\cir,\cir'$ as the first and last entries. In this way, we replace $(a,b_0 := b,c_0:=c,d)$ with $(a,b_n,c_n,d)$. We claim that for some value of $n$, the pairs $(a, b_n)$, $(b_n, c_n)$, and $(c_n, d)$ are all odd-coprime.

    Compute
    \[
        S_3 S_2 = \begin{pmatrix}
            1 & 2  & 0  & 0 \\
            0 & 3  & 2  & 0 \\
            0 & -2 & -1 & 0 \\
            0 & 4  & 2  & 1
    \end{pmatrix},
    \]
    whence the dynamical system obtained is
    \[\begin{pmatrix}b_n \\ c_n\end{pmatrix}=
    \begin{pmatrix} 3 & -2 \\ 2 & -1 \end{pmatrix}
        \begin{pmatrix}
        b_{n-1} \\ c_{n-1}
    \end{pmatrix} +
        \begin{pmatrix}
        2a + 4d \\ 2d
    \end{pmatrix}.
    \]
    Writing
    \[
  \begin{pmatrix} 3 & -2 \\ 2 & -1 \end{pmatrix} = A
= I + M, \quad M =
 \begin{pmatrix} 2 & -2 \\ 2 & -2 \end{pmatrix},\quad B=\begin{pmatrix}
     2a+4d\\2d
 \end{pmatrix},
    \]
    We obtain (for $n\geq 0$)
    \[\begin{pmatrix}b_n \\ c_n\end{pmatrix}=
    (I+M)^n\begin{pmatrix}b_0 \\ c_0\end{pmatrix}
    + 
    \sum_{i=0}^{n-1} (I+M)^iB
    \]
    Noticing that $M^2=0$, this becomes
\begin{align*}
    \begin{pmatrix}b_n \\ c_n\end{pmatrix}
    &=(I+nM)\begin{pmatrix}b_0 \\ c_0\end{pmatrix}
    + \sum_{i=0}^{n-1} \left(I+iM\right)         \begin{pmatrix}
        2a + 4d \\ 2d
    \end{pmatrix} \\
     &=(I+nM)\begin{pmatrix}b_0 \\ c_0\end{pmatrix}
    + n\begin{pmatrix}
        2a + 4d \\ 2d
    \end{pmatrix} + \frac{n(n-1)}{2}  M       \begin{pmatrix}
        2a + 4d \\ 2d
    \end{pmatrix} \\
    &= n^2 \begin{pmatrix} 2a+2d \\ 2a+2d \end{pmatrix}
    + n \begin{pmatrix} 2d + 2b_0 - 2 c_0 \\ -2a + 2b_0 - 2 c_0\end{pmatrix}
    +      \begin{pmatrix}
        b_0 \\ c_0
    \end{pmatrix}.
    \end{align*}

    Let $g = \gcd(a,d)$. Recall the Eisenstein equation, which gives $a^2+b_n^2+c_n^2+d^2=2(a+c_n)(b_n+d)$. Let $p$ be an odd prime, and if $p\mid g$, we see that $b_n^2+c_n^2\equiv 2b_nc_n\pmod{p}$, hence $(b_n-c_n)^2\equiv 0\pmod{p}$, i.e. $p\mid b_n-c_n$. However, if $p\mid b_n$ or $c_n$ for any $n$, then $p\mid b_n,c_n$, and so we have a non-primitive Eisenstein quadruple, contradiction. Thus, $b_n$ and $c_n$ never share prime factors with $g$.

    We now define three finite sets of primes:
    \begin{itemize}
        \item $P_1$ is the set of prime divisors of $a$ that do not divide $g$;
        \item $P_2$ is the set of prime divisors of $d$ that do not divide $g$;
        \item $P_3$ is the set of prime divisors of $a-d$.
    \end{itemize}
    The above computation shows that $P_1$ and $P_2$ are disjoint, and if $p\in P_3\cap(P_1\cup P_2)$, then $p\mid g$, contradiction. Thus, the three sets are disjoint.

    The equation for $b_n$ is quadratic, and $p$ dividing all of its coefficients is equivalent to
    \[p\mid b_0,\quad a\equiv -d\pmod{p},\quad c_0\equiv d\pmod{p}.\]
    From the Eisenstein equation, this implies that
    \[3a^2\equiv a^2+b_0^2+c_0^2+d^2\equiv 2(a+c_0)(b_0+d)\equiv 0\pmod{p},\]
    so $p=3$ or $p\mid a$. In particular, if $p\neq 3$ or $(a, b_0, c_0, d)\not\equiv (a, 0, -a, -a)\pmod{3}$, then $p$ does not divide the $\gcd$ of the coefficients, and Lemma~\ref{lem:quadraticoddprime} applies.

    Analogously, for the quadratic equation giving $c_n$, we have that $p$ does not divide the $\gcd$ of the coefficients as long as $p\neq 3$ or $(a, b_0, c_0, d_0)\not\equiv (a, a, 0, -a)\pmod{3}$.

    Now, for our three sets of primes, do the following:
    \begin{itemize}
        \item For $p\in P_1\cup P_3$, if $p\neq 3$, then Lemma~\ref{lem:quadraticoddprime} applies with $b_n$. If $p=3$, then since $p\mid a$ or $p\mid a-d$, we must have $(a, b_0, c_0, d_0)\not\equiv (a, 0, -a, -a)\pmod{3}$, as otherwise the quadruple would not be primitive at 3. In particular, Lemma~\ref{lem:quadraticoddprime} always applies, and guarantees a residue $r_p$ so that if $n\equiv r_p\pmod{p}$, then $b_n\not\equiv 0\pmod{p}$.
        \item For $p\in P_2$, apply the analogous result with $c_n$, obtaining a residue $r_p$ so that if $n\equiv r_p\pmod{p}$, then $c_n\not\equiv 0\pmod{p}$.
    \end{itemize}

    As our three sets of primes are disjoint, by the Chinese remainder theorem, we can combine the above residues into one class $r\pmod{N}$, where $N$ is the product of primes in $P_1\cup P_2\cup P_3$. We claim that any $n\equiv r\pmod{N}$ will satisfy the odd-coprimality on pairs.

    For $(a, b_n)$, let $p$ be an odd prime divisor of $a$. If $p\mid g$, then we already know that $p\nmid b_n$, hence $p\nmid\gcd(a, b_n)$. Otherwise, $p\in P_1$, and by our construction, $p\nmid b_n$, so again, $p\nmid \gcd(a, b_n)$, proving that this is a power of 2.

    The argument for $(c_n, d)$ is analogous, so it remains to check $(b_n, c_n)$. Let $p\mid b_n, c_n$, and by the Eisenstein equation, we have
    \[a^2+d^2\equiv a^2+b_n^2+c_n^2+d^2\equiv 2(a+c_n)(b_n+d)\equiv 2ad\pmod{p},\]
    whence $p\mid (a-d)^2$, so $p\mid a-d$. Therefore $p\in P_3$, and by our construction, $p\nmid b_n$, so $\gcd(b_n, c_n)$ is a power of 2, as claimed.

    To finish, we just need to note that the $n^2$ coefficient in $b_n$ and $c_n$ is $2a+2d$, which is non-zero. (Indeed, otherwise, $a=-d\neq 0$, but there is a unique smallest absolute curvature in a bounded packing.) Thus, for some large $n\equiv r\pmod{N}$, the four curvatures $a,b_n,c_n,d$ must be distinct and non-zero, the final requirement.
\end{proof}

The reason we assumed that our starting circles had non-zero curvature was that otherwise, $P_1$ or $P_2$ would be infinite. Similarly, if they had equal curvature, then $P_3$ is infinite. First, we rectify the zero curvature issue.

\begin{lemma}\label{lem:coprimetangencypath0curv}
    Let $\cir$ and $\cir'$ be tangent circles in a primitive Eisenstein circle packing, where $\cir$ has curvature zero. Then, there is a tangency path between $\cir$ and $\cir'$ where consecutive circles have coprime curvatures.
\end{lemma}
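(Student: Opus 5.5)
We use the fact that a primitive packing containing a curvature-zero circle is very rigid. By Proposition~\ref{prop:eisensteinreduced} and Lemma~\ref{lem:pairsnonneg}, its root quadruple has two zero entries, so it is the strip packing with root quadruple $(0,0,1,1)$. We then exploit the explicit description of the circles tangent to $\cir$. Complete $\cir,\cir'$ into a 4-wheel in standard position $q=(0,b,c,d)$, with $\cir$ first. Then $\phi(q)$ is the unique reduced form of discriminant $0$, namely $[1,0,0]$, i.e.\ $x^2$. By Proposition~\ref{prop:curvaturesoftangentcircles}, the circles tangent to $\cir$ have curvatures $2x^2$ with $x$ coprime to an odd $y$, and $x^2$ with $x$ odd (from coprime $(x,y)$ with $y$ even). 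In particular $\cir'$ has curvature $b=x_0^2$ or $2x_0^2$. Moreover, in each parity class, $\cir$ is tangent to circles of curvature $1$ (respectively $2$). The only obstruction is that $\gcd(0,b)=b$, so the tangency $\cir$--$\cir'$ itself is never coprime unless $b=1$.

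The plan is to route around $\cir$. Step one: the path starts $\cir\to\mathcal{D}$, where $\mathcal{D}$ is a curvature-$1$ circle tangent to $\cir$, which is coprime. Step two: connect $\mathcal{D}$ to $\cir'$ by a coprime path that avoids $\cir$, using the wheel dynamics. The model computation is a 4-wheel of the form $(\cir,\cir',\mathcal{E},\mathcal{F})$ with curvatures $(0,b,c,1)$. The Eisenstein equation then reads $(b-c)^2=2c-1$. Writing $s=b-c$, this gives $\gcd(b,c)=\gcd(c,s)$, and since $s^2=2c-1$ this gcd is $1$. Hence $\cir-\mathcal{F}-\mathcal{E}-\cir'$ has consecutive curvature pairs $(0,1),(1,c),(c,b)$, all coprime. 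So it suffices to show that $\cir'$ lies in some 4-wheel with $\cir$ in which the circle opposite $\cir'$ (the one tangent to $\cir$) has curvature $1$.

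To find such a wheel, fix $\cir,\cir'$ in positions $1,2$ and run the $\langle S_3,S_4\rangle$-orbit, as in Lemma~\ref{lem:onetangencyreplacement}. As in that proof, $(S_4S_3)^n$ yields an explicit quadratic formula for $d_n$, and the possible values of $d_n$ are the curvatures of circles tangent to $\cir$ adjacent to $\cir'$. Concretely, I would normalize $\cir=\Dcir_1=\widehat{\RR}$ as in Lemma~\ref{lem:everytangencyis4wheel}, using the transitivity of $\Gamma_1^T(2)$ on tangency points. Then I would compute in the strip packing of Figure~\ref{fig:strip} which Ford circles sit in a common wheel with a given one. If curvature $1$ is not directly reachable, fall back on a two-stage route. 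First pick a neighbour $\mathcal{F}$ of $\cir$ in the wheel orbit whose curvature $d_n=x^2$ or $2x^2$ is as small as possible. Use Lemma~\ref{lem:quadraticoddprime} on $d_n$ and $c_n$, together with the parity constraint of Lemma~\ref{lem:twoeventwoodd}, to force $\gcd(c_n,b)=\gcd(c_n,d_n)=1$. Then connect $\mathcal{F}$ back to a curvature-$1$ neighbour of $\cir$ by Lemma~\ref{lem:onetangencyreplacement}. Both of these circles have nonzero, distinct curvatures, so that lemma applies, and its residual powers of $2$ are removed by choosing odd-curvature intermediates.

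The main obstacle is this last step: controlling the prime $2$, and guaranteeing a curvature-$1$ (or otherwise unit-gcd) neighbour of $\cir$ inside a wheel through $\cir'$. I would try the explicit strip-packing coordinates first, since there everything is given by Farey-type data $p/q$, before resorting to the general quadratic-residue argument.
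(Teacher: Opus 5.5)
Your reduction to finding a wheel $(\cir,\cir',\mathcal{E},\mathcal{F})$ with $\mathcal{F}$ of curvature $1$ does not hold in general, and your fallback does not repair it.

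\emph{Why the direct route fails.} In such a wheel, $\cir'$ and $\mathcal{F}$ are opposite, so by Lemma~\ref{lem:twoeventwoodd} exactly one of them is even. Hence whenever $\cir'$ has odd curvature $x_0^2>1$, no such wheel exists. It can fail for even curvature too. In the Eisenpint picture with $\cir=\widehat{\RR}$, the circle $\sm{3}{4}{5}{7}(\Dcir_2)$ is tangent at $3/5$ and has curvature $50$. Its wheel partners along $\widehat{\RR}$ are the images of the curvature-$1$ circles at the even integers, and these have curvatures $(10k+7)^2\neq 1$.

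\emph{Why the fallback fails.} You apply Lemma~\ref{lem:onetangencyreplacement} to $\mathcal{F}$ and a curvature-$1$ neighbour $\mathcal{D}$ of $\cir$. That lemma requires the two circles to be tangent, and $\mathcal{F},\mathcal{D}$ never are. Both touch $\cir$ at distinct points, so a tangency between them would give a mutually tangent triple, which Lemma~\ref{lem:notangenttriangles} excludes. Joining them would need a tangency path that avoids both curvature-$0$ circles, with every edge then repaired. That is essentially the original problem. Proposition~\ref{prop:coprimetangencypath} cannot be used here, since its proof relies on this lemma.

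\emph{The missing idea.} The coprimality you extracted from $(b-c)^2=2c-1$ holds in every wheel through $\cir$, with no condition on $d$. For a wheel with curvatures $(0,b,c,d)$, the Eisenstein equation reads $b^2+c^2+d^2=2c(b+d)$. So a prime dividing $c$ and one of $b,d$ also divides the other, contradicting primitivity. Hence $\gcd(b,c)=\gcd(c,d)=1$: the circle opposite $\cir$ is coprime to both of its neighbours.

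The paper combines this with reduction, as follows.
\begin{enumerate}
\item Reduce a wheel $(\cir,\cir_1,\cir_2,\cir')$ to the root wheel. Swapping out $\cir$ would produce $2(b+d)>0$, so it is never a reducing step. Thus $\cir$ stays in every intermediate wheel, and the coprimality above holds throughout.
\item Track a current circle, starting at $\cir'$. Whenever it is about to be swapped out, step to one of its neighbours in the current wheel other than $\cir$. Every such step joins the circle opposite $\cir$ to one of its neighbours, so it is coprime.
\item At the end you sit on a circle of the root wheel, with curvatures $(0,0,1,1)$. From there you reach $\cir$ directly if the circle has curvature $1$, or through the two curvature-$1$ circles if it has curvature $0$.
\end{enumerate}
This argument needs neither Lemma~\ref{lem:quadraticoddprime} nor the values of the quadratic form.
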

\begin{proof}
    Since we have a curvature zero circle, we must be in the Eisenstein strip packing, Figure~\ref{fig:strip}. If $\cir'$ is also of curvature $0$, then we can traverse across the two curvature 1 circles in the root 4-wheel of curvatures $(0, 0, 1, 1)$. Otherwise, assume that $(\cir, \cir_1,\cir_2,\cir_3)$ is a 4-wheel with curvatures $(0, b, c, d)$. By the Eisenstein equation, $b^2+c^2+d^2\equiv 2c(b+d)$. So, if $p$ is a prime that divides either $(b, c)$ or $(c, d)$, then it divides $b,c,d$, whence $(0, b, c, d)$ is not primitive, contradiction. Therefore it follows that $\gcd(b, c)=\gcd(c, d)=1$.

    Thus, we can pick any 4-wheel $(\cir, \cir_1,\cir_2,\cir')$, and reduce it, eventually obtaining a quadruple of circles with curvatures being a permutation of $(0, 0, 1, 1)$. However, along the way, start with $\cir'$. Any time we are swapping this circle out, move to the tangent circle $\cir_2$, which must have coprime curvature. Repeat this process, obtaining a coprime tangency path from $\cir_2$ to one of the other circles in the base quadruple. We can then move directly to $\cir$ (if curvature 1), or through the two curvature 1 circles to $\cir$. In any case, we have found our tangency-connected path.
\end{proof}

Finally, we can turn any tangency-connected path into a coprime tangency-connected path.

\begin{proposition}\label{prop:coprimetangencypath}
    Let $\cir$ and $\cir'$ be circles in a primitive Eisenstein circle packing. Then, there is a tangency path between $\cir$ and $\cir'$ where consecutive circles have coprime curvatures.
\end{proposition}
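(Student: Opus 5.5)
The plan is to begin with an arbitrary tangency path between $\cir$ and $\cir'$ and repair it edge by edge. Such a path exists because the packing is tangency-connected by construction: it is generated from a 4-wheel by swaps, and each swapped circle is tangent to two circles of the current wheel. It therefore suffices to show that for any two tangent circles $\cir_a,\cir_b$ in the packing there is a coprime tangency path between them. Concatenating these local paths then gives the result.

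Fix a tangent pair $\cir_a,\cir_b$ with curvatures $a,b$, and split into cases.
\begin{itemize}
\item If one curvature is zero, Lemma~\ref{lem:coprimetangencypath0curv} applies directly.
\item If $a=b\neq 0$, first produce an intermediate circle. Complete the pair to a 4-wheel $(\cir_a,\cir_b,\cir_2,\cir_3)$, with $\cir_3$ tangent to $\cir_a$ and $\cir_2$ tangent to $\cir_b$. Using the swaps that fix $\cir_a,\cir_b$, namely $(S_3S_2)^n$ in suitable positions, we can choose the wheel so that its other curvatures are distinct from $a$ and nonzero, since the quadratic growth in $n$ computed in Lemma~\ref{lem:onetangencyreplacement} applies here too. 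The path $\cir_a-\cir_3-\cir_2-\cir_b$ then consists of tangent pairs with distinct nonzero curvatures, and it remains to handle each of these.
\item If $a\neq b$ and both are nonzero, Lemma~\ref{lem:onetangencyreplacement} yields a 4-wheel path $\cir_a-\cir_1-\cir_2-\cir_b$ in which consecutive curvatures share no odd prime. The remaining work is to kill common factors of $2$.
\end{itemize}

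The prime $2$ is handled by Lemma~\ref{lem:twoeventwoodd}. In any primitive 4-wheel, consecutive circles (which are tangent) cannot both be even. Indeed, in a wheel $(a,b,c,d)$ exactly one of $a,c$ and exactly one of $b,d$ is even. If $a$ and $b$ were both even, then $c$ and $d$ would be odd, and the wheel is consistent with this, so that alone does not rule it out. So parity needs more care. Within the 4-wheel $(\cir_a,\cir_1,\cir_2,\cir_b)$, each circle is tangent to exactly one even and one odd neighbour, which is the key observation used in the proof of Proposition~\ref{prop:orbitsE}. Hence at most two of the three consecutive pairs, and possibly exactly one, consist of two even curvatures.

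To fix an even–even pair $(\cir_i,\cir_{i+1})$, the plan is to reroute around it using the other two circles of the wheel, which are odd. For instance, if $\cir_a,\cir_1$ are both even, then $\cir_2$ and $\cir_b$ are odd, and $\cir_a$ is tangent to $\cir_b$ directly. The fix is therefore to choose the 4-wheel path in the other direction, $\cir_a-\cir_b$ via the wheel's other side, or better, to re-run the construction of Lemma~\ref{lem:onetangencyreplacement} while also imposing a $2$-adic condition on $n$. Concretely, the plan is to add the prime $2$ to the Chinese remainder argument. Since $b_n\equiv b_0+2n(d+b_0-c_0)\pmod 4$ and similarly for $c_n$, parities are constant in $n$, so $2$ cannot be killed by varying $n$; the argument must use the odd/even structure instead. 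So when $a,b$ are both even, the plan is to pass through an odd neighbour. By Lemma~\ref{lem:twoeventwoodd}, in the wheel $(\cir_a,\cir_1,\cir_2,\cir_b)$, if $a,b$ are even then $\cir_1,\cir_2$ are odd, and the three pairs are all (even, odd), (odd, odd), (odd, even). So the rerouted path has no common factor $2$ at all, and with the odd-coprimality from Lemma~\ref{lem:onetangencyreplacement} it is fully coprime.

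If instead exactly one of $a,b$ is even, the wheel places the even circle of the other pair at a position tangent to the even endpoint, and the parity bookkeeping must be checked. The main obstacle is this parity case analysis, together with making sure the distinct-nonzero hypotheses propagate in the $a=b$ reduction. Where a consecutive pair comes out even–even, the first thing to try is to swap which of $\cir_1,\cir_2$ plays which role, by applying one extra $S_2$ or $S_3$. This moves the even circles so that every consecutive pair has mixed parity or is odd–odd.
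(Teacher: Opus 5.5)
You have the right overall structure: repair an arbitrary tangency path edge by edge, handle zero curvature with Lemma~\ref{lem:coprimetangencypath0curv}, reduce $a=b$ to distinct curvatures via $(S_3S_2)^n$, and otherwise invoke Lemma~\ref{lem:onetangencyreplacement}. This matches the paper, and your treatment of the case where both endpoints are even is correct. The proof is incomplete, though, when both endpoints are odd or when they have mixed parity, and the fix you propose there cannot work. Every swap $S_i$ preserves each entry of the quadruple modulo $2$. By Lemma~\ref{lem:twoeventwoodd}, in a wheel $(\cir_a,\cir_1,\cir_2,\cir_b)$ the parity of $\cir_1$ is forced to be opposite to that of $\cir_b$, and the parity of $\cir_2$ opposite to that of $\cir_a$. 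So no choice of middle circles, and no extra $S_2$ or $S_3$, changes the parity pattern:
\begin{itemize}
\item if $a,b$ are both odd, the middle pair $(\cir_1,\cir_2)$ is always even--even;
\item if $a$ is even and $b$ is odd, the pair $(\cir_a,\cir_1)$ is always even--even.
\end{itemize}
Two smaller points in the same passage. Your count is loose: of the four cyclic pairs in a primitive wheel exactly one is even--even, so at most one of the three path pairs is. And rerouting through $\cir_b$ is circular, since $\cir_a-\cir_b$ is the edge you are trying to replace.

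The missing step is to recurse using the case you already solved. Lemma~\ref{lem:onetangencyreplacement} guarantees that the four curvatures of the new wheel are distinct and non-zero. Hence the single even--even tangent pair it produces is a valid input for a second application of the lemma. Both curvatures of that pair are even, so Lemma~\ref{lem:twoeventwoodd} makes the two new middle circles odd, and odd-coprimality then gives full coprimality. This replaces the bad edge by a coprime path through at most four intermediate circles, which is exactly how the paper finishes.

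Two minor corrections. First, your formula for $b_n$ modulo $4$ drops the $2n^2(a+d)$ term, although your parity conclusion is still right. Second, in the $a=b$ reduction you also need the two new middle curvatures to be distinct from each other. This holds for large $n$, because $b_n-c_n$ is linear in $n$ with slope $2(a+d)\neq 0$.
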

\begin{proof}
    The tangency graph is connected, so to begin, pick any path. Consider any consecutive tangency $\Dcir-\Dcir'$ of curvatures $d, d'$ respectively, where $\gcd(d, d')>1$. We claim that we can replace this with a path $\Dcir-\Dcir_1-\Dcir_2-\cdots-\Dcir_n-\Dcir'$, where every pair of consecutive circles have coprime curvatures. If this is true, then repeating this replacement for all such pairs produces the final path.

    Assume that $d\neq d'$. If either are 0, we are done by Lemma~\ref{lem:coprimetangencypath0curv}. Otherwise, by Lemma~\ref{lem:onetangencyreplacement}, we can find circles $\Dcir_1$ and $\Dcir_2$ of curvatures $x, y$ which make a 4-wheel of curvatures $(d, x, y, d')$, where consecutive pairs are distinct, non-zero, and share no odd prime divisors. If $d$ and $d'$ are even, then by Lemma~\ref{lem:twoeventwoodd}, $x$ and $y$ are necessarily odd, whence $\gcd(d, x)=\gcd(x, y)=\gcd(y, d')=1$.

    If $d$ and $d'$ are odd, then $x$ and $y$ are even, so $\gcd(d, x)=\gcd(y, d')=1$, but $\gcd(x, y)$ is an even power of 2. In this case, we can just repeat the argument for circles $\Dcir_1$ and $\Dcir_2$, ending up with a 6 circle path between $\Dcir$ and $\Dcir'$.

    Similarly, if one of $d, d'$ is even and the other is odd, we can assume that $d$ is even and $d'$ is odd. Therefore $x$ is even and $y$ is odd, so $\gcd(x, y)=\gcd(y, d')=1$. This time $\gcd(d, x)$ is an even power of 2, so repeat the argument on $\Dcir$ and $\Dcir_1$ to get a valid 6 circle coprime path between $\Dcir$ and $\Dcir'$.

    The final case is if $d=d'\neq 0$. Pick a 4-wheel $(\Dcir, \Dcir_1,\Dcir_2,\Dcir')$, and by alternately applying swaps $S_2$ and $S_3$ some number of times, we can reduce to the case where each pair of consecutive circles has distinct curvature. Repeat the above arguments for each pair to derive the final result.
\end{proof}

\subsection{The obstructions}

We settle Theorem~\ref{thm:reciprocity} through a series of lemmas. Throughout this section, let $\cir$ denote a circle of curvature $c$ in a primitive Eisenstein packing of type $(3, t)$. Pick a tangent circle with coprime curvature $b$, which can always be taken to be odd. Recall the definition of $\chi_2(\cir)$ involving $(b, c, t)$, Definition~\ref{def:chi2fromtangentcircle}.

\begin{lemma}\label{lem:teq1chi2eqm1}
    If $t=1$ and $\chi_2(\cir)=-1$, then $c\neq (2n+1)^2, 2n^2, 6n^2$ for any $n\in\ZZ$, despite these families being entirely admissible.
\end{lemma}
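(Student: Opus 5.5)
We argue by contradiction: assume $c$ lies in one of the three families and compute $\chi_2(\cir)$ directly from Proposition~\ref{prop:rhofromtangentcircle}, using an odd tangent curvature $b$ coprime to $c$. In type $(3,1)$ every odd curvature in the packing is $1\pmod 4$ (Proposition~\ref{prop:curvaturesmod4}), so $b\equiv 1\pmod 4$. In each case we will show $\chi_2(\cir)=1$, contradicting the hypothesis. We handle $c>0$; $c=0$ is $2\cdot 0^2$, and for it $\rho=1$ gives $\chi_2=1$ directly, while negative $c$ cannot be a square or $2n^2,6n^2$ with $n\neq0$.

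\emph{Case $c=(2n+1)^2$.} Here $c$ is odd and $\chi_2(\cir)=\kron{b}{c}$, which equals $1$ because the Jacobi symbol with square denominator is $1$ whenever $\gcd(b,c)=1$.

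\emph{Case $c=2n^2$.} Here $c$ is even, $t=1$, and $\chi_2(\cir)=\kron{b}{c/2}=\kron{b}{n^2}=1$. For $n$ even the Kronecker symbol $\kron{b}{n^2}$ factors as $\kron{b}{|n|}^2=1$, since $b$ is odd and coprime to $n$.

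\emph{Case $c=6n^2$.} Again $\chi_2(\cir)=\kron{b}{3n^2}=\kron{b}{3}$, so we must show $b\equiv 1\pmod 3$. This is the main step, and we plan to use Proposition~\ref{prop:tangentcurvaturemod6sum}. We know $3\mid c$ and $3\nmid b$. Since $b\equiv1\pmod4$ is odd and $c$ is even, $b+c$ is odd. If $b\equiv 2\pmod 3$, then $b+c\equiv 2\pmod 3$, and with $b+c$ odd this gives $b+c\equiv 5\pmod 6$, which Proposition~\ref{prop:tangentcurvaturemod6sum} forbids. Hence $b\equiv 1\pmod 3$ and $\chi_2(\cir)=1$.

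Admissibility of the families follows from the mod-4 description in type $(3,1)$: odd squares are $1\pmod 4$, and $2n^2, 6n^2$ are $0$ or $2\pmod 4$, all of which lie in $c(E)\pmod 4=\{0,1,2\}$. The only potential subtlety is bookkeeping with Kronecker symbols at the prime $2$ (for example $\kron{b}{2}$ factors appearing when $n$ is even). These disappear because every factor appears squared and $b$ is odd, so no actual obstacle should arise beyond the mod 6 step.
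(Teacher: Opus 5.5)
Your proposal is correct and follows essentially the same argument as the paper. In both, $\chi_2(\cir)$ is evaluated via Proposition~\ref{prop:rhofromtangentcircle} with a coprime tangent curvature $b$, the square factors make the symbol $1$ in the first two families, and Proposition~\ref{prop:tangentcurvaturemod6sum} forces $b\equiv 1\pmod{6}$ so that $\kron{b}{3}=1$ in the $6n^2$ case. Your extra remarks on $c\le 0$ and on admissibility modulo $4$ are harmless additions.
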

\begin{proof}
    If $c=(2n+1)^2$, then
    \[-1=\chi_2(\cir)=\kron{b}{(2n+1)^2}=1,\]
    contradiction.

    If $c=2n^2$, then
    \[-1=\chi_2(\cir)=\kron{b}{2n^2/2}=\kron{b}{n^2}=1,\]
    another contradiction.

    If $c=6n^2$, then as $b$ is coprime to $c$, it is coprime to 6, hence $b\equiv 1, 5\pmod{6}$. However, by Proposition~\ref{prop:tangentcurvaturemod6sum}, $b+c\not\equiv 5\pmod{6}$, hence $b\equiv 1\pmod{6}$. Thus,
    \[-1=\chi_2(\cir)=\kron{b}{6n^2/2}=\kron{b}{3}=1,\]
    a final contradiction.
\end{proof}

\begin{lemma}\label{lem:teq3chi2eq1}
    If $t=3$ and $\chi_2(\cir)=1$, then $c\neq 2n^2, 3(2n+1)^2$ for any $n\in\ZZ$, despite these families being entirely admissible.
\end{lemma}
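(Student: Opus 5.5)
Following the template of Lemma~\ref{lem:teq1chi2eqm1}, I will pick a tangent circle of odd curvature $b$ coprime to $c$ and evaluate $\chi_2(\cir)$ using Definition~\ref{def:chi2fromtangentcircle} (equivalently Proposition~\ref{prop:rhofromtangentcircle}). For each of the two families I will show that $\chi_2(\cir)=-1$, which contradicts the hypothesis $\chi_2(\cir)=1$. Before that I will check admissibility in type $(3,3)$, where the allowed residues are $0,2,3 \pmod 4$. Indeed $2n^2\equiv 0,2\pmod 4$, and $3(2n+1)^2\equiv 3\pmod 4$, so both families are admissible.

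\emph{Case $c=2n^2$.} Here $c$ is even and $t=3$, so $\chi_2(\cir)=-\kron{-b}{n^2}$. Since $\gcd(b,c)=1$, $b$ is coprime to $n$, and hence $\kron{-b}{n^2}=\kron{-b}{n}^2=1$. This gives $\chi_2(\cir)=-1$, a contradiction. If $n$ happens to be even, I will also confirm that the Kronecker symbol at the factor $2$ behaves as expected; this holds because $b$ is odd. The case $n=0$ gives $c=0$, where $\rho=1$, and I will treat it separately or observe that a curvature-zero circle forces the strip packing, in which the value of $\chi_2$ can be computed directly.

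\emph{Case $c=3(2n+1)^2$.} Here $c$ is odd, and since $b$ is odd the first clause applies, giving $\chi_2(\cir)=\kron{b}{c}=\kron{b}{3}\kron{b}{2n+1}^2=\kron{b}{3}$, using $\gcd(b,c)=1$. It therefore remains to show that $b\equiv 2\pmod 3$. This is the key step, and I will take it from Proposition~\ref{prop:tangentcurvaturemod6sum}. Since $3\mid c$ and $b$ is coprime to $c$, we have $b\equiv 1$ or $2\pmod 3$. Both $b$ and $c$ are odd, so $b+c$ is even and $b+c\equiv b\pmod 3$. If $b\equiv 1\pmod 3$, then $b+c$ is even and $\equiv 1\pmod 3$, that is, $b+c\equiv 4\pmod 6$, which Proposition~\ref{prop:tangentcurvaturemod6sum} forbids. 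Hence $b\equiv 2\pmod 3$, so $\kron{b}{3}=-1$ and $\chi_2(\cir)=-1$, again a contradiction.

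I expect the main obstacle to be the choice of tangent circle. The definition of $\chi_2$ requires a tangent circle of coprime curvature, and for even $c$ I want this curvature to be odd. The existence of such a circle follows from the coprime tangency paths of Proposition~\ref{prop:coprimetangencypath}. If only an even coprime neighbour is available when $c$ is odd, I would use the second clause $\kron{2b}{c}$ instead. For $c=3(2n+1)^2$ this gives $\kron{2b}{3}$, and in that setting $b+c$ is odd with $b+c\not\equiv 5\pmod 6$, which forces $b\equiv 1\pmod 3$ and so $2b\equiv 2\pmod 3$. Thus $\kron{2b}{3}=-1$ and the same conclusion follows.
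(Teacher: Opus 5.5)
Your proof is correct and follows the paper's own argument. For $c=2n^2$ the symbol reduces to $-\kron{-b}{n^2}=-1$. For $c=3(2n+1)^2$, Proposition~\ref{prop:tangentcurvaturemod6sum} forces $b\equiv 2\pmod 3$, so $\chi_2(\cir)=\kron{b}{3}=-1$. Your extra handling of an even neighbour via $\kron{2b}{3}$ is also right.

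The $n=0$ case you flag is vacuous. A curvature-zero circle only occurs in the strip packing $(0,0,1,1)$, which has type $(3,1)$, so it cannot arise when $t=3$.
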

\begin{proof}
    If $c=2n^2$, then
    \[1=\chi_2(\cir)=-\kron{-b}{2n^2/2}=-\kron{-b}{n^2}=-1,\]
    a contradiction.

    If $c=3(2n+1)^2$, then $c\equiv 3\pmod{6}$. Since $b$ is odd and coprime to $3(2n+1)^2$, $b\equiv 1, 5\pmod{6}$. By Proposition~\ref{prop:tangentcurvaturemod6sum}, $b+c\not\equiv 4\pmod{6}$, hence $b\equiv 5\pmod{6}$. Therefore,
    \[1=\chi_2(\cir)=\kron{b}{3(2n+1)^2}=\kron{b}{3}=-1,\]
    another contradiction.
\end{proof}

\begin{lemma}\label{lem:teq3chi2eqm1}
    If $t=3$ and $\chi_2(\cir)=-1$, then $c\neq 6n^2$ for any $n\in\ZZ$, despite this family being entirely admissible.
\end{lemma}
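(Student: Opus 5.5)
We argue by contradiction, following the template of Lemmas~\ref{lem:teq1chi2eqm1} and~\ref{lem:teq3chi2eq1}. Suppose $c=6n^2$ with $n\neq 0$, and let $b$ be the curvature of a tangent circle coprime to $c$. Such a tangent circle exists and has odd curvature: by Proposition~\ref{prop:coprimetangencypath} there is a coprime tangent circle, and since $c$ is even its curvature must be odd. Then $b$ is coprime to $6$, so $b\equiv 1,5\pmod 6$.

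The first step is to pin down $b$ modulo $6$ using Proposition~\ref{prop:tangentcurvaturemod6sum}. Since $c\equiv 0\pmod 6$, we have $b+c\equiv b\pmod 6$, and this value cannot be $5$. Hence $b\equiv 1\pmod 6$, and consequently $-b\equiv 2\pmod 3$.

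Next we evaluate $\chi_2$. As $c$ is even and $t=3$, Definition~\ref{def:chi2fromtangentcircle} gives
\[\chi_2(\cir)=-\kron{-b}{c/2}=-\kron{-b}{3n^2}=-\kron{-b}{3}\kron{-b}{n^2}.\]
Because $b$ is coprime to $6n^2$, we get $\kron{-b}{n^2}=\kron{-b}{n}^2=1$, using multiplicativity of the Kronecker symbol in the denominator. Moreover $\kron{-b}{3}=\kron{2}{3}=-1$. Therefore $\chi_2(\cir)=-(-1)=1$, which contradicts the hypothesis $\chi_2(\cir)=-1$.

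For admissibility, note that $6n^2\equiv 0,2\pmod 4$, and both residues lie in $c(E)\pmod 4=\{0,2,3\}$ for type $(3,3)$. So the whole family is admissible. The only delicate point is the sign convention for negative numerators, which does not arise here because $n^2$ and $3$ are positive denominators. Overall the argument is a direct check, with the mod $6$ restriction from Proposition~\ref{prop:tangentcurvaturemod6sum} doing the real work.
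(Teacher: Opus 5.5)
Your proof is correct and is essentially the paper's argument: Proposition~\ref{prop:tangentcurvaturemod6sum} forces $b\equiv 1\pmod 6$, and then $\chi_2(\cir)=-\kron{-b}{3n^2}=-\kron{-b}{3}=1$, which contradicts $\chi_2(\cir)=-1$. You exclude $n=0$ without justification, as the paper also does implicitly, but this is harmless: a curvature-zero circle occurs only in the strip packing, and that packing has type $(3,1)$.
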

\begin{proof}
    If $c=6n^2$, as with the above two lemmas, $b\equiv 1, 5\pmod{6}$, hence $b\equiv 1\pmod{6}$ as $b+c\not\equiv 5\pmod{6}$. Therefore
    \[-1=\chi_2(\cir)=-\kron{-b}{6n^2/2}=-\kron{-b}{3}=1,\]
    contradiction.
\end{proof}

It is clear that Theorem~\ref{thm:reciprocity} follows from Lemmas~\ref{lem:teq1chi2eqm1}, \ref{lem:teq3chi2eq1}, \ref{lem:teq3chi2eqm1}.

\section{Computational results on sporadic curvatures}\label{sec:computation}

\tablestretch{
\begin{table}[htb]
	\caption{Sporadic curvature data for small quadruples.}\label{table:conjecturedata}
	\begin{tabular}{|l|c|c|c|c|c|c|c|} 
		\hline
		Root quadruple      & Moiety &  Type        & Obstructions & $N$              & $|\mathcal{S}_E(N)|$ &  $\max(\mathcal{S}_E(N))$ & $\approx\frac{N}{\max(\mathcal{S}_E(N))}$\\ \hline
		$(0, 0, 1, 1)$      & O/E  & $(3, 1, 1)$  &              & $10^{8}$         & 66                   & 12490                     & 8006.41\\
    	  $(-2, 4, 9, 5)$     & O    &              &              & $10^{8}$         & 3188                 & 1079526                   & 92.63\\
		                    & E    &              &              & $10^{8}$         & 2938                 & 993678                    & 100.64\\ 
		$(-6, 14, 25, 13)$  & O    &              &              & $4\cdot 10^{8}$  & 22719                & 9351538                   & 42.77\\
		                      & E    &              &              & $4\cdot 10^{8}$  & 22499                & 8171254                   & 48.95\\
		$(-7, 9, 52, 44)$   & O    &              &              & $4\cdot 10^{8}$  & 48655                & 23129156                  & 17.29\\ 
		                      & E    &              &              & $4\cdot 10^{8}$  & 49169                & 20342406                  & 19.66\\ \hline
		
        $(-3, 17, 4, 20)$   & O    & $(3, 1, -1)$ & $(2n+1)^2,2n^2,6n^2$ & $10^{9}$ & 14704                & 9814206                   & 101.89\\
		                    & E    &              &              & $10^{9}$         & 14256                & 4650662                   & 215.02\\
		$(-3, 5, 14, 10)$   & O    &              &              & $10^{9}$         & 2354                 & 426408                    & 2345.17\\
		                    & E    &              &              & $10^{9}$         & 2139                 & 447314                    & 2235.57\\
		$(-4, 28, 33, 5)$   & O    &              &              & $10^{9}$         & 45277                & 31740702                  & 31.51\\
		                    & E    &              &              & $10^{9}$         & 45184                & 19615334                  & 50.98\\
		$(-7, 13, 30, 20)$  & O    &              &              & $10^{9}$         & 33841                & 13236632                  & 75.55\\
		                    & E    &              &              & $10^{9}$         & 33157                & 11734136                  & 85.22\\ \hline
		
        $(-1, 3, 4, 2)$     & O    & $(3, 3, 1)$  & $2n^2, 3(2n+1)^2$ & $10^{8}$    & 265                  & 43406                     & 2303.83\\
		$(-2, 8, 11, 3)$    & O    &              &              & $4\cdot 10^{8}$  & 3349                 & 1412726                   & 283.14\\
		$(-4, 10, 15, 11)$  & E    &              &              & $10^{9}$         & 27095                & 10182338                  & 98.21\\
		$(-4, 12, 19, 7)$   & O    &              &              & $4\cdot 10^{8}$  & 29239                & 11266738                  & 35.50\\
		$(-5, 7, 30, 24)$   & E    &              &              & $4\cdot 10^{8}$  & 13855                & 4687696                   & 85.33\\
		$(-5, 11, 20, 12)$  & O    &              &              & $4\cdot 10^{8}$  & 11241                & 3903298                   & 102.48\\
		$(-6, 14, 23, 15)$  & E    &              &              & $4\cdot 10^{8}$  & 21928                & 7490626                   & 53.40\\
		$(-8, 14, 35, 27)$  & O    &              &              & $6\cdot 10^{9}$  & 280363               & 163242802                 & 36.76\\ \hline
		
        $(-1, 3, 4, 2)$     & E    & $(3, 3, -1)$ & $6n^2$       & $10^{8}$         & 267                  & 45846                     & 2181.22\\
		$(-2, 8, 11, 3)$    & E    &              &              & $4\cdot 10^{8}$  & 3556                 & 875106                    & 457.09\\
		$(-4, 10, 15, 11)$  & O    &              &              & $10^{9}$         & 27037                & 17277798                  & 57.88\\
		$(-4, 12, 19, 7)$   & E    &              &              & $4\cdot 10^{8}$  & 29021                & 12264182                  & 32.62\\
		$(-5, 7, 30, 24)$   & O    &              &              & $4\cdot 10^{8}$  & 13894                & 4464868                   & 89.59\\
		$(-5, 11, 20, 12)$  & E    &              &              & $4\cdot 10^{8}$  & 11349                & 3153514                   & 126.84\\
		$(-6, 14, 23, 15)$  & O    &              &              & $4\cdot 10^{8}$  & 22379                & 7337122                   & 54.52\\
		$(-8, 14, 35, 27)$  & E    &              &              & $6\cdot 10^{9}$  & 284218               & 315974568                 & 18.99\\ \hline
	\end{tabular}
\end{table}
}

There are four extended types possible. For every packing of extended type $(3, 1, \pm 1)$, we get two moieties: one for the odd index circles (1 and 3, denoted O), and one for the even index circles (2 and 4, denoted E). For each packing of type $(3, 3)$, one moiety has extended type $(3, 3, 1)$, and the other moiety has extended type $(3, 3, -1)$.

Overall, for the 48 root quadruples with curvatures at most 60, we have 96 sets of moieties, which divide into
\begin{itemize}
    \item 26 of extended type $(3, 1, 1)$ (with two being equal: in the packing $(0, 0, 1, 1)$, the even index and odd index moieties clearly contain the same sets of curvatures);
    \item 20 of extended type $(3, 1, -1)$;
    \item 25 of extended type $(3, 3, 1)$;
    \item 25 of extended type $(3, 3, -1)$.
\end{itemize}

For each moiety, we computed the sporadic set $\mathcal{S}_E(N)$ for $N=10^8$, and observed the final sporadic curvature found. If it was larger than $N/10$, we increased $N$ and recomputed. In each such case, we were able to succeed for some $N\leq 3\cdot 10^{10}$. A summary of these computations for 4 small packings of each extended type is found in Table~\ref{table:conjecturedata}. While this does not prove that $\mathcal{S}_E$ is finite, it is strong evidence towards this conjecture, and further suggests that we have found the exact sporadic set in the described cases.

\printbibliography

\end{document}